\documentclass[english,ruled]{article}
\usepackage[T1]{fontenc}
\usepackage[latin9]{inputenc}
\usepackage{verbatim}
\usepackage{subcaption}
\usepackage{algorithm2e}
\usepackage{amsmath}
\usepackage{amsthm}
\usepackage{amssymb}
\usepackage{graphicx}
\usepackage{xcolor}
\usepackage{multicol}
\usepackage{multirow}
\usepackage{geometry}
\usepackage{booktabs}
\usepackage{enumitem}
\usepackage{setspace}
\makeatletter
\usepackage[toc,page,header]{appendix}
\usepackage{minitoc}
\usepackage{ifthen}
\newboolean{doublecolumn}
\setboolean{doublecolumn}{false}
\newboolean{arxiv}
\setboolean{arxiv}{true}
\usepackage{pgfplots}
\usepackage{pdflscape}
\usepackage{hyperref}
\usepackage{cleveref}
\usepackage{authblk}
\pgfplotsset{compat=1.15}

\setlength{\parindent}{0pt}
\geometry{verbose,tmargin=3cm,bmargin=3cm,lmargin=2.4cm,rmargin=2.4cm}
\linespread{1.0}

%%%%%%%%%% Start TeXmacs macros
\newcommand{\assign}{:=}

\newcommand{\cdummy}{\cdot}

\newcommand{\tmop}[1]{\ensuremath{\operatorname{#1}}}

\newcommand{\minf}[1]{\underset{#1}{\text{minimize}}}
\newcommand{\hdm}{{\texttt{HDM}}}
\newcommand{\hdmagd}{{\texttt{HDM-AGD}}}
\newcommand{\hdmhb}{{\texttt{HDM-HB}}}
\newcommand{\hdmbest}{{\texttt{HDM-Best}}}

\newcommand{\bfgs}{{\texttt{BFGS}}}
\newcommand{\lbfgs}{{\texttt{L-BFGS}}}

\newcommand{\adagrad}{{\texttt{AdaGrad}}}
\newcommand{\adam}{{\texttt{Adam}}}

\newcommand{\agd}{{\texttt{AGD}}}
\newcommand{\mathd}{\mathrm{d}}
\newcommand{\nin}{\not\in}

%%%%%%%%%% End TeXmacs macros

%

%

%

%

%

%

%

%

%

%

% round bracket

%
%
%

% square bracket

%
%
%

% curly bracket

%
%
%
\global\long\def\vertiii#1{\left\vert \kern-0.25ex  \left\vert \kern-0.25ex  \left\vert #1\right\vert \kern-0.25ex  \right\vert \kern-0.25ex  \right\vert }%

%

%

%

%

%

%

%

%

%

%

%

%

%

%

%

%

%

%

%

% expectation

%

%

%

%

\global\long\def\diam{\mathrm{diam}}%

\global\long\def\argmin{\operatornamewithlimits{arg\,min}}%

\global\long\def\diag{\mathrm{diag}}%

\global\long\def\and{\mathrm{and}}%

\global\long\def\dist{\mathrm{dist}}%

\global\long\def\Rbb{\mathbb{R}}%

\global\long\def\Bcal{\mathcal{B}}%

\global\long\def\Dcal{\mathcal{D}}%

\global\long\def\Lcal{\mathcal{L}}%

\global\long\def\Ocal{\mathcal{O}}%

\global\long\def\Pcal{\mathcal{P}}%

\global\long\def\Scal{\mathcal{S}}%

\global\long\def\Xcal{\mathcal{X}}%

%

%

%

% Use bold text symbol for vector and matrix

%

%

%

%

%

%

%

%

%

%

%

%

%

%

%

%

%

%

%

%

%

%

%

%

%

%

%

% Use bold symbol for vector and matrix

%

%

%

%

%

%

%

%

%

%

%

%

%

%

%

%

%

%

%

%

%

%

%

%

%

%

%

%

%

%

%

%

%

%

%

%

%

%

%

%

%

%

%

%

%

%

\theoremstyle{plain}
\newtheorem{lem}{\protect\lemmaname}[section]
\theoremstyle{remark}
\newtheorem{rem}{\protect\remarkname}
\theoremstyle{plain}
\newtheorem{thm}{\protect\theoremname}[section]
\theoremstyle{plain}

\providecommand{\corollaryname}{Corollary}
\theoremstyle{plain}

\theoremstyle{plain}

\theoremstyle{plain}
\newtheorem{definition}{\protect\definitionname}[section]

\providecommand{\lemmaname}{Lemma}
\providecommand{\remarkname}{Remark}
\providecommand{\theoremname}{Theorem}
\providecommand{\examplename}{Example}
\providecommand{\propositionname}{Proposition}
\providecommand{\definitionname}{Definition}

% cleveref
\crefdefaultlabelformat{#2\textbf{#1}#3} % <-- Only #1 in \textbf
\crefname{section}{\textbf{section}}{\textbf{sections}}
\Crefname{section}{\textbf{Section}}{\textbf{Sections}}
\crefname{thm}{\textbf{theorem}}{\textbf{theorems}}
\Crefname{thm}{\textbf{Theorem}}{\textbf{Theorems}}
\crefname{lem}{\textbf{lemma}}{\textbf{lemmas}}
\Crefname{lem}{\textbf{Lemma}}{\textbf{Lemmas}}
\crefname{prop}{\textbf{proposition}}{\textbf{propositions}}
\Crefname{prop}{\textbf{Proposition}}{\textbf{Propositions}}
\crefname{algorithm}{\textbf{algorithm}}{\textbf{algorithms}}
\Crefname{algorithm}{\textbf{Algorithm}}{\textbf{Algorithms}}
\crefname{coro}{\textbf{Corollary}}{\textbf{corollaries}}
\Crefname{coro}{\textbf{Corollary}}{\textbf{corollaries}}
\crefname{definition}{\textbf{Definition}}{\textbf{definitions}}
\Crefname{definition}{\textbf{Definition}}{\textbf{definitions}}
\crefname{table}{\textbf{Table}}{\textbf{tables}}
\Crefname{table}{\textbf{Table}}{\textbf{tables}}
\crefname{figure}{\textbf{Figure}}{\textbf{figures}}
\Crefname{figure}{\textbf{Figure}}{\textbf{figures}}

% Surrogate loss shorthand

% Comments
\newcommand{\YC}[1]{ }
\renewcommand{\YC}[1]{\textcolor{blue}{[YC: #1]}}

\begin{document}

\title{Provable and Practical Online Learning Rate Adaptation with Hypergradient Descent}

\author[1]{Ya-Chi Chu\thanks{ycchu97@stanford.edu}}
\author[2]{Wenzhi Gao\thanks{gwz@stanford.edu, equal contribution}}
\author[2,3]{Yinyu Ye\thanks{yyye@stanford.edu}}
\author[2,3]{Madeleine Udell\thanks{udell@stanford.edu}}
\affil[1]{Department of Mathematics, Stanford University}
\affil[2]{ICME, Stanford University}
\affil[3]{Department of Management Science and Engineering, Stanford University}

\maketitle

 \begin{abstract}
 This paper investigates the convergence properties of the hypergradient descent method ({\hdm}), a 25-year-old heuristic originally proposed for adaptive stepsize selection in stochastic first-order methods \cite{almeida1999parameter, gunes2018online}.  
 We provide the first rigorous convergence analysis of {\hdm} using the online learning framework of \cite{gao2024gradient} and apply this analysis to develop new state-of-the-art adaptive gradient methods with empirical and theoretical support. 
 Notably, {\hdm} automatically identifies the optimal stepsize for the local optimization landscape and achieves local superlinear convergence. Our analysis explains the instability of {\hdm} reported in the literature and proposes efficient strategies to address it. We also develop two {\hdm} variants with heavy-ball and Nesterov momentum. Experiments on deterministic convex problems show {\hdm} with heavy-ball momentum (\hdmhb) exhibits robust performance and significantly outperforms other adaptive first-order methods. Moreover, {\hdmhb} often matches the performance of \texttt{L-BFGS}, an efficient and practical quasi-Newton method, using less memory and cheaper iterations.
\end{abstract}
\section{Introduction}\label{sec:intro}

We consider the smooth convex optimization problem
\begin{eqnarray*}
  \minf{x \in \mathbb{R}^n} & f (x), & 
\end{eqnarray*}
where $f : \mathbb{R}^n \rightarrow \mathbb{R}$ is convex and $L$-smooth with $f
(x^{\star}) \assign \min_x f (x) > - \infty$. Theoretically, gradient descent
\begin{equation*}%\label{eqn:gradient-descent}
	x^{k+1} = x^k - \alpha_k \nabla f(x^k)
\end{equation*}
 with constant stepsize $\alpha_k \equiv 1/L$ is guaranteed to converge. However, the choice of stepsize $\alpha_k$ strongly affects the performance of gradient descent in practice \cite{defazio2024road}, and various stepsize selection strategies have been proposed to improve the practical convergence of gradient descent. Examples include line-search \cite{armijo1966minimization}, Polyak stepsize \cite{polyak1987introduction}, stepsize scheduling  \cite{li2021second, wang2023convergence}, hypergradient descent \cite{almeida1999parameter,rubio2017convergence,gunes2018online} and  the well-known adaptive stepsizes \cite{orabona2016coin,duchi2011adaptive,kingma2014adam}.

Our paper focuses on the \emph{hypergradient descent method} (\hdm), which was initially proposed in \cite{almeida1999parameter} as a heuristic for stochastic optimization. It was later tested on modern machine learning problems and exhibited promising performance \cite{gunes2018online}. In {\hdm}, the stepsize $\alpha_k$ is adjusted by another gradient descent update:
\begin{align*}
\alpha_{k+1} &= \alpha_k - \tilde{\eta}_k \tfrac{\mathd}{\mathd \alpha}[f(x^k - \alpha \nabla f(x^k))]\big|_{\alpha = \alpha_k} = \alpha_k - \eta_k \tfrac{-\langle \nabla f(x^{k+1}), \nabla f(x^k) \rangle}{\| \nabla f(x^k) \|^2},
\end{align*}
where the hypergradient stepsize $\tilde{\eta}_k$ is often set to be $\tilde{\eta}_k = \tfrac{\eta_k}{\| \nabla f(x^k) \|^2}$ for $\eta_k > 0$ to be invariant of the scaling of $f$.
Gao et al. \cite{gao2024gradient} generalized {\hdm}
to support learning a \emph{preconditioned} gradient descent update with 
preconditioner (matrix stepsize) $P_k \in \Rbb^{n\times n}$
through the iteration
\begin{align}
    x^{k+1} ={} & x^k - P_k \nabla f(x^k), \label{eqn:pgd} \\
    P_{k+1} ={} & \Pi_{\Pcal} \big[P_k - \eta_k \tfrac{-\nabla f(x^{k+1}) \nabla f(x^k)^{\top}}{\| \nabla f(x^k) \|^2}\big], \label{eqn:hdm-Pk-update}
\end{align}
where \eqref{eqn:hdm-Pk-update} follows from $\nabla_P[ f(x^k - P \nabla f(x^k))] \big|_{P = P_k} = - \nabla f(x^{k+1}) \nabla f(x^k)^{\top}$ and $\Pi_{\Pcal}[\cdot]$ is orthogonal projection on to a compact set of candidate preconditioners $\Pcal$. Note that $P$ does not need to be positive definite \cite{gao2024gradient}, hence the projection is easy to compute in practice. We call the update \eqref{eqn:pgd}-\eqref{eqn:hdm-Pk-update} \emph{vanilla} {\hdm} throughout the paper. In practice, $P_k$ is often set to be diagonal and \eqref{eqn:hdm-Pk-update} simplifies to $P_{k+1} = P_k - \eta_k \tfrac{-\diag(\nabla f(x^{k+1}) \circ \nabla f(x^k))}{\| \nabla f(x^k) \|^2}$, where $\circ$ is entry-wise product.\\

\begin{figure}[t]
  \centering
  \begin{subfigure}{0.35\textwidth}
\includegraphics[height=0.2\textheight]{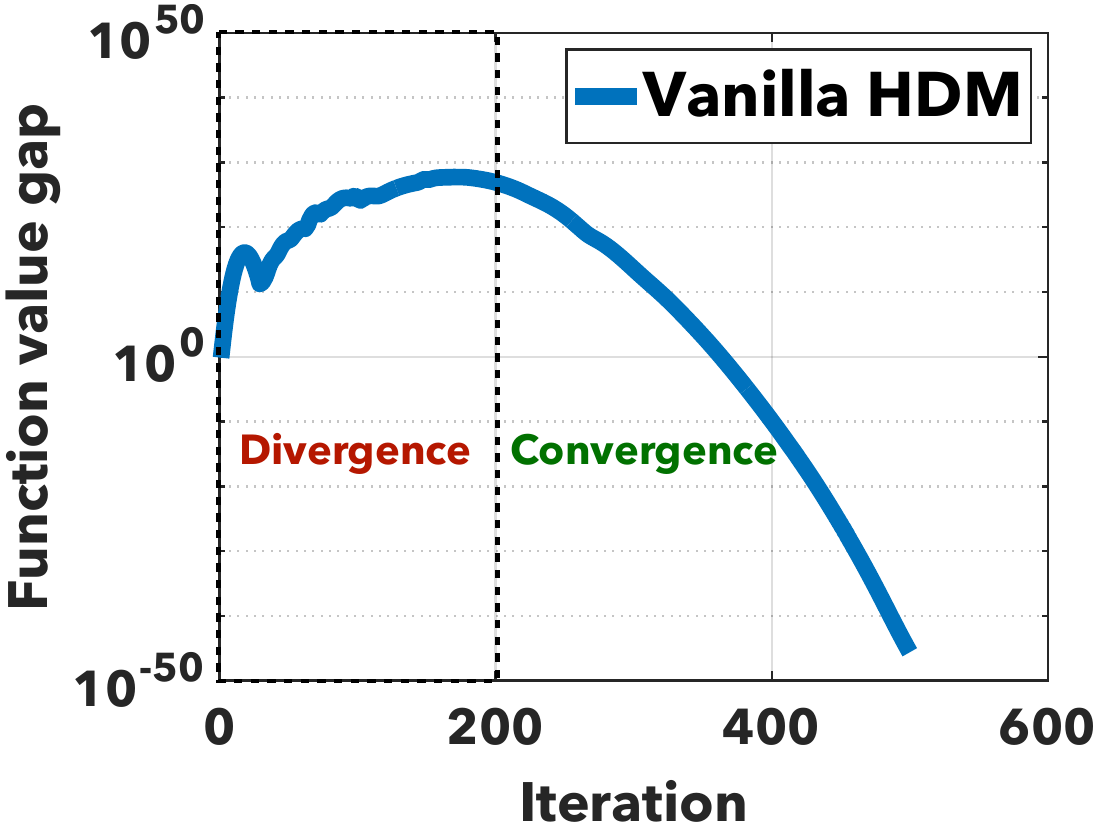}
    \caption{Two-phase behavior}
    \label{fig:demo:a}
  \end{subfigure}
  \begin{subfigure}{0.35\textwidth}~~
\includegraphics[height=0.2\textheight]{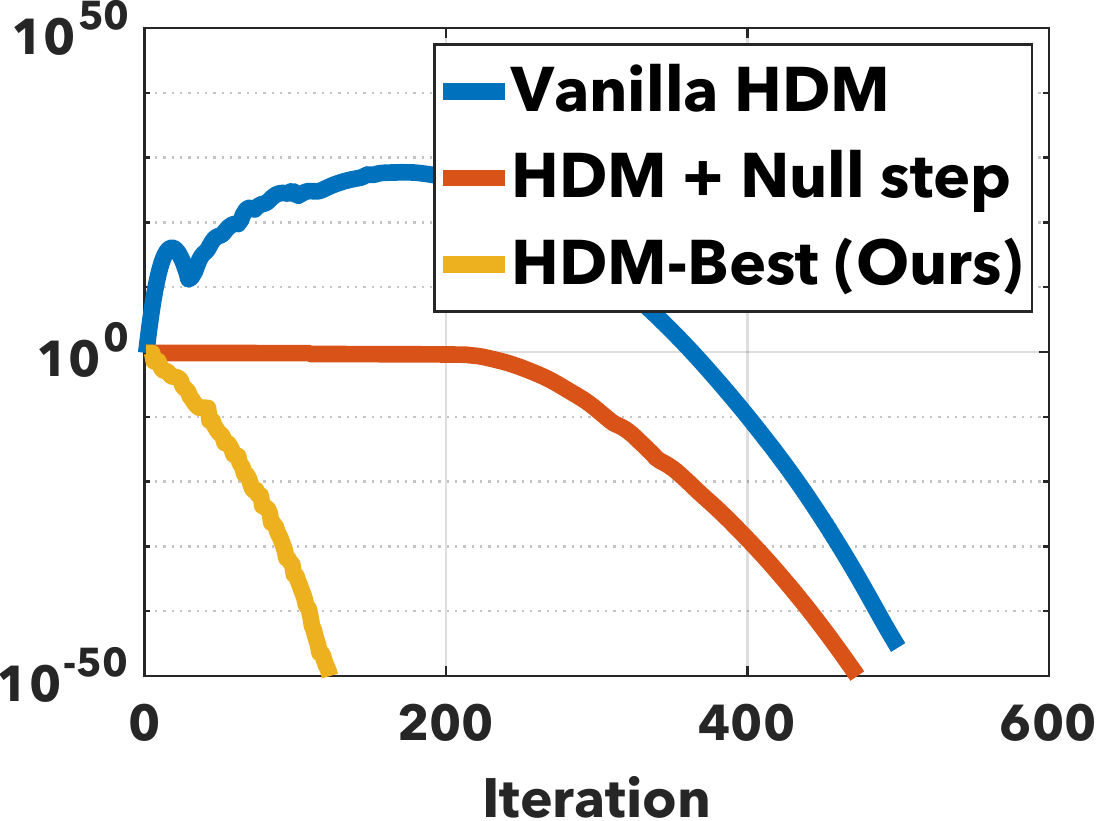}
    \caption{Addressing instability}
    \label{fig:demo:b}
  \end{subfigure}
\caption{The behavior of different {\hdm} variants on the toy quadratic optimization problem. \Cref{fig:demo:a}: two-phase convergence behavior of vanilla {\hdm}. \Cref{fig:demo:b}: effect of null step and our best variant {\hdmbest}. \label{fig-demos}}
  \label{fig:demo:instability}
\end{figure}

Vanilla {\hdm} is widely used in practice, but it can be unstable if the hypergradient stepsize $\eta_k$ is not carefully tuned \cite{kunstner2024searching,chandra2022gradient,rubio2017convergence}. 
\Cref{fig:demo:a} shows $f(x^k)$ can spike as high as $10^{30}$ in the early iterations of vanilla {\hdm}, which would lead experienced users to abandon the algorithm. 
Surprisingly, our analysis reveals that this behavior of {\hdm} is not true divergence; instead, it can be understood as the warm-up phase of an online learning procedure, and is followed by fast convergence (\Cref{fig:demo:a}).
Moreover, we show in both theory and practice that the explosion of $f(x^k)$ can be circumvented by taking a \emph{null step}, which skips the update whenever the new iterate fails to decrease the objective value, i.e., $f(x^k - P_k \nabla f(x^k)) \geq f(x^k)$.
The null steps flatten the objective value curve in the warm-up phase of {\hdm} but cannot shorten the warm-up (\Cref{fig:demo:b}).\\

Our analysis exploits the online learning framework in \cite{gao2024gradient}, in which the authors observe that the $P$-update \eqref{eqn:hdm-Pk-update} in vanilla {\hdm} can be viewed as online gradient descent with respect to the online surrogate loss
\begin{equation} \label{eqn:hypergrad-feedback}
h_x(P) \assign \tfrac{f ( x - P \nabla f (x) ) - f (x) }{\| \nabla f (x) \|^2}.
\end{equation}
The function $h_x(P)$, called \emph{hypergradient feedback} in this paper, is a function of preconditioner $P$ and is well-defined for all non-stationary $x$. 
To see that \eqref{eqn:hdm-Pk-update} aligns with the online gradient descent update, notice $\nabla h_{x^k}(P_k) = -\tfrac{\nabla f(x^{k+1}) \nabla f(x^k)^{\top}}{\| \nabla f(x^k) \|^2}$ so the update \eqref{eqn:hdm-Pk-update} sets $P_{k+1} = P_k - \eta_k \nabla h_{x^k} (P_k)$.
Using insights from our analysis, we develop a variant of {\hdm} 
based on {\adagrad} that improves convergence of {\hdm} (\Cref{fig:demo:b}) both in theory and practice. 

\begin{algorithm}[h]
  {\textbf{input} initial point $x^1, P_1 \in \Pcal$}\\
  \For{k =\rm{ 1, 2,...}}{
  $x^{k + 1} = \displaystyle \argmin_{x \in \{x^k, x^k - P_k \nabla f(x^k)\}} f(x) $\\
  $P_{k+1} = \Pi_{\mathcal{P}} [P_k - \eta_k \nabla h_{x^k} (P_k)]$\\
  }
  \caption{Hypergradient Descent Method (\hdm) \label{alg:hdm}}
\end{algorithm}

Vanilla {\hdm} + null steps (\Cref{alg:hdm}) was first considered by \cite{gao2024gradient} and guaranteed to converge globally.
However, their analysis is not sufficient to explain the practical behavior of {\hdm} and provides no advice for how to design a practically efficient {\hdm}.
In this paper, we dive deeper into the convergence behavior of {\hdm} (\Cref{alg:hdm}), establishing sharper global convergence guarantees and conducting a local convergence analysis. Our findings offer new insights into (vanilla) {\hdm} and serve as a foundation to design more efficient and practical variants of {\hdm}.
The contributions of this paper include:

\begin{itemize}[leftmargin=10pt]
    \item We provide the first rigorous convergence analysis for {\hdm}, including both global and local convergence guarantees (\Cref{sec:hdm}) that show {\hdm} can adapt to the local optimization landscape. Our analysis provides several new insights into how {\hdm} adapts to optimization landscapes (\Cref{sec:global-conv}), why vanilla {\hdm} is unstable in practice (\Cref{sec:instability}), and the connection between {\hdm} and quasi-Newton methods (\Cref{sec:local-conv}). 
    
    \item We develop and analyze two improved variants of {\hdm}: {\hdm} + heavy-ball momentum ({\hdmhb} in \Cref{sec:heavyball}), which has the same convergence rate as {\hdm} but is faster than {\hdm} in practice; and {\hdm} + Nesterov momentum ({\hdmagd} in \Cref{sec:nesterov}), which is faster in theory and intermediate between {\hdm} and {\hdmhb} in practice. 
    
    \item We develop a practically efficient variant {\hdmbest}, which updates $x^k$ by preconditioned gradient descent with heavy-ball momentum and jointly updates $P_k$ and momentum parameter by {\adagrad}.
    Our {\hdmbest} outperforms adaptive first-order methods and performs on par with \texttt{L-BFGS} (with memory size 5 or 10) using \emph{less memory} (memory size 1) (\Cref{sec:exp}).
\end{itemize}

\subsection{Related Literature}

\paragraph{Adaptive First-order Methods.}
Notable adaptive first-order methods include {\adagrad} \cite{duchi2011adaptive,mcmahan2010adaptive}, \texttt{Adam} \cite{kingma2014adam,zhang2024adam}, and parameter-free stepsizes \cite{orabona2016coin,defazio2024road}.
Most of these techniques originate in the online learning community, and they typically achieve both strong empirical convergence and online learning regret guarantees.

\paragraph{Hypergradient Descent.}
Hypergradient descent dates back to \cite{almeida1999parameter}, which was first proposed as a heuristic to accelerate stochastic gradient descent. 
Similar concepts were also explored in \cite{sutton1992adapting,schraudolph1999local,jacobs1988increased,mahmood2012tuning}, while those works employed slightly different algorithmic updates. 
Later, \cite{gunes2018online} rediscovered the {\hdm} and named it ``hypergradient descent''; \cite{gunes2018online} also extended {\hdm} to other first-order methods with extensive experimental validation of its practical efficacy.  Recent studies \cite{jie2022adaptive,chandra2022gradient,ozkara2024mada} further empirically enhanced {\hdm} for broader applicability, reporting promising numerical results.\\

Despite these empirical successes, a rigorous theoretical understanding of {\hdm} has emerged only recently. \cite{rubio2017convergence} showed that {\hdm} converges on convex quadratic functions and established several analytic properties. 
Subsequently, \cite{kunstner2024searching} demonstrated that when using a diagonal preconditioner, hypergradient can be employed to generate cutting planes in the preconditioner space, achieving an $\Ocal(\sqrt{n}\kappa^\star \log (1/\varepsilon))$ complexity result on smooth strongly convex functions. Here, $\kappa^\star$ is the condition number associated with the optimal diagonal preconditioner. 
More recently, \cite{gao2024gradient} showed that {\hdm} can be viewed as online gradient descent applied to some surrogate loss function and that {\hdm} has strong trajectory-based convergence guarantees.

\subsection{Notations}
We denote Euclidean norm by $\| \cdot \|$ and  Euclidean inner product by $\langle \cdot, \cdot \rangle$.
The upper and lower case letters $A, a$ respectively denote matrices and scalars. 
Denote the Frobenius norm by $\| A \|_F
\assign \sqrt{\sum_{i j} a_{i j}^2}$. Define $[\cdot]_+ := \max\{\cdot, 0\}$. We use
$\Pi_{\mathcal{C}} [\cdot]$ to denote the orthogonal projection onto a
closed convex set $\mathcal{C}$ and use $\tmop{dist}
(x, \mathcal{C}) \assign \| x - \Pi_{\mathcal{C}} [x] \|$ to denote the distance between a point $x$ and a closed convex set $\mathcal{C}$. 
Denote the optimal set of $f (x)$ by $\mathcal{X}^{\star} = \{ x : f (x) = f (x^{\star}) \}$; and the $\alpha$-sublevel set of $f$ by $\Lcal_\alpha := \{x: f(x) \leq \alpha\}$. 
For consistency of notation, a \textit{stepsize} $P$ in this paper always refers to a matrix applied in the gradient update.
Define $\mathcal{S} \assign \{ P = \alpha I : \alpha \in
\mathbb{R} \}$ and $\mathcal{D} \assign \{ P = \tmop{diag} (d) : d \in \mathbb{R}^n \}$. The condition number of an $L$-smooth and $\mu$-strongly convex function is $\kappa := L/\mu$.
\section{Background: {\hdm} and Online Learning} \label{sec:hdm-ol}

This section establishes the connection between {\hdm} and online learning through the framework in \cite{gao2024gradient}. 
We refer to the following assumptions in the paper.

\begin{enumerate}[leftmargin=30pt,label=\textbf{A\arabic*:},ref=\rm{\textbf{A\arabic*}},start=1]
  \item $f(x)$ is $L$-smooth and convex.  \label{A1}
  \item $f(x)$ is $\mu$-strongly convex with $\mu > 0$. \label{Ascvx}
  \item Closed convex set $\Pcal$ satisfies $0 \in \Pcal, L^{-1} I\in \Pcal$ and $\diam (\mathcal{P}) \leq D < \infty$. \label{A2}
\end{enumerate}

\subsection{Descent Lemma and Hypergradient Feedback}

Hypergradient feedback \eqref{eqn:hypergrad-feedback} is motivated by  descent lemma:
\[ f ( x - \tfrac{1}{L} \nabla f (x) ) - f (x) \leq - \tfrac{1}{2
   L} \| \nabla f (x) \|^2 . \]
The descent lemma states that, under the constant stepsize $P_k \equiv \tfrac{1}{L} I$,
the function value progress of a gradient step is proportional to $\| \nabla f
(x) \|^2$ with ratio $- 1/(2L)$. 
When an (effective) preconditioner $P_k$ is used, the \emph{effective} smoothness constant decreases, and thus the ratio $h_x (P) = \tfrac{f (x - P \nabla f (x)) - f (x)}{\| \nabla f (x) \|^2}$ is expected to become smaller than $- 1 / (2 L)$, yielding a faster convergence. 
Hence, the ratio $h_x (P)$ is a suitable feedback to measure the quality of a preconditioner. {\hdm} uses this
feedback to learn a good preconditioner using online gradient descent. The hypergradient feedback $h_x (P)$ has the following properties.

\begin{lem}[{Extension of Proposition 6.1 in \cite{gao2024gradient}}]\label{lem:hx-properties}
  For any $x \nin \mathcal{X}^{\star}$.
  \begin{itemize}[leftmargin=10pt]
    \item Under \ref{A1}, $h_x (P)$ is convex and $L$-smooth and $h_x(\frac{1}{L}I)\leq -\frac{1}{2L}$. Moreover, if \ref{Ascvx} holds and $\mathcal{P} \subseteq \mathcal{S}$,
    then $h_x (P) = h_x (\alpha)$ is $\mu$-strongly convex. 
    
    \item Under \ref{A1} and \ref{A2}, $h_x (P)$ is  $(L D + 1)$-Lipschitz. Moreover, if \ref{Ascvx} holds and $\mathcal{P} \subseteq \mathcal{D}$,
    then $h_x (P) = h_x (d)$ is $\frac{\mu}{(1 + L D)^2}$-exponential concave \cite{hazan2007logarithmic}.
  \end{itemize}
\end{lem}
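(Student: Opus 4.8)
The plan is to fix $x \nin \mathcal{X}^{\star}$, set $g := \nabla f(x)$ (nonzero, since for convex $f$ a vanishing gradient forces $x \in \mathcal{X}^{\star}$), and write
\[
  h_x(P) = \frac{\phi(P) - f(x)}{\|g\|^2}, \qquad \phi(P) := f(x - Pg).
\]
The structural fact driving everything is that $P \mapsto x - Pg$ is affine in $P$, so $\phi$ is the composition of the convex, $L$-smooth function $f$ with an affine map; convexity of $h_x$ follows immediately, since dividing by the positive constant $\|g\|^2$ preserves it. For $L$-smoothness I would use the gradient $\nabla_P h_x(P) = -\tfrac{\nabla f(x - Pg)\,g^{\top}}{\|g\|^2}$ recorded above and estimate, via the identity $\|ab^{\top}\|_F = \|a\|\,\|b\|$ and $L$-smoothness of $f$,
\[
  \|\nabla_P h_x(P_1) - \nabla_P h_x(P_2)\|_F = \frac{\|\nabla f(x - P_1 g) - \nabla f(x - P_2 g)\|}{\|g\|} \le \frac{L\,\|(P_1 - P_2)g\|}{\|g\|} \le L\,\|P_1 - P_2\|_F,
\]
the last step using $\|(P_1 - P_2)g\| \le \|P_1 - P_2\|_F\,\|g\|$. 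Finally $h_x(\tfrac1L I) \le -\tfrac{1}{2L}$ is exactly the descent lemma divided by $\|g\|^2$.

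For the scalar strong-convexity claim I would restrict to $P = \alpha I$ and differentiate $\psi(\alpha) := f(x - \alpha g)$ twice: $\psi''(\alpha) = g^{\top}\nabla^2 f(x - \alpha g)\,g \ge \mu\|g\|^2$ under \ref{Ascvx}, so $h_x''(\alpha) = \psi''(\alpha)/\|g\|^2 \ge \mu$. For the $(LD+1)$-Lipschitz claim, observe $\|\nabla_P h_x(P)\|_F = \|\nabla f(x - Pg)\|/\|g\|$, so it suffices to bound the numerator; since $0 \in \mathcal{P}$ and $\diam(\mathcal{P}) \le D$ give $\|P\|_F \le D$ and hence $\|Pg\| \le D\|g\|$, $L$-smoothness yields $\|\nabla f(x - Pg)\| \le \|\nabla f(x)\| + L\|Pg\| \le (1 + LD)\|g\|$, whence $\|\nabla_P h_x(P)\|_F \le 1 + LD$.

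The main obstacle is the exp-concavity claim for diagonal preconditioners, where I expect the real work to lie. Writing $P = \diag(d)$ and $y := x - d \circ g$, I would compute
\[
  \nabla_d h_x(d) = -\frac{\diag(g)\,\nabla f(y)}{\|g\|^2}, \qquad \nabla^2_d h_x(d) = \frac{\diag(g)\,\nabla^2 f(y)\,\diag(g)}{\|g\|^2}.
\]
Recall that $\beta$-exp-concavity is equivalent, for twice-differentiable functions, to the matrix inequality $\nabla^2 h_x(d) \succeq \beta\,\nabla h_x(d)\,\nabla h_x(d)^{\top}$. Here \ref{Ascvx} gives $\nabla^2 f(y) \succeq \mu I$, so the left side dominates $\tfrac{\mu}{\|g\|^2}\diag(g)^2$, while a Cauchy--Schwarz argument applied to $v := \diag(g)\,\nabla f(y)$ shows the rank-one factor obeys $vv^{\top} \preceq \|\nabla f(y)\|^2\diag(g)^2$. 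The desired inequality therefore reduces to the scalar condition $\beta\,\|\nabla f(y)\|^2 \le \mu\|g\|^2$, and the bound $\|\nabla f(y)\| = \|\nabla f(x - Pg)\| \le (1 + LD)\|g\|$ from the Lipschitz step makes this hold precisely with $\beta = \tfrac{\mu}{(1 + LD)^2}$. The delicate points will be keeping the Hessian bookkeeping correct under the diagonal parametrization and verifying that the Cauchy--Schwarz bound is tight enough to recover the stated constant.
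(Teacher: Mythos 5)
Your proposal is correct and follows essentially the same route as the paper: the same second-derivative computation $h_x''(\alpha) = \langle g, \nabla^2 f(x-\alpha g)\, g\rangle / \|g\|^2 \geq \mu$ for scalar strong convexity, and the same reduction of exp-concavity to $\nabla^2 h_x(d) \succeq \tfrac{\mu}{\|g\|^2}\diag(g)^2 \succeq \beta\, \nabla h_x(d)\nabla h_x(d)^{\top}$ via $\|\nabla f(x-Pg)\| \leq (1+LD)\|g\|$ (the paper reaches the same constant by expanding $\nabla f(x-Pg) = g + z$ with $\|z\| \leq LD\|g\|$, which is equivalent). The only difference is cosmetic: you prove convexity, $L$-smoothness, and Lipschitz continuity directly from the affine-composition structure, whereas the paper cites \cite{gao2024gradient} for those parts.
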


\subsection{Online Learning Guarantees}

Using the convexity and Lipschitz continuity of $h_x(P)$, standard analysis in online learning literature  \cite{orabona2019modern,hazan2016introduction} guarantees sublinear regret for online gradient descent. 

\begin{lem}[{Sublinear regret \cite{gao2024gradient}}] \label{lem:regret-sublinear}
  Under \ref{A1} and \ref{A2}, online gradient descent
\begin{equation} \label{eqn:olalg-ogd}
	P_{k + 1} = \Pi_{\mathcal{P}} [P_k - \eta_k \nabla h_{x^k} (P_k)]
\end{equation}
with stepsize $\eta_k \equiv \tfrac{D}{ (L D + 1) \sqrt{K}}$ generates 
  % a  sequence of 
  $\{P_k\}$ such that
\begin{align}
\textstyle \sum_{k = 1}^K h_{x^k} (P_k) - \displaystyle \min_{P \in \mathcal{P}}  \textstyle \sum_{k = 1}^K h_{x^k} (P)
\leq \rho_{K} \assign D (L D + 1) \sqrt{K} \label{eqn:ogd-constant}. 
\end{align}
\end{lem}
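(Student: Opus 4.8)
The plan is to run the textbook online gradient descent (OGD) regret argument, feeding in the structural facts about $h_{x^k}$ from \Cref{lem:hx-properties}. Write $G_k \assign \nabla h_{x^k}(P_k)$ and let $P \in \mathcal{P}$ be an arbitrary comparator (eventually the minimizer in the regret definition). Since each $h_{x^k}$ is convex in $P$ under \ref{A1} by \Cref{lem:hx-properties}, I first linearize: convexity gives $h_{x^k}(P_k) - h_{x^k}(P) \leq \inprod{G_k}{P_k - P}$, where the inner product is the Frobenius/trace inner product on matrices. Summing over $k$ reduces the claim to bounding $\sum_{k=1}^K \inprod{G_k}{P_k - P}$ uniformly over $P \in \mathcal{P}$.

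Next I would carry out the standard potential argument on $\| P_k - P \|_F^2$. Using the nonexpansiveness of the Euclidean projection $\Pi_{\mathcal{P}}$ together with the update \eqref{eqn:olalg-ogd}, I expand $\| P_{k+1} - P \|_F^2 \leq \| P_k - \eta_k G_k - P \|_F^2 = \| P_k - P \|_F^2 - 2\eta_k \inprod{G_k}{P_k - P} + \eta_k^2 \| G_k \|_F^2$, and rearrange to isolate the per-step regret as $\inprod{G_k}{P_k - P} \leq \tfrac{1}{2\eta_k}\big(\| P_k - P \|_F^2 - \| P_{k+1} - P \|_F^2\big) + \tfrac{\eta_k}{2}\| G_k \|_F^2$. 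With the constant stepsize $\eta_k \equiv \eta$ the first terms telescope, leaving $\sum_{k=1}^K \inprod{G_k}{P_k - P} \leq \tfrac{1}{2\eta}\| P_1 - P \|_F^2 + \tfrac{\eta}{2}\sum_{k=1}^K \| G_k \|_F^2$.

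Finally I would insert the two quantitative bounds. Assumption \ref{A2} gives $\diam(\mathcal{P}) \leq D$, hence $\| P_1 - P \|_F \leq D$; and the $(LD+1)$-Lipschitz bound from \Cref{lem:hx-properties} gives $\| G_k \|_F \leq LD+1$ for every $k$. This yields a regret bound of $\tfrac{D^2}{2\eta} + \tfrac{\eta}{2}K(LD+1)^2$, and substituting $\eta = \tfrac{D}{(LD+1)\sqrt{K}}$ makes both terms equal to $\tfrac{1}{2}D(LD+1)\sqrt{K}$, summing to the claimed $\rho_{K} \assign D(LD+1)\sqrt{K}$.

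The argument is entirely routine, so I do not expect a genuine obstacle; the only point requiring care is consistency of the matrix geometry, since $P$ is a matrix rather than a vector. Every norm and inner product above must be the Frobenius/trace one, which is precisely the metric in which both $\diam(\mathcal{P}) \leq D$ and the Lipschitz constant $LD+1$ are measured — the latter because $\| \nabla h_x(P) \|_F = \| \nabla f(x^{k+1}) \| / \| \nabla f(x^k) \|$, and $L$-smoothness together with $\| P \|_F \leq D$ (using $0 \in \mathcal{P}$) bounds this ratio by $LD+1$. Keeping this Frobenius bookkeeping straight is all that is needed.
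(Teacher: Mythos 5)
Your proposal is correct and matches the paper's argument: the paper proves this lemma by invoking the $(LD+1)$-Lipschitz bound from \Cref{lem:hx-properties} together with its auxiliary OGD regret lemma (\Cref{lem:auxi-dynamic}, specialized to a static comparator), whose proof is exactly your potential argument --- projection nonexpansiveness, convexity linearization, telescoping $\|P_k - P\|_F^2$, and the diameter and gradient-norm bounds, with the same stepsize substitution balancing the two terms. The only cosmetic difference is that the paper packages the telescoping inside a more general dynamic-regret lemma and then sets the comparator path length to zero, whereas you telescope directly.
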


If strong convexity \ref{Ascvx} is further assumed and $P_k \in \Scal$, a different choice of hypergradient stepsize $\eta_k$ in \eqref{eqn:olalg-ogd} improves the regret to $\log K$.

\begin{lem}[Logarithmic regret] \label{lem:regret-log}
Instate \ref{A1} to \ref{A2} and suppose $\mathcal{P} \subseteq
  \mathcal{S}$. Then online gradient descent \eqref{eqn:olalg-ogd}	with  $\eta_k = 1/(k \mu)$ generates a sequence of $\{
  P_k \}$ such that $\textstyle \sum_{k = 1}^K h_{x^k} (P_k) -  \min_{P \in \mathcal{P}}  \textstyle \sum_{k
  = 1}^K h_{x^k} (P) = \Ocal(\log K). $
\end{lem}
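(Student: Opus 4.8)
The plan is to run the standard analysis of online gradient descent on strongly convex losses (in the style of Hazan, Agarwal, and Kale), feeding it the two structural facts about the per-round losses $h_{x^k}$ that \Cref{lem:hx-properties} already supplies. Because $\mathcal{P} \subseteq \mathcal{S}$, every iterate is a scalar multiple of the identity, so I identify $P_k = \alpha_k I$ with the scalar $\alpha_k$ and write $g_k := \nabla h_{x^k}(\alpha_k)$. Under \ref{A1}, \ref{Ascvx}, and $\mathcal{P} \subseteq \mathcal{S}$, \Cref{lem:hx-properties} tells me each $h_{x^k}$ is $\mu$-strongly convex, and under \ref{A1}--\ref{A2} its gradient is bounded by the Lipschitz constant $LD+1$. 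Let $\alpha^\star \in \mathcal{P}$ denote the fixed comparator $\argmin_{P \in \mathcal{P}} \sum_{k=1}^K h_{x^k}(P)$; the goal is to bound $\sum_{k=1}^K [h_{x^k}(\alpha_k) - h_{x^k}(\alpha^\star)]$.

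First I would write the strong convexity inequality $h_{x^k}(\alpha_k) - h_{x^k}(\alpha^\star) \leq \langle g_k, \alpha_k - \alpha^\star \rangle - \tfrac{\mu}{2}\|\alpha_k - \alpha^\star\|^2$. Next, since $\alpha^\star \in \mathcal{P}$ and projection onto the convex set $\mathcal{P}$ is nonexpansive, the one-step recursion $\alpha_{k+1} = \Pi_{\mathcal{P}}[\alpha_k - \eta_k g_k]$ gives $\|\alpha_{k+1} - \alpha^\star\|^2 \leq \|\alpha_k - \alpha^\star\|^2 - 2\eta_k \langle g_k, \alpha_k - \alpha^\star\rangle + \eta_k^2 \|g_k\|^2$, which I rearrange into an upper bound on $\langle g_k, \alpha_k - \alpha^\star\rangle$. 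Substituting $\eta_k = 1/(k\mu)$ so that $1/(2\eta_k) = k\mu/2$, combining the two inequalities, and summing over $k$, the distance terms form the sum $\tfrac{\mu}{2}\sum_{k=1}^K \big[(k-1)\|\alpha_k - \alpha^\star\|^2 - k\|\alpha_{k+1}-\alpha^\star\|^2\big]$. This telescopes to $-\tfrac{\mu}{2}K\|\alpha_{K+1}-\alpha^\star\|^2 \leq 0$, so the entire distance contribution is nonpositive and drops out. What survives is $\sum_{k=1}^K \tfrac{\eta_k}{2}\|g_k\|^2$. Bounding $\|g_k\| \leq LD+1$ from \Cref{lem:hx-properties} and using $\sum_{k=1}^K 1/k \leq 1 + \log K$ yields the regret bound $\tfrac{(LD+1)^2}{2\mu}(1 + \log K) = \Ocal(\log K)$.

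The argument is essentially routine once the two properties of $h_{x^k}$ are in hand, so the only genuine point requiring care is the exact cancellation in the telescoping step: the choice $\eta_k = 1/(k\mu)$ is precisely calibrated so that the $-\tfrac{\mu}{2}\|\alpha_k-\alpha^\star\|^2$ term from strong convexity combines with the time-varying quadratic terms to telescope to something nonpositive. I would verify this identity explicitly (by writing $a_k := (k-1)\|\alpha_k-\alpha^\star\|^2$ and observing the sum is $a_1 - a_{K+1}$) rather than leaving it to the reader, since a mismatched stepsize constant would leave a residual $\Ocal(\sqrt{K})$ term and destroy the logarithmic rate. The remaining inputs — nonexpansiveness of $\Pi_{\mathcal{P}}$, the uniform gradient bound $LD+1$, and the harmonic sum estimate — are all standard.
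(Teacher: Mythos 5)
Your proof is correct and takes essentially the same route as the paper: the paper proves this lemma by combining the strong convexity and Lipschitz constants from \Cref{lem:hx-properties} with its auxiliary \Cref{lem:auxi-log-regret}, whose proof is exactly your argument — the projection/nonexpansiveness recursion, the stepsize $\eta_k = 1/(\mu k)$ producing the rearranged bound $h_k(P_k) - h_k(P) \leq \tfrac{\mu}{2}\bigl[(k-1)\|P_k-P\|_F^2 - k\|P_{k+1}-P\|_F^2\bigr] + \tfrac{\gamma^2}{2k\mu}$ with $\gamma = LD+1$, followed by telescoping and the harmonic-sum estimate. The only cosmetic difference is that you carry out the analysis in the scalar parametrization $\alpha$ (natural since $\mathcal{P} \subseteq \mathcal{S}$), whereas the paper states the auxiliary lemma for matrices under the Frobenius norm; the algebra is identical.
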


\begin{rem}
Given exponential-concavity of $h_x$ established in \Cref{lem:hx-properties}, it is possible to apply online learning algorithms such as online Newton method \cite{hazan2007logarithmic}.
\end{rem}

\subsection{Hypergradient Reduction and {\hdm}}
One major contribution of \cite{gao2024gradient} is an online-to-offline reduction that relates the minimization of cumulative hypergradient feedback $\textstyle \sum_{k=1}^K h_{x^k}(P_k)$ to the function value gap.
We provide a sharper version of this reduction.

\begin{lem}[{Sharper version of Lemma 6.1 in \cite{gao2024gradient}}] \label{lem:hypergrad-to-online}
Under \ref{A1}, the iterates generated by \Cref{alg:hdm} satisfy
\begin{equation*}
f(x^{K+1}) - f(x^\star)
\leq \min \Big\{ \tfrac{\Delta^2}{K \max \{ \frac{1}{K} \sum_{k = 1}^K - h_{x^k} (P_k), 0 \}}, f (x^1) - f (x^{\star}) \Big\},
\end{equation*}
where $\Delta = \max_{x \in \mathcal{L}_{f (x^1)}} \min_{x^{\star} \in \mathcal{X}^{\star}} \| x - x^{\star} \|$.
Further, under  \ref{A1} and \ref{Ascvx},
\begin{equation*}
f(x^{K+1}) - f(x^\star)
\leq (f (x^1) - f (x^{\star})) \big( 1 - 2 \mu \max \big\{ \tfrac{1}{K} \textstyle\sum_{k = 1}^K - h_{x^k} (P_k), 0 \big\} \big)^K.
\end{equation*}
\end{lem}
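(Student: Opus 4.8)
The plan is to reduce everything to a single per-step descent inequality coming from the null step, and then telescope it two different ways in the two regimes. Write $g^k := \nabla f(x^k)$, $\delta_k := f(x^k) - f(x^\star)$, and adopt the shorthand $r_k := -h_{x^k}(P_k)$. The starting point is the $x$-update in \Cref{alg:hdm}: since $x^{k+1}$ is chosen as the better of $x^k$ and $x^k - P_k g^k$, we have simultaneously $f(x^{k+1}) \le f(x^k)$ and $f(x^{k+1}) \le f(x^k - P_k g^k) = f(x^k) + h_{x^k}(P_k)\|g^k\|^2$. Combining these gives the key inequality
\[ \delta_k - \delta_{k+1} \ge [r_k]_+\,\|g^k\|^2 , \]
and the monotonicity $\delta_{k+1}\le\delta_k$ it entails places every iterate in the sublevel set, $x^k \in \mathcal{L}_{f(x^1)}$ (since $f(x^k)\le f(x^1)$).

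For the convex bound I would lower bound $\|g^k\|$ via convexity: taking $x^\star$ to be the projection of $x^k$ onto $\mathcal{X}^\star$, convexity gives $\delta_k \le \langle g^k, x^k - x^\star\rangle \le \|g^k\|\,\dist(x^k,\mathcal{X}^\star)\le \|g^k\|\,\Delta$, hence $\|g^k\|^2 \ge \delta_k^2/\Delta^2$. Substituting into the key inequality and dividing by $\delta_k\delta_{k+1}>0$ (the case where some $\delta_k=0$ is trivial by monotonicity) yields $\tfrac{1}{\delta_{k+1}} - \tfrac{1}{\delta_k}\ge \tfrac{[r_k]_+}{\Delta^2}$, where I use $\delta_k\ge\delta_{k+1}$. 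Telescoping over $k=1,\dots,K$ and dropping $1/\delta_1\ge0$ gives $\delta_{K+1}\le \Delta^2/\sum_{k=1}^K[r_k]_+$. The stated form follows from $\sum_{k=1}^K[r_k]_+\ge \max\{\sum_{k=1}^K r_k,0\}=K\max\{\tfrac1K\sum_{k=1}^K r_k,0\}$, while the $\min$ with $f(x^1)-f(x^\star)$ is supplied directly by monotonicity $\delta_{K+1}\le\delta_1$.

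For the strongly convex bound I would replace the convexity estimate by the PL inequality implied by \ref{Ascvx}, namely $\|g^k\|^2\ge 2\mu\,\delta_k$, turning the key inequality into $\delta_{k+1}\le(1-2\mu[r_k]_+)\delta_k$; each factor lies in $[0,1]$ because $\delta_{k+1}\ge0$. Multiplying over $k$ gives $\delta_{K+1}\le\delta_1\prod_{k=1}^K(1-2\mu[r_k]_+)$, and AM--GM applied to the nonnegative factors bounds the product by $\big(1-\tfrac{2\mu}{K}\sum_{k=1}^K[r_k]_+\big)^K$. Invoking $\sum_k[r_k]_+\ge\max\{\sum_k r_k,0\}$ once more, together with monotonicity of $t\mapsto t^K$ on $[0,1]$, produces the claimed geometric rate $\delta_1\big(1-2\mu\max\{\tfrac1K\sum_k r_k,0\}\big)^K$.

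The main obstacle is the sign bookkeeping around $h_{x^k}(P_k)$: the hypergradient feedback need not be negative, since the plain preconditioned step may fail to decrease $f$ and trigger a null step, so the clean per-step inequality only holds with $[\,\cdot\,]_+$. I must check that the passage from $\sum_k[r_k]_+$ to the averaged quantity $\max\{\tfrac1K\sum_k r_k,0\}$ only loosens the bound in the correct direction. The AM--GM step converting the product into the stated averaged-geometric form, together with verifying that each contraction factor $1-2\mu[r_k]_+$ genuinely lies in $[0,1]$, is the other place that needs care.
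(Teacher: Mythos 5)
Your proof is correct and, for the first (convex) bound, follows essentially the same route as the paper: both arguments telescope the inverse optimality gaps, using the null step to get $f(x^k)-f(x^{k+1}) \ge \max\{-h_{x^k}(P_k),0\}\,\|\nabla f(x^k)\|^2$ and convexity plus monotonicity (so that $x^k$ stays in $\mathcal{L}_{f(x^1)}$) to bound $\|\nabla f(x^k)\| \ge (f(x^k)-f(x^\star))/\Delta$; the only cosmetic difference is that the paper keeps the $1/(f(x^1)-f(x^\star))$ term in the telescoped denominator to extract the $\min$, whereas you drop it and invoke monotonicity, which is equivalent. One point worth flagging: the paper's appendix proof stops after the convex bound and never writes out the strongly convex case, so your argument there --- the PL inequality giving $\delta_{k+1} \le (1-2\mu\max\{-h_{x^k}(P_k),0\})\,\delta_k$ with each factor verified to lie in $[0,1]$, followed by AM--GM to bound the product by the $K$-th power of the average, and then the relaxation $\sum_k \max\{-h_{x^k}(P_k),0\} \ge \max\{\sum_k -h_{x^k}(P_k),0\}$ --- is a genuine and correct completion, and it is the natural one, since AM--GM is exactly what is needed to pass from per-step contraction factors to the averaged-feedback geometric rate stated in the lemma.
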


According to \Cref{lem:hypergrad-to-online}, the negative average feedback $\tfrac{1}{K} \textstyle\sum_{k = 1}^K - h_{x^k} (P_k)$ determines the rate for sublinear/linear convergence of \Cref{alg:hdm}: larger $\tfrac{1}{K} \textstyle\sum_{k = 1}^K - h_{x^k} (P_k)$ implies faster convergence. Given the objective $\tfrac{1}{K} \textstyle\sum_{k = 1}^K - h_{x^k} (P_k)$, {\hdm} applies online gradient descent to generate a sequence of preconditioners $\{P_k\}$ that guarantee the following lower bound:
\begin{equation} \label{eqn:avg-guarantee}
  \tfrac{1}{K} \textstyle\sum_{k = 1}^K - h_{x^k} (P_k) \geq \displaystyle \max_{P \in \mathcal{P}}  \tfrac{1}{K} \textstyle \sum_{k
  = 1}^K - h_{x^k} (P) + o(1),
\end{equation}
which follows from the sublinear regret $\rho_K = o(K)$ in \Cref{lem:regret-sublinear} and \Cref{lem:regret-log}, implying $\tfrac{\rho_K}{K} = o(1)$.
\section{The Convergence Behavior of {\hdm} \label{sec:hdm}}

This section presents our main convergence results on {\hdm} and consequent insights. All the analyses are based on the online learning framework established in \Cref{sec:hdm-ol}. Unless specified, we assume the online gradient descent in {\hdm} (\Cref{alg:hdm}) uses the constant stepsize $\eta_k \equiv \eta > 0$ throughout this section.

\subsection{{\hdm} Adapts to the Local  Landscape} \label{sec:global-conv}

Our first convergence result follows by combining \Cref{lem:hypergrad-to-online} and \Cref{lem:regret-sublinear}:
\begin{thm}[Static adaptivity]\label{thm:adaptivity}
Under \ref{A1} and \begin{rm}{(\ref{A1} + \ref{Ascvx})}\end{rm} respectively, \Cref{alg:hdm} satisfies
\begin{align}
f (x^{K + 1}) - f (x^{\star}) & \leq{} \min \{ \tfrac{\Delta^2}{K \max\{ \gamma_K^{\star} - \frac{\rho_K}{K}, 0 \} }, f (x^1) - f (x^{\star}) \} \tag{\ref{A1}} \\
    f (x^{K + 1}) - f (x^{\star}) & \leq{} [f (x^1) - f (x^{\star})] ( 1 - 2 \mu \max\{ \gamma_K^{\star} - \tfrac{\rho_K}{K}, 0 \}
    )^K, \tag{\ref{A1} + \ref{Ascvx}}
\end{align}
where $\Delta$ is the same as defined in \Cref{lem:hypergrad-to-online}, $\rho_K$ is defined in \eqref{eqn:ogd-constant}, and $\gamma_K^{\star} \assign - \min_{P \in \mathcal{P}}  \tfrac{1}{K} \textstyle\sum_{k = 1}^K h_{x^k}(P)$.
\end{thm}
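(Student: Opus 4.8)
The plan is to combine \Cref{lem:hypergrad-to-online} and \Cref{lem:regret-sublinear} by treating the realized negative average feedback $\bar{g}_K \assign \tfrac{1}{K}\sum_{k=1}^K -h_{x^k}(P_k)$ as the single quantity that mediates between online regret and offline convergence. \Cref{lem:hypergrad-to-online} already expresses both the sublinear and the linear function-value bounds entirely through $\max\{\bar{g}_K, 0\}$, so the whole task reduces to lower bounding $\bar{g}_K$ by a quantity depending only on the comparator $\gamma_K^{\star}$ and the regret $\rho_K$, and then propagating that lower bound through the two estimates by monotonicity.

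First I would rearrange the regret guarantee of \Cref{lem:regret-sublinear}. Dividing $\sum_{k=1}^K h_{x^k}(P_k) - \min_{P\in\mathcal{P}}\sum_{k=1}^K h_{x^k}(P) \leq \rho_K$ by $K$ and negating gives $\bar{g}_K \geq -\tfrac{1}{K}\min_{P\in\mathcal{P}}\sum_{k=1}^K h_{x^k}(P) - \tfrac{\rho_K}{K} = \gamma_K^{\star} - \tfrac{\rho_K}{K}$, using the definition of $\gamma_K^{\star}$. Since $t\mapsto\max\{t,0\}$ is nondecreasing, this yields $\max\{\bar{g}_K, 0\} \geq \max\{\gamma_K^{\star} - \tfrac{\rho_K}{K}, 0\}$, which is the one inequality all the rest rests on.

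Next I would substitute this lower bound into each estimate from \Cref{lem:hypergrad-to-online}. For the convex (\ref{A1}) case, the map $s\mapsto \tfrac{\Delta^2}{Ks}$ is nonincreasing in $s>0$, and the outer $\min$ with $f(x^1)-f(x^{\star})$ absorbs the degenerate case $\max\{\gamma_K^{\star}-\tfrac{\rho_K}{K},0\}=0$ where the fraction is $+\infty$; since $\min\{\cdot,\,f(x^1)-f(x^{\star})\}$ is monotone, replacing $\max\{\bar{g}_K,0\}$ by the smaller $\max\{\gamma_K^{\star}-\tfrac{\rho_K}{K},0\}$ only enlarges the right-hand side, giving the stated (\ref{A1}) bound. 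For the strongly convex (\ref{A1}$+$\ref{Ascvx}) case, I would raise the contraction factor to the $K$th power, which is order-preserving provided both bases lie in $[0,1]$. That the realized base $1-2\mu\max\{\bar{g}_K,0\}$ is nonnegative follows from the strong-convexity bound $-h_x(P)\leq\tfrac{1}{2\mu}$ — a consequence of $f(x-P\nabla f(x))\geq f(x^{\star})$ together with the PL inequality $f(x)-f(x^{\star})\leq\tfrac{1}{2\mu}\|\nabla f(x)\|^2$ — which forces $\bar{g}_K\leq\tfrac{1}{2\mu}$; and because $\bar{g}_K\geq\gamma_K^{\star}-\tfrac{\rho_K}{K}$, the comparator base is the larger of the two and still lies in $[0,1]$, so raising to the $K$th power preserves the inequality and yields the stated linear rate.

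The routine parts — dividing the regret inequality by $K$ and the elementary monotonicity of $s\mapsto\Delta^2/(Ks)$ and of $t\mapsto t^K$ on $[0,1]$ — carry no real difficulty. The point requiring care, and the one I expect to be the main obstacle, is keeping the two $\max\{\cdot,0\}$ truncations and the possibly-vacuous denominator aligned through the substitution: one must verify that weakening $\bar{g}_K$ to $\gamma_K^{\star}-\tfrac{\rho_K}{K}$ preserves the direction of every inequality (it pushes the sublinear bound up and moves the contraction factor up toward $1$), and in the strongly convex case that the comparator base never leaves $[0,1]$, so that the $K$th power remains a valid, order-preserving contraction rather than silently flipping sign.
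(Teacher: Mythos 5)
Your proposal is correct and follows essentially the same route as the paper, whose proof is the one-liner ``plug the regret bound \eqref{eqn:ogd-constant} of \Cref{lem:regret-sublinear} into \Cref{lem:hypergrad-to-online}''; your lower bound $\tfrac{1}{K}\sum_{k=1}^K -h_{x^k}(P_k) \geq \gamma_K^{\star} - \tfrac{\rho_K}{K}$ followed by monotone substitution is exactly that combination, spelled out. The extra care you take in the strongly convex case (using $-h_x(P) \leq \tfrac{1}{2\mu}$ to keep both contraction factors in $[0,1]$ before raising to the $K$th power) is a detail the paper leaves implicit, and it is verified correctly.
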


\Cref{thm:adaptivity} has two implications: 
1) Since $\gamma_K^\star \geq - \frac{1}{K}\sum_{k = 1}^K  h_{x^k} (\frac{1}{L}I) \geq \frac{1}{2L}$ (by descent lemma) and $\tfrac{\rho_K}{K} = o(1)$, both upper bounds in \Cref{thm:adaptivity} converge to $0$ when $K$ goes to infinity, guaranteeing global convergence of {\hdm}.
2) More importantly, $\gamma_K^\star$ reflects the possibly improved convergence rate of {\hdm} through the adaptive $P$-update, which depends on the local optimization landscape.
To see this, when $K$ is large and $\tfrac{\rho_K}{K}$ is negligible, the convergence of {\hdm} is competitive with preconditioned gradient descent \eqref{eqn:pgd} with any \emph{static} preconditioner. In particular, the optimal $P_k \equiv P_K^{\star} \assign \argmin_{P \in \mathcal{P}}  \tfrac{1}{K} \textstyle\sum_{k = 1}^K h_{x^k}(P)$ achieves the rate $\tfrac{\Delta^2}{K \gamma_K^\star}$.
Note that $\gamma_K^\star$ (or $P_K^\star$) depends only on the past trajectory $\{x^k\}_{k \leq K}$; and thus if the algorithm visits a local region with a smaller smoothness constant than the global constant $L$, one can expect $\gamma_K^\star \gg \frac{1}{2L}$. Adaptivity leads to faster convergence than standard gradient descent.
In summary, {\hdm} \emph{adapts to the local optimization landscape}.\\

We borrow a standard dynamic regret argument in online convex optimization literature \cite{hazan2016introduction} to provide an even stronger notion of adaptivity of {\hdm}:
\begin{thm}[Dynamic adaptivity]\label{thm:dynamic-adaptivity}
Under \ref{A1} and \begin{rm}{(\ref{A1} + \ref{Ascvx})}\end{rm} respectively, \Cref{alg:hdm} satisfies
\begin{align}
f (x^{K + 1}) - f (x^{\star}) & \leq{} \min \big\{ \tfrac{\Delta^2}{K \max\{ \delta_K^{\star} - \frac{\rho_K}{K}, 0 \} }, f (x^1) - f (x^{\star}) \big\}; \tag{\ref{A1}} \\
f (x^{K + 1}) - f (x^{\star}) & \leq{} (f (x^1) - f (x^{\star})) ( 1 - 2 \mu \max\{ \delta_K^{\star} - \tfrac{\rho_K}{K}, 0 \} )^K, \tag{\ref{A1} + \ref{Ascvx}}
\end{align}
where $\Delta$ is the same as defined in \Cref{lem:hypergrad-to-online},
\begin{align}\label{eqn:dynamic-regret}
\delta_K^{\star} \assign -\min_{\{\hat{P}_k \in \Pcal\}}  \big[ \tfrac{1}{K} \textstyle\sum_{k = 1}^K h_{x^k} (\hat{P}_k) + \mathsf{PL}(\{\hat{P}_k\}) \big], 
\end{align}
and $\mathsf{PL}(\{\hat{P}_k\}):=\tfrac{(L D + 1)}{2 \sqrt{K}} \textstyle\sum_{k = 1}^K \| \hat{P}_{k + 1} - \hat{P}_k \|_F$.
\end{thm}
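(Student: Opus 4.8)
The plan is to follow the proof of \Cref{thm:adaptivity} almost verbatim, replacing the \emph{static} regret bound of \Cref{lem:regret-sublinear} with a \emph{dynamic} regret bound for the online gradient descent that drives the $P$-update in \Cref{alg:hdm}. By \Cref{lem:hypergrad-to-online} the convergence rate is controlled entirely by the negative average feedback $\tfrac1K\sum_{k=1}^K -h_{x^k}(P_k)$, and both displayed bounds there are monotonically decreasing in this quantity. Hence it suffices to prove the single lower bound $\tfrac1K\sum_{k=1}^K -h_{x^k}(P_k) \ge \delta_K^{\star} - \tfrac{\rho_K}{K}$ and then substitute it into the two inequalities of \Cref{lem:hypergrad-to-online}. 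Since $P_{k+1} = \Pi_{\Pcal}[P_k - \eta\,\nabla h_{x^k}(P_k)]$ is exactly projected online gradient descent on the convex losses $h_{x^k}$ (\Cref{lem:hx-properties}), the whole task reduces to a dynamic regret inequality against an arbitrary comparator sequence $\{\hat{P}_k\}$ with $\hat{P}_k \in \Pcal$.

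For the dynamic regret bound, write $g_k := \nabla h_{x^k}(P_k)$ and start from nonexpansiveness of the projection, which gives the one-step inequality $\|P_{k+1}-\hat{P}_k\|_F^2 \le \|P_k - \hat{P}_k\|_F^2 - 2\eta\langle g_k, P_k - \hat{P}_k\rangle + \eta^2\|g_k\|_F^2$. Convexity of $h_{x^k}$ yields $h_{x^k}(P_k) - h_{x^k}(\hat{P}_k) \le \langle g_k, P_k - \hat{P}_k\rangle$, so summing the rearranged inequality bounds the cumulative regret by $\tfrac{1}{2\eta}\sum_{k}(\|P_k-\hat{P}_k\|_F^2 - \|P_{k+1}-\hat{P}_k\|_F^2) + \tfrac{\eta}{2}\sum_{k}\|g_k\|_F^2$. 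The crux is the first sum: because the comparator drifts, it does not telescope directly; reindexing rewrites it as $\|P_1-\hat{P}_1\|_F^2 - \|P_{K+1}-\hat{P}_K\|_F^2 + \sum_k(\|P_k-\hat{P}_k\|_F^2 - \|P_k-\hat{P}_{k-1}\|_F^2)$, and each comparator-shift term is controlled, using $\diam(\Pcal)\le D$, by a multiple of $\|\hat{P}_k - \hat{P}_{k-1}\|_F$. Plugging in the Lipschitz bound $\|g_k\|_F \le LD+1$ (\Cref{lem:hx-properties}) and the step size $\eta = \tfrac{D}{(LD+1)\sqrt K}$ of \Cref{lem:regret-sublinear} makes the $\tfrac{D^2}{2\eta}$ and $\tfrac{\eta}{2}\sum\|g_k\|_F^2$ contributions collapse into $\rho_K = D(LD+1)\sqrt K$, while the comparator-movement contribution is controlled by $(LD+1)\sqrt K\sum_k \|\hat{P}_{k+1}-\hat{P}_k\|_F$.

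Dividing the resulting inequality by $K$ gives, for \emph{every} comparator sequence, $\tfrac1K\sum_{k} -h_{x^k}(P_k) \ge -\big[\tfrac1K\sum_k h_{x^k}(\hat{P}_k) + \mathsf{PL}(\{\hat{P}_k\})\big] - \tfrac{\rho_K}{K}$, where the movement term is exactly $\mathsf{PL}(\{\hat{P}_k\})$ once the leading constant is read off from the $\tfrac{1}{2\eta}$ factor and $\diam(\Pcal)\le D$. Taking the supremum of the right-hand side over all $\{\hat{P}_k\}$ turns the bracket into its negative minimum, i.e.\ into $\delta_K^{\star}$, yielding $\tfrac1K\sum_k -h_{x^k}(P_k)\ge \delta_K^{\star} - \tfrac{\rho_K}{K}$. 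Because both bounds in \Cref{lem:hypergrad-to-online} are decreasing in the negative average feedback, substituting this lower bound (inside the outer $\max\{\cdot,0\}$, which is harmless since it only enlarges the bound) produces the two claimed inequalities under \ref{A1} and under \ref{A1}+\ref{Ascvx} simultaneously, from this one dynamic regret estimate.

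I expect the main obstacle to be the telescoping with a moving comparator and, in particular, pinning down the precise leading constant that appears in $\mathsf{PL}$: the bound on $\|P_k-\hat{P}_k\|_F^2 - \|P_k-\hat{P}_{k-1}\|_F^2$ must hold \emph{uniformly} over all admissible comparator sequences so that the final supremum over $\{\hat{P}_k\}$ is legitimate and recovers exactly $\delta_K^{\star}$, and the constant must be tracked carefully through the choice of $\eta$. A secondary, routine point is checking that the monotone substitution into \Cref{lem:hypergrad-to-online} is valid after the $\max\{\cdot,0\}$ truncation is inserted, which is what lets both regimes follow from the single regret bound.
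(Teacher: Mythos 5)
Your proposal is correct and is essentially the paper's own proof: the paper establishes \Cref{thm:dynamic-adaptivity} by invoking a dynamic-regret bound for projected online gradient descent (\Cref{lem:auxi-dynamic}, instantiated with $\gamma = LD+1$ and $\eta = \tfrac{D}{(LD+1)\sqrt{K}}$) and then substituting the resulting lower bound on $\tfrac{1}{K}\sum_{k=1}^K -h_{x^k}(P_k)$ into \Cref{lem:hypergrad-to-online}, exactly the reduction you describe. The one substantive point is the constant you yourself flagged: the standard moving-comparator telescoping you sketch yields a path-length coefficient of $\tfrac{D}{\eta} = (LD+1)\sqrt{K}$, whereas the theorem's $\mathsf{PL}$ carries $\tfrac{(LD+1)\sqrt{K}}{2}$; this factor of $2$ traces to the step in the paper's proof of \Cref{lem:auxi-dynamic} where $\tfrac{1}{2\eta}\cdot 2\langle \hat{P}_k, P_{k+1}-P_k\rangle$ is bounded by $\tfrac{D}{2\eta}\|P_{k+1}-P_k\|_F$ instead of $\tfrac{D}{\eta}\|P_{k+1}-P_k\|_F$ (after which the learner's movement $\|P_{k+1}-P_k\|_F$ is silently replaced by the comparator's movement $\|\hat{P}_{k+1}-\hat{P}_k\|_F$ in the telescoped bound), so your accounting is the defensible one and the theorem should be read with $\mathsf{PL}$ enlarged by a factor of $2$ --- a harmless change that does not affect the $o(1)$ role of the regret terms.
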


\Cref{thm:adaptivity} and \Cref{thm:dynamic-adaptivity} differ in the constants $\gamma_K^\star$ and $\delta_K^\star$, as the minimum in \eqref{eqn:dynamic-regret} searches over different optimal preconditioners for different $h_{x^k}$. \Cref{thm:dynamic-adaptivity} shows that, even if the sequence $\{ x^k \}$ traverses different regions of the landscape, 
{\hdm} automatically chooses $\hat{P}_k$
to adapt to the local region, at the price of an additional regret term $\mathsf{PL}(\{\hat{P}_k\})$. 
Adaptivity of {\hdm} undergirds its good empirical performance.

\subsection{Online Regret and Instability} \label{sec:instability}

Though adaptive $P$-update underpins the strong performance of {\hdm}, vanilla {\hdm} is observed unstable in practice. 
This section identifies the source of instabilty in vanilla {\hdm} (\Cref{fig:demo:instability}) based on our analysis. We also propose two simple yet effective strategies to address the instability.

\paragraph{Divergence Behavior due to Regret.} 
Recall from \Cref{thm:adaptivity} that the optimality gap at $x^{K + 1}$ is bounded by $\tfrac{\Delta^2}{K \max\{ \gamma_K^{\star} - \frac{\rho_K}{K}, 0 \} }$.
This rate can be better than that of gradient descent when $K$ is large and $\gamma_K^{\star} \gg \tfrac{1}{2L}$, but the analysis provides no guarantee on earlier iterates $\{ x^k \}_{k \leq K}$.
In particular, the convergence rate makes sense only if $\gamma_K^{\star} > \frac{\rho_K}{K}$. That is, the progress $\sum_k h_{x^k} (P_k)$ accumulated by the online gradient descent outweighs its regret $\rho_K$. 
In other words, online gradient descent takes time to learn a good preconditioner, and the regret accumulated during this warm-up phase causes {\hdm} to behave as if it is diverging until the progress $\sum_k h_{x^k} (P_k)$ outpaces the regret $\rho_K$. 
Since $\rho_K$ grows sublinearly with the iteration count $K$, {\hdm} will eventually converge. 
However, the objective value will usually explode (and be terminated by the user) before convergence begins.
Consequently, the two-phase convergence behavior (\Cref{fig:demo:a}) is rarely observed.

\paragraph{Addressing Instability.} While our analysis guarantees {\hdm}  eventually converges, an algorithm that diverges up to $10^{30}$ before converging is not practical. We propose two simple but effective fixes based on our analysis:

\begin{itemize}[leftmargin=10pt]
\item \emph{Null step.} 
The $x$-update is skipped if the new iterate increases the objective value:
 \begin{equation} \label{eqn:null-step}
 x^{k + 1} = \displaystyle \argmin_{x \in \{x^k, x^k - P_k \nabla f(x^k)\}} f(x).
 \end{equation}
The null step ensures a monotonic decrease as {\hdm} learns a good preconditioner, although it requires an additional function value oracle call at each iteration. 
Even on iterations when $x^k$ is not updated, 
the preconditioner $P_k$ is updated using online gradient descent, so the algorithm is still making progress.
In \Cref{fig:instablity}, the null steps  flatten the objective value curve in the divergence phase.

\item \emph{Advanced Learning Algorithms.} Better online learning algorithms with lower regret shorten the divergence phase. \Cref{fig:instablity} shows a significant speedup when the online gradient descent in \Cref{alg:hdm} is replaced by \texttt{AdaGrad}.
In our experiments, {\adagrad} often improves the robustness of {\hdm} since it does not require pre-specifying algorithm parameters that depend on the total iteration count $K$
and provides convergence guarantees for the earlier iterates $\{ x^k \}_{k \leq K}$ (\Cref{sec:intermediate-iter}).
\end{itemize}
\begin{figure}
\centering
\includegraphics[scale=0.3]{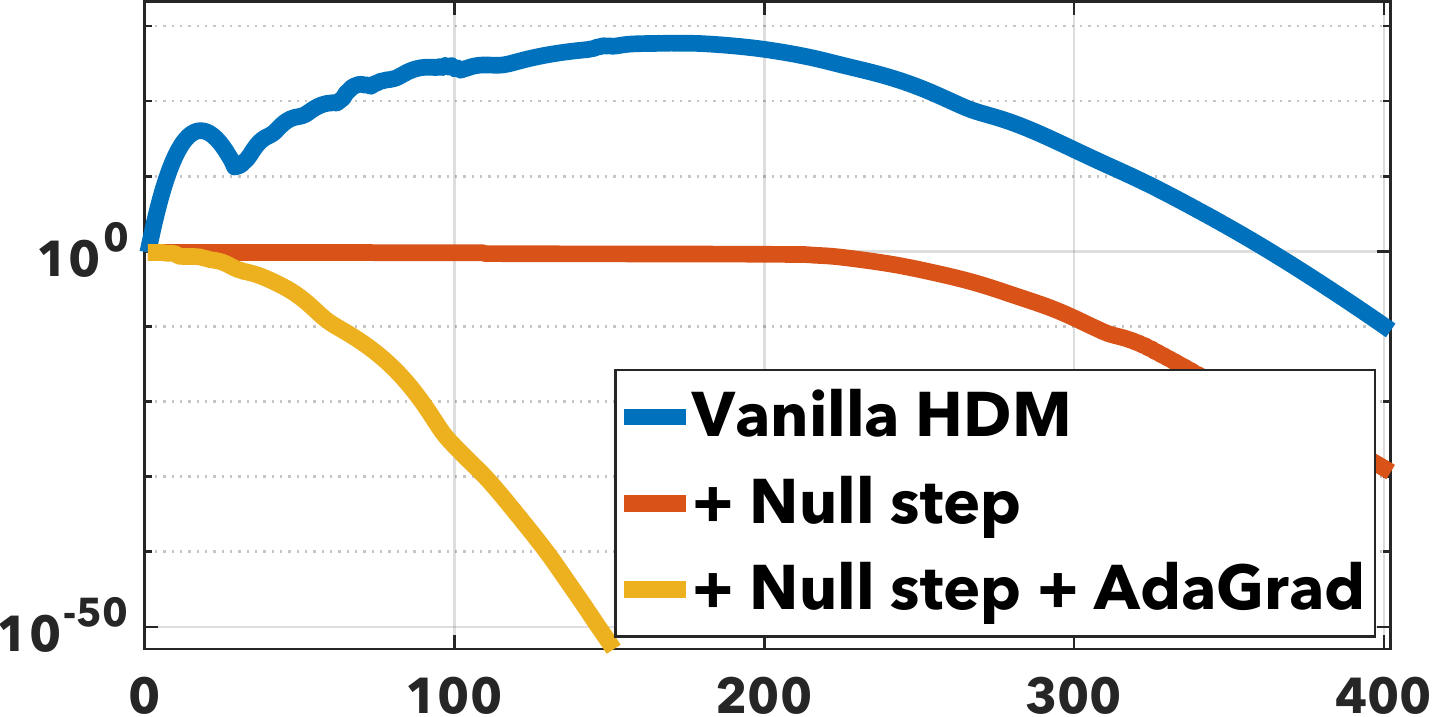}	
\caption{Addressing instability of {\hdm} \label{fig:instablity}}
\end{figure}
\subsection{Local Superlinear Convergence} \label{sec:local-conv}
\Cref{fig:demo:instability} shows {\hdm} converges faster than the (linearly convergent) first-order methods. In fact, {\hdm} exhibits local superlinear convergence on strongly convex objectives (\Cref{thm:superlin} below). Results in this subsection assume a strongly convex objective (\ref{Ascvx}) and Lipschitz Hessian (\ref{A3}):
\begin{enumerate}[leftmargin=30pt,label=\textbf{A\arabic*:},ref=\rm{\textbf{A\arabic*}},start=4]
  \item $f (x)$ has $H$-Lipschitz Hessian. \label{A3}
\end{enumerate}

\paragraph{Strongly Convex Quadratics.}  We develop intuition by considering a strongly convex quadratic. For $f (x) = \tfrac{1}{2} \langle x, A x \rangle$, we have $x^{\star} = x - [\nabla^2 f (x^{\star})]^{- 1} \nabla f (x)$ \footnote{since $\nabla f(x)$ equals to its first-order Taylor expansion at $x^{\star}$ for quadratics: $\nabla f(x) = \nabla f(x^{\star}) + \nabla^2 f(x^\star) (x - x^\star)$}.
In other words, $P^{\star} = [\nabla^2 f (x^{\star})]^{- 1}$ is a universal minimizer of $h_x(P)$ that drives any non-optimal point $x \nin \mathcal{X}^{\star}$ to the optimum $x^{\star}$ \emph{in one step}. When $[\nabla^2 f (x^{\star})]^{- 1} \in \mathcal{P}$, 
\Cref{thm:adaptivity} guarantees the performance of {\hdm} is competitive,
so we should expect the descent curve to decrease more and more sharply, giving superlinear convergence (\Cref{fig:demo:a}).

\paragraph{Local Superlinear Convergence.}
For general functions satisfying \ref{A3}, $f(x)$ locally behaves like a quadratic
 \begin{equation*}
   f (x) = f (x^{\star}) + \tfrac{1}{2} \langle x - x^{\star}, \nabla^2 f (x^{\star}) (x - x^{\star}) \rangle +\mathcal{O} (\| x - x^{\star} \|^3).
 \end{equation*}
Therefore, local superlinear convergence is expected for {\hdm} near $x^{\star}$. \Cref{thm:superlin} formalizes this intuition.

\begin{thm}[Local superlinear convergence]\label{thm:superlin}
Suppose $ [\nabla^2 f (x^{\star})]^{- 1} \in \mathcal{P}$ and assume \ref{A1} to \ref{A3}. Then \Cref{alg:hdm} has local superlinear convergence:
\begin{equation} \label{eqn:superlin}
f (x^{K + 1}) - f (x^{\star}) \leq{} (f (x^{1}) - f (x^{\star})) \big(\min\{\tfrac{H^2 \kappa^2}{4 \mu^2 K} \textstyle\sum_{k = 1}^K {\| x^k - x^{\star} \|^2} + \tfrac{2L \rho_K}{K}, 1 \} \big)^K.
\end{equation}
\end{thm}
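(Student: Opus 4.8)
The plan is to combine the strongly convex linear-rate reduction (\Cref{lem:hypergrad-to-online}) with the online regret bound (\Cref{lem:regret-sublinear}), competing against the Newton preconditioner $P^{\star} := [\nabla^2 f(x^{\star})]^{-1}$, which lies in $\mathcal{P}$ by hypothesis. First I would apply the strongly convex branch of \Cref{lem:hypergrad-to-online}, which reduces \eqref{eqn:superlin} to a lower bound on the cumulative negative feedback $\frac{1}{K}\sum_{k=1}^K -h_{x^k}(P_k)$. Since online gradient descent competes with every fixed comparator, \Cref{lem:regret-sublinear} gives $\frac{1}{K}\sum_k -h_{x^k}(P_k) \geq \frac{1}{K}\sum_k -h_{x^k}(P^{\star}) - \frac{\rho_K}{K}$, so the whole problem localizes to estimating the fixed-comparator feedback $-h_{x^k}(P^{\star})$ term by term.

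The core estimate is a quasi-Newton contraction, and it is the mechanism that produces superlinearity. Writing $e^k := x^k - x^{\star}$ and using \ref{A3}, the expansion $\nabla f(x^k) = \nabla^2 f(x^{\star}) e^k + O(H\|e^k\|^2)$ shows that the comparator step $x^{+} := x^k - P^{\star}\nabla f(x^k)$ lands within $\|x^{+} - x^{\star}\| \leq \frac{H}{2\mu}\|e^k\|^2$ of the optimum, because $P^{\star}$ inverts $\nabla^2 f(x^{\star})$ and \ref{Ascvx} bounds $\|[\nabla^2 f(x^{\star})]^{-1}\| \leq 1/\mu$. Consequently $f(x^{+}) - f(x^{\star}) \leq \frac{L}{2}\|x^{+} - x^{\star}\|^2 \leq \frac{LH^2}{8\mu^2}\|e^k\|^4$: the residual of the Newton-preconditioned step is quartic in $\|e^k\|$, and this is precisely the gain that must be converted into a contraction factor shrinking to $0$.

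I would then write $-h_{x^k}(P^{\star}) = \frac{(f(x^k)-f(x^{\star})) - (f(x^{+})-f(x^{\star}))}{\|\nabla f(x^k)\|^2}$, pair the quartic bound on the second numerator term with $f(x^k)-f(x^{\star}) \geq \frac{\mu}{2}\|e^k\|^2$, and push the resulting termwise feedback bound through the $2\mu$ contraction factor of \Cref{lem:hypergrad-to-online}. Averaging over $k$, inserting the regret term, and relaxing $2\mu\,\rho_K/K$ to $2L\,\rho_K/K$ via $L \geq \mu$, the curvature defect should collect into the $\frac{H^2\kappa^2}{4\mu^2 K}\sum_k\|e^k\|^2$ term; clipping the resulting base at $1$ (it is a valid contraction factor, hence at most $1$) would yield \eqref{eqn:superlin}. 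I would cross-check the bookkeeping on the exactly quadratic case $H=0$, where \eqref{eqn:superlin} must collapse to the pure online-learning rate $(2L\rho_K/K)^K$.

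The hard part will be the normalization by $\|\nabla f(x^k)\|^2$ in the denominator of the feedback. The quartic estimate controls only the numerator, whereas $\|\nabla f(x^k)\|^2$ can sit anywhere in $[2\mu,\,2L]\cdot(f(x^k)-f(x^{\star}))$; bounding it by its worst case $2L(f(x^k)-f(x^{\star}))$ yields only a gradient-descent-scale base near $\frac{1}{2L}$ and leaves a spurious residual $(1-\kappa^{-1})$, while \eqref{eqn:superlin} demands the $\frac{1}{2\mu}$ scale with no residual. Closing this gap — arguing that along the HDM trajectory the Polyak--Lojasiewicz inequality tightens, so the denominator may be replaced by $2\mu(f(x^k)-f(x^{\star}))$ up to an $O(\|e^k\|^2)$ relative error absorbed into the constant — is the step I expect to be the crux, and it is exactly what inflates the constant from $\kappa$ to $\kappa^2$. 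As a fallback I would keep in reserve a route that bypasses the averaged feedback entirely: bound the per-step ratio $\frac{f(x^{k+1})-f(x^{\star})}{f(x^k)-f(x^{\star})}$ directly through the quasi-Newton contraction, take the product over $k$, and use regret only to absorb the discrepancy between the learned $P_k$ and $P^{\star}$.
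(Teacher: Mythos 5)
Your primary route breaks at exactly the point you flagged, and the repair you propose there cannot be made to work. The strongly convex branch of \Cref{lem:hypergrad-to-online} caps the provable per-step factor at $1 - 2\mu\max\{-h_{x^k}(P_k),0\}$, and even with the ideal comparator $P^{\star}=[\nabla^2 f(x^{\star})]^{-1}$ this is, up to your quartic correction, $1 - \tfrac{2\mu(f(x^k)-f(x^{\star}))}{\|\nabla f(x^k)\|^2}$. The claim that the PL inequality ``tightens'' along the trajectory---that $\|\nabla f(x^k)\|^2$ may be replaced by $2\mu(f(x^k)-f(x^{\star}))$ up to $O(\|x^k-x^{\star}\|^2)$ relative error---is false: writing $e$ for the unit vector along $x^k-x^{\star}$, near the optimum
\[
\tfrac{\|\nabla f(x^k)\|^2}{2(f(x^k)-f(x^{\star}))} \rightarrow \tfrac{\|\nabla^2 f(x^{\star})e\|^2}{\langle e, \nabla^2 f(x^{\star})e\rangle} \in [\mu, L],
\]
a direction-dependent limit that equals $\mu$ only when the error aligns with the minimal-curvature eigenvector, and nothing in {\hdm} forces such alignment. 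Already for a quadratic ($H=0$) with error along the maximal eigenvector, your main route proves only the factor $1-1/\kappa$ per step, i.e.\ plain linear convergence; your own $H=0$ sanity check would have exposed this, since the route yields a base of $1-1/\kappa+O(\rho_K/K)$ rather than the required $2L\rho_K/K$. The loss is structural: the reduction converts the function decrease into a multiple of $\|\nabla f(x^k)\|^2$ and back via PL, which discards precisely the information that a Newton step lands at the optimum rather than merely making $O(\|\nabla f\|^2)$ progress. No bookkeeping inside that reduction recovers superlinearity.

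Your fallback---bound the per-step ratio $\tfrac{f(x^{k+1})-f(x^{\star})}{f(x^k)-f(x^{\star})}$ directly, take the product over $k$, and use regret only to compare $P_k$ against $P^{\star}$---is in fact the paper's proof, and the single device you are missing to execute it is a shifted feedback. Define $\hat h_x(P) \assign \tfrac{f(x-P\nabla f(x)) - f(x^{\star})}{\|\nabla f(x)\|^2} \geq 0$. For fixed $x$ this differs from $h_x$ by a constant independent of $P$, so the online gradient descent iterates are literally unchanged and the regret bound of \Cref{lem:regret-sublinear} applies verbatim to $\hat h$. The null step gives the identity $\tfrac{f(x^{k+1})-f(x^{\star})}{f(x^k)-f(x^{\star})} = \min\big\{\tfrac{\hat h_{x^k}(P_k)\|\nabla f(x^k)\|^2}{f(x^k)-f(x^{\star})},\,1\big\}$, and the smoothness upper bound $\|\nabla f(x^k)\|^2 \leq 2L(f(x^k)-f(x^{\star}))$ (not PL) bounds this by $\min\{2L\hat h_{x^k}(P_k),1\}$; the factor $2L$ is harmless because it multiplies a vanishing quantity, and it---not any PL tightening---is what produces $\kappa^2$ in \eqref{eqn:superlin}. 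Then AM--GM on the product of ratios, the regret bound against $P^{\star}$, and the quasi-Newton estimates you already derived correctly, namely $\|x^k - P^{\star}\nabla f(x^k)-x^{\star}\| \leq \tfrac{H}{2\mu}\|x^k-x^{\star}\|^2$ and hence $\hat h_{x^k}(P^{\star}) \leq \tfrac{L}{2\mu^2}\cdot\tfrac{H^2}{4\mu^2}\|x^k-x^{\star}\|^2 = \tfrac{H^2\kappa}{8\mu^3}\|x^k-x^{\star}\|^2$, yield \eqref{eqn:superlin} exactly.
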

\Cref{thm:superlin} justifies our observation of superlinear convergence in \Cref{fig:demo:a}: for strongly convex quadratics, the Hessian Lipschitz constant is zero ($H = 0$) and \eqref{eqn:superlin} guarantees the superlinear convergence at rate $\mathcal{O} ( ( \tfrac{\rho_K}{K} )^K )$. 
For general strongly convex objectives, global linear convergence (\Cref{thm:adaptivity}) implies 
$\lim_{K \rightarrow \infty} \frac{1}{K} \textstyle \sum_{k = 1}^K
\| x^k - x^{\star} \|^2 = 0$. 
So eventually, the first term in \eqref{eqn:superlin} vanishes, giving superlinear convergence.

\paragraph{{\hdm} Learns the Hessian at the Optimum.}
In fact, $\{P_k\}$ in {\hdm} will converge to $[\nabla^2 f (x^{\star})]^{-1}$ under an assumption similar to one studied in the quasi-Newton literature \cite{conn1991convergence, nocedal1999numerical}. \Cref{lem:scalmat-conv} quantifies the effect of learning the preconditioner through the distance $\|P_{k} - [\nabla^2 f (x^{\star})]^{-1}\|_F$.
\begin{lem}\label{lem:scalmat-conv}
  Under the same assumptions as \Cref{thm:superlin}, \Cref{alg:hdm} generates $\{P_k\}$ such that
\begin{align}
&\| P_{k + 1} - [\nabla^2 f (x^{\star})]^{- 1} \|_F^2 \nonumber \\
\leq{} & \| P_k - [\nabla^2 f (x^{\star})]^{- 1} \|_F^2  - \tfrac{\mu (\eta - L \eta^2)}{2} \big\| (P_k - [\nabla^2 f (x^{\star})]^{- 1}) \tfrac{\nabla f (x^k)}{\| \nabla f (x^k) \|} \big\|^2 
+ (2\eta - L \eta^2) \tfrac{H^2 \kappa}{4 \mu^3}\| x^k - x^{\star} \|^2.  \label{eqn:scalmat-conv}
\end{align}
\end{lem}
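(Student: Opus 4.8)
The plan is to read the preconditioner update $P_{k+1}=\Pi_{\mathcal P}[P_k-\eta\nabla h_{x^k}(P_k)]$ as a single projected online-gradient step on the hypergradient feedback $h_{x^k}$, and to track its progress toward the \emph{fixed} comparator $A_\star^{-1}$, where $A_\star:=\nabla^2 f(x^\star)$, which lies in $\mathcal P$ by hypothesis. First I would invoke nonexpansiveness of the Euclidean projection onto the convex set $\mathcal P\ni A_\star^{-1}$ and expand the square:
\[
\|P_{k+1}-A_\star^{-1}\|_F^2\le\|P_k-A_\star^{-1}\|_F^2-2\eta\langle\nabla h_{x^k}(P_k),P_k-A_\star^{-1}\rangle+\eta^2\|\nabla h_{x^k}(P_k)\|_F^2 .
\]
Everything then reduces to controlling the inner-product and gradient-norm terms using the curvature of $h_{x^k}$ as a function of the matrix $P$.

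Writing $g_k:=\nabla f(x^k)$ (nonzero since $x^k\notin\mathcal X^\star$) and $u_k:=g_k/\|g_k\|$, direct differentiation gives $\langle V,\nabla^2 h_{x^k}(P)[V]\rangle=\|g_k\|^{-2}(Vg_k)^\top\nabla^2 f(x^k-Pg_k)(Vg_k)$. Hence $\nabla^2 f\succeq\mu I$ yields only a \emph{degenerate, directional} lower bound $\langle V,\nabla^2 h_{x^k}(P)[V]\rangle\ge\mu\|Vu_k\|^2$ (the full strong-convexity clause of \Cref{lem:hx-properties} requires $\mathcal P\subseteq\mathcal S$, which fails here), while $L$-smoothness gives $\langle V,\nabla^2 h_{x^k}(P)[V]\rangle\le L\|V\|_F^2$. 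Integrating the directional bound along $[P_k,A_\star^{-1}]$ produces $\langle\nabla h_{x^k}(P_k),P_k-A_\star^{-1}\rangle\ge G_k+\tfrac\mu2\|(P_k-A_\star^{-1})u_k\|^2$ with $G_k:=h_{x^k}(P_k)-h_{x^k}(A_\star^{-1})$, and the smoothness clause gives $\|\nabla h_{x^k}(P_k)\|_F^2\le 2L(h_{x^k}(P_k)-h_k^\star)$ with $h_k^\star:=\min_P h_{x^k}(P)$. Substituting both and collecting the $G_k$-terms yields the coefficient $-2\eta(1-L\eta)G_k$; since $\eta\le 1/L$ and $G_k\ge-\Delta_k$ where $\Delta_k:=h_{x^k}(A_\star^{-1})-h_k^\star\ge0$, all function-gap contributions collapse to a single nonnegative error:
\[
\|P_{k+1}-A_\star^{-1}\|_F^2\le\|P_k-A_\star^{-1}\|_F^2-\mu\eta\,\|(P_k-A_\star^{-1})u_k\|^2+2\eta\Delta_k .
\]

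The crux is bounding $\Delta_k$, i.e.\ certifying that $A_\star^{-1}$ is an \emph{approximate} minimizer of $h_{x^k}$. Because $h_{x^k}(P)$ depends on $P$ only through $Pg_k$, its minimum is attained at any $P$ with $Pg_k=x^k-x^\star$ (the step landing exactly on $x^\star$), whereas $A_\star^{-1}g_k=(x^k-x^\star)+A_\star^{-1}r_k$ with $r_k:=\nabla f(x^k)-A_\star(x^k-x^\star)$ the Taylor remainder. Using $L$-smoothness of $h_{x^k}$ about its minimizer together with the Lipschitz-Hessian estimate $\|r_k\|\le\tfrac H2\|x^k-x^\star\|^2$ (from \ref{A3}), $\|A_\star^{-1}\|\le 1/\mu$, and the strong-convexity bound $\|g_k\|\ge\mu\|x^k-x^\star\|$, I would obtain $\Delta_k\le\tfrac{L}{2\|g_k\|^2}\|A_\star^{-1}r_k\|^2\le\tfrac{H^2\kappa}{8\mu^3}\|x^k-x^\star\|^2$. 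Finally, substituting this into the recursion and weakening $\mu\eta\ge\tfrac{\mu(\eta-L\eta^2)}2$ and $2\eta\Delta_k\le(2\eta-L\eta^2)\tfrac{H^2\kappa}{4\mu^3}\|x^k-x^\star\|^2$ (both valid for $0<\eta\le 1/L$) reproduces exactly \eqref{eqn:scalmat-conv}.

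The step I expect to be the main obstacle is the $\Delta_k$ estimate: one must recognize that $h_{x^k}$ is minimized by the exact one-step-to-optimum preconditioner rather than by $A_\star^{-1}$ itself, and then control the mismatch \emph{purely} through $\|x^k-x^\star\|^2$ — this is precisely where \ref{A3} enters and generates the $H^2\kappa/\mu^3$ constant. A secondary subtlety, already visible above, is that $\nabla^2 h_{x^k}$ is rank-deficient, so the usual strong-convexity inequality is unavailable and must be replaced by its directional form along $u_k=g_k/\|g_k\|$, which is exactly what produces the progress term $\|(P_k-[\nabla^2 f(x^\star)]^{-1})u_k\|^2$ appearing in the statement.
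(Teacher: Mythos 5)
Your proposal is correct and reproduces \eqref{eqn:scalmat-conv} exactly, including the constants. The overall skeleton coincides with the paper's proof: expand the projected online-gradient step against the fixed comparator $[\nabla^2 f(x^\star)]^{-1}\in\mathcal P$ via nonexpansiveness, bound $\|\nabla h_{x^k}(P_k)\|_F^2\le 2L\,(h_{x^k}(P_k)-\min_P h_{x^k}(P))$ by smoothness, and control the suboptimality of $[\nabla^2 f(x^\star)]^{-1}$ through the Lipschitz-Hessian estimate $\|x^k-[\nabla^2 f(x^\star)]^{-1}\nabla f(x^k)-x^\star\|\le\tfrac{H}{2\mu}\|x^k-x^\star\|^2$, which gives the same bound $\Delta_k\le\tfrac{H^2\kappa}{8\mu^3}\|x^k-x^\star\|^2$ that the paper derives as its equation \eqref{eqn:proof-2-1}. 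Where you genuinely diverge is in extracting the progress term $\|(P_k-[\nabla^2 f(x^\star)]^{-1})u_k\|^2$: you integrate the \emph{directional} Hessian lower bound $\langle V,\nabla^2 h_{x^k}(P)[V]\rangle\ge\mu\|Vu_k\|^2$ along the segment $[P_k,[\nabla^2 f(x^\star)]^{-1}]$ (correctly noting that full strong convexity of $h_x$ in $P$ fails off the scalar class $\mathcal S$), whereas the paper uses only plain convexity of $\hat h_{x^k}$ for the inner product and then lower-bounds the gap $\hat h_{x^k}(P_k)-\hat h_{x^k}(P^\star)$ at the function level, applying $f(y)-f(x^\star)\ge\tfrac\mu2\|y-x^\star\|^2$ at $y=x^k-P_k\nabla f(x^k)$ followed by a Young-type splitting $\|a+b\|^2\ge\tfrac12\|b\|^2-\|a\|^2$. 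Your route is cleaner and buys a sharper intermediate coefficient ($\mu\eta$ rather than the paper's $\tfrac{\mu}{4}\cdot 2\eta(1-L\eta)$), which you then deliberately weaken to match the statement; it also needs only $\eta\le 1/L$ where the paper invokes $\eta\le\tfrac{1}{2L}$ (both restrictions are implicit, not in the lemma statement). The paper's route is more elementary in that it never differentiates $h_x$ twice, manipulating only function values of $f$ — a style that transfers more readily to settings where $f$ is not twice differentiable away from $x^\star$.
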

Relation \eqref{eqn:scalmat-conv} consists of three terms: the distance $\| P_k - [\nabla^2 f (x^{\star})]^{- 1} \|_F^2$; a decrement in the distance (second term); and an error term (last term) that converges to zero as $x^k \rightarrow x^{\star}$.
The decrement is determined by the magnitude of $\big\| (P_k - [\nabla^2 f (x^{\star})]^{- 1}) \tfrac{\nabla f (x^k)}{\| \nabla f (x^k) \|} \big\|^2$, which measures the difference between the operators $P_k$ and $[\nabla^2 f (x^{\star})]^{-1}$ in the (unit) gradient direction $\tfrac{\nabla f (x^k)}{\|\nabla f (x^k)\|}$.
To ensure fast convergence, it suffices for $P_k \nabla f (x^k)$ and $[\nabla^2 f (x^{\star})]^{-1} \nabla f (x^k)$ to remain sufficiently close. If the set $\big\{ \tfrac{\nabla f (x^k)}{\|\nabla f (x^k)\|} \big\}$ spans the entire space over the iterations, $P_k$ and $[\nabla^2 f (x^{\star})]^{-1}$ should align in all directions, leading to convergence of $\{P_k\}$.

\begin{thm}[Convergence of the preconditioner]\label{thm:scal-mat-conv}
Instate the same assumptions as in \Cref{lem:scalmat-conv} and let $\eta_k \equiv \eta \in (0, \tfrac{1}{2L(LD+1)^2 \kappa}]$ in online gradient descent \eqref{eqn:olalg-ogd}. Suppose the gradient directions
$\big\{ \tfrac{\nabla f (x^k)}{\| \nabla f (x^k) \|} \big\}$ are uniformly independent \footnote{The formal definition of a uniformly independent sequence is given in \Cref{app:thm-pf-scal-mat-conv}, which is adapted from quasi-Newton literature \cite{conn1991convergence, nocedal1999numerical}}. Then 
$\lim_{k \rightarrow \infty}  \| P_k - [\nabla^2 f (x^{\star})]^{- 1} \| = 0$.
\end{thm}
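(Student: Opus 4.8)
The plan is to track the error matrix $E_k \assign P_k - [\nabla^2 f(x^\star)]^{-1}$ and the unit gradient directions $g_k \assign \nabla f(x^k)/\|\nabla f(x^k)\|$, extract from \Cref{lem:scalmat-conv} that $E_k$ shrinks \emph{along $g_k$} in a summable fashion, and then upgrade this directional decay to full convergence $\|E_k\|\to 0$ using uniform independence. Since $\eta\in(0,\tfrac{1}{2L(LD+1)^2\kappa}]\subseteq(0,\tfrac1L)$, both coefficients $\tfrac{\mu(\eta-L\eta^2)}{2}$ and $(2\eta-L\eta^2)\tfrac{H^2\kappa}{4\mu^3}$ in \eqref{eqn:scalmat-conv} are strictly positive; call them $c_1,c_2>0$.

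First I would telescope \eqref{eqn:scalmat-conv} over $k=1,\dots,K$. Discarding the nonnegative term $\|E_{K+1}\|_F^2$ yields $c_1\textstyle\sum_{k=1}^K\|E_k g_k\|^2\le\|E_1\|_F^2+c_2\sum_{k=1}^K\|x^k-x^\star\|^2$. To bound the right-hand side I would invoke the linear convergence under \ref{A1}+\ref{Ascvx} from \Cref{thm:adaptivity} (or the superlinear rate of \Cref{thm:superlin}), which makes $\textstyle\sum_k(f(x^k)-f(x^\star))$ finite; together with the strong-convexity bound $\tfrac{\mu}{2}\|x^k-x^\star\|^2\le f(x^k)-f(x^\star)$ this gives $\textstyle\sum_k\|x^k-x^\star\|^2<\infty$. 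Hence $\textstyle\sum_k\|E_k g_k\|^2<\infty$, and in particular $\|E_k g_k\|\to 0$.

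Converting directional decay into $\|E_k\|\to 0$ is the crux, and I would handle it in two pieces. (i) I would show the iterates vary slowly, $\|E_{k+1}-E_k\|_F=\|P_{k+1}-P_k\|_F\to 0$: nonexpansiveness of $\Pi_{\mathcal P}$ together with $P_k=\Pi_{\mathcal P}[P_k]$ gives $\|P_{k+1}-P_k\|_F\le\eta\|\nabla h_{x^k}(P_k)\|_F=\eta\,\|\nabla f(x^{k+1})\|/\|\nabla f(x^k)\|$ on descent steps, and the superlinear convergence of \Cref{thm:superlin} combined with $\tfrac1{2L}\|\nabla f(x)\|^2\le f(x)-f(x^\star)\le\tfrac1{2\mu}\|\nabla f(x)\|^2$ forces this ratio to $0$; null steps (where the ratio equals $1$) cease for all large $k$ because the superlinear descent step strictly decreases $f$, so the $\argmin$ in \Cref{alg:hdm} selects it. (ii) By uniform independence, for all large $k$ there are indices $k\le i_1<\dots<i_n\le k+p$ whose normalized gradients form $G=[g_{i_1},\dots,g_{i_n}]$ with $\sigma_{\min}(G)\ge m$; the identity $\textstyle\sum_{j=1}^n\|E g_{i_j}\|^2=\|EG\|_F^2=\tr(E^\top E\,GG^\top)\ge m^2\|E\|_F^2$ then gives $m^2\|E_k\|_F^2\le\sum_{j=1}^n\|E_k g_{i_j}\|^2$. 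Each summand is at most $\|E_{i_j}g_{i_j}\|+\|E_k-E_{i_j}\|_F$ (using $\|g_{i_j}\|=1$), where the first term $\to0$ by the summability from step one and the second $\to0$ since $\|E_k-E_{i_j}\|_F\le p\max_{l\ge k}\|E_{l+1}-E_l\|_F\to0$ by (i). Letting $k\to\infty$ gives $\|E_k\|_F\to 0$, whence $\|P_k-[\nabla^2 f(x^\star)]^{-1}\|\le\|E_k\|_F\to0$.

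The main obstacle is exactly the coupling of (i) and (ii): the telescoping controls $E_k$ only in the single direction $g_k$, so everything hinges on the preconditioner varying slowly enough that over a window containing $n$ uniformly-independent directions it behaves like one fixed matrix. This in turn relies on entering the superlinear regime to kill the gradient ratio and to extinguish null steps, and on the elementary but essential linear-algebra step turning $\sigma_{\min}(G)\ge m$ into the lower bound $m^2\|E_k\|_F^2\le\sum_j\|E_k g_{i_j}\|^2$. The surrounding estimates are routine.
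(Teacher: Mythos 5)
Your overall architecture --- telescope \eqref{eqn:scalmat-conv} to get summable directional errors $\sum_k\|E_kg_k\|^2<\infty$, then upgrade to $\|E_k\|_F\to0$ via uniform independence over bounded windows --- matches the paper's proof in outline, and your windowed finish ($\sigma_{\min}(G)^2\|E_k\|_F^2\le\sum_j\|E_kg_{i_j}\|^2$ plus slow variation of $P_k$) is in fact cleaner than the paper's, which first asserts that $\lim_k P_k=\bar P$ exists and then runs a contradiction argument around $\bar P$. However, there are two genuine gaps. The first is your step 2: summability of $\sum_k\|x^k-x^\star\|^2$ cannot be obtained by citing \Cref{thm:adaptivity} or \Cref{thm:superlin}. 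Those results are proved with the regret bound of \Cref{lem:regret-sublinear}, i.e.\ with the horizon-tuned stepsize $\eta_k\equiv D/((LD+1)\sqrt{K})$, and they bound only the final iterate of a length-$K$ run; they do not apply to a single run with the horizon-independent constant $\eta\in(0,\tfrac{1}{2L(LD+1)^2\kappa}]$ mandated here, and end-of-run bounds for different $K$ cannot be summed along one trajectory in any case. With constant $\eta$ the static regret is $\tfrac{D^2}{2\eta}+\tfrac{\eta(1+LD)^2}{2}K$ --- linear in $K$ --- and linear convergence of the iterates survives only because $\eta\le\tfrac{1}{2L(LD+1)^2\kappa}$ forces the per-step regret penalty $\tfrac{\eta\kappa(LD+1)^2}{2}\le\tfrac{1}{4L}$ below the guaranteed descent $\tfrac{1}{2L}$. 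This is exactly the content of the paper's dedicated \Cref{lem:scal-mat-conv-aux-2}, and it is the real reason the stepsize threshold appears in the statement; you read that threshold as ensuring only positivity of $c_1,c_2$, which is the signature of the missing piece.

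The second gap is the justification of your step (i). The superlinear rate of \Cref{thm:superlin} is an aggregate bound over the whole run (obtained by bounding the \emph{average} of per-step ratios); it implies neither that the per-step gradient ratio $\|\nabla f(x^{k+1})\|/\|\nabla f(x^k)\|\to0$ nor that the trial point eventually always decreases $f$, so ``null steps cease'' is unsupported --- and also unnecessary, since \Cref{alg:hdm} updates $P_k$ with the trial point's hypergradient whether or not the $x$-step is accepted. Fortunately the conclusion $\|P_{k+1}-P_k\|_F\le\eta\|\nabla h_{x^k}(P_k)\|_F\to0$ does follow from ingredients you already have: writing $x^{k+1/2}=x^k-P_k\nabla f(x^k)$, smoothness together with \eqref{eqn:proof-3-1-auxi2} and $\|\nabla f(x^k)\|\ge\mu\|x^k-x^\star\|$ gives $\|\nabla h_{x^k}(P_k)\|_F=\tfrac{\|\nabla f(x^{k+1/2})\|}{\|\nabla f(x^k)\|}\le L\|E_kg_k\|+\tfrac{LH}{2\mu^2}\|x^k-x^\star\|$, and both terms vanish by your step 2; the paper reaches the same point by telescoping \eqref{eqn:proof-3-6-3} to get $\sum_k\hat h_{x^k}(P_k)<\infty$ and using $\|\nabla\hat h_{x^k}(P_k)\|_F^2\le 2L\,\hat h_{x^k}(P_k)$. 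With these two repairs --- the constant-stepsize convergence lemma in place of the invalid citations, and the corrected slow-variation argument --- your proof goes through.
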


\begin{rem}
The convergence of parameters in {\hdm} was observed experimentally by \cite{gunes2018online} for a scalar stepsize ($\Pcal \subseteq \Scal$). Our result theoretically justifies this observation.
\end{rem}

\paragraph{{\hdm} and Quasi-Newton Methods.}
Our results identify a similarity between {\hdm} and quasi-Newton methods.
Both learn the inverse Hessian operator $g \mapsto [\nabla^2 f (x^{\star})]^{-1} g$ as the algorithm progresses, but through different properties of the operator.
The quasi-Newton methods use the secant equation 
$x - y \approx [\nabla^2 f(x^\star)]^{-1}(\nabla f(x)- \nabla f(y))$ for $x, y$ close to $x^\star$ and enforce this equation, replacing the inverse Hessian by $P_k$,
to guide learning \cite{jiang2023online, jiang2024online}.
In contrast, {\hdm} learns an optimal preconditioner for the function. 
Since the function is locally quadratic, this optimal preconditioner is the inverse Hessian.
{\hdm} uses the hypergradient feedback $h_x(P)$ to directly measure the quality of the preconditioner and can search for an optimal preconditioner in a given closed convex set $\mathcal{P}$, whereas quasi-Newton methods use the secant equation as an indirect proxy.
Both approaches require a safeguard to prevent divergence in the warm-up phase, which is achieved by line-search in quasi-Newton  and null step in {\hdm}. 
 In a word, both {\hdm} and quasi-Newton leverage complementary perspectives on $g \mapsto [\nabla^2 f (x^{\star})]^{-1} g$, so it is natural that they achieve similar convergence guarantees.

\section{{\hdm} with Momentum} \label{sec:momentum}

This section develops two variants of {\hdm}, 
with heavy-ball momentum \cite{polyak1964some} and with Nesterov momentum \cite{nesterov1983method}.

\subsection{Heavy-ball Momentum}
\label{sec:heavyball}

The heavy-ball method is a practical acceleration technique: 
\begin{equation} \label{eqn:heavyball-update}
x^{k + 1} = x^k - P_k \nabla f (x^k) + B_k (x^k - x^{k - 1}).
\end{equation}
The momentum parameter $B_k$ is typically chosen as a scalar $B_k = \beta_k I$ with $\beta_k > 0$.
{\hdm} can learn a matrix momentum
$B_k \in \mathcal{B} \subseteq \mathbb{R}^{n \times n}$
with convergence guarantees (\Cref{thm:heavyball}) 
when $\mathcal{B}$ satisfies this assumption:
\begin{enumerate}[leftmargin=30pt,label=\textbf{A\arabic*:},ref=\rm{\textbf{A\arabic*}},start=5]
  \item Closed convex set $\Bcal$ satisfies $\tfrac{1}{2} I\in \Bcal$, $\diam (\Bcal) \leq D$. \label{ABcal}
\end{enumerate}

{\hdm} can \emph{jointly} learn the pair $(P_k, B_k)$ using the modified feedback function
\begin{equation} \label{eqn:heavyball-feedback}
  h_{x, x^-} (P, B) \assign 
  \tfrac{\psi(x^{+}(P, B), x) - \psi(x, x^{-})}{\| \nabla f (x) \|^2 + \frac{\tau}{2} \| x - x^- \|^2}
  = \tfrac{[f (x^+(P, B)) + \frac{\omega}{2} \| x^+(P, B) - x \|^2] - [f (x) + \frac{\omega}{2} \| x - x^- \|^2]}{\| \nabla f (x) \|^2 + \frac{\tau}{2} \| x - x^- \|^2}, 
\end{equation}
where $\psi$ is the potential function for heavy-ball momentum defined by $\psi (x, x^-) \assign f (x) + \tfrac{\omega}{2} \| x - x^- \|^2$ \cite{danilova2020non}; \[x^{+}(P, B) \assign x - P \nabla f (x) + B (x - x^{-})\] updates $x$; and $\omega > 0$ and $ \tau > 0$ are constants. \Cref{alg:ospolyak} presents the resulting method, \hdmhb, 
which uses {\hdm}, heavy-ball momentum, and a null step to ensure decrease of the potential function $\psi$.
\Cref{fig:demo:c} compares non-adaptive heavy-ball ($P_k \equiv  \alpha I, B_k \equiv \beta I$) against {\hdmhb} with full-matrix/diagonal preconditioner and scalar momentum.
 \Cref{thm:heavyball} presents the convergence of {\hdmhb}. 

\begin{algorithm}[h]
{\textbf{input} initial point $x^0 = x^1, \eta_p, \eta_b > 0$, $P_1$, $B_1$}\\
\For{k =\rm{ 1, 2,...}}{
$\hspace{1.2pt}~~~~x^{k+1/2} = x^k - P_k \nabla f(x^k) + B_k (x^k - x^{k-1})$ \\
$\hspace{2pt}~~~~~~P_{k+1} = \Pi_{\Pcal}[P_k - \eta_p \nabla_{P} h_{x^k, x^{k-1}}(P_k, B_k)]$ \\
$\hspace{1pt}~~~~~~B_{k+1} = \Pi_{\Bcal}[B_k - \eta_b \nabla_{B} h_{x^k, x^{k-1}}(P_k, B_k)]$ \\
$(x^{k + 1}, x^k) = \displaystyle \argmin_{(x^+, x) \in \{(x^k, x^{k-1}), (x^{k+1/2}, x^k) \}} \psi(x^+, x)$
}
{\textbf{output} $x^{K+1}$}
\caption{{\hdm} with heavy-ball momentum (\hdmhb)\label{alg:ospolyak}}
\end{algorithm}

\begin{thm}[Convergence of {\hdmhb}]\label{thm:heavyball}
Under \ref{A1}, \ref{A2} and \ref{ABcal}, \Cref{alg:ospolyak} satisfies
\begin{equation*}
  f (x^{K + 1}) - f (x^{\star}) \leq \tfrac{f (x^{1}) - f (x^{\star})}{K V \max\{ \gamma_K^{\star} - \frac{\rho_K}{K}, 0 \} + 1},
\end{equation*}
where $\gamma_{K}^{\star} \assign - \min_{(P, B) \in \mathcal{P} \times \mathcal{B}} \tfrac{1}{K} \sum_{k=1}^K h_{x^k, x^{k-1}}(P, B)$ depends on the iteration trajectory $\{x^k\}_{k \leq K}$; $\rho_K = \mathcal{O}(\sqrt{K})$ is the regret with respect to feedback \eqref{eqn:heavyball-feedback}; $V \assign \min\big\{ \tfrac{f (x^{1}) - f (x^{\star})}{4 \Delta^2}, \tfrac{\tau}{4 \omega} \big\}$; $\Delta$ is defined in \Cref{lem:hypergrad-to-online}.
\end{thm}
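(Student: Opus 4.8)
The plan is to mirror the reduction behind \Cref{lem:hypergrad-to-online} and \Cref{thm:adaptivity}, with the heavy-ball potential $\psi(x,x^-)=f(x)+\tfrac{\omega}{2}\|x-x^-\|^2$ playing the role the objective $f$ plays in the momentum-free analysis. Writing $\psi_k:=\psi(x^k,x^{k-1})$, note that $\psi_1=f(x^1)$ (since $x^0=x^1$) while $\psi_{K+1}=f(x^{K+1})+\tfrac{\omega}{2}\|x^{K+1}-x^K\|^2\ge f(x^{K+1})$, so it suffices to bound $\psi_{K+1}-f(x^\star)$ above by $\tfrac{f(x^1)-f(x^\star)}{KV\max\{\gamma_K^\star-\rho_K/K,0\}+1}$, as this dominates $f(x^{K+1})-f(x^\star)$.

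First I would extract a decrement identity from the null step. The last line of \Cref{alg:ospolyak} sets the new pair to whichever of $(x^k,x^{k-1})$ and $(x^{k+1/2},x^k)$ has smaller $\psi$, so $\psi_{k+1}=\min\{\psi_k,\psi(x^{k+1/2},x^k)\}$. The numerator of the feedback \eqref{eqn:heavyball-feedback} is exactly $\psi(x^{k+1/2},x^k)-\psi_k$, and its denominator $g_k:=\|\nabla f(x^k)\|^2+\tfrac{\tau}{2}\|x^k-x^{k-1}\|^2>0$, so writing $h_k:=h_{x^k,x^{k-1}}(P_k,B_k)$ I obtain the clean identity $\psi_k-\psi_{k+1}=g_k[-h_k]_+$. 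In particular $\{\psi_k\}$ is non-increasing, hence $f(x^k)\le\psi_k\le\psi_1=f(x^1)$ and every iterate lies in $\mathcal{L}_{f(x^1)}$; convexity then gives $f(x^k)-f(x^\star)\le\langle\nabla f(x^k),x^k-\Pi_{\mathcal{X}^\star}[x^k]\rangle\le\|\nabla f(x^k)\|\,\Delta$, so that $\|\nabla f(x^k)\|^2\ge(f(x^k)-f(x^\star))^2/\Delta^2$.

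The crux is a \emph{linear-in-$\psi$} lower bound on $g_k$. Set $\Psi_k:=\psi_k-f(x^\star)\ge0$ and $\delta_k:=f(x^k)-f(x^\star)$; I claim $g_k\ge\tfrac{V}{\Psi_1}\Psi_k^2$, proved by splitting on which term dominates $\Psi_k=\delta_k+\tfrac{\omega}{2}\|x^k-x^{k-1}\|^2$. If $\delta_k\ge\tfrac12\Psi_k$, then $g_k\ge\|\nabla f(x^k)\|^2\ge\delta_k^2/\Delta^2\ge\Psi_k^2/(4\Delta^2)\ge\tfrac{V}{\Psi_1}\Psi_k^2$, using $V\le\tfrac{\delta_1}{4\Delta^2}=\tfrac{\Psi_1}{4\Delta^2}$. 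Otherwise $\tfrac{\omega}{2}\|x^k-x^{k-1}\|^2\ge\tfrac12\Psi_k$, so $g_k\ge\tfrac{\tau}{2}\|x^k-x^{k-1}\|^2\ge\tfrac{\tau}{2\omega}\Psi_k\ge\tfrac{V}{\Psi_1}\Psi_k^2$, where the last step uses $\Psi_k\le\Psi_1$ together with $V\le\tfrac{\tau}{4\omega}$. Combining the claim with the null-step identity and $\Psi_{k+1}\le\Psi_k$ gives the per-step bound $\tfrac{1}{\Psi_{k+1}}-\tfrac{1}{\Psi_k}=\tfrac{g_k[-h_k]_+}{\Psi_k\Psi_{k+1}}\ge\tfrac{g_k}{\Psi_k^2}[-h_k]_+\ge\tfrac{V}{\Psi_1}[-h_k]_+$.

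Finally I would telescope and feed in the online-learning regret. Summing over $k=1,\dots,K$ and using $\Psi_1=f(x^1)-f(x^\star)$ yields $\tfrac{1}{\Psi_{K+1}}-\tfrac{1}{\Psi_1}\ge\tfrac{V}{\Psi_1}\sum_{k=1}^K[-h_k]_+$. The regret guarantee for online gradient descent on the joint feedback over $\mathcal{P}\times\mathcal{B}$ (the analog of \Cref{lem:regret-sublinear}, giving $\rho_K=\mathcal{O}(\sqrt K)$) gives $\sum_{k=1}^K(-h_k)\ge K\gamma_K^\star-\rho_K$, hence $\sum_{k=1}^K[-h_k]_+\ge K\max\{\gamma_K^\star-\rho_K/K,0\}$; substituting and rearranging into $\tfrac{\Psi_1}{\Psi_{K+1}}\ge1+VK\max\{\gamma_K^\star-\rho_K/K,0\}$ gives the theorem. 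The hard part will be the two-case inequality $g_k\ge\tfrac{V}{\Psi_1}\Psi_k^2$ and pinning down $V$: because the gradient term only yields a \emph{quadratic}-in-gap bound $\|\nabla f(x^k)\|^2\gtrsim\delta_k^2/\Delta^2$, the momentum part of $g_k$ must cover precisely the regime where momentum dominates $\Psi_k$, which is what forces the $\min\{\cdot,\cdot\}$ form of $V$. A second point needing care is establishing the $\mathcal{O}(\sqrt K)$ regret itself: I would verify that $h_{x,x^-}(P,B)$ is convex and Lipschitz jointly in $(P,B)$ on $\mathcal{P}\times\mathcal{B}$ (an analog of \Cref{lem:hx-properties}), with $\tfrac12 I\in\mathcal{B}$ and $L^{-1}I\in\mathcal{P}$ serving as the descent-lemma certificate that keeps $\gamma_K^\star$ bounded below and the bound non-vacuous.
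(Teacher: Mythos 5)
Your proposal is correct and takes essentially the same route as the paper's proof: the null-step identity $\psi_k-\psi_{k+1}=g_k[-h_k]_+$, reciprocal telescoping of the potential with the monotonicity bound $\Psi_k\Psi_{k+1}\geq \Psi_{k+1}^2$ replaced by $\Psi_k^2$, the two-case analysis producing exactly the constant $V=\min\{\tfrac{f(x^1)-f(x^\star)}{4\Delta^2},\tfrac{\tau}{4\omega}\}$ (this is the paper's \Cref{lem:heavyball-conversion}), and the online-regret reduction via $\sum_k[-h_k]_+\geq K\max\{\gamma_K^\star-\rho_K/K,0\}$. The one ingredient you flag but do not carry out --- joint convexity and Lipschitz continuity of $h_{x,x^-}(P,B)$ in $(P,B)$, needed for the $\mathcal{O}(\sqrt{K})$ regret --- is precisely what the paper establishes in \Cref{lem:heavyball-feedback-property}, by the computation you anticipate.
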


\subsection{Nesterov Momentum} \label{sec:nesterov}
{\hdm} can also improve accelerated gradient descent {\agd}:\begin{align}
  y^k ={} & x^k + ( 1 - \tfrac{A_k}{A_{k + 1}} ) (z^k - x^k)
  \nonumber\\
  x^{k + 1} ={} & y^k - \tfrac{1}{L} \nabla f (y^k) \label{eqn:agd-descent-lemma-0}\\
  z^{k + 1} ={} & z^k + \tfrac{A_{k + 1} - A_k}{L} \nabla f (y^k), \nonumber
\end{align}
where is a pre-specified sequence. 
{\hdm} can learn a preconditioner $P_k$ that replaces $\frac 1 L$ to accelerate the gradient step \eqref{eqn:agd-descent-lemma-0} in {\agd}. We call the resulting algorithm {\hdmagd}. \Cref{alg:osnes} provides a
realization of the {\hdmagd} based on a monotone variant of {\agd} \cite{d2021acceleration}. The convergence of {\hdmagd} is established in \Cref{thm:osnes}, the proof of which is deferred to \Cref{app:proof-osnes}.
\begin{algorithm}[h]
{\textbf{input} starting point $x^1, z^1, \eta > 0$, $\theta \in [\tfrac{1}{2}, LD) $, $A_0 = 0$}\\
\For{k =\rm{ 1, 2,...}}{
$\begin{aligned}
 A_{k + 1} ={} & (A_{k + 1} - A_k)^2 \nonumber \\
  y^k ={} & x^k + ( 1 - \tfrac{A_k}{A_{k + 1}} ) (z^k - x^k)
  \nonumber\\
  x^{k + 1} ={} & \underset{x \in \{ y^k - \frac{1}{L} \nabla f (y^k), y^k
  - P_k \nabla f (y^k), x^k \}}{\argmin} f (x) \nonumber\\
  P_{k + 1} ={} & \Pi_{\mathcal{P}} [P_k - \eta \nabla h_{y^k} (P_k)]
  \nonumber\\
  v_k ={} & \max \{ \tfrac{1}{2 \max \{ -h_{y^k} (P_k), 1 / (2 L) \}},
   \tfrac{L}{2 \theta} \}  \\
  z^{k + 1} ={} & z^k + \tfrac{(A_{k + 1} - A_k)}{v_k} \nabla f (y^k)
  \nonumber
\end{aligned}$
}
{\textbf{output} $x^{K+1}$}
\caption{{\hdm} with Nesterov momentum  \label{alg:osnes}}
\end{algorithm}

\begin{thm}
\label{thm:osnes}
Assume \ref{A1} and \ref{A2}. Suppose {\agd} starts from $(x', z')$ and runs for $K$ iterations to output $\hat{x}$.
Then \Cref{alg:osnes} starting from $(x^1, z^1) = (\hat{x}, z')$ and $\theta \in [\tfrac{1}{2}, LD)$ satisfies
\begin{align}
f (x^{K + 1}) - f (x^{\star}) \leq \big[ \tfrac{1}{2 \theta} + ( 8 - \tfrac{4}{\theta} ) ( \tfrac{L D - \omega^\star_K}{L D - \theta} ) \big] \tfrac{2 L \| z' - x^{\star} \|^2}{K^2} +\mathcal{O} ( \tfrac{\rho_K}{K^3} ), \nonumber
\end{align}
where $\omega^\star_K = - \min_{P \in \mathcal{P}} \tfrac{L}{K} \sum_{k = 1}^K h_{y^k} (P)$ depends on the iteration trajectory $\{x^k\}_{k \leq K}$.
\end{thm}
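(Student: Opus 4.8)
The plan is to run the standard Nesterov estimate-sequence (Lyapunov) argument, but with a per-iteration \emph{effective smoothness} $v_k$ in place of the global constant $L$, and then to convert the resulting $v_k$-dependent rate into a statement about the averaged hypergradient feedback $\omega^\star_K$ via the online-learning regret bound of \Cref{lem:regret-sublinear}. The whole point of the construction is that, when the learned preconditioner is good, $v_k$ can be much smaller than $L$, and the accelerated machinery then pays off at the faster scale.

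First I would establish an \emph{effective descent lemma}. Because $x^{k+1}$ is the argmin of $f$ over the three candidates $\{y^k-\tfrac1L\nabla f(y^k),\ y^k-P_k\nabla f(y^k),\ x^k\}$, it beats both the $\tfrac1L$-step (which by the descent lemma in \Cref{lem:hx-properties}, $h_{y^k}(\tfrac1L I)\le-\tfrac1{2L}$, decreases $f$ by at least $\tfrac1{2L}\|\nabla f(y^k)\|^2$) and the preconditioned step (which decreases $f$ by exactly $-h_{y^k}(P_k)\|\nabla f(y^k)\|^2$), while the inclusion of $x^k$ enforces monotonicity $f(x^{k+1})\le f(x^k)$. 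Unwinding the definition of $v_k$ shows $\tfrac1{2v_k}=\min\{\max\{-h_{y^k}(P_k),\tfrac1{2L}\},\tfrac\theta L\}$, so that $v_k\in[\tfrac{L}{2\theta},L]$ and
\[
f(x^{k+1}) \le f(y^k) - \tfrac{1}{2v_k}\|\nabla f(y^k)\|^2 .
\]
Next I would derive a one-step potential inequality. Writing $R_k:=\|z^k-x^\star\|^2$ and combining this descent bound with convexity of $f$ at $y^k$ (tested against $x^k$ and $x^\star$), the coupling identity $A_{k+1}y^k=A_k x^k+(A_{k+1}-A_k)z^k$ implied by the $y^k$-update, and the three-point identity for the mirror step $z^{k+1}=z^k-\tfrac{A_{k+1}-A_k}{v_k}\nabla f(y^k)$, the $\|\nabla f(y^k)\|^2$ terms cancel exactly: the recursion $A_{k+1}=(A_{k+1}-A_k)^2$ is precisely what forces $\tfrac{A_{k+1}}{v_k}=v_k\big(\tfrac{A_{k+1}-A_k}{v_k}\big)^2$. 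This yields the per-step relation
\[
A_{k+1}(f(x^{k+1})-f^\star) - A_k(f(x^k)-f^\star) \le \tfrac{v_k}{2}\,(R_k - R_{k+1}).
\]

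Summing this relation is where the main obstacle lies: the coefficient $v_k$ is $k$-dependent and need not be monotone, so the right-hand side does not collapse into a single $\tfrac v2 R_1$ boundary term as in fixed-smoothness {\agd}. I would first control the growth of $R_k$ using the per-step relation itself (it bounds $R_{k+1}$ in terms of $R_k$ and the optimality gap, together with $v_k\ge\tfrac{L}{2\theta}$), then apply summation by parts to $\sum_k\tfrac{v_k}{2}(R_k-R_{k+1})$ and exploit the clip structure $\tfrac1{2v_k}=\min\{\max\{-h_{y^k}(P_k),\tfrac1{2L}\},\tfrac\theta L\}$ together with the uniform two-sided bounds $v_k\in[\tfrac{L}{2\theta},L]$. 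This reduces the sum to an effective smoothness $\bar v\,\|z'-x^\star\|^2$, where $\bar v$ shrinks as the averaged clipped feedback $\tfrac1K\sum_k(-h_{y^k}(P_k))$ grows. The regret guarantee of \Cref{lem:regret-sublinear} supplies $\tfrac1K\sum_k(-h_{y^k}(P_k))\ge\tfrac{\omega^\star_K}{L}-\tfrac{\rho_K}{K}$, and substituting produces the interpolation factor $\tfrac1{2\theta}+(8-\tfrac4\theta)\tfrac{LD-\omega^\star_K}{LD-\theta}$, with the regret entering only as the lower-order $\mathcal{O}(\rho_K/K^3)$ term. I expect the delicate part to be exactly this reconciliation of the per-step $v_k$ with the aggregate $\omega^\star_K$: the variation terms generated by summation by parts are what inflate the leading constant to $8-\tfrac4\theta$, and keeping the regret contribution at order $\rho_K/K^3$ rather than $\rho_K/K^2$ requires the clip bounds to be used carefully.

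Finally I would close the estimate. The recursion gives $\sqrt{A_{k+1}}-\sqrt{A_k}\ge\tfrac12$ (since, writing $u=\sqrt{A_{k+1}},\,w=\sqrt{A_k}$, one has $u^2-u=w^2$, hence $u-w=\tfrac{u}{u+w}\ge\tfrac12$), so with $A_1=1$ we get $A_{K+1}\ge(1+\tfrac K2)^2\ge\tfrac{K^2}{4}$, turning the $\tfrac1{A_{K+1}}$ prefactor into the $\tfrac{2L\|z'-x^\star\|^2}{K^2}$ scale (recall $z^1=z'$, so $R_1=\|z'-x^\star\|^2$). The warm start — initializing $x^1=\hat x$ as the output of $K$ steps of plain {\agd} — ensures $f(x^1)-f^\star\le\tfrac{2L\|z'-x^\star\|^2}{K^2}$ by the standard {\agd} guarantee, so the $A_1(f(x^1)-f^\star)$ contribution enters only at order $1/K^4$ and is absorbed into the $\mathcal{O}(\rho_K/K^3)$ remainder, completing the claimed bound.
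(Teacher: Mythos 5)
Your setup matches the paper's: the effective descent inequality $f(x^{k+1})\le f(y^k)-\tfrac{1}{2v_k}\|\nabla f(y^k)\|^2$ with $v_k\in[\tfrac{L}{2\theta},L]$, the one-step potential relation $A_{k+1}[f(x^{k+1})-f(x^\star)]+\tfrac{v_k}{2}\|z^{k+1}-x^\star\|^2\le A_k[f(x^k)-f(x^\star)]+\tfrac{v_k}{2}\|z^k-x^\star\|^2$, the growth bound $A_{K+1}\ge K^2/4$, and the regret inequality are all correct and identical to the paper's ingredients. The gap is at the step you yourself flag as delicate: how to aggregate a potential whose coefficient $v_k$ changes with $k$. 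Your proposed route — summation by parts on $\sum_k\tfrac{v_k}{2}(R_k-R_{k+1})$ with $R_k=\|z^k-x^\star\|^2$ — produces variation terms of the form $\sum_k\tfrac{|v_{k+1}-v_k|}{2}R_{k+1}$. Each iteration on which $v_k$ jumps above $\tfrac{L}{2\theta}$ (i.e.\ each iteration with $-h_{y^k}(P_k)<\theta/L$) contributes up to $\tfrac{L}{2}(1-\tfrac{1}{2\theta})\max_k R_k$, and $\max_k R_k$ is a $\Theta(\|z'-x^\star\|^2)$-scale quantity: nothing in the algorithm forces $\|z^k-x^\star\|$ to shrink. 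Since the counting argument can only bound the number of such iterations by roughly $K\tfrac{LD-\omega^\star_K}{LD-\theta}$, after dividing by $A_{K+1}\sim K^2/4$ your error term is of order $\tfrac{L\|z'-x^\star\|^2}{K}\cdot\tfrac{LD-\omega^\star_K}{LD-\theta}$ — a full factor of $K$ larger than the $\tfrac{1}{K^2}$-scaled interpolation term in the theorem. So the claim that summation by parts "inflates the leading constant to $8-\tfrac{4}{\theta}$" does not hold; as described, the argument only yields an $\mathcal{O}(1/K)$ degradation, not the stated bound.

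The idea missing from your proposal is the paper's treatment of the bad iterations: on the set $\bar{\mathcal{I}}=\{k: h_{y^k}(P_k)>-\theta/L\}$ the paper abandons the $z$-potential altogether and uses only monotonicity $f(x^{k+1})\le f(x^k)$ combined with $A_{k+1}-A_k\le k+1$ (\Cref{lem:osnes-auxi}), via a convex combination with weight $\tfrac{L}{2\theta v_k}$ on the potential inequality. A bad iteration then costs $(1-\tfrac{1}{2\theta})(k+1)[f(x^k)-f(x^\star)]$ — a \emph{function-value}-scale error rather than a $\|z-x^\star\|^2$-scale error. This is where the warm start is essential (not merely for the $A_1[f(x^1)-f(x^\star)]$ bookkeeping, which is the only place your sketch uses it): monotonicity plus the {\agd} guarantee give $f(x^k)-f(x^\star)\le f(x^1)-f(x^\star)\le\tfrac{2L\|z'-x^\star\|^2}{K^2}$ for \emph{every} $k$, so the total bad-iteration cost is $\mathcal{O}\big(K\,|\bar{\mathcal{I}}|\,\tfrac{L\|z'-x^\star\|^2}{K^2}\big)$, and dividing by $A_{K+1}\ge K^2/4$ produces exactly the $(8-\tfrac{4}{\theta})\tfrac{LD-\omega^\star_K}{LD-\theta}$ coefficient at scale $\tfrac{2L\|z'-x^\star\|^2}{K^2}$, with the regret entering at $\mathcal{O}(\rho_K/K^3)$. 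Two further points your sketch glosses over: the count $|\bar{\mathcal{I}}|\le K\tfrac{LD-\omega^\star_K}{LD-\theta}+\tfrac{L\rho_K}{LD-\theta}$ requires the lower bound $h_{y^k}(P)\ge-D$ (this is where $LD$ enters and why $\theta<LD$ is needed), which you never invoke; and on the good set $\mathcal{I}$ no summation by parts is needed at all, because there $v_k=\tfrac{L}{2\theta}$ holds \emph{exactly}, so the fixed-coefficient potential telescopes with zero error.
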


The parameter $\theta$ serves as a smooth interpolation between {\hdm} and {\hdmagd}: when $\theta = 1 / 2$, \Cref{thm:osnes} recovers the convergence rate of
vanilla {\agd}; when $\theta > 1 / 2$ and $\omega_K^{\star}
\rightarrow L D$, we expect {\hdmagd} to yield faster convergence. As suggested by \Cref{fig:demo:d}, {\hdmagd} achieves faster convergence than {\agd}.
\begin{rem}
To mitigate the effect of regret, \Cref{alg:osnes} needs a warm start from vanilla {\agd}. However, experiments
  suggest that it is unnecessary in practice, and we leave an improved analysis to future work.
\end{rem}

\begin{rem}
For strongly convex problems, we can combine \Cref{thm:osnes} with a
  standard restart argument \cite{d2021acceleration,roulet2017sharpness} and achieve a similar trajectory-based linear
  convergence rate.
\end{rem}

\begin{figure}
  \centering
  \begin{subfigure}{0.4\textwidth}
    \centering
\includegraphics[height=0.18\textheight]{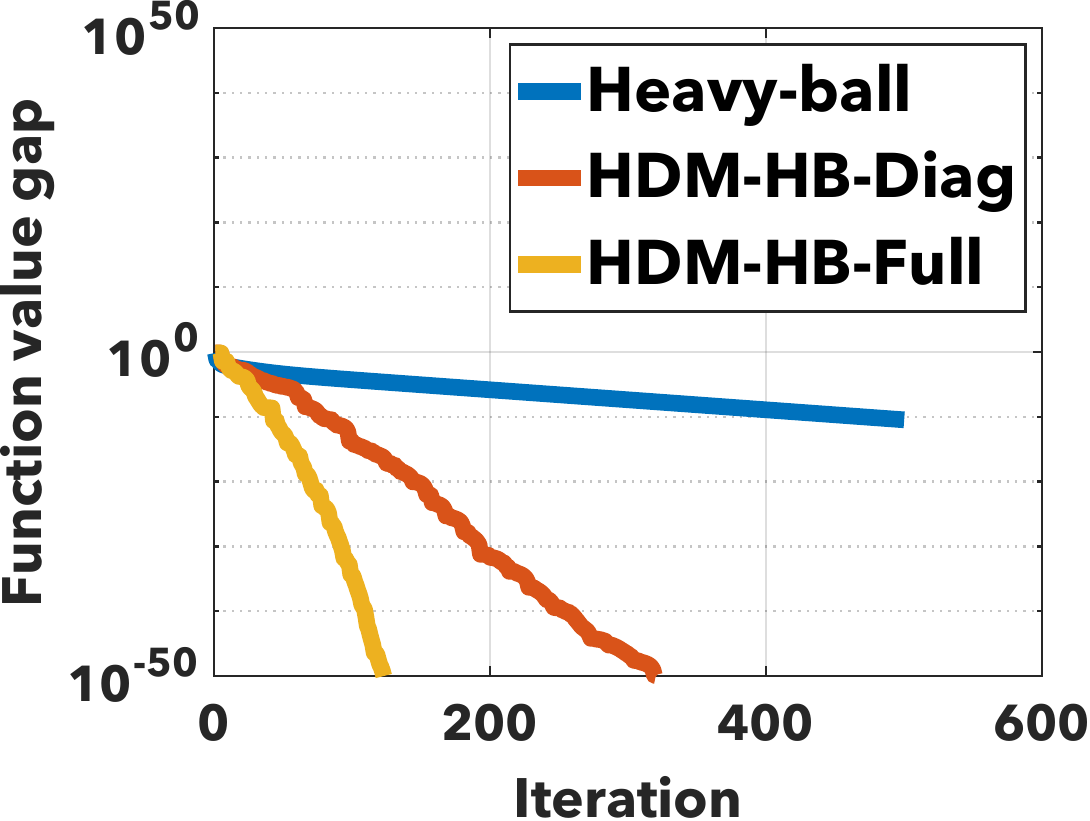}
    \caption{\hdmhb}
    \label{fig:demo:c}
  \end{subfigure}
  \begin{subfigure}{0.4\textwidth}
    \centering
\includegraphics[height=0.18\textheight]{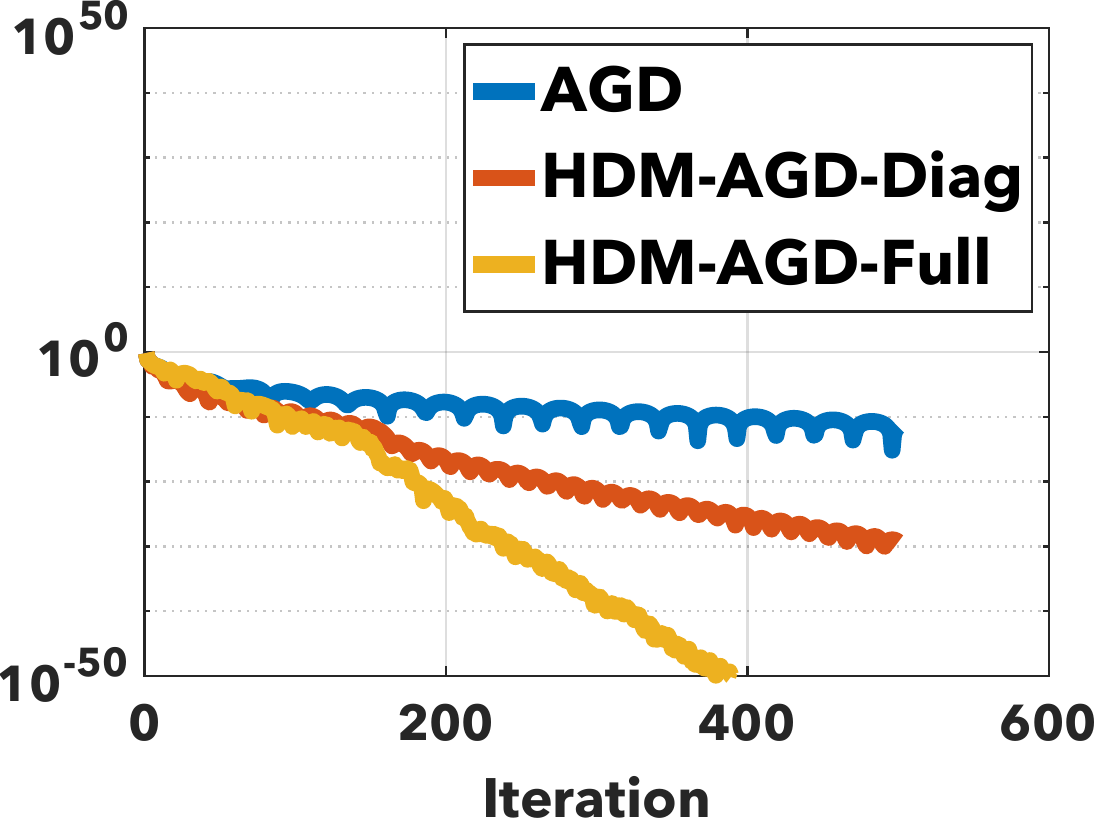}
    \caption{\hdmagd}
    \label{fig:demo:d}
  \end{subfigure}
\caption{The convergence behavior of {\hdmhb} and {\hdmagd} on a toy quadratic problem. \Cref{fig:demo:c}: {\hdmhb}. \label{fig-demos}\Cref{fig:demo:d}: {\hdm} with Nesterov momentum.} 
  \label{fig:demo:all}
\end{figure}
\section{Experiments} \label{sec:exp}

This section conducts numerical experiments to validate the empirical
performance of hypergradient descent. We compare {\hdmbest} (see \Cref{sec:hdmbest} below) with 
different adaptive optimization algorithms.

\subsection{Efficient and Practical Variant: {\hdmbest}} \label{sec:hdmbest}

This section highlights the major components of our most competitive variant {\hdmbest}. The algorithm and a more detailed explanation are available in \Cref{app:hdmprac}. The implementation is available at \url{https://github.com/udellgroup/hypergrad}. 

\paragraph{Diagonal Preconditioner and Heavy-ball Momentum.} {\hdmbest} updates $x$ by \eqref{eqn:heavyball-update} with diagonal preconditioner \cite{qu2024optimal,gao2023scalable} $\Pcal \subseteq \Dcal$ and scalar momentum $\Bcal = \{ \beta I : \beta \in \mathbb{R} \}$.
This choice balances practical efficiency and implementation complexity. Boundedness of $\Pcal$ does not greatly impact the performance, while the bound on $\Bcal$ can significantly change algorithm behavior. Two empirically robust ranges for $\Bcal$ are $[0,0.9995]$ and $[-0.9995,0.9995]$. 

\paragraph{$\adagrad$ for Online Learning. } {\hdmbest} uses {\adagrad} to shorten the warm-up phase for learning of $(P_k, \beta_k)$ (see \Cref{sec:instability}). {\adagrad} usually yields faster convergence of {\hdm} than online gradient descent at the cost of additional memory of size $n$.

\subsection{Dataset and Testing Problems}
We test {\hdmbest} on deterministic convex problems. We adopt two convex optimization tasks in machine learning: support vector machine \cite{lee2001ssvm} and logistic regression \cite{hastie2009elements}. The testing datasets are obtained from \texttt{LIBSVM} \cite{chang2011libsvm}.

\subsection{Experiment Setup}

\paragraph{Algorithm Benchmark.}
We benchmark the following algorithms.\\
\begin{itemize}[leftmargin=10pt,itemsep=2pt,topsep=0pt]
    \item \texttt{GD}. Vanilla gradient descent.
    \item \texttt{GD-HB}. Gradient descent with heavy-ball momentum. \cite{polyak1964some}
    \item \texttt{AGD-CVX}. The smooth convex version of accelerated gradient descent (Nesterov momentum). \cite{d2021acceleration}
    \item \texttt{AGD-SCVX}. The smooth strongly convex version of accelerated gradient descent. \cite{d2021acceleration}
    \item \texttt{Adam}. Adaptive momentum estimation. \cite{kingma2014adam}
    \item \texttt{AdaGrad}. Adaptive (sub)gradient method. \cite{duchi2011adaptive}
    \item \texttt{BFGS}. {\bfgs} from \texttt{scipy} \cite{nocedal1999numerical,virtanen2020scipy}.
    \item \texttt{L-BFGS-Mk}. {\lbfgs} with memory size \texttt{k} in \texttt{scipy}.
    \item Practical variant {\hdmbest} uses as memory $7$ vectors of size $n$, comparable to memory for \texttt{L-BFGS-M1}.
\end{itemize}

\paragraph{Algorithm Configuration.} See \Cref{app:hdmprac} for details.
\begin{itemize}[leftmargin=10pt]
  \item For {\hdmbest}, we search for the optimal $\eta_p$ within $\{ 0.1 / L, 1 / L, 10 / L, 100/L \}$
  and $\eta_b \in \{ 1, 3, 5, 10, 100 \}$. 
  
  \item Stepsize in \texttt{GD}, \texttt{GD-HB},
  \texttt{AGD-CVX}, and \texttt{AGD-SCVX} are all set to $1 / L$.
  
  \item The momentum parameter in \texttt{GD-HB} is chosen within the set $\{
  0.1, 0.5, 0.9, 0.99 \}$.
  
  \item The \texttt{Adam} stepsize is chosen within the set $\{ 1 / L, 10^{- 3},
  10^{- 2}, 10^{- 1}, 1, 10 \}$. $\beta_1 = 0.9, \beta_2 = 0.999$.
  
  \item  The \texttt{AdaGrad} stepsize is chosen within the set $\{ 1 / L, 10^{- 3},
  10^{- 2}, 10^{- 1}, 1, 10 \}$.
  
  \item {\bfgs}, \texttt{L-BFGS-Mk} use default parameters in
  \texttt{scipy}.
\end{itemize}

\paragraph{Testing Configurations.}

\begin{enumerate}[leftmargin=15pt,label=\textbf{\arabic*)}]
  \item {\textit{Maximum oracle access.}} We allow a maximum of 1000 gradient
  oracles for each algorithm.
  
  \item {\textit{Initial point}}. All the algorithms are initialized from the
  same starting point generated from normal distribution $\mathcal{N} (0,
  I_n)$ and normalized to have unit length.
  
  \item {\textit{Stopping criterion.}} Algorithms stop if $\| \nabla f 
  \|_\infty \leq 10^{- 4}$.
\end{enumerate}

\begin{table}[h]
\centering
\caption{Number of solved problems for each algorithm. \label{table:stats}}
\begin{tabular}{ccc}
\toprule
    Algorithm/Problem & SVM (33) $\uparrow$ & Logistic Regression (33) $\uparrow$\\
\midrule
    \texttt{GD} & 5 & 2\\
    \texttt{GD-HB} & 9 & 7\\
    \texttt{AGD-CVX} & 8 & 3\\
    \texttt{AGD-SCVX} & 7 & 6\\
    \texttt{Adam} & 26 & 11\\
    \texttt{AdaGrad} & 9 & 8\\
    \texttt{L-BFGS-M1} & 13 & 11\\
    \texttt{L-BFGS-M3} & 20 & 14\\
    \texttt{L-BFGS-M5} & 26 & 16\\
    \texttt{L-BFGS-M10} & \textcolor{orange}{31} & 18\\
    \texttt{BFGS} & \textcolor{red}{32} & \textcolor{red}{26}\\
    \texttt{HDM-Best} & \textcolor{red}{32} & \textcolor{red}{21}\\
\bottomrule
\end{tabular}
\end{table}

\begin{figure*}[!h]
\centering
\includegraphics[scale=0.2]{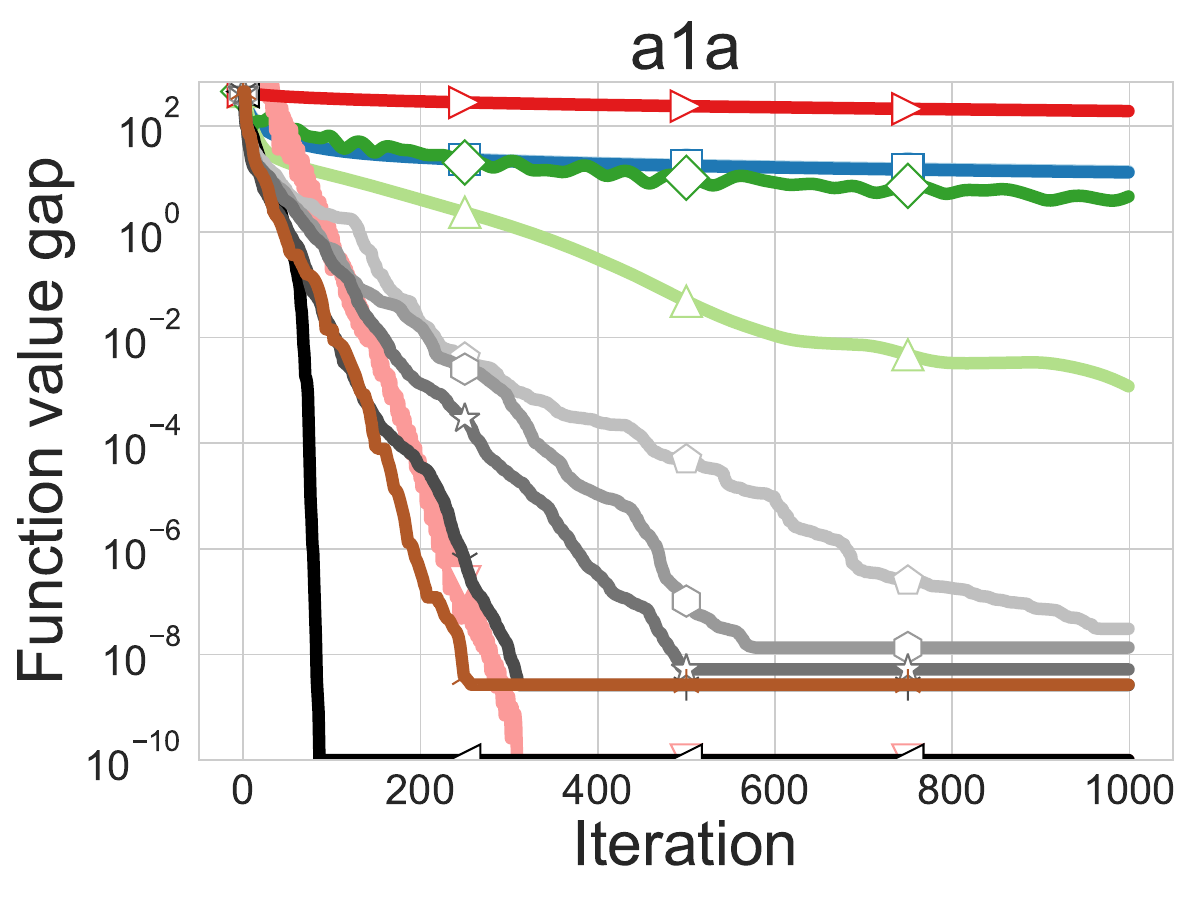}
\includegraphics[scale=0.2]{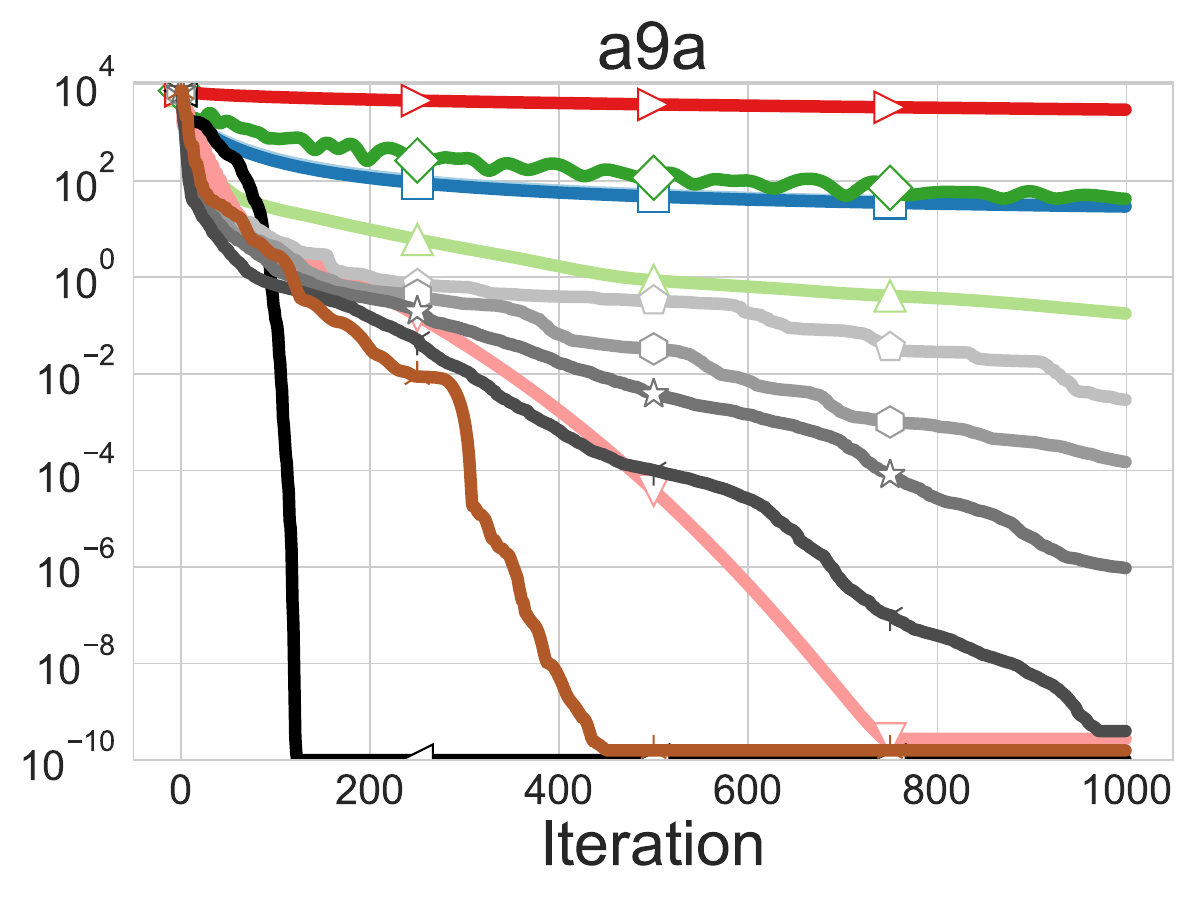}
\includegraphics[scale=0.2]{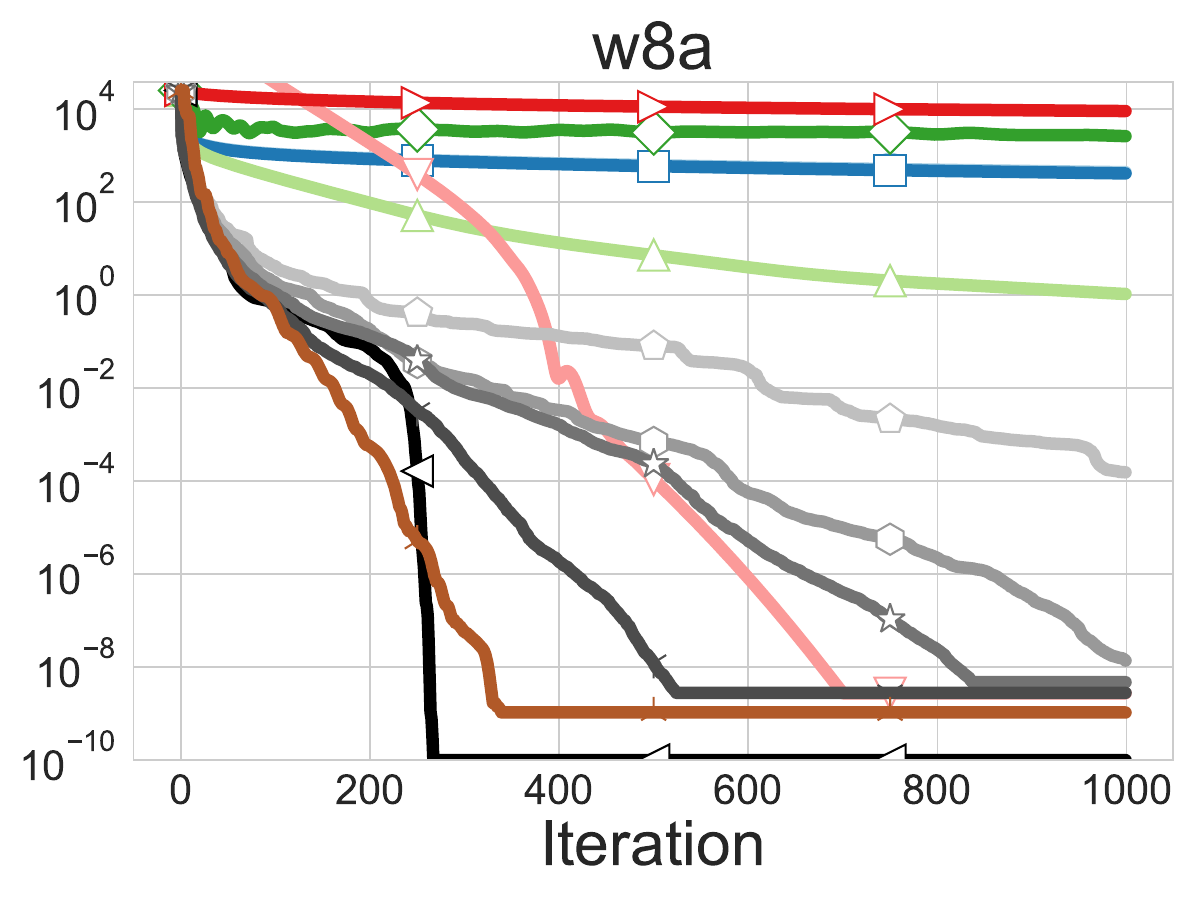}
\includegraphics[scale=0.2]{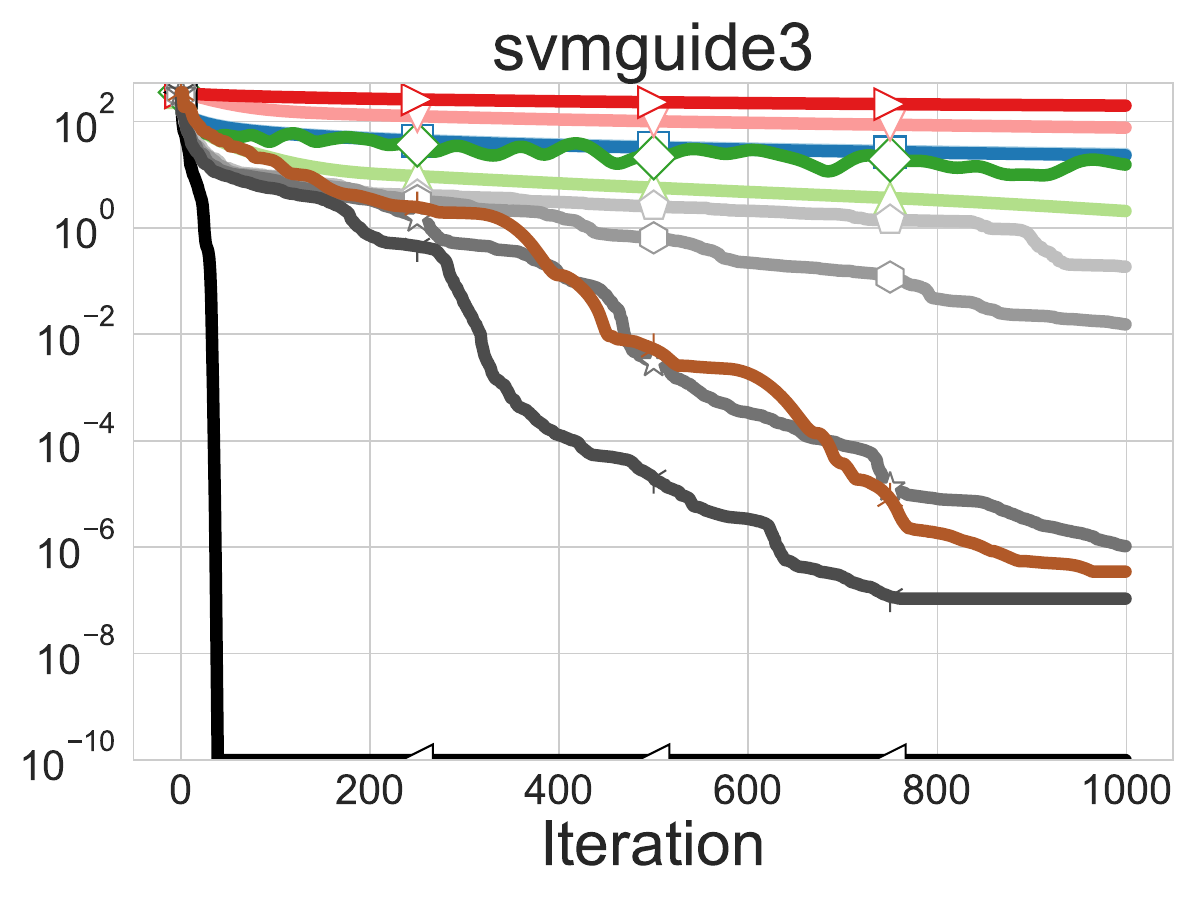}
\includegraphics[scale=0.2]{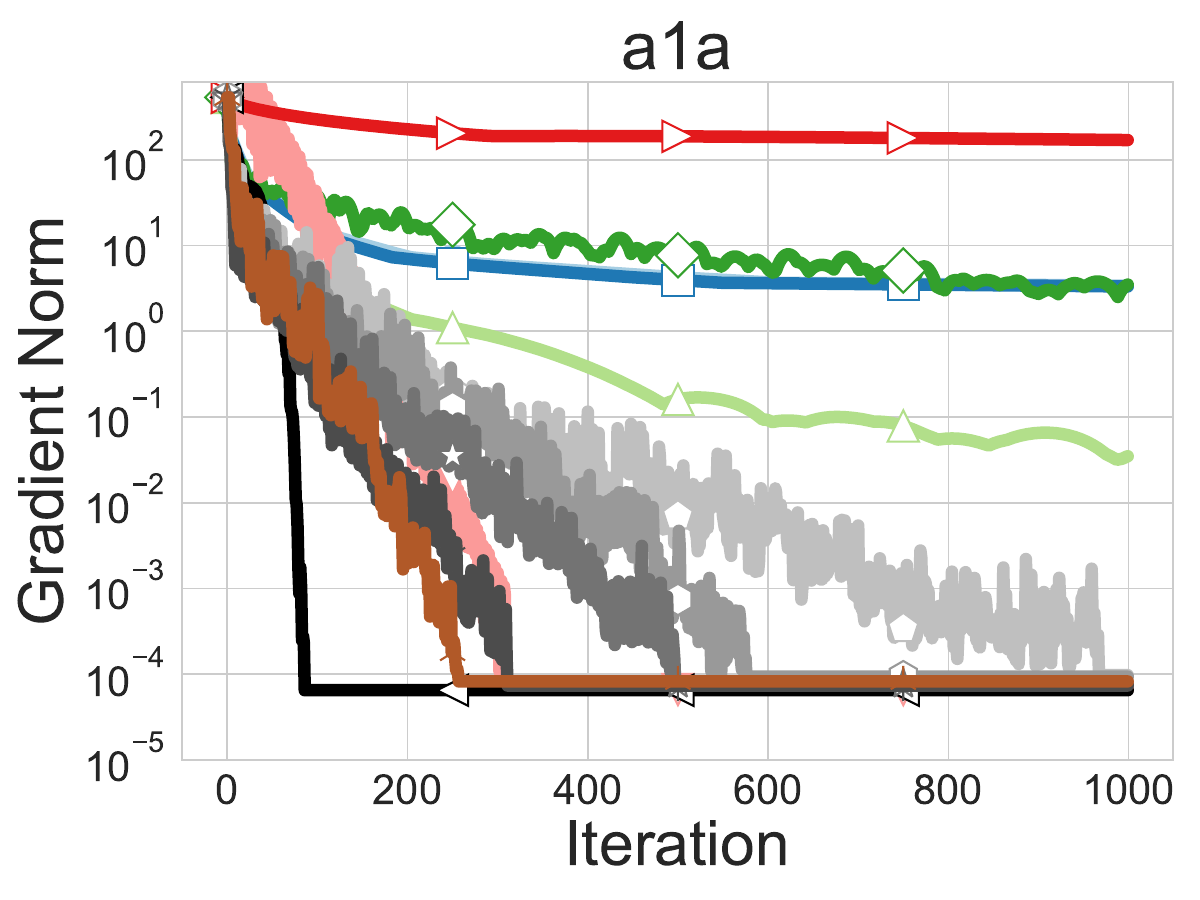}
\includegraphics[scale=0.2]{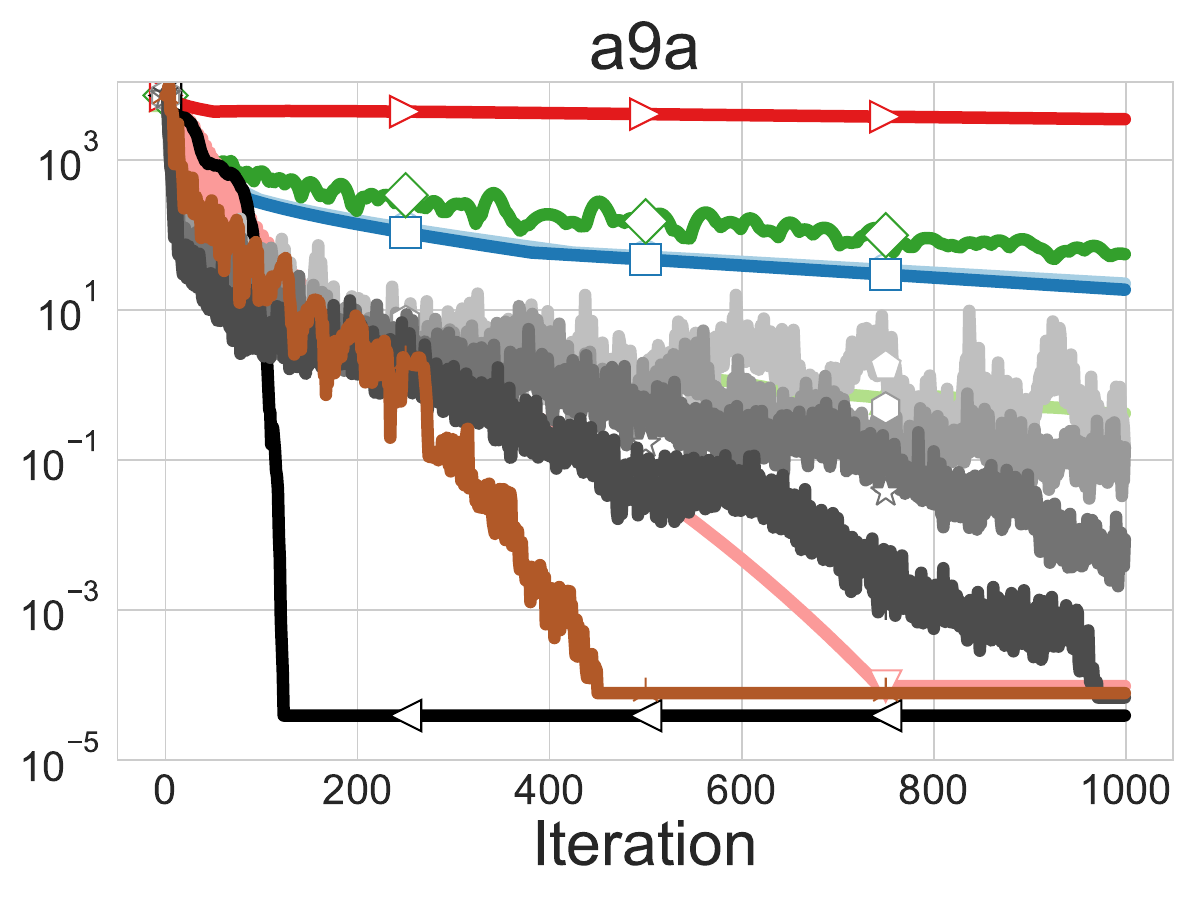}
\includegraphics[scale=0.2]{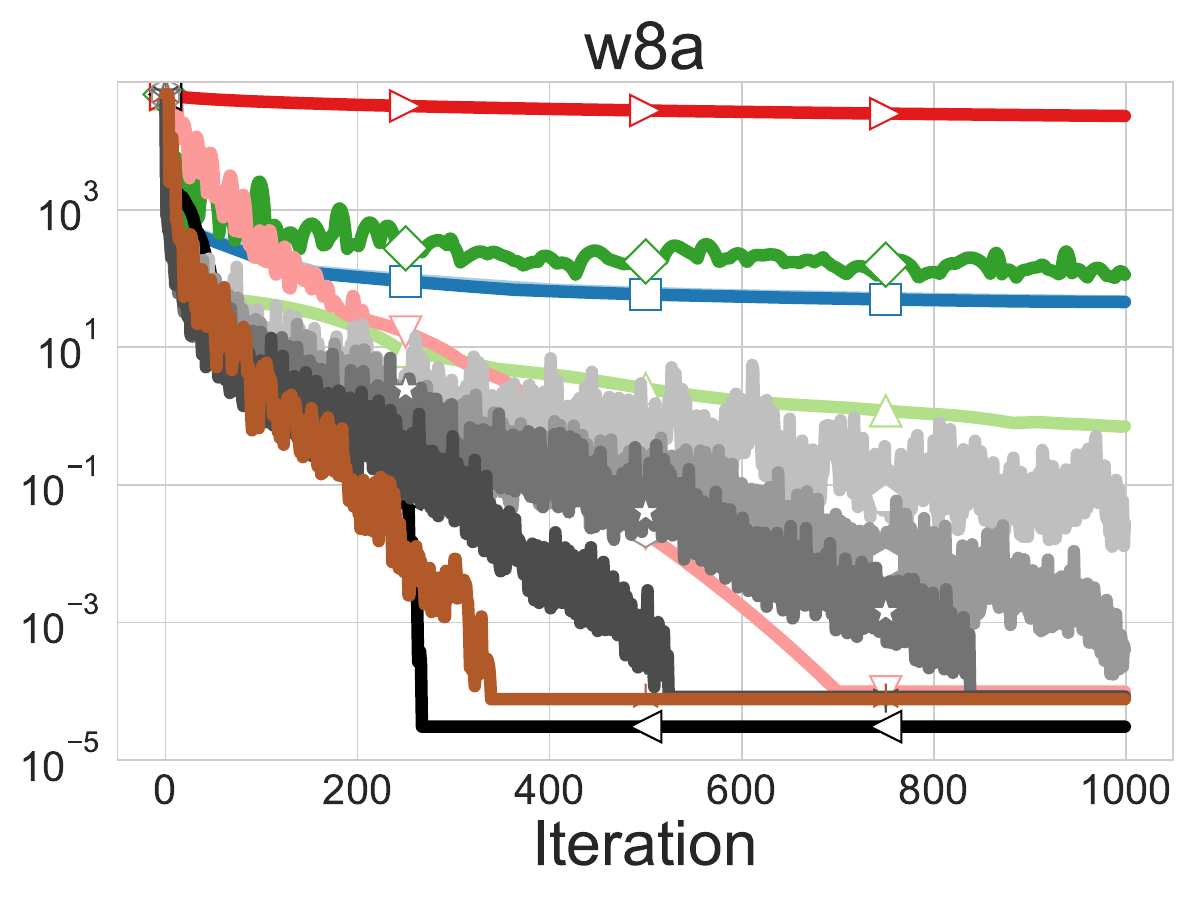}
\includegraphics[scale=0.2]{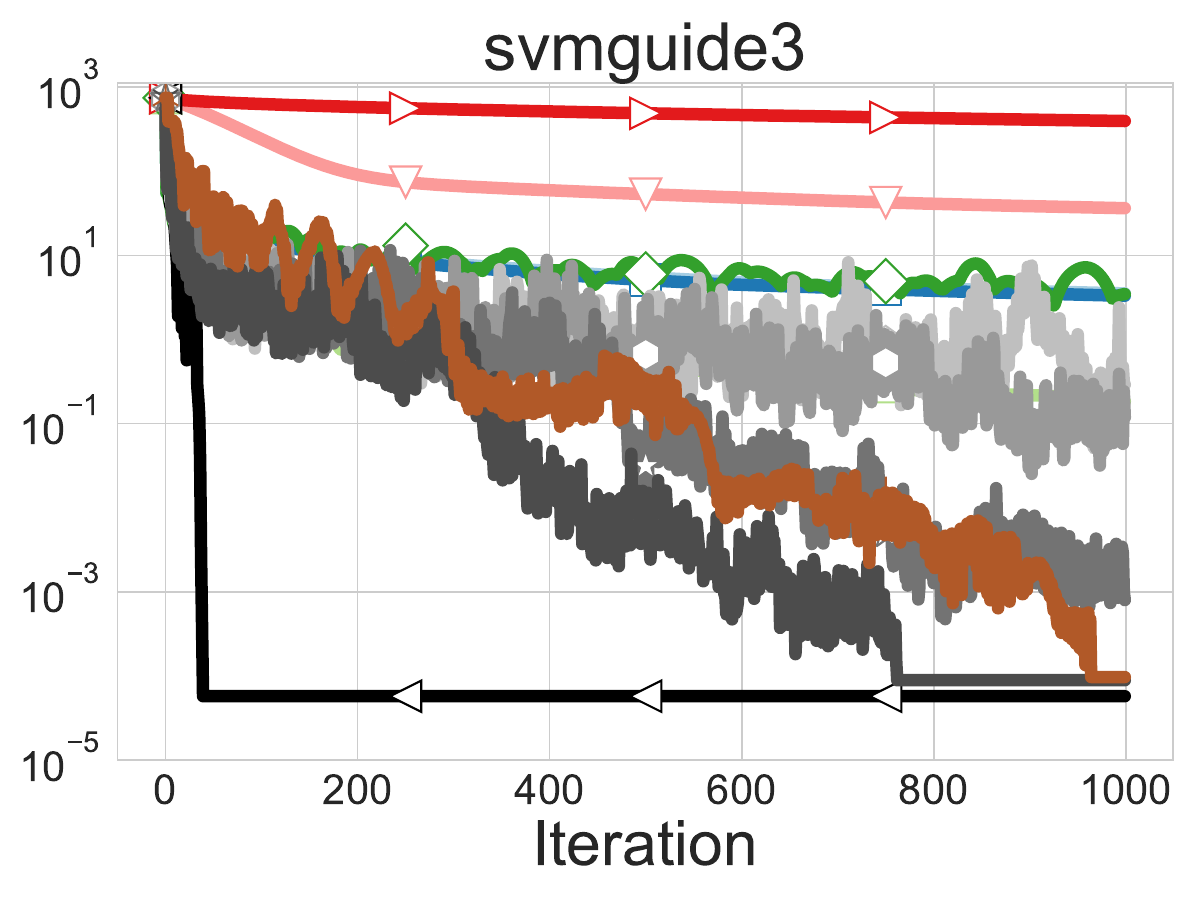}
\includegraphics[scale=0.4]{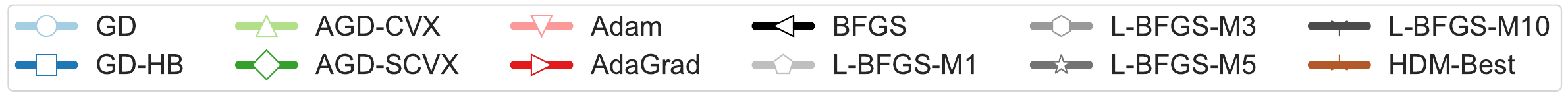}
\caption{Support vector-machine problems. First row: function value gap. Second row: gradient norm. \label{fig:svm}}
\end{figure*}

\begin{figure*}[!h]
\centering
\includegraphics[scale=0.2]{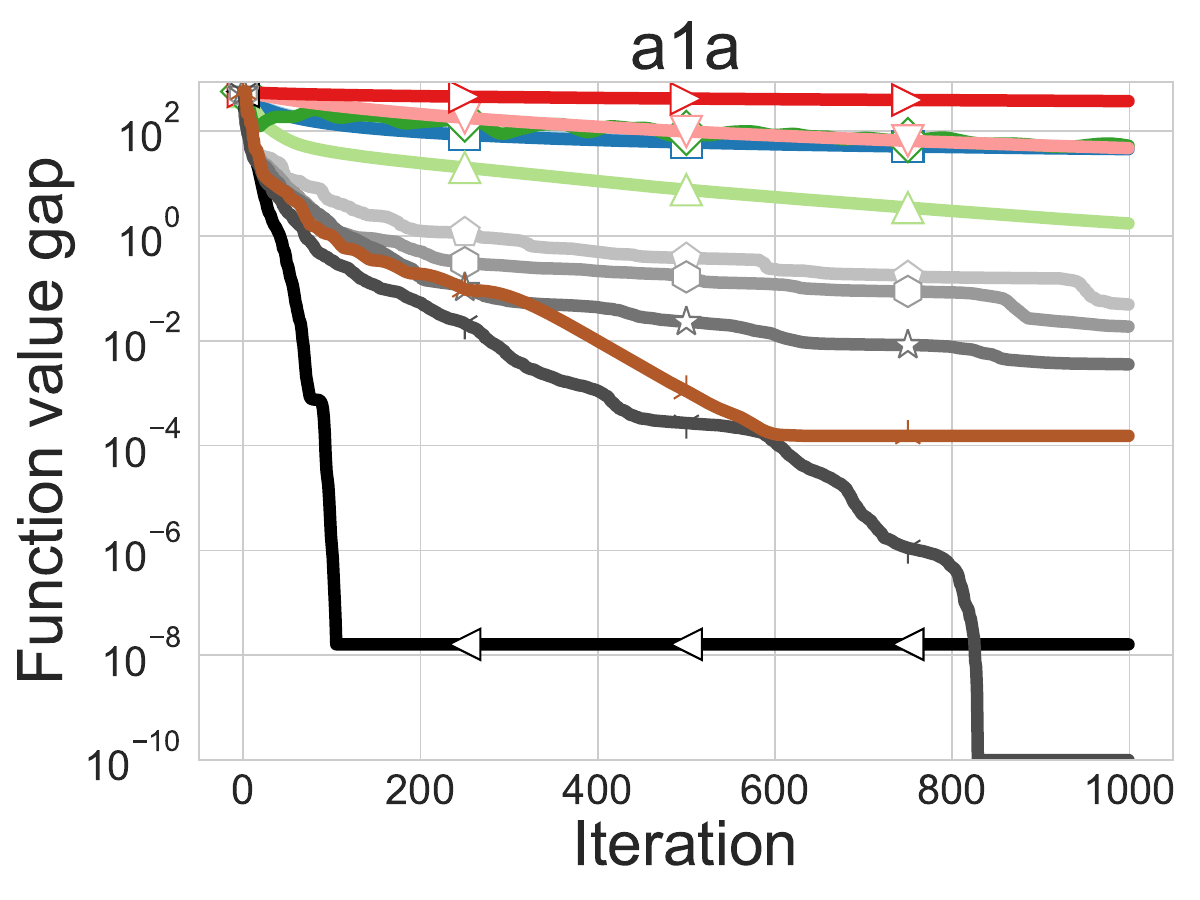}
\includegraphics[scale=0.2]{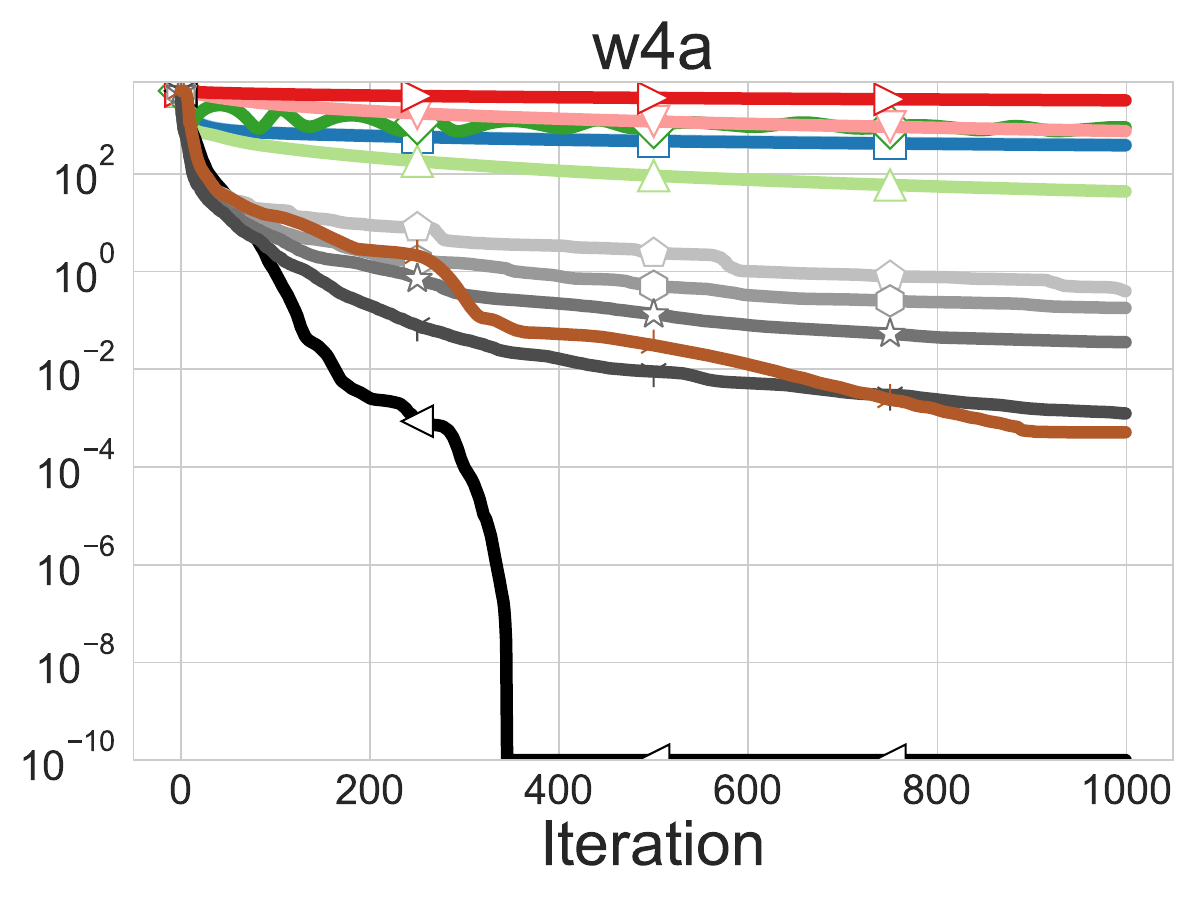}
\includegraphics[scale=0.2]{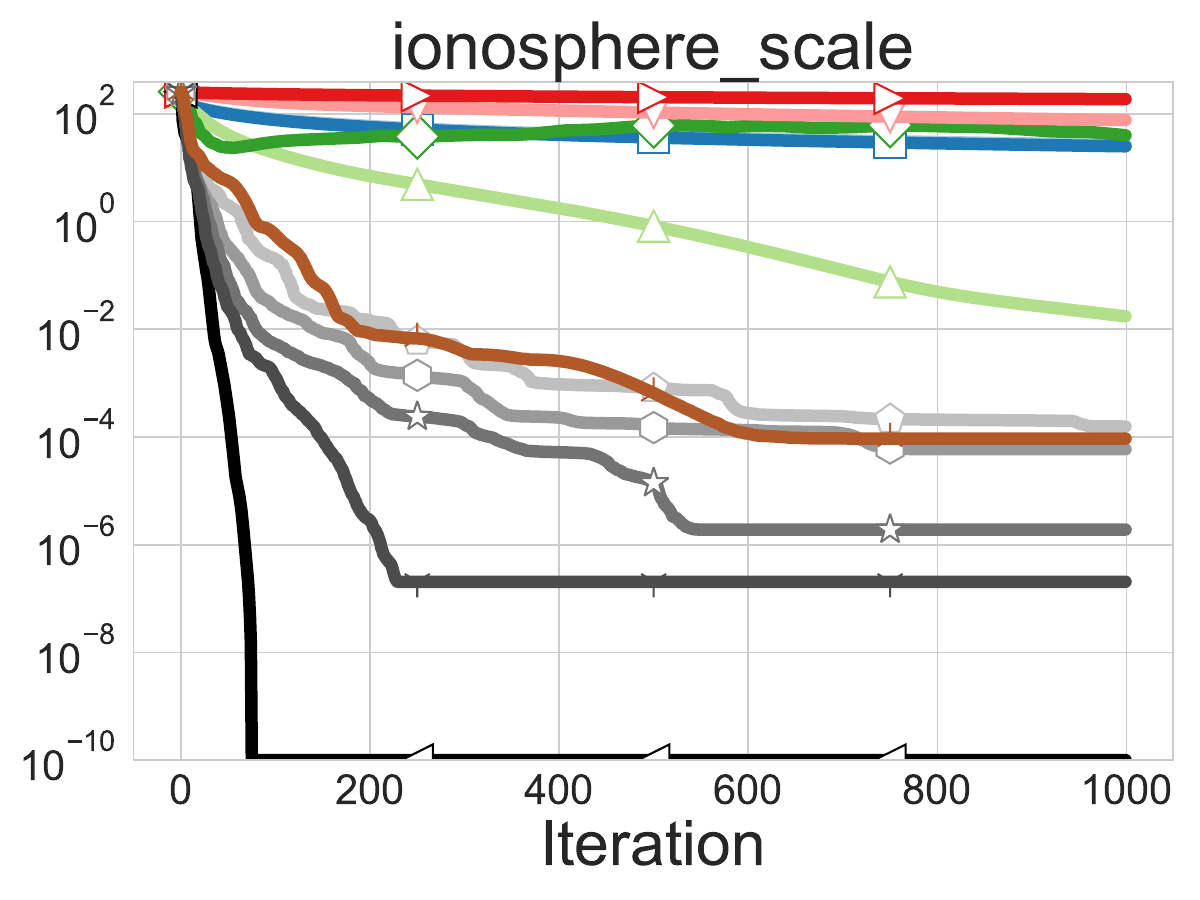}
\includegraphics[scale=0.2]{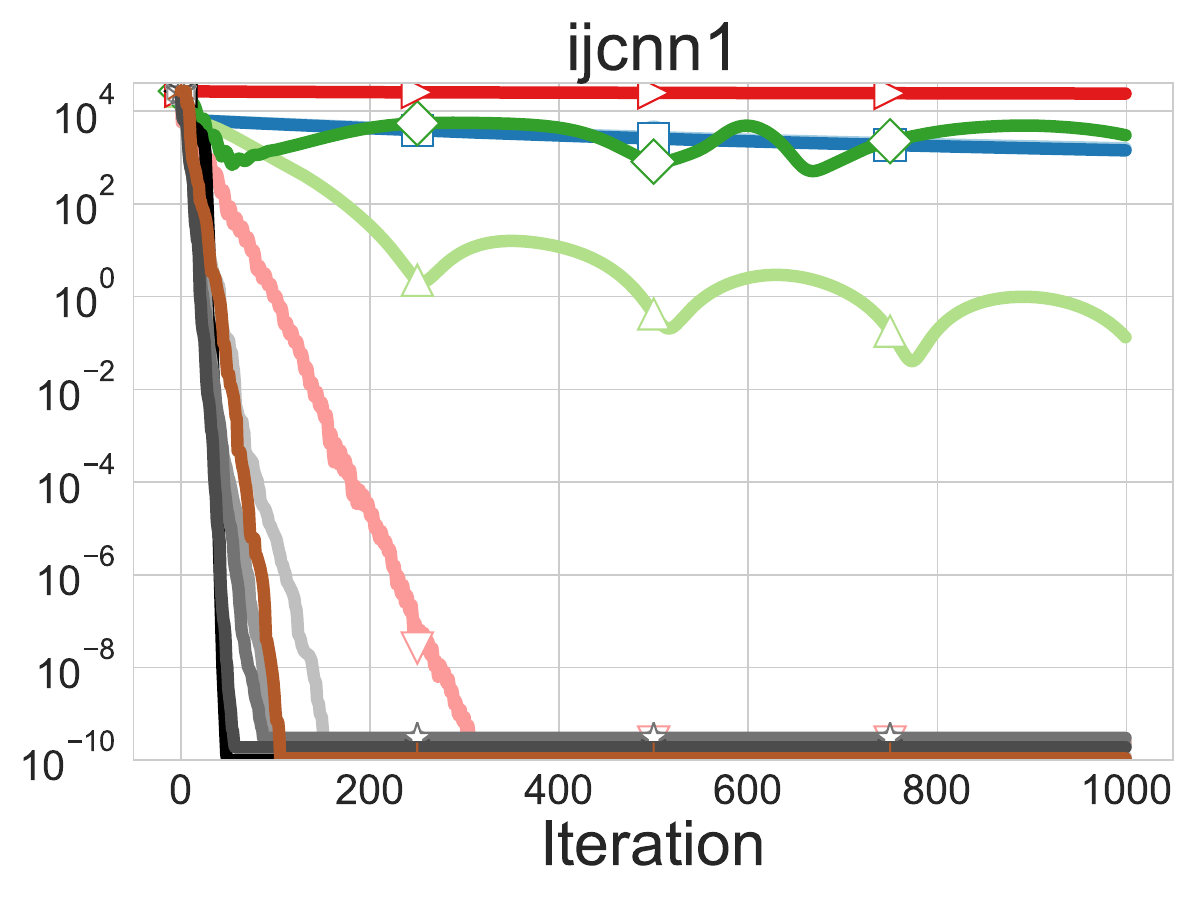}
\includegraphics[scale=0.2]{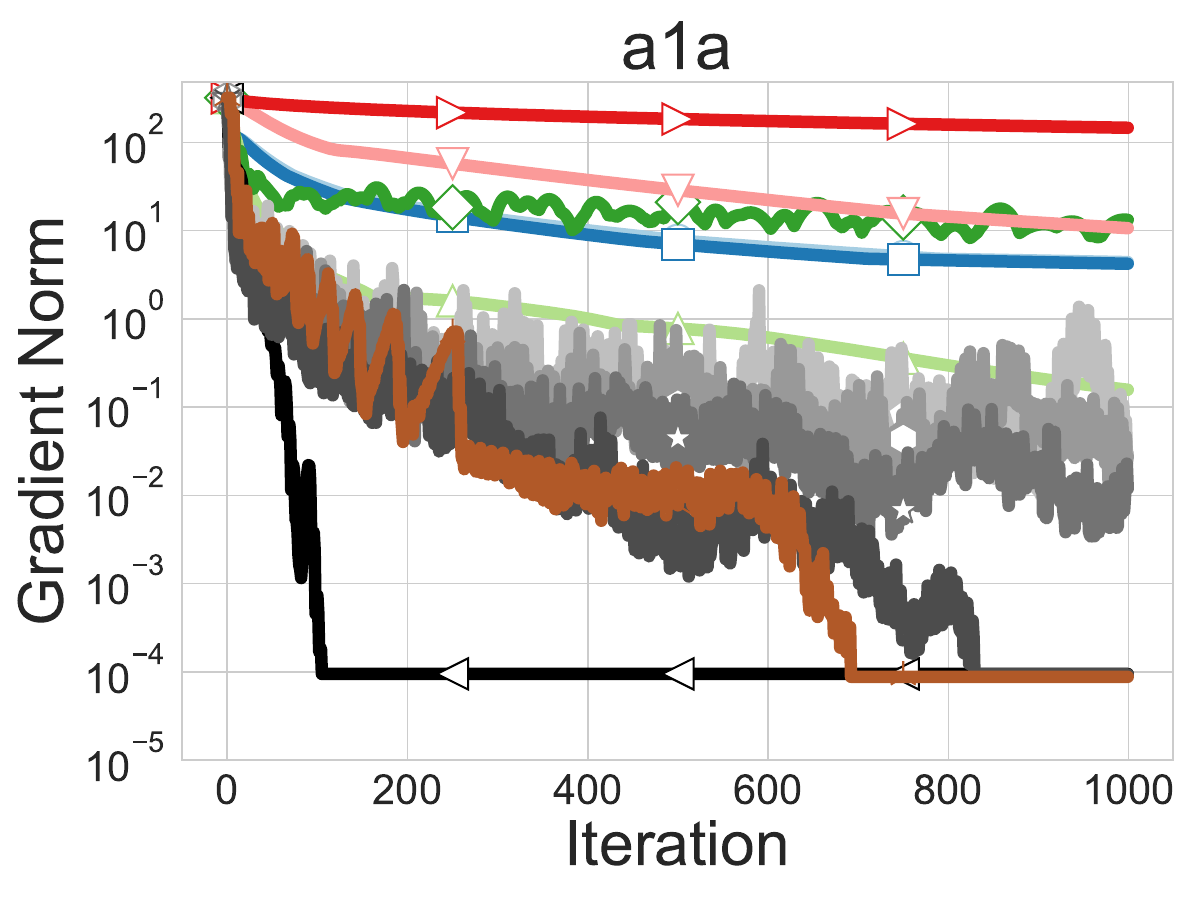}
\includegraphics[scale=0.2]{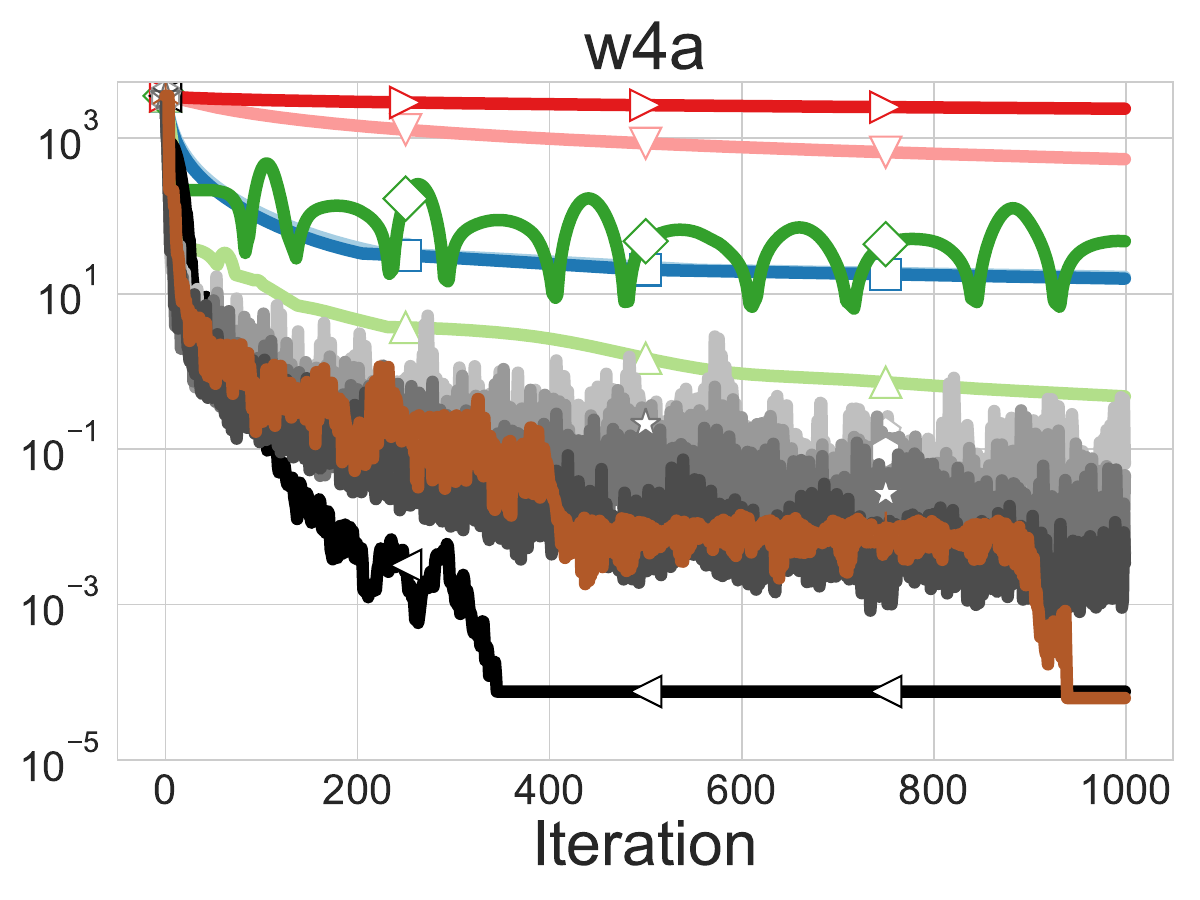}
\includegraphics[scale=0.2]{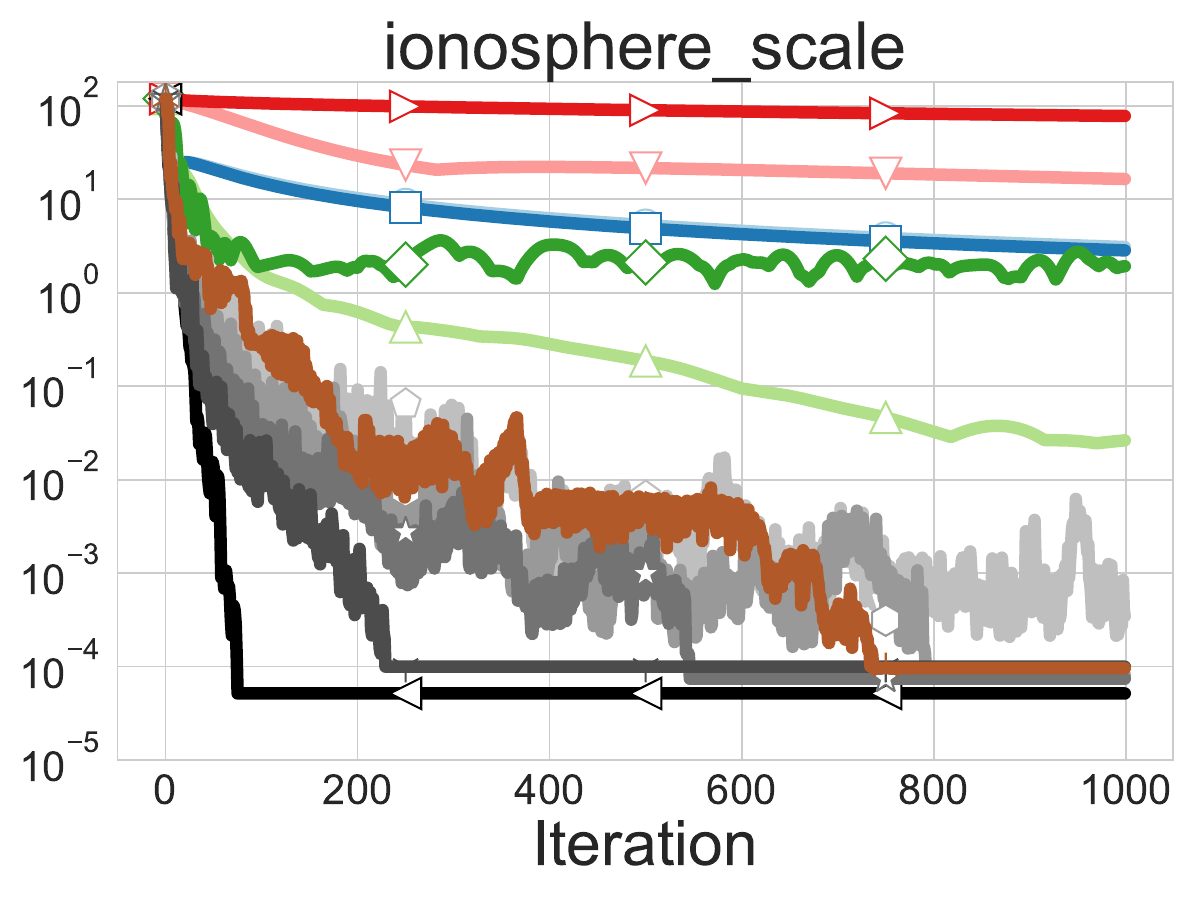}
\includegraphics[scale=0.2]{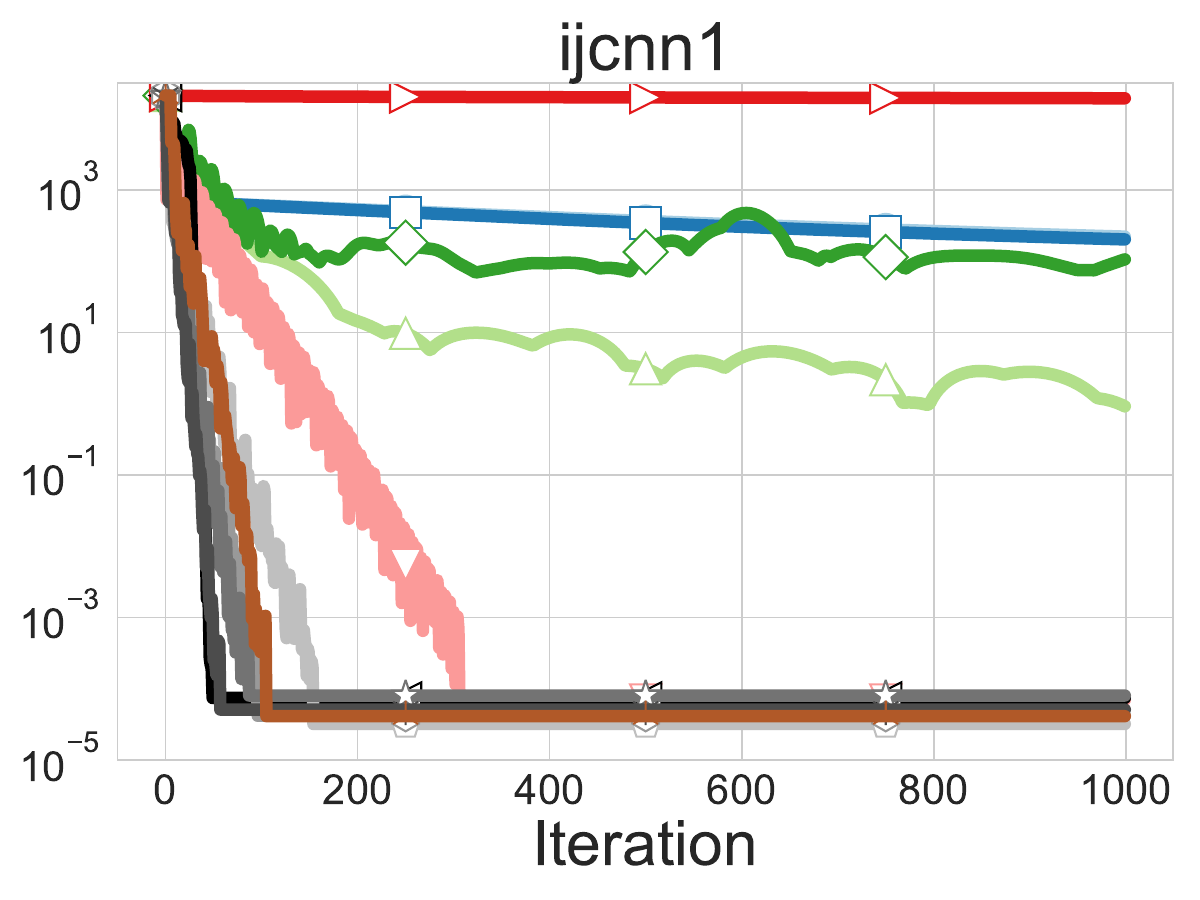}
\caption{Logistic regression problems. First row: function value gap. Second row: gradient norm.  \label{fig:logistic}}
\end{figure*}

For each algorithm, we record the number of successfully solved instances ($\|\nabla f\|_\infty \leq 10^{-4}$ within 1000 gradient oracles). \Cref{table:stats} summarizes the detailed statistics. The number of instances solved by {\hdmbest} is comparable to that of \texttt{L-BFGS-M10}.

\paragraph{Support Vector Machine.} \Cref{fig:svm} shows the function value gap and gradient norm plots on sample test instances on support vector machine problems. The optimal value for each instance is obtained by running {\bfgs} until $\|\nabla f \|_\infty \leq 10^{-4}$.  We see that the practical variant of {\hdmbest} achieves a significant speedup over other adaptive first-order methods. In particular, {\hdmbest} often matches \texttt{L-BFGS-M5} and \texttt{L-BFGS-M10}, while its memory usage is closer to \texttt{L-BFGS-M1}. Notably, {\adam} also achieves competitive performance in several instances.

\paragraph{Logistic Regression.} In logistic regression (\Cref{fig:logistic}), {\hdmbest} still compares well with \texttt{L-BFGS-M5} and is significantly faster than other adaptive first-order methods.\\

Overall, {\hdmbest} demonstrates superior performance on deterministic convex problems and is comparable with the mature \texttt{L-BFGS} family. We believe that further development of {\hdm} will fully unleash its potential for a broad range of optimization tasks.

\section{Conclusion}
This paper addresses the long-standing challenge of establishing convergence of the hypergradient descent heuristic. We provide the first rigorous theoretical foundation for hypergradient descent and introduce a novel online learning perspective that extends to other first-order methods with adaptive hyperparameter updates. Our theoretical advances support effective and scalable enhancements that allow the (first-order) {\hdm} to achieve superlinear convergence with guarantees that resemble quasi-Newton methods. 
Building on these results, we propose {\hdmbest}, an efficient variant of {\hdm} that performs competitively with the widely used \texttt{L-BFGS} method on convex problems. This empirical success positions {\hdm} as a compelling alternative for modern machine learning. Extending the theory of {\hdm} to stochastic and nonconvex optimization is a crucial next step to understanding its potential to speed up the training of large-scale models.

\newpage 
\renewcommand \thepart{}
\renewcommand \partname{}

\bibliography{ref.bib}
\bibliographystyle{plain}

\doparttoc
\faketableofcontents
\part{}

\newpage
\appendix
\onecolumn

\addcontentsline{toc}{section}{Appendix}
\part{Appendix} 
\parttoc

\paragraph{Structure of the Appendix. } The appendix is organized as follows. In \Cref{app:hdmprac}, we introduce a practical variant of hypergradient descent and explain its implementation details. \Cref{app:additional} provides additional experiment details on the tested problems. \Cref{app:proof-hdm-ol} to \Cref{app:proof-momentum} provide proofs of the main results in the paper.

\newpage
\section{{\hdm} in Practice} \label{app:hdmprac}

This section introduces {\hdmbest}, our recommended practical hypergradient descent method. This variant is adapted from {\hdmhb}, with simplifications to reduce the implementation complexity. The algorithm is given in \Cref{alg:practical}.

\begin{algorithm}[h]
{\textbf{input} starting point $x^0 = x^1$, $\Pcal = \mathbb{S}^n_{+} \cap \Dcal, \Bcal = [0,0.9995]$, initial diagonal preconditioner $P_1 \in \mathbb{S}^n_{+} \cap \Dcal$, \\initial scalar momentum parameter $\beta_1 = 0.95$, {\adagrad} stepsize $\eta_p, \eta_b > 0$, {\adagrad}  diagonal matrix $U_1 = 0$, {\adagrad} momentum scalar $v_1 = 0$, $\tau > 0$}\\
\For{k =\rm{ 1, 2,...}}{
$\begin{aligned}
x^{k + 1 / 2} & = x^k - P_k \nabla f (x^k) + \beta_k (x^k - x^{k - 1}) \\
\nabla_P h_{x^k, x^{k - 1}} (P_k, \beta_k) & = \tfrac{\diag(\nabla f (x^{k + 1 / 2}) \circ \nabla f (x^k))}{\| \nabla f (x^k) \|^2 + \frac{\tau}{2} \| x^k - x^{k - 1} \|^2} \texttt{ \# Element-wise product} \\
\nabla_{\beta} h_{x^k, x^{k - 1}} (P_k, \beta_k) & = \tfrac{\langle \nabla f (x^{k + 1 / 2}), x^k - x^{k - 1} \rangle}{\| \nabla f (x^k) \|^2 + \frac{\tau}{2} \| x^k - x^{k - 1} \|^2} \texttt{ \# Inner product} \\
U_{k + 1} & = U_k + \nabla_P h_{x^k, x^{k - 1}} (P_k, \beta_k) \circ \nabla_P h_{x^k, x^{k - 1}} (P_k, \beta_k) \texttt{ \# Diagonal matrix} \\
v_{k + 1} & = v_k + \nabla_{\beta} h_{x^k, x^{k - 1}} (P_k, \beta_k) \cdot \nabla_{\beta} h_{x^k, x^{k - 1}} (P_k, \beta_k) \texttt{ \# Scalar matrix} \\
P_{k + 1} & = \Pi_{\Rbb^n_{+} \cap \Dcal} [P_k - \eta_p U_{k + 1}^{- 1 / 2} \nabla_P h_{x^k, x^{k - 1}} (P_k, \beta_k)] \texttt{ \# Diagonal matrix} \\
\beta_{k + 1} & = \Pi_{[0, 0.9995]} [\beta_k - \eta_b v_{k + 1}^{- 1 / 2} \nabla_{\beta} h_{x^k, x^{k - 1}} (P_k, \beta_k)] \\
x^{k + 1} & = \argmin_{x \in \{ x^k, x^{k + 1 / 2} \}} f (x).
\end{aligned}
$
}
{\textbf{output} $x^{K+1}$}
\caption{{\hdmbest} \label{alg:practical}}
\end{algorithm}

We make several remarks about \Cref{alg:practical}. 

\begin{itemize}[leftmargin=10pt]
\item \textit{Choice of online learning algorithm.} Unless $f(x)$ is quadratic, adaptive online learning algorithms such as {\adagrad} often significantly outperform online gradient descent with constant stepsize. Note that {\adagrad} introduces additional memory of size $n$ to store the diagonal online learning preconditioner $U$.

\item \textit{Sensitivity of parameters.} The two stepsize parameters in {\adagrad} are the most important algorithm parameters: $\eta_p, \eta_b$. According to the experiments, $\eta_p$ should be set proportional to $1/L$, the smoothness constant, while an aggressive choice of $\eta_b \in \{1,10,100\}$ often yields fast convergence. A local estimator of the smoothness constant $L$ can significantly enhance algorithm performance.
\item \textit{Heavy-ball feedback and null step.} In practice, it is observed that dropping the $\frac{\omega}{2}\|x^+(P, B) - x\|^2$ in the numerator of heavy-ball feedback \eqref{eqn:heavyball-feedback} often does not affect algorithm performance. Therefore, in \Cref{alg:practical} the hypergradient with respect to $\frac{\omega}{2}\|x^+(P, B) - x\|^2$ is ignored. 
On the other hand, the $\frac{\tau} {2}\|x^+(P, B) - x\|^2$  term in the denominator smoothes the update of $\beta_k$ and can strongly affect convergence. The parameter $\tau$ should be taken to be proportional to $L^2$ according to the discussions in \Cref{app:heavy-ball}. The null step is taken with respect to the function value $f(x)$ instead of the heavy-ball potential function.
\item \textit{Memory usage.} The memory usage of {\hdmbest}, measured in terms of number of vectors of length $n$ is $7n$: 1) three vectors store primal iterates $x^{-}, x, x^{+}$. 2) Two vectors store past and buffer gradients $\nabla f(x), \nabla f(x^+)$. 3) A vector stores the diagonal preconditioner $P_k$. 4) A vector stores the {\adagrad} stepsize matrix $U$.
\item \textit{Computational cost. } The major additional computation cost arises from computing hypergradient $\nabla h$, which involves one element-wise product and one inner product for vectors of size $n$. In addition, {\hdmbest} needs to maintain a diagonal matrix for {\adagrad}. The overall additional computational cost is several $\Ocal({n})$ operations.
\end{itemize}

\newpage
\section{Proof of Results in Section \ref{sec:hdm-ol}} \label{app:proof-hdm-ol}

\subsection{Auxiliary Results}

\begin{lem}[Sublinear dynamic regret \cite{hazan2016introduction}] \label{lem:auxi-dynamic}
  Given a family of convex and $\gamma$-Lipschitz losses $\{ h_k \}$, online
  gradient descent $P_{k + 1} = \Pi_{\mathcal{P}} [P_k - \eta \nabla h_k
  (P_k)]$ with constant stepsize $\eta > 0$ generates a sequence of scaling
  matrices $\{ P_k \}$ such that
\begin{equation} \label{eqn:auxi-ogd-dynamic-regret}
	\textstyle \sum_{k = 1}^K h_k (P_k) - h_k (\hat{P}_k) \leq \tfrac{D^2}{2 \eta} +
     \tfrac{\eta}{2} \gamma^2 K + \tfrac{D}{2 \eta} \mathsf{PL} (\{ \hat{P}_k
     \}).
\end{equation}
  where $\{ \hat{P}_k \}, \hat{P}_k\in \Pcal$ are arbitrarily chosen competitors and $\mathsf{PL}
  (\{ \hat{P}_k \}) \assign \sum_{k = 1}^K \| \hat{P}_k - \hat{P}_{k + 1}
  \|_F$ is the path length of the competitors. In particular, if $\hat{P}_k \equiv P$, then $\mathsf{PL} (\{ \hat{P}_k
     \})=0$ and 
\begin{equation} \label{eqn:auxi-ogd-static-regret}
	\textstyle \sum_{k = 1}^K h_k (P_k) - h_k (\hat{P}_k) \leq \tfrac{D^2}{2 \eta} +
     \tfrac{\eta}{2} \gamma^2 K.
\end{equation}
\end{lem}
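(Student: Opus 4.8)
The plan is to run the standard one-step analysis of projected online gradient descent against a \emph{moving} comparator $\{\hat{P}_k\}$ and then control the mismatch introduced by the moving comparator through the path length. First I would invoke convexity of each loss: since $h_k$ is convex,
\[
h_k(P_k) - h_k(\hat{P}_k) \le \langle \nabla h_k(P_k),\, P_k - \hat{P}_k\rangle.
\]
Next, using that $\hat{P}_k \in \mathcal{P}$ together with the non-expansiveness of $\Pi_{\mathcal{P}}[\cdot]$, I would expand the squared distance after one update $P_{k+1} = \Pi_{\mathcal{P}}[P_k - \eta\nabla h_k(P_k)]$:
\[
\|P_{k+1} - \hat{P}_k\|_F^2 \le \|P_k - \eta\nabla h_k(P_k) - \hat{P}_k\|_F^2 = \|P_k - \hat{P}_k\|_F^2 - 2\eta\langle \nabla h_k(P_k), P_k - \hat{P}_k\rangle + \eta^2\|\nabla h_k(P_k)\|_F^2.
\]
Rearranging and applying the Lipschitz bound $\|\nabla h_k(P_k)\|_F \le \gamma$ yields the one-step inequality
\[
h_k(P_k) - h_k(\hat{P}_k) \le \tfrac{1}{2\eta}\big(\|P_k - \hat{P}_k\|_F^2 - \|P_{k+1} - \hat{P}_k\|_F^2\big) + \tfrac{\eta}{2}\gamma^2.
\]

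Summing over $k = 1,\dots,K$ produces the term $\tfrac{\eta}{2}\gamma^2 K$ immediately, so the crux is the telescoping-like term $\sum_k \big(\|P_k - \hat{P}_k\|_F^2 - \|P_{k+1} - \hat{P}_k\|_F^2\big)$. Because the comparator moves, this does not telescope directly; I would insert $\|P_{k+1} - \hat{P}_{k+1}\|_F^2$ by writing each summand as $\big(\|P_k - \hat{P}_k\|_F^2 - \|P_{k+1} - \hat{P}_{k+1}\|_F^2\big) + \big(\|P_{k+1} - \hat{P}_{k+1}\|_F^2 - \|P_{k+1} - \hat{P}_k\|_F^2\big)$. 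The first group telescopes to $\|P_1-\hat{P}_1\|_F^2 - \|P_{K+1}-\hat{P}_{K+1}\|_F^2 \le D^2$. The second group is the comparator-mismatch; applying the identity $\|a\|^2-\|b\|^2 = \langle a-b,\,a+b\rangle$ followed by Cauchy--Schwarz, and then bounding every difference of two points of $\mathcal{P}$ by the diameter $D$, controls it by a constant multiple of $D\sum_k \|\hat{P}_{k+1}-\hat{P}_k\|_F = D\,\mathsf{PL}(\{\hat{P}_k\})$. Collecting the three contributions and dividing by $2\eta$ gives the claimed dynamic-regret bound.

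For the static special case $\hat{P}_k \equiv P$, the mismatch group vanishes and $\mathsf{PL}(\{\hat{P}_k\}) = 0$, so the telescoping closes exactly and the bound reduces to \eqref{eqn:auxi-ogd-static-regret}. The main obstacle is precisely the moving-comparator bookkeeping: re-aligning the indices so that the telescoping part collapses to $D^2$, and then bounding the residual cross terms through the path length with the correct constant. Everything else---convexity of $h_k$, non-expansiveness of the projection, and the uniform gradient bound---is routine, so I expect the proof to be short once the mismatch decomposition is set up.
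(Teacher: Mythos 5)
Your one-step analysis---non-expansiveness of $\Pi_{\Pcal}$, convexity of $h_k$, and the gradient bound $\|\nabla h_k(P_k)\|_F\le\gamma$---coincides exactly with the paper's, and your static case \eqref{eqn:auxi-ogd-static-regret} comes out identically. Where you genuinely differ is the moving-comparator bookkeeping. You insert $\|P_{k+1}-\hat{P}_{k+1}\|_F^2$ and telescope the aligned differences (the classical Zinkevich-style dynamic-regret argument), whereas the paper expands $\|P_k-\hat{P}_k\|_F^2-\|P_{k+1}-\hat{P}_k\|_F^2=\|P_k\|_F^2-\|P_{k+1}\|_F^2+2\langle\hat{P}_k,\,P_{k+1}-P_k\rangle$, telescopes the pure norm terms, and applies Cauchy--Schwarz to the cross terms. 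Both are legitimate routes; yours is arguably cleaner, since the comparator enters only through the consecutive differences $\hat{P}_{k+1}-\hat{P}_k$, while the paper's cross term involves the iterate increment $P_{k+1}-P_k$ and must then be converted into the comparator path length by an Abel-type resummation that the paper never actually spells out.

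The one substantive issue is the constant, which you leave unresolved by promising only ``a constant multiple'' of $D\,\mathsf{PL}(\{\hat{P}_k\})$. In your decomposition the mismatch term satisfies $\|P_{k+1}-\hat{P}_{k+1}\|_F^2-\|P_{k+1}-\hat{P}_k\|_F^2\le\|\hat{P}_{k+1}-\hat{P}_k\|_F\,\|2P_{k+1}-\hat{P}_{k+1}-\hat{P}_k\|_F\le 2D\,\|\hat{P}_{k+1}-\hat{P}_k\|_F$, so after dividing by $2\eta$ you obtain the coefficient $\tfrac{D}{\eta}$ on the path length---twice the $\tfrac{D}{2\eta}$ claimed in \eqref{eqn:auxi-ogd-dynamic-regret}. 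Strictly speaking, then, your argument proves the lemma only with $\tfrac{D}{2\eta}$ replaced by $\tfrac{D}{\eta}$. You are in good company: the paper's own proof displays a per-step bound $\tfrac{D^2}{2\eta}+\tfrac{D}{2\eta}\|P_{k+1}-P_k\|_F+\tfrac{\eta}{2}\gamma^2$, which drops a factor of $2$ from the cross term $2\langle\hat{P}_k,P_{k+1}-P_k\rangle$ and carries the \emph{iterate} difference, and then after ``telescoping'' silently states the bound with the \emph{comparator} path length; so the stated coefficient is not cleanly justified there either. The discrepancy is harmless downstream---\Cref{thm:dynamic-adaptivity} only needs a bound of the structural form $\tfrac{D^2}{2\eta}+\tfrac{\eta}{2}\gamma^2K+\mathcal{O}(\tfrac{D}{\eta})\,\mathsf{PL}$---but if you want your write-up to match the lemma verbatim you would need either to tighten the mismatch bound or to restate the lemma with the larger constant.
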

\begin{proof}
The result follows from a standard dynamic regret analysis from online convex
optimization literature, and we adapt the proof for our analysis. For any $P
\in \mathcal{P}$, we deduce
\begin{align}
  \| P_{k + 1} - P \|_F^2 ={} & \| \Pi_{\mathcal{P}} [P_k - \eta \nabla h_k
  (P_k)] - P \|_F^2 \nonumber\\
  \leq{} & \| P_k - P - \eta \nabla h_k (P_k) \|_F^2 \label{eqn:dynamic-regret-1} \\
  \leq{} & \| P_k - P \|_F^2 - 2 \eta \langle \nabla h_k (P_k), P_k - P \rangle
  + \eta^2 \| \nabla h_k (P_k) \|^2_F \nonumber \\
  \leq{} & \| P_k - P \|_F^2 - 2 \eta [h_k (P_k) - h_k (P)] + \eta^2 \gamma^2, \label{eqn:dynamic-regret-2} 
\end{align}
where \eqref{eqn:dynamic-regret-1} uses non-expansiveness of orthogonal projection; \eqref{eqn:dynamic-regret-2} applies convexity and $\gamma$-Lipschitz continuity of $h_k$.
Now, let $P = \hat{P}_k$ and we re-arrange to get
\begin{align}
  h_k (P_k) - h_k (\hat{P}_k) \leq{} & \tfrac{1}{2 \eta} [\| P_k - \hat{P}_k
  \|_F^2 - \| P_{k + 1} - \hat{P}_k \|_F^2] + \tfrac{\eta}{2} \gamma^2
  \nonumber\\
  ={} & \tfrac{1}{2 \eta} [\| P_k \|_F^2 - \| P_{k + 1} \|_F^2 + 2 \langle
  \hat{P}_k, P_{k + 1} - P_k \rangle] + \tfrac{\eta}{2} \gamma^2 \nonumber\\
  \leq{} & \tfrac{D^2}{2 \eta} + \tfrac{D}{2 \eta} \| P_{k + 1} - P_k \|_F +
  \tfrac{\eta}{2} \gamma^2,\label{eqn:dynamic-regret-3}
\end{align}
where \eqref{eqn:dynamic-regret-3} uses Cauchy's inequality $\langle
  \hat{P}_k, P_{k + 1} - P_k \rangle \leq \|\hat{P}_k\|\cdot\|P_{k + 1} - P_k\| \leq D\|P_{k + 1} - P_k\|$. 
   Telescoping,
\[ \textstyle \sum_{k = 1}^K h_k (P_k) - h_k (\hat{P}_k) \leq \tfrac{D^2}{2 \eta} +
   \tfrac{\eta}{2} \gamma^2 K + \tfrac{D}{2 \eta} \sum_{k = 1}^K \| \hat{P}_k
   - \hat{P}_{k + 1} \|_F \]
and this completes the proof.
\end{proof}
\begin{lem}[Logarithmic static regret \cite{orabona2019modern}] \label{lem:auxi-log-regret}
  Given a family of $\mu$-strongly convex and $\gamma$-Lipschitz losses $\{
  h_k \}$, online gradient descent $P_{k + 1} = \Pi_{\mathcal{P}} [P_k - \eta_k \nabla h_k
  (P_k)]$ with stepsize $\eta_k = 1 / (\mu k)$
  generates a sequence of scaling matrices $\{ P_k \}$ such that $\sum_{k =
  1}^K h_k (P_k) - h_k (P) \leq \tfrac{1}{2} \gamma^2 \log K.$
\end{lem}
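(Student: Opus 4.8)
The plan is to mirror the projection-based regret argument already carried out in \Cref{lem:auxi-dynamic}, but to use the extra $\mu$-strong convexity of each $h_k$ to absorb the growth created by the shrinking stepsize $\eta_k = 1/(\mu k)$. First I would record the one-step inequality that the projected update supplies automatically: by non-expansiveness of $\Pi_{\Pcal}$ and the same expansion as in \eqref{eqn:dynamic-regret-1}--\eqref{eqn:dynamic-regret-2}, together with $\|\nabla h_k(P_k)\|_F \leq \gamma$ from Lipschitz continuity,
\[
\inprod{\nabla h_k(P_k)}{P_k - P} \leq \tfrac{1}{2\eta_k}\big(\|P_k - P\|_F^2 - \|P_{k+1} - P\|_F^2\big) + \tfrac{\eta_k}{2}\gamma^2 .
\]

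Next I would bring in strong convexity, which is the feature that distinguishes this lemma from the merely convex case. Since each $h_k$ is $\mu$-strongly convex, $h_k(P_k) - h_k(P) \leq \inprod{\nabla h_k(P_k)}{P_k - P} - \tfrac{\mu}{2}\|P_k - P\|_F^2$. Combining this with the one-step inequality and substituting $\tfrac{1}{2\eta_k} = \tfrac{\mu k}{2}$ gives the per-step bound
\[
h_k(P_k) - h_k(P) \leq \tfrac{\mu (k-1)}{2}\|P_k - P\|_F^2 - \tfrac{\mu k}{2}\|P_{k+1} - P\|_F^2 + \tfrac{\gamma^2}{2\mu k} .
\]
The crucial observation is that the $-\tfrac{\mu}{2}\|P_k - P\|_F^2$ penalty from strong convexity is exactly what converts the leading coefficient $\tfrac{\mu k}{2}$ into $\tfrac{\mu (k-1)}{2}$, producing a clean weighted-telescoping pattern $c_{k-1} - c_k$ with $c_k := \tfrac{\mu k}{2}\|P_{k+1} - P\|_F^2$ and $c_0 = 0$.

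Finally I would sum from $k=1$ to $K$. The weighted-telescoping part collapses to $c_0 - c_K = -\tfrac{\mu K}{2}\|P_{K+1} - P\|_F^2 \leq 0$, and the residual contributes $\tfrac{\gamma^2}{2\mu}\sum_{k=1}^K \tfrac{1}{k} \leq \tfrac{\gamma^2}{2\mu}(1 + \log K)$ by the standard harmonic-sum estimate, which is the claimed logarithmic regret $\Ocal(\gamma^2 \log K)$ (matching the stated bound up to the usual $1/\mu$ factor and additive constant). The projection expansion and the harmonic bound are routine; the only genuinely delicate point — and the step I would verify most carefully — is that the schedule $\eta_k = 1/(\mu k)$ is precisely calibrated so that the strong-convexity term cancels the per-step increase of the telescoping coefficient. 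Any faster decay would leave an uncancelled $\sum_k \|P_k - P\|_F^2$ term, and any slower decay would spoil the logarithmic harmonic sum, so the exact $1/k$ rate is what makes the argument close.
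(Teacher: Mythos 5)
Your proposal is correct and follows essentially the same argument as the paper's proof: the projected-update expansion combined with the $\mu$-strong convexity inequality, substitution of $\eta_k = 1/(\mu k)$ to produce the weighted telescoping coefficients $\tfrac{\mu(k-1)}{2}$ and $\tfrac{\mu k}{2}$, and the harmonic-sum bound $\sum_{k=1}^K \tfrac{1}{k} \leq 1 + \log K$. Your closing observation about the $1/\mu$ factor and additive constant also matches the paper, whose own proof yields $\tfrac{\gamma^2}{2\mu}(\log K + 1)$ rather than the cleaner constant quoted in the lemma statement.
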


\begin{proof}

Using strong convexity, we have $h_k (P) \geq h_k (P_k) + \langle \nabla h_k (P_k), P - P_k \rangle +
   \tfrac{\mu}{2} \| P - P_k \|_F^2  $ and 
\begin{align}
  \| P_{k + 1} - P \|_F^2 \leq{} & \| P_k - P \|_F^2 - 2 \eta_k \langle \nabla
  h_k (P_k), P_k - P \rangle + \eta_k^2 \gamma^2 \nonumber\\
  \leq{} & \| P_k - P \|_F^2 - 2 \eta_k [h_k (P_k) - h_k (P)] + \eta_k^2
  \gamma^2 - \mu \eta_k \| P - P_k \|_F^2 \nonumber\\
  ={} & \tfrac{k - 1}{k} \| P_k - P \|_F^2 - \tfrac{2}{k \mu} [h_k (P_k) - h_k
  (P)] + \tfrac{\gamma^2}{k^2 \mu^2} \label{eqn:dynamic-regret-4}, 
\end{align}
where \eqref{eqn:dynamic-regret-4} plugs in $\eta_k = 1/(\mu k)$. Re-arranging the terms, \[h_k (P_k) - h_k (P) \leq \tfrac{\mu}{2} [(k - 1) \|
P_k - P \|_F^2 - k \| P_{k + 1} - P \|_F^2] + \tfrac{\gamma^2}{2 k \mu}\] and
telescoping gives $\sum_{k = 1}^K h_k (P_k) - h_k (P) \leq
\sum_{k = 1}^K \tfrac{\gamma^2}{2 k \mu} \leq \tfrac{\gamma^2}{2 \mu} (\log K
+ 1)$, which completes the proof.
\end{proof}

\subsection{Proof of Lemma \ref{lem:hx-properties}}

Consider the first property. Convexity and smoothness follow directly from \cite{gao2024gradient}.
To verify strong convexity, note that for $h_x (\alpha) = \tfrac{f (x - \alpha
\nabla f (x)) - f (x^{\star})}{\| \nabla f (x) \|^2}$
\[ h_x'' (\alpha) = \tfrac{\mathd}{\mathd \alpha} \Big[ \tfrac{\langle \nabla
   f (x - \alpha \nabla f (x)), \nabla f (x) \rangle}{\| \nabla f (x) \|^2}
   \Big] = \big\langle \tfrac{\nabla f (x)}{\| \nabla f (x) \|}, \nabla^2 f
   (x) \tfrac{\nabla f (x)}{\| \nabla f (x) \|} \big\rangle \geq \mu \]
since $\nabla^2 f (x) \succeq \mu I$ and $x \nin \Xcal^\star$. This completes the proof of the first property.\\

Next, we consider the second property. Lipschitz continuity also follows from \cite{gao2024gradient}. To verify exp-concavity, recall that a twice-differentiable function
$h$ is $\beta$-exp-concave if $\nabla^2 h (x) \succeq \beta \nabla h (x)
\nabla h (x)^{\top}$ for some $\beta \geq 0$. By definition of $\Dcal$,
\[ \nabla h_x (P) = - \tfrac{\nabla f (x) \circ \nabla f (x - P \nabla f
   (x))}{\| \nabla f (x) \|^2} = - \tfrac{\diag (\nabla f (x)) \nabla f
   (x - P \nabla f (x))}{\| \nabla f (x) \|^2} \]
and $\nabla^2 h_x (P) = \frac{\diag (\nabla f (x)) \nabla^2 f (x - P
\nabla f (x)) \diag (\nabla f (x))}{\| \nabla f (x) \|^2}$. Using
$\nabla^2 f (x - P \nabla f (x)) \succeq \mu I$, we deduce that
\begin{align}
  & \nabla^2 h_x (P) - \beta\nabla h_x (P) \nabla h_x
  (P)^{\top} \nonumber\\
  ={} & \tfrac{\diag (\nabla f (x)) \nabla^2 f (x - P \nabla f (x))
  \diag (\nabla f (x))}{\| \nabla f (x) \|^2} - \beta \tfrac{\diag
  (\nabla f (x)) \nabla f (x - P \nabla f (x)) \nabla f (x - P \nabla f
  (x))^{\top} \diag (\nabla f (x))}{\| \nabla f (x) \|^4} \nonumber\\
  ={} & \diag \big( \tfrac{\nabla f (x)}{\| \nabla f (x) \|} \big)
  \Big[ \nabla^2 f (x - P \nabla f (x)) - \beta \tfrac{\nabla f (x - P \nabla
  f (x))}{\| \nabla f (x) \|} \tfrac{\nabla f (x - P \nabla f (x))^{\top}}{\| \nabla
  f (x) \|} \Big] \diag \big( \tfrac{\nabla f (x)}{\| \nabla f
  (x) \|} \big) \nonumber\\
  \succeq{} & \diag \big( \tfrac{\nabla f (x)}{\| \nabla f (x) \|}
  \big) \Big[ \mu I - \beta \tfrac{\nabla f (x - P \nabla f (x))}{\| \nabla
  f (x) \|} \tfrac{\nabla f (x - P \nabla f (x))^{\top}
}{\| \nabla f (x) \|}  \Big] \diag \big( \tfrac{\nabla f (x)}{\| \nabla f (x) \|} \big), \label{eqn:proof-1-1}
\end{align}
where \eqref{eqn:proof-1-1} uses $\mu$-strong convexity of $f(x)$. Now, it suffices to verify that
\begin{equation} \label{eqn:proof-1-2}
	\tfrac{\nabla f (x - P \nabla f (x))}{\| \nabla f (x) \|} \tfrac{\nabla f
   (x - P \nabla f (x))^{\top}}{\| \nabla f (x) \|} \preceq \tfrac{\mu}{\beta}  I
\end{equation}
for all $x \nin \Xcal^\star$.
Write $\tfrac{\nabla f (x - P \nabla f (x))}{\| \nabla f (x) \|} =
\tfrac{\nabla f (x)}{\| \nabla f (x) \|} + \tfrac{\nabla f (x - P \nabla f
(x)) - \nabla f (x)}{\| \nabla f (x) \|}$ and let $z \assign \nabla f (x -
P \nabla f (x)) - \nabla f (x)$, we have, by $L$-smoothness, that $\| z \| \leq L \|P \nabla f(x) \| \leq L D \| \nabla f (x) \|$
and
\begin{align}
  \Big\| \tfrac{\nabla f (x - P \nabla f (x))}{\| \nabla f (x) \|}
  \tfrac{\nabla f (x - P \nabla f (x))^{\top}}{\| \nabla f (x) \|} \Big\| ={}
  & \Big\| \Big( \tfrac{\nabla f (x)}{\| \nabla f (x) \|} + \tfrac{z}{\|
  \nabla f (x) \|} \Big) \Big( \tfrac{\nabla f (x)}{\| \nabla f (x) \|} +
  \tfrac{z}{\| \nabla f (x) \|} \Big)^{\top} \Big\| \nonumber\\
  ={} & \big\| \tfrac{\nabla f (x) \nabla f (x)^{\top}}{\| \nabla f (x) \|^2} +
  \tfrac{z \nabla f (x)^{\top}}{\| \nabla f (x) \|^2} + \tfrac{\nabla f (x)
  z^{\top}}{\| \nabla f (x) \|^2} + \tfrac{z z^{\top}}{\| \nabla f (x) \|^2}
  \big\| \nonumber\\
  \leq{} & 1 + \tfrac{2 \| z \|}{\| \nabla f (x) \|} + \tfrac{\| z \|^2}{\|
  \nabla f (x) \|^2} = ( 1 + \tfrac{\| z \|}{\| \nabla f (x) \|}
  )^2 \leq (1 + L D)^2 . \nonumber
\end{align}

Hence, for $\beta \leq \tfrac{\mu}{(1 + L D)^2}$ the relation \eqref{eqn:proof-1-2} holds. We conclude that $h_x(P) = h_x(d)$ is $\frac{\mu}{(1 + L D)^2}$-exponential concave.

\subsection{Proof of Lemma \ref{lem:regret-sublinear}}

We use Lipschitzness from \Cref{lem:hx-properties} and \eqref{eqn:auxi-ogd-static-regret} from \Cref{lem:auxi-dynamic} by taking $\gamma = 1+LD$ and $\eta = \frac{D}{(LD+1)\sqrt{K}}$.

\subsection{Proof of Lemma \ref{lem:regret-log}}
We use Lipschitzness and strong convexity from \Cref{lem:hx-properties} and invoke \Cref{lem:auxi-log-regret} by taking $\gamma = 1 + L D$.

\subsection{Proof of Lemma \ref{lem:hypergrad-to-online}}

The proof resembles \cite{gao2024gradient} and uses a tighter analysis.
Consider the optimality measure $f (x^{K + 1}) - f (x^{\star})$, and we deduce that
\begin{align}
  f (x^{K + 1}) - f (x^{\star}) ={} & \frac{1}{\frac{1}{f (x^{K + 1}) - f
  (x^{\star})}} \nonumber\\
  ={} & \frac{1}{\sum_{k = 1}^K \frac{1}{f (x^{k + 1}) - f (x^{\star})} -
  \frac{1}{f (x^k) - f (x^{\star})} + \frac{1}{f (x^1) - f (x^{\star})}}
  \nonumber\\
  ={} & \frac{1}{\sum_{k = 1}^K \frac{f (x^k) - f (x^{k + 1})}{[f (x^{k + 1}) -
  f (x^{\star})] [f (x^k) - f (x^{\star})]} + \frac{1}{f (x^1) - f
  (x^{\star})}} \nonumber\\
  ={} & \frac{1}{\sum_{k = 1}^K \frac{\max \{ - h_{x^k} (P_k), 0 \} \| \nabla f
  (x^k) \|^2}{[f (x^{k + 1}) - f (x^{\star})] [f (x^k) - f (x^{\star})]} +
  \frac{1}{f (x^1) - f (x^{\star})}} \nonumber
\end{align}

Next, using $f (x) - f (x^{\star}) \leq \| \nabla f (x) \| \cdot \| x -
x^{\star} \|$,
\[ \tfrac{\max \{ - h_{x^k} (P_k), 0 \} \| \nabla f (x^k) \|^2}{[f (x^{k + 1})
   - f (x^{\star})] [f (x^k) - f (x^{\star})]} \geq \tfrac{\max \{ - h_{x^k}
   (P_k), 0 \} \| \nabla f (x^k) \|^2}{[f (x^k) - f (x^{\star})]^2} \geq
   \tfrac{\max \{ - h_{x^k} (P_k), 0 \}}{\dist (x^k,
   \mathcal{X}^{\star})^2} \geq \tfrac{\max \{ - h_{x^k} (P_k), 0
   \}}{\Delta^2}. \]
Finally, we deduce that
\begin{align}
  f (x^{K + 1}) - f (x^{\star}) \leq{} & \tfrac{\Delta^2}{\sum_{k = 1}^K \max \{
  - h_{x^k} (P_k), 0 \} + \tfrac{\Delta^2}{f (x^1) - f (x^{\star})}}
  \nonumber\\
  \leq{} & \tfrac{\Delta^2}{\max \{ \sum_{k = 1}^K - h_{x^k} (P_k), 0
  \} + \frac{\Delta^2}{f (x^1) - f (x^{\star})}} \nonumber\\
  \leq{} & \min \Big\{ \tfrac{\Delta^2}{K \max \{ \frac{1}{K} \sum_{k =
  1}^K - h_{x^k} (P_k), 0 \}}, f (x^1) - f (x^{\star}) \Big\}
  \nonumber
\end{align}
and this completes the proof.

\subsection{Intermediate Iterate Convergence with Adaptive Online Algorithms} \label{sec:intermediate-iter}

One disadvantage of constant stepsize online gradient descent is 1) it requires the total iteration number $K$. 2) No regret guarantee for the intermediate iterates. One simple fix is let $\eta_k = \Ocal(1/\sqrt{k})$. It gives the same sublinear regret guarantee up to a constant multiplicative factor \cite{orabona2019modern}, but the regret guarantee holds for any $k$. Similar arguments hold for adaptive gradient methods \cite{duchi2011adaptive,mcmahan2010adaptive}.

\section{Proof of Results in Section \ref{sec:hdm}}  \label{app:proof-hdm}

\subsection{Proof of Theorem \ref{thm:adaptivity}}

Plugging \eqref{eqn:ogd-constant} from \Cref{lem:regret-sublinear} into \Cref{lem:hypergrad-to-online} completes the proof.

\subsection{Proof of Theorem \ref{thm:dynamic-adaptivity}}
Invoking Lipschitzness from \Cref{lem:hx-properties} and \eqref{eqn:auxi-ogd-dynamic-regret} from \Cref{lem:auxi-dynamic} with $\gamma =1+LD,\eta = \frac{D}{(LD+1)\sqrt{K}}$ gives

\begin{equation}
	\textstyle \sum_{k = 1}^K h_{x^k} (P_k) - h_{x^k} (\hat{P}_k) \leq \rho_K + \frac{LD+1}{2} \sqrt{K}\sum_{k = 1}^K \| \hat{P}_k
   - \hat{P}_{k + 1} \|_F\nonumber.
\end{equation}

Plugging the relation into \Cref{lem:hypergrad-to-online} completes the proof.

\subsection{Proof of Theorem \ref{thm:superlin}}

For \Cref{thm:superlin} and \Cref{lem:scalmat-conv} only, we will define the following modified feedback function by replacing $f(x^k)$ in the numerator by $f(x^\star)$:
\[ \hat{h}_x (P) \assign \tfrac{f (x - P \nabla f (x)) - f (x^{\star}) }{\|
   \nabla f (x) \|^2}\geq 0. \]
For a fixed $x$, $\hat{h}_x (P)$ only differs from the original hypergradient
feedback by a constant; it has the same properties as the original feedback function, and the algorithm is exactly the same since only the gradient of $\hat{h}_x$ is considered in the algorithm update.  Using the definition of $\hat{h}_x (P)$, we deduce that 
\begin{align}
  \tfrac{f (x^{K + 1}) - f (x^{\star})}{f (x^1) - f (x^{\star})} ={} & \textstyle \prod_{k = 1}^K \tfrac{f (x^{k + 1}) - f
  (x^{\star})}{f (x^k) - f (x^{\star})} \nonumber\\
  \leq{} & ( \tfrac{1}{K} \textstyle \sum_{k = 1}^K \tfrac{f (x^{k + 1}) - f
  (x^{\star})}{f (x^k) - f (x^{\star})} )^K \nonumber\\
  ={} & ( \tfrac{1}{K} \textstyle \sum_{k = 1}^K \min \{ \tfrac{\hat{h}_{x^k} (P_k) \|
  \nabla f (x^k) \|^2}{f (x^k) - f (x^{\star})}, 1 \} )^K
  \label{eqn:proof-3-1-15} \\
  \leq{} & ( \tfrac{1}{K} \textstyle \sum_{k = 1}^K \min \{ 2 L \hat{h}_{x^k} (P_k), 1 \}
  )^K \label{eqn:proof-3-1-16}\\
  \leq{} & ( \min \{ \tfrac{2 L}{K} \textstyle \sum_{k = 1}^K \hat{h}_{x^k} (P_k), 1
  \} )^K, \nonumber
\end{align}
where \eqref{eqn:proof-3-1-15} plugs in the definition of $\hat{h}_x$; \eqref{eqn:proof-3-1-16} uses $L$-smoothness and that $\hat{h}_x$ is nonnegative. Using \Cref{lem:regret-sublinear}, we get  $\textstyle \sum_{k = 1}^K \hat{h}_{x^k} (P_k) \leq \textstyle \sum_{k = 1}^K \hat{h}_{x^k} (P) + \rho_K$
for any $P \in \mathcal{P}$. Next, we consider the quantity $\hat{h}_x ([\nabla^2 f (x^{\star})]^{- 1})$ and deduce that
\begin{align}
  \hat{h}_x ([\nabla^2 f (x^{\star})]^{- 1}) ={} & \tfrac{f (x - [\nabla^2 f
  (x^{\star})]^{- 1} \nabla f (x)) - f (x^{\star})}{\| \nabla f (x) \|^2}
  \nonumber\\
  \leq{} & \tfrac{\frac{L}{2} \| x - [\nabla^2 f (x^{\star})]^{- 1} \nabla f
  (x) - x^{\star} \|^2}{\| x - x^{\star} \|^2} \tfrac{\| x - x^{\star}
  \|^2}{\| \nabla f (x) \|^2} \label{eqn:proof-3-1-17} \\
  \leq{} & \tfrac{L}{2 \mu^2} \tfrac{\| x - [\nabla^2 f (x^{\star})]^{- 1}
  \nabla f (x) - x^{\star} \|^2}{\| x - x^{\star} \|^2}, \label{eqn:proof-3-1-18}
\end{align}
where \eqref{eqn:proof-3-1-17} uses $L$-smoothness $f (x) - f (x^{\star}) \leq \tfrac{L}{2} \| x - x^{\star} \|^2$
and \eqref{eqn:proof-3-1-18} uses $\| \nabla f (x) \|^2 \geq \mu^2 \| x - x^{\star} \|^2$. Then,
\begin{align}
  x - [\nabla^2 f (x^{\star})]^{- 1} \nabla f (x) - x^{\star}={} & x - x^{\star} - [\nabla^2 f (x^{\star})]^{- 1} \nabla f (x) \nonumber\\
  ={} & [\nabla^2 f (x^{\star})]^{- 1} [\nabla^2 f (x^{\star}) (x - x^{\star}) -
  (\nabla f (x) - \nabla f (x^{\star}))]\nonumber
\end{align}
since $\nabla f(x^\star) = 0$. Plugging in $\nabla f (x) - \nabla f (x^{\star}) = \textstyle \int_0^1 \nabla^2 f (x^{\star} +
t (x - x^{\star})) (x - x^{\star}) \mathd t$, we deduce that
\begin{align}
\| \nabla^2 f (x^{\star}) (x - x^{\star}) - (\nabla f (x) - \nabla f
  (x^{\star}))\|={} & \|\nabla^2 f (x^{\star}) (x - x^{\star}) - \textstyle \int_0^1 \nabla^2 f (x^{\star}
  + t (x - x^{\star})) (x - x^{\star}) \mathd t \|\nonumber\\
  ={} & \|\textstyle \int_0^1 [\nabla^2 f (x^{\star}) - \nabla^2 f (x^{\star} + t (x -
  x^{\star}))] (x - x^{\star}) \mathd t \|\nonumber\\
  \leq{} & \textstyle \int_0^1 t H \| x - x^{\star} \|^2 \mathd t = \tfrac{H}{2} \| x -
  x^{\star} \|^2, \label{eqn:proof-3-1-19}
\end{align}
where \eqref{eqn:proof-3-1-19} uses $H$-Lipschitz continuity of $\nabla^2f(x)$
and, consequently,
\begin{align}
  & \| x - [\nabla^2 f (x^{\star})]^{- 1} \nabla f (x) - x^{\star} \|
  \nonumber\\
  ={} & \| [\nabla^2 f (x^{\star})]^{- 1} [\nabla^2 f (x^{\star}) (x -
  x^{\star}) - (\nabla f (x) - \nabla f (x^{\star}))] \| \leq \tfrac{H}{2 \mu}
  \| x - x^{\star} \|^2 \label{eqn:proof-3-1-auxi2}
\end{align}
since $\nabla^2 f(x^\star) \succeq \mu I$ due to strong convexity.
Plugging the relation back, we get
\begin{equation} \label{eqn:proof-2-1}
  \hat{h}_x ([\nabla^2 f (x^{\star})]^{- 1}) \leq \tfrac{L}{2 \mu^2}
   \tfrac{\tfrac{H^2}{4 \mu^2} \| x - x^{\star} \|^4}{\| x - x^{\star} \|^2} ={}
   \tfrac{H^2 \kappa}{8 \mu^3} \| x - x^{\star} \|^2.
\end{equation}
Since $[\nabla^2 f(x^\star)]^{-1} \in \Pcal$ by assumption, 
\[ \textstyle \sum_{k = 1}^K \hat{h}_{x^k} (P_k) \leq \textstyle \sum_{k = 1}^K \hat{h}_{x^k} ([\nabla^2 f
   (x^{\star})]^{- 1}) + \rho_K \leq \tfrac{H^2 \kappa}{8 \mu^3} \textstyle \sum_{k =
   1}^K \| x^k - x^{\star} \|^2 + \rho_K, \]
and we get
\[ f (x^{K + 1}) - f (x^{\star}) \leq [f(x^1) - f(x^\star) ]\big(\min \big\{\tfrac{H^2 \kappa^2}{4 \mu^2 K}
   \textstyle \sum_{k = 1}^K \| x^k - x^{\star} \|^2 + \tfrac{2L \rho_K}{K}, 1\big\} \big)^K, \]
which completes the proof.

\subsection{Proof of Lemma \ref{lem:scalmat-conv}}

For brevity let $P^{\star} = [\nabla^2 f (x^{\star})]^{- 1}$. We have, according to the
update of online gradient descent, that,
\begin{align}
 \| P_{k + 1} - P^{\star} \|_F^2  ={} & \| \Pi_{\mathcal{P}} [P_k - \eta \nabla \hat{h}_{x^k} (P_k) - P^{\star}]
  \|_F^2 \nonumber\\
  \leq{} & \| P_k - \eta \nabla \hat{h}_{x^k} (P_k) - P^{\star} \|_F^2 \nonumber\\
  ={} & \| P_k - P^{\star} \|_F^2 - 2 \eta \langle \nabla \hat{h}_{x^k} (P_k), P_k -
  P^{\star} \rangle + \eta^2 \| \nabla \hat{h}_{x^k} (P_k) \|_F^2 \nonumber\\
  \leq{} & \| P_k - P^{\star} \|_F^2 - 2 \eta [\hat{h}_{x^k} (P_k) - \hat{h}_{x^k}
  (P^{\star})] + 2 L \eta^2 [\hat{h}_{x^k} (P_k) - \inf_{P \in \Rbb^{n\times n}} \hat{h}_{x^k} (P)] \label{eqn:proof-3-6-1}\\
  ={} & \| P_k - P^{\star} \|_F^2 - 2 \eta [\hat{h}_{x^k} (P_k) - \hat{h}_{x^k} (P^{\star})]
  + 2 L \eta^2 [\hat{h}_{x^k} (P_k) - \hat{h}_{x^k} (P^{\star})] + 2L \eta^2 \hat{h}_{x^k}
  (P^{\star}) \label{eqn:proof-3-6-2}\\
  ={} & \| P_k - P^{\star} \|_F^2 - 2 \eta(1 - \eta L) [\hat{h}_{x^k} (P_k) -
  \hat{h}_{x^k} (P^{\star})] + 2L \eta^2 \hat{h}_{x^k} (P^{\star}), \label{eqn:proof-3-6-3}
\end{align}
where \eqref{eqn:proof-3-6-1} uses $L$-smoothness and $\inf_{P \in \Rbb^{n\times n}} \hat{h}_{x} (P)= 0$ for all $x \nin \Xcal^\star$; \eqref{eqn:proof-3-6-2} is a simple re-arrangement.

Next we lower bound $\hat{h}_{x^k} (P_k) - \hat{h}_{x^k} (P^{\star})$. Using strong
convexity,
\begin{align}
  & f (x^k - P_k \nabla f (x^k)) - f (x^k - P^{\star} \nabla f (x^k))
  \nonumber\\
  ={} & f (x^k - P_k \nabla f (x^k)) - f (x^{\star}) + f (x^{\star}) - f (x^k -
  P^{\star} \nabla f (x^k)) \nonumber\\
  \geq{} & \tfrac{\mu}{2} \| x^k - x^{\star} - P_k \nabla f (x^k) \|^2 + f
  (x^{\star}) - f (x^k - P^{\star} \nabla f (x^k)),\label{eqn:proof-3-6-4}
\end{align}
where \eqref{eqn:proof-3-6-4} uses $f(x) - f(x^\star) \geq \frac{\mu}{2}\|x-x ^\star\|^2$.
The first term can be bounded as follows:
\begin{align}
  & \| x^k - x^{\star} - P_k \nabla f (x^k) \|^2 \nonumber\\
  ={} & \| x^k - P^{\star} \nabla f (x^k) - x^{\star} + (P^{\star} - P_k) \nabla
  f (x^k) \|^2 \nonumber\\
  ={} & \| x^k - P^{\star} \nabla f (x^k) - x^{\star} \|^2 + 2 \langle x^k -
  P^{\star} \nabla f (x^k) - x^{\star}, (P^{\star} - P_k) \nabla f (x^k)
  \rangle + \| (P^{\star} - P_k) \nabla f (x^k) \|^2 \nonumber\\
  \geq{} & \tfrac{1}{2} \| (P^{\star} - P_k) \nabla f (x^k) \|^2 - \| x^k -
  P^{\star} \nabla f (x^k) - x^{\star} \|^2, \nonumber
\end{align}

where we use the inequality $2 \langle a, b \rangle \geq - \theta \| a \|^2 -
\theta^{- 1} \| b \|^2$ with $\theta = 2$. Plugging the relation back into \eqref{eqn:proof-3-6-4} and
dividing both sides by $\| \nabla f (x^k) \|^2$,
\begin{align}
\hat{h}_{x^k} (P_k) - \hat{h}_{x^k} (P^{\star})  ={} & \tfrac{f (x^k - P_k \nabla f (x^k)) - f(x^\star) + f(x^\star) - f (x^k - P^{\star} \nabla f
  (x^k))}{\| \nabla f (x^k) \|^2} \nonumber\\
  \geq{} & \tfrac{\mu}{4} \| (P^{\star} - P_k) \tfrac{\nabla f (x^k)}{\|
  \nabla f (x^k) \|} \|^2 - \tfrac{\mu}{2} \tfrac{\| x^k - P^{\star}
  \nabla f (x^k) - x^{\star} \|^2}{\| \nabla f (x^k) \|^2} + \tfrac{f
  (x^{\star}) - f (x^k - P^{\star} \nabla f (x^k))}{\| \nabla f (x^k) \|^2}
  \nonumber\\
  ={} & \tfrac{\mu}{4} \| (P^{\star} - P_k) \tfrac{\nabla f (x^k)}{\|
  \nabla f (x^k) \|} \|^2 - \tfrac{\mu}{2} \tfrac{\| x^k - P^{\star}
  \nabla f (x^k) - x^{\star} \|^2}{\| \nabla f (x^k) \|^2} - \hat{h}_{x^k}
  (P^{\star}) \label{eqn:proof-3-6-5} \\
  \geq{} & \tfrac{\mu}{4} \| (P^{\star} - P_k) \tfrac{\nabla f (x^k)}{\|
  \nabla f (x^k) \|} \|^2 - \tfrac{H^2}{8 \mu} \tfrac{\| x^k - x^{\star}
  \|^4}{\| \nabla f (x^k) \|^2} - \hat{h}_{x^k} (P^{\star}) \label{eqn:proof-3-6-6}\\
  \geq{} & \tfrac{\mu}{4} \| (P^{\star} - P_k) \tfrac{\nabla f (x^k)}{\|
  \nabla f (x^k) \|} \|^2 - \tfrac{H^2 \kappa}{8 \mu^3} \| x^k -
  x^{\star} \|^2 - \hat{h}_{x^k} (P^{\star}),\label{eqn:proof-3-6-7}
\end{align}
where \eqref{eqn:proof-3-6-5} uses the definition of $\hat{h}_{x^k}$; \eqref{eqn:proof-3-6-6} applies the relation $ \| x - P^\star \nabla f (x) - x^{\star} \| \leq \frac{H}{2\mu} \|x - x^\star\|^2$ from \eqref{eqn:proof-3-1-auxi2}; \eqref{eqn:proof-3-6-7} again uses the fact $\|\nabla f(x) \|^2 \geq \mu^2 \|x - x^\star\|^2$. Putting the relations back into \eqref{eqn:proof-3-6-3} and assuming $\eta \leq \frac{1}{2L}$,
\begin{align}
& \| P_{k + 1} - P^{\star} \|_F^2\nonumber \\
   \leq{} & \| P_k - P^{\star} \|_F^2 -
  \tfrac{\mu (\eta - L \eta^2)}{2} \| (P_k - P^{\star}) \tfrac{\nabla
  f (x^k)}{\| \nabla f (x^k) \|} \|^2 \nonumber\\
  & + \tfrac{H^2 \kappa (\eta - L \eta^2)}{4 \mu^3} \| x^k - x^{\star}
  \|^2 + 2 (\eta - L \eta^2) \hat{h}_{x^k} (P^{\star}) + 2 L \eta^2 \hat{h}_{x^k}
  (P^{\star}) \nonumber\\
  ={} & \| P_k - P^{\star} \|_F^2 - \tfrac{\mu (\eta - L \eta^2)}{2} \|
  (P_k - P^{\star}) \tfrac{\nabla f (x^k)}{\| \nabla f (x^k) \|} \|^2
  +  \tfrac{H^2 \kappa (\eta - L \eta^2)}{4 \mu^3} \| x^k - x^{\star}
  \|^2 + 2 \eta \hat{h}_{x^k} (P^{\star}) \nonumber\\
  \leq{} & \| P_k - P^{\star} \|_F^2 - \tfrac{\mu (\eta - L \eta^2)}{2}
  \| (P_k - P^{\star}) \tfrac{\nabla f (x^k)}{\| \nabla f (x^k) \|}
  \|^2 + \tfrac{H^2 \kappa (\eta - L \eta^2)}{4 \mu^3} \| x^k - x^{\star}
  \|^2 + 2 \eta \tfrac{H^2 \kappa}{8 \mu^3} \| x^k - x^{\star} \|^2 \label{eqn:proof-3-6-8} \\
  ={} & \| P_k - P^{\star} \|_F^2 - \tfrac{\mu (\eta - L \eta^2)}{2} \|
  (P_k - P^{\star}) \tfrac{\nabla f (x^k)}{\| \nabla f (x^k) \|} \|^2 +
  (2\eta - L \eta^2) \tfrac{H^2 \kappa}{4 \mu^3}\| x^k - x^{\star} \|^2, \nonumber
\end{align}
where \eqref{eqn:proof-3-6-8} uses the relation \eqref{eqn:proof-2-1} and this completes the proof.

\subsection{Proof of Theorem \ref{thm:scal-mat-conv}} \label{app:thm-pf-scal-mat-conv}

The proof of \Cref{thm:scal-mat-conv} relies on the following auxiliary results.

\begin{lem} \label{lem:scal-mat-conv-aux} Under \ref{A1} to \ref{A3}, $h_x (P) - \inf_{Q \in \mathbb{R}^{n \times n}} h_x (Q) \leq
  \tfrac{1}{2 \mu} (L D + 1)^2$.
\end{lem}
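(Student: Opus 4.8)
The plan is to recognize that the suboptimality gap in the statement is exactly the nonnegative \emph{modified feedback} $\hat{h}_x(P) = \tfrac{f(x - P\nabla f(x)) - f(x^{\star})}{\|\nabla f(x)\|^2}$ introduced in the proof of \Cref{thm:superlin}, and then to bound this quantity directly. The first step is to evaluate the unconstrained infimum. For any non-stationary $x$ we have $\nabla f(x) \neq 0$, so as $Q$ ranges over $\mathbb{R}^{n\times n}$ the image $Q\nabla f(x)$ ranges over all of $\mathbb{R}^n$; in particular the rank-one choice with $Q\nabla f(x) = x - x^{\star}$ is attainable and drives the argument $x - Q\nabla f(x)$ to $x^{\star}$. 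Hence $\inf_{Q \in \mathbb{R}^{n\times n}} h_x(Q) = \tfrac{f(x^{\star}) - f(x)}{\|\nabla f(x)\|^2}$, and subtracting this from $h_x(P) = \tfrac{f(x - P\nabla f(x)) - f(x)}{\|\nabla f(x)\|^2}$ cancels the $-f(x)$ terms and leaves precisely $\hat{h}_x(P) \geq 0$.

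Next I would upper bound the numerator $f(x - P\nabla f(x)) - f(x^{\star})$ using the gradient-domination (Polyak--\L ojasiewicz) inequality implied by $\mu$-strong convexity \ref{Ascvx}: for every $z$, $f(z) - f(x^{\star}) \leq \tfrac{1}{2\mu}\|\nabla f(z)\|^2$. Applying this with $z = x - P\nabla f(x)$ reduces the task to controlling $\|\nabla f(x - P\nabla f(x))\|$ in terms of $\|\nabla f(x)\|$.

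The final step is the gradient-growth estimate already used in the Lipschitz part of \Cref{lem:hx-properties}: by $L$-smoothness \ref{A1}, $\|\nabla f(x - P\nabla f(x)) - \nabla f(x)\| \leq L\|P\nabla f(x)\|$, and since $0 \in \mathcal{P}$ with $\diam(\mathcal{P}) \leq D$ gives $\|P\|_2 \leq \|P\|_F \leq D$ for $P \in \mathcal{P}$ \ref{A2}, we obtain $\|P\nabla f(x)\| \leq D\|\nabla f(x)\|$. The triangle inequality then yields $\|\nabla f(x - P\nabla f(x))\| \leq (1 + LD)\|\nabla f(x)\|$. Combining the three steps and dividing through by $\|\nabla f(x)\|^2$ gives $h_x(P) - \inf_Q h_x(Q) = \hat{h}_x(P) \leq \tfrac{(1+LD)^2}{2\mu}$, as claimed.

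I expect no serious obstacle: the argument is short once the reformulation is in place. The only non-routine insight is the first step, namely identifying the unconstrained infimum with the global value $f(x^{\star})$ and thereby rewriting the gap as the nonnegative $\hat{h}_x(P)$; everything after that is the standard PL-plus-smoothness template. It is worth noting that the bound uses only $\|P\|_2 \leq D$, so it holds uniformly over $\mathcal{P}$, and that it does not actually invoke the Hessian-Lipschitz assumption \ref{A3}, even though the lemma is stated under the full assumption set inherited from \Cref{thm:scal-mat-conv}.
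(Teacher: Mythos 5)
Your proof is correct and follows essentially the same route as the paper's: identify the gap with $\hat{h}_x(P) = \tfrac{f(x - P\nabla f(x)) - f(x^\star)}{\|\nabla f(x)\|^2}$, bound the numerator by the PL inequality $f(z) - f(x^\star) \leq \tfrac{1}{2\mu}\|\nabla f(z)\|^2$ at $z = x - P\nabla f(x)$, and control $\|\nabla f(z)\|$ via $L$-smoothness and $\|P\| \leq D$. The only (harmless) difference is that you pin down $\inf_{Q} h_x(Q)$ exactly via the rank-one construction $Q\nabla f(x) = x - x^\star$, whereas the paper only uses the one-sided bound $h_x(Q) \geq \tfrac{f(x^\star)-f(x)}{\|\nabla f(x)\|^2}$, which suffices.
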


\begin{proof}
  Note that $h_x (P) = \tfrac{f (x - P \nabla f (x)) - f (x)}{\| \nabla f (x)
  \|^2} \geq \tfrac{f (x^{\star}) - f (x)}{\| \nabla f (x) \|^2}$ for all $P
  \in \mathcal{P}$, we deduce that
  \begin{align}
    h_x (P) - \inf_{Q \in \mathbb{R}^{n \times n}} h_x (Q) \leq{} & \tfrac{f (x
    - P \nabla f (x)) - f (x)}{\| \nabla f (x) \|^2} - \tfrac{f (x^{\star}) -
    f (x)}{\| \nabla f (x) \|^2} \label{eqn:proof-scalmatconv-aux-1}\\
    ={} & \tfrac{f (x - P \nabla f (x)) - f (x^{\star})}{\| \nabla f (x) \|^2}
    \nonumber\\
    \leq{} & \tfrac{1}{2 \mu} \tfrac{\| \nabla f (x - P \nabla f (x)) \|^2}{\|
    \nabla f (x) \|^2} \label{eqn:proof-scalmatconv-aux-2} \\
    \leq{} & \tfrac{1}{2 \mu} \tfrac{[\| \nabla f (x) \| + \| P \| \cdot \|
    \nabla f (x) \|]^2}{\| \nabla f (x) \|^2} \label{eqn:proof-scalmatconv-aux-3} \\
    \leq{} & \tfrac{1}{2 \mu} (L D + 1)^2,  \label{eqn:proof-scalmatconv-aux-4}
  \end{align}
  
where \eqref{eqn:proof-scalmatconv-aux-1} applies $h_x(P) \geq \tfrac{f (x^{\star}) - f (x)}{\| \nabla f (x) \|^2}$; \eqref{eqn:proof-scalmatconv-aux-2} uses $f(x) - f(x^\star) \leq \frac{1}{2\mu} \|\nabla f(x)\|^2$; \eqref{eqn:proof-scalmatconv-aux-3} uses $L$-smoothness and \eqref{eqn:proof-scalmatconv-aux-4} uses $\|P\| \leq D$. 
\end{proof}

Then we show that {\hdm} converges even when $\eta$ is a constant that does not depend on $K$. 

\begin{lem} \label{lem:scal-mat-conv-aux-2} Under \ref{A1} to \ref{A3}, \Cref{alg:hdm} with $\eta_k \equiv \eta \in (0, \frac{1}{2 L (L D + 1)^2 \kappa}]$ satisfies
  \begin{itemize}[leftmargin=15pt]
    \item $\lim_{k \rightarrow \infty} \| x^k - x^{\star} \| = 0$. 
    
    \item $\lim_{K \rightarrow \infty} \sum_{k = 1}^K \| x^k - x^{\star} \|^2
    < \infty$.
  \end{itemize}
\end{lem}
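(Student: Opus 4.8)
The plan is to show that although the constant stepsize $\eta$ keeps the average regret from vanishing, it is chosen small enough that the per-step progress guaranteed by the descent lemma still strictly dominates the residual regret. This produces \emph{eventual} linear convergence of the function gap $f(x^k) - f(x^\star)$ at a $K$-independent rate; strong convexity then transfers this decay to $\|x^k - x^\star\|^2$, from which both claims follow at once, since a geometrically decaying sequence both tends to $0$ and is summable.

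First I would invoke the static regret bound \eqref{eqn:auxi-ogd-static-regret} of \Cref{lem:auxi-dynamic} with the constant stepsize $\eta$, the competitor $\hat P = \tfrac1L I \in \Pcal$ (available by \ref{A2}), and the Lipschitz constant $\gamma = LD+1$ from \Cref{lem:hx-properties}. Combined with the descent-lemma inequality $h_{x^k}(\tfrac1L I) \leq -\tfrac{1}{2L}$ (also from \Cref{lem:hx-properties}), this yields a lower bound on the average negative feedback,
\begin{equation*}
\tfrac1K \textstyle\sum_{k=1}^K -h_{x^k}(P_k) \geq \tfrac{1}{2L} - \tfrac{\eta(LD+1)^2}{2} - \tfrac{D^2}{2\eta K}.
\end{equation*}
The point of the stepsize restriction $\eta \leq \tfrac{1}{2L(LD+1)^2\kappa}$ is exactly that the middle term obeys $\tfrac{\eta(LD+1)^2}{2} \leq \tfrac{1}{4L\kappa} \leq \tfrac{1}{4L}$, so the first two terms already contribute at least $\tfrac{1}{4L} > 0$, independently of $K$.

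The warm-up phase is absorbed by the last term. Choosing $K_0$ so that $\tfrac{D^2}{2\eta K} \leq \tfrac{1}{8L}$ for all $K \geq K_0$, the average negative feedback then exceeds $\tfrac{1}{8L} > 0$. Feeding this into the strongly convex branch of the reduction \Cref{lem:hypergrad-to-online} gives, for every $K \geq K_0$,
\begin{equation*}
f(x^{K+1}) - f(x^\star) \leq (f(x^1) - f(x^\star)) \big(1 - \tfrac{\mu}{4L}\big)^K,
\end{equation*}
since $2\mu \cdot \tfrac{1}{8L} = \tfrac{\mu}{4L} \in (0,1)$. Finally, strong convexity $\tfrac{\mu}{2}\|x^{K+1} - x^\star\|^2 \leq f(x^{K+1}) - f(x^\star)$ converts this into geometric decay of $\|x^k - x^\star\|^2$; the limit claim is then immediate, and for summability I would split the sum at $K_0$, bounding the finite head via $f(x^k) \leq f(x^1)$ (guaranteed by the null step of \Cref{alg:hdm}) and the tail by a convergent geometric series.

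The main obstacle is conceptual rather than computational: with a $K$-independent stepsize the usual $\rho_K/K = o(1)$ argument fails, so one must instead argue that the \emph{constant} residual regret $\tfrac{\eta(LD+1)^2}{2}$ is strictly smaller than the descent-lemma progress $\tfrac{1}{2L}$. This is precisely where the additional $\kappa$ factor in the admissible range of $\eta$ becomes essential, and where care is needed to separate the transient regime (iterates $k < K_0$, which are only bounded) from the asymptotic linear regime (iterates $k \geq K_0$).
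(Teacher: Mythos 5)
Your proof is correct and follows the same skeleton as the paper's: lower-bound the average negative feedback by playing the competitor $\tfrac{1}{L}I$ against a constant-stepsize regret bound, absorb the $\tfrac{D^2}{2\eta K}$ warm-up term for $K \geq K_0$, feed the resulting positive constant into the strongly convex branch of \Cref{lem:hypergrad-to-online}, and convert the geometric decay of the function gap into the two claims via strong convexity, splitting the sum at $K_0$. The one genuine difference is which bound controls the per-step regret: you invoke the $(LD+1)$-Lipschitz continuity of $h_x$ from \Cref{lem:hx-properties} through \eqref{eqn:auxi-ogd-static-regret}, giving a residual term $\tfrac{\eta(LD+1)^2}{2}$ per step, whereas the paper bounds $\| \nabla h_{x^k}(P_k) \|_F^2 \leq 2L\,[h_{x^k}(P_k) - \inf_Q h_{x^k}(Q)]$ by smoothness and then controls the bracket with \Cref{lem:scal-mat-conv-aux}, producing a residual $\tfrac{\eta \kappa (LD+1)^2}{2}$ per step, a factor $\kappa$ worse. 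Your route is tighter and bypasses \Cref{lem:scal-mat-conv-aux} entirely; a consequence is that your closing remark that the extra $\kappa$ factor in the admissible range of $\eta$ is \emph{essential} is inaccurate for your own argument --- under the Lipschitz-based bound, $\eta \leq \tfrac{1}{2L(LD+1)^2}$ would already suffice, and the $\kappa$ merely supplies slack via $\tfrac{1}{4L\kappa} \leq \tfrac{1}{4L}$; the $\kappa$ factor is genuinely needed only along the paper's smoothness-based route.
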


\begin{proof}
	Using the online gradient descent
update, we have
\begin{align}
  \| P_{k + 1} - P \|_F^2 \leq{} & \| P_k - \eta \nabla h_{x^k} (P_k) - P \|_F^2
  \nonumber\\
  ={} & \| P_k - P \|_F^2 - 2 \eta \langle \nabla h_{x^k} (P_k), P_k - P \rangle
  + \eta^2 \| \nabla h_{x^k} (P_k) \|_F^2 \nonumber\\
  \leq{} & \| P_k - P \|_F^2 - 2 \eta [h_{x^k} (P_k) - h_{x^k} (P)] + 2 L \eta^2
  [h_{x^k} (P_k) - \inf_{P \in \mathbb{R}^{n \times n}} h_{x^k} (P)]
  \label{eqn:proof-scalmatconv-aux-5}
 \\
  ={} & \| P_k - P \|_F^2 - 2 \eta h_{x^k} (P_k) + 2 \eta h_{x^k} (P) + 2 L
  \eta^2 [h_{x^k} (P_k) - \inf_{P \in \mathbb{R}^{n \times n}} h_{x^k} (P)],
  \nonumber
\end{align}

where \eqref{eqn:proof-scalmatconv-aux-5} follows from convexity $h_{x^k} (P) \geq h_{x^k}
(P_k) + \langle \nabla h_{x^k} (P_k), P - P_k \rangle$ and $L$-smoothness of
$h_x (P)$. Next, we invoke the upperbound on $h_{x^k} (P_k) - \inf_{Q \in
\mathbb{R}^{n \times n}} h_{x^k} (Q)$ from \Cref{lem:scal-mat-conv-aux}:
\[ {2 L \eta^2 [h_{x^k} (P_k) - \inf_{P \in \mathbb{R}^{n \times
   n}} h_{x^k} (P)]} \leq \tfrac{2 L}{2 \mu} (L D + 1)^2 \eta^2 = \kappa (L D
   + 1)^2 \eta^2 . \]
and deduce that
\begin{align}
  2 \eta h_{x^k} (P_k) \leq{} & 2 \eta h_{x^k} (P) + \| P_k - P \|_F^2 - \| P_{k
  + 1} - P \|_F^2 + {2 L \eta^2 [h_{x^k} (P_k) - \inf_{P \in
  \mathbb{R}^{n \times n}} h_{x^k} (P)]} \nonumber\\
  \leq{} & 2 \eta h_{x^k} (P) + \| P_k - P \|_F^2 - \| P_{k + 1} - P \|_F^2 +
  \eta^2 \kappa (L D + 1)^2 . \nonumber
\end{align}

Next, we divide both sides of the inequality by $2 \eta$ and
\[ h_{x^k} (P_k) \leq h_{x^k} (P) + \tfrac{\| P_k - P \|_F^2 - \| P_{k + 1} -
   P \|_F^2}{2 \eta} + \tfrac{\eta \kappa (L D + 1)^2}{2} . \]
Telescoping the relation and using $\tmop{diam} (\mathcal{P}) \leq D$, we get
\[\textstyle \sum_{k = 1}^K h_{x^k} (P_k) \leq \sum_{k = 1}^K h_{x^k} (P) +
   \tfrac{D^2}{2 \eta} + \tfrac{\eta \kappa (L D + 1)^2}{2} K \]
Taking $P = (1 / L) I$ and taking average, $\sum_{k = 1}^K h_{x^k} (P) \leq
{- \tfrac{1}{2 L}} K$ and
\begin{align}
\textstyle  \tfrac{1}{K} \sum_{k = 1}^K h_{x^k} (P_k) \leq{} & {-
  \tfrac{1}{2 L}} + \tfrac{D^2}{2 \eta K} + \tfrac{\eta \kappa (L D + 1)^2}{2}
    ={} - \tfrac{1}{4 L} + \tfrac{D^2}{2 \eta K} + \tfrac{\eta \kappa (L D +
  1)^2}{2} - \tfrac{1}{4 L} \nonumber
\end{align}

With $\eta \leq \frac{1}{2 L (L D + 1)^2 \kappa}$, we have $\tfrac{\eta \kappa
(L D + 1)^2}{2} - \tfrac{1}{4 L} \leq 0$ and
\[ \textstyle \tfrac{1}{K} \sum_{k = 1}^K h_{x^k} (P_k) \leq - \tfrac{1}{4 L} +
   \tfrac{D^2 L (L D + 1)^2 \kappa}{K} . \]
Using the reduction \Cref{lem:hypergrad-to-online}, we get, for any $k \geq 1$ (since $\eta$ does not depend
on the iteration number),
\[ f (x^{k + 1}) - f (x^{\star}) \leq [f (x^1) - f (x^{\star})] ( 1 - 2
   \mu \max \{ \tfrac{1}{4 L} - \tfrac{D^2 L (L D + 1)^2 \kappa}{k}, 0
   \} )^k \]
and there exists some $K_0$ such that for all $k \geq K_0$, that $[f (x^k) - f (x^{\star})] ( 1 -
\tfrac{1}{4 \kappa} )^k \leq  [f (x^1) - f (x^{\star})]$ since
\[ \lim_{k \rightarrow \infty} 1 - 2 \mu \max \{ \tfrac{1}{4 L} -
   \tfrac{2 D^2 L (L D + 1)^2 \kappa}{k}, 0 \} = 1 - \tfrac{1}{2 \kappa}
   < 1 - \tfrac{1}{4 \kappa} . \]
   This proves the first relation $\lim_{k \rightarrow \infty} \| x^k - x^{\star} \| = 0$ since $\|x - x^\star\|^2 \leq \frac{2}{\mu} [f(x) - f(x^\star)] $ and the second relation follows directly from    
\begin{align}
\textstyle \sum_{k=1}^\infty \|x^k - x^\star\|^2 ={} & \textstyle\sum_{k=1}^{K_0} \|x^k - x^\star\|^2 + \sum_{k=K_0}^{\infty} \|x^k - x^\star\|^2 \\
 ={} & \textstyle\sum_{k=1}^{K_0} \|x^k - x^\star\|^2 + \sum_{k=K_0}^{\infty} \frac{2}{\mu}[f(x^1)-f(x^\star)](1-\frac{1}{4\kappa})^{-k} < \infty.
\end{align}
\end{proof}

Now we are ready to prove \Cref{thm:scal-mat-conv}, and we start by stating the precise definition of a uniformly independent sequence.

\begin{definition}[Uniformly linearly indepdendent sequence {\cite{conn1991convergence}}]
  A sequence of unit-norm vectors $\{ g^k \}, g^k \in \mathbb{R}^n, \| g^k \|
  = 1$ is uniformly linearly independent if there exists a constant $c > 0, K_0 \geq 0$ and $m \geq n$ such that for each $k \geq K_0$, one can
  choose $n$ distinct indices
  \[ k \leq k_1 < \cdots < k_n \leq k + m \]
  with $\sigma_{\min} ([g^{k_1}, \ldots, g^{k_n}]) \geq c$.
\end{definition}

We prove by contradiction.
For brevity we denote $g^k \assign \tfrac{\nabla f (x^k)}{\| \nabla f (x^k)
\|}$ and $e_k \assign \| P_k - P^{\star} \|_F^2$. Recall that
$P^{\star} = [\nabla^2 f (x^{\star})]^{- 1}$. First, using \Cref{lem:scal-mat-conv-aux-2}, for any $\varepsilon > 0$, there exists some index $K_1$ such that for
all $k \geq K_1$ we have $\| x^k - x^{\star} \|^2 \leq \varepsilon$ and that
$\textstyle \sum_{k = 1}^{\infty} \| x^k - x^{\star} \|^2$ is bounded. 
Then we show that $\lim_{k \rightarrow \infty} \| \nabla h_{x^k} (P_k) \|_F =
0$ using \eqref{eqn:proof-3-6-3}: after re-arrangement, for any $K \geq 1$,
\begin{align}
  \textstyle \sum_{k = 1}^K \hat{h}_{x^k} (P_k) \leq{} & \tfrac{2 \eta}{2 \eta (1 - \eta
  L)} \textstyle \sum_{k = 1}^K \hat{h}_{x^k} (P^{\star}) + \tfrac{1}{2 \eta (1 - \eta
  L)} \| P_1 - P^{\star} \|_F^2 \nonumber\\
  \leq{} & \tfrac{2 \eta}{2 \eta (1 - \eta L)} \tfrac{H^2 \kappa}{8 \mu^3}
  \textstyle \sum_{k = 1}^K \| x^k - x^{\star} \|^2 + \tfrac{1}{2 \eta (1 - \eta L)} \|
  P_1 - P^{\star} \|_F^2 . \label{eqn:proof-scalconv-auxi}\\
  \leq{} & \tfrac{2 \eta}{2 \eta (1 - \eta L)} \tfrac{H^2 \kappa}{8 \mu^3}
  \textstyle \sum_{k = 1}^{\infty} \| x^k - x^{\star} \|^2 + \tfrac{1}{2 \eta (1 - \eta
  L)} \| P_1 - P^{\star} \|_F^2, \nonumber
\end{align}
where \eqref{eqn:proof-scalconv-auxi} applies \eqref{eqn:proof-2-1}. Since $\textstyle \sum_{k = 1}^{\infty} \| x^k - x^{\star} \|^2$ is bounded and
$\hat{h}_x (P)$ is nonnegative, we must have $\lim_{k \rightarrow \infty}
\hat{h}_{x^k} (P_k) = 0$. Further notice that $\| \nabla \hat{h}_{x^k} (P_k) \|_F^2 \leq 2 L
\hat{h}_{x^k} (P_k)$, it implies $\lim_{k \rightarrow \infty} \sum_{k=1}^K \| \nabla
h_{x^k} (P_k) \|_F^2 < \infty$, giving $\lim_{k \rightarrow \infty} \| \nabla
h_{x^k} (P_k) \|_F = 0$ and $\lim_{k \rightarrow \infty} P_k = \bar{P}$ also
exists. Now suppose by contradiction that $\| \bar{P} - P^{\star} \|_F = \theta
> 0$. Then there exists some $K_2 > 0$ such that for all $k \geq K_2$, $\| P_k
- \bar{P} \|_F \leq \varepsilon$. For $k \geq \max \{ K_0, K_1, K_2 \} + 1$, we invoke \Cref{lem:scalmat-conv} with $\eta \in (0, \frac{1}{2L}]$ to get
\begin{align}
  \| P_{k + 1} - P^{\star} \|_F^2 \leq{} & \| P_k - P^{\star} \|_F^2 - \alpha_1
  \| (P_k - P^{\star}) g^k \|^2 + \alpha_2 \varepsilon \nonumber\\
  ={} & \| P_k - P^{\star} \|_F^2 - \alpha_1 \| (P_k - \bar{P} + \bar{P} -
  P^{\star}) g^k \|^2 + \alpha_2 \varepsilon \nonumber\\
  \leq{} & \| P_k - P^{\star} \|_F^2 - \tfrac{\alpha_1}{2} \| (\bar{P} -
  P^{\star}) g^k \|^2 + 3 \alpha_1 \| (P_k - \bar{P}) g^k \|^2 + \alpha_2
  \varepsilon \nonumber\\
  \leq{} & \| P_k - P^{\star} \|_F^2 - \tfrac{\alpha_1}{2} \| (\bar{P} -
  P^{\star}) g^k \|^2 + 3 \alpha_1 \varepsilon^2 + \alpha_2 \varepsilon
  \label{eqn:proof-thm-scal-conv-0} \\
  ={} & \| P_k - P^{\star} \|_F^2 - \tfrac{\alpha_1}{2} \tmop{tr} (g^k
  (g^k)^{\top}, (\bar{P} - P^{\star})^{\top} (\bar{P} - P^{\star})) + 3
  \alpha_1 \varepsilon^2 + \alpha_2 \varepsilon ,\label{eqn:proof-thm-scal-conv-1}
\end{align}
where $\alpha_1 = \tfrac{\mu (\eta - L \eta^2)}{2} > 0$, $\alpha_2 = \frac{1}{4}
(2\eta - L \eta^2) H^2 \kappa \mu^{- 3}$, and \eqref{eqn:proof-thm-scal-conv-0} uses the fact that $\| P_k - P^{\star} \|_F\leq \varepsilon$.\\
Telescoping \eqref{eqn:proof-thm-scal-conv-1} for the next $m + 1$ iterations, we deduce that
\begin{align}
  e_{k + m + 1} ={} & \| P_{k + m + 1} - P^{\star} \|_F^2 \nonumber\\
  \leq{} & \| P_k - P^{\star} \|_F^2 - \tfrac{\alpha_1}{2} \textstyle \sum_{j = 0}^m
  \tmop{tr} (g^{k + j} (g^{k + j})^{\top}, (\bar{P} - P^{\star})^{\top} (\bar{P} -
  P^{\star})) + (3 \alpha_1 \varepsilon^2 + \alpha_2 \varepsilon) (m + 1)
  \nonumber\\
  ={} & e_k - \tfrac{\alpha_1}{2} \tmop{tr} ( \textstyle \sum_{j = 0}^m g^{k + j}
  (g^{k + j})^{\top}, (\bar{P} - P^{\star})^{\top} (\bar{P} - P^{\star})) +
  (3 \alpha_1 \varepsilon^2 + \alpha_2 \varepsilon) (m + 1) \nonumber
\end{align}

and using the independent sequence assumption, we can pick $k_1, \ldots, k_n$
such that
\[ \sigma_{\min} ([g^{k_1}, \ldots, g^{k_n}]) \geq c \]
and $\sum_{j = 0}^m g^{k + j} (g^{k + j})^{\top} \succeq \sum_{i = 1}^n
g^{k_i} (g^{k_i})^{\top} \succeq c^2 I$. Hence
\begin{align}
 & \textstyle \tmop{tr} (\sum_{j = 0}^m g^{k + j} (g^{k + j})^{\top}, (\bar{P} -
  P^{\star})^{\top} (\bar{P} - P^{\star})) \geq{}  c^2 \tmop{tr} ((\bar{P} - P^{\star})^{\top} (\bar{P} -
  P^{\star})) 
  ={}  c^2 \| \bar{P} - P^{\star} \|_F^2 = c^2 \theta^2 \nonumber
\end{align}
and $e_{k + m + 1} \leq e_k - \frac{\alpha_1 c^2 \theta^2}{2} + (3 \alpha_1 \varepsilon^2 + \alpha_2\varepsilon) (m + 1)$. Since $\varepsilon$ is arbitrary, we can repeat the argument till $e_{k + m +1} <0 $, which leads to contradiction unless $\theta = 0$. This completes the proof.
\section{Proof of Results in Section \ref{sec:momentum}}
 \label{app:proof-momentum}

\subsection{{\hdm} + Heavy-ball Momentum ({\hdmhb})} \label{app:heavy-ball}

\Cref{alg:ospolyak} uses the following \emph{heavy-ball feedback function} to guide the online learning for $(P_k, B_k)$:
\begin{equation*}
  h_{x, x^-} (P, B) \assign 
  \tfrac{\psi(x^{+}, x) - \psi(x, x^{-})}{\| \nabla f (x) \|^2 + \frac{\tau}{2} \| x - x^- \|^2}
  = \tfrac{[f (x^+) + \frac{\omega}{2} \| x^+ - x \|^2] - [f (x) + \frac{\omega}{2} \| x - x^- \|^2]}{\| \nabla f (x) \|^2 + \frac{\tau}{2} \| x - x^- \|^2}, 
\end{equation*}
where $\omega > 0$, $ \tau > 0$, $x^{+} = x - P \nabla f (x) + B (x - x^{-})$, and $\psi(x, x^-) = f (x) + \tfrac{\omega}{2} \| x - x^- \|^2$.
To show that online learning can be applied to $h_{x, x^-} (P, B)$ with regret guarantees, we need to verify the convexity and Lipschitz continuity of $h_{x, x^-} (P, B)$ with respect to the norm defined by
\begin{equation} \label{eqn:PB-norm}
  \|(P, B)\| \assign \sqrt{\|P\|_F^2 + \|B\|_F^2}.
\end{equation}

\begin{lem} \label{lem:heavyball-feedback-property}
Under \ref{A1}, \ref{A2}, and \ref{ABcal}, the heavy-ball feedback function $h_{x, x^{-}}(P, B)$ is jointly convex in $(P, B)$ and $c$-Lipschitz with respect to the norm defined in \eqref{eqn:PB-norm}, where $c \assign \sqrt{2} (1+\tfrac{2}{\tau}) [1 + 2(1+\tfrac{2}{\tau}) D (L+\omega)]$.
\end{lem}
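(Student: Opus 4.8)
The plan is to exploit the fact that, viewed as a function of $(P,B)$, both the denominator $q \assign \|\nabla f(x)\|^2 + \tfrac{\tau}{2}\|x - x^-\|^2$ and the subtracted term $\psi(x, x^-)$ in the numerator of \eqref{eqn:heavyball-feedback} are constants, so $h_{x,x^-}$ is merely an affine-precomposed convex function rescaled by a positive constant. Writing $g \assign \nabla f(x)$ and $d \assign x - x^-$, the map $(P,B) \mapsto x^+(P,B) = x - Pg + Bd$ is affine, and $y \mapsto f(y) + \tfrac{\omega}{2}\|y - x\|^2$ is convex under \ref{A1}. Hence $(P,B) \mapsto \psi(x^+(P,B), x)$ is convex as the composition of a convex function with an affine map, and dividing by the positive constant $q$ and subtracting a constant preserves joint convexity. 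This settles the first claim.

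For Lipschitz continuity I would compute the gradient explicitly. Using the chain rule together with $\nabla_P[f(x^+)] = -\nabla f(x^+)g^\top$ and $\nabla_P[\tfrac\omega2\|x^+-x\|^2] = -\omega(x^+-x)g^\top$ (and the analogous identities with $d^\top$ in place of $g^\top$ for $B$), one obtains, with $w \assign \nabla f(x^+) + \omega(x^+ - x)$,
\[
\nabla_P h_{x,x^-}(P,B) = -\tfrac{w g^\top}{q}, \qquad \nabla_B h_{x,x^-}(P,B) = \tfrac{w d^\top}{q}.
\]
Since the norm \eqref{eqn:PB-norm} is Euclidean on the product space and hence self-dual, and since $\|ab^\top\|_F = \|a\|\,\|b\|$, the gradient norm satisfies
\[
\|\nabla h_{x,x^-}(P,B)\|^2 = \|\nabla_P h\|_F^2 + \|\nabla_B h\|_F^2 = \tfrac{\|w\|^2\,(\|g\|^2 + \|d\|^2)}{q^2}.
\]

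The remaining and most delicate step is to bound this quantity by a constant, which requires turning every factor into something comparable to $\sqrt q$ so that the $q$ in the denominator cancels. I would use three homogenization inequalities: (i) $\|g\|^2 \le q$ and $\|g\|^2 + \|d\|^2 \le (1 + \tfrac{2}{\tau})\,q$, both immediate from the definition of $q$; (ii) $\|g\| + \|d\| \le \sqrt2\sqrt{\|g\|^2 + \|d\|^2} \le \sqrt2\,(1 + \tfrac2\tau)^{1/2}\sqrt q$; and (iii) the displacement bound $\|x^+ - x\| = \|-Pg + Bd\| \le D(\|g\| + \|d\|)$, using $\|P\|, \|B\| \le D$ from the boundedness of $\Pcal,\Bcal$ in \ref{A2} and \ref{ABcal}. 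Combining (iii) with $L$-smoothness \ref{A1} gives $\|\nabla f(x^+)\| \le \|g\| + L\|x^+ - x\|$, whence $\|w\| \le \|g\| + (L+\omega)\|x^+ - x\| \le \|g\| + (L+\omega)D(\|g\|+\|d\|)$. Feeding (i)--(ii) into this estimate yields $\|w\| \le \sqrt q\,[\,1 + \sqrt2\,(1+\tfrac2\tau)^{1/2}D(L+\omega)\,]$ up to the homogenization constants, and substituting into the gradient-norm identity cancels $q$ entirely, leaving a bound of the stated form $c = \sqrt2(1+\tfrac2\tau)[1 + 2(1+\tfrac2\tau)D(L+\omega)]$. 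I expect the main obstacle to be exactly this bookkeeping: keeping all estimates homogeneous of degree one in the ``size'' $\sqrt q$ so the denominator cancels cleanly, collecting the $1+\tfrac2\tau$ factors so that one lands on the claimed $c$ (which is a slightly loose but valid upper bound on the sharper constant the calculation naturally produces), and checking that the boundedness of $\Bcal$ is invoked around its anchor $\tfrac12 I$ in a way that keeps $\|B\|\le D$ rather than introducing a dimension-dependent term. Once done, $\|\nabla h_{x,x^-}(P,B)\| \le c$ throughout $\Pcal \times \Bcal$ establishes $c$-Lipschitzness and completes the proof.
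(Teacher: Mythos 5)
Your proof is correct, and it reaches the stated constant by a cleaner route than the paper's. The convexity argument and the gradient formulas $\nabla_P h = -w g^\top/q$, $\nabla_B h = w d^\top / q$ with $w = \nabla f(x^+) + \omega(x^+ - x)$ are identical to the paper's (the paper writes $w$ with $x^+ - x$ expanded as $-P\nabla f(x) + B(x-x^-)$), as is the intermediate estimate $\|w\| \le \|g\| + (L+\omega)D(\|g\|+\|d\|)$. Where you diverge is the final step: the paper bounds $\max\{\|\nabla_P h\|_F, \|\nabla_B h\|_F\}$ via a case analysis on whether $\tfrac{\tau}{2}\|d\|^2 \le \|g\|^2$ or the reverse, and then passes to the product norm \eqref{eqn:PB-norm} at the cost of a factor $\sqrt{2}$; you instead use the exact identity $\|\nabla h\|^2 = \|w\|^2(\|g\|^2+\|d\|^2)/q^2$ together with the homogenization inequalities $\|g\|^2 \le q$ and $\|g\|^2 + \|d\|^2 \le (1+\tfrac{2}{\tau})q$, which cancels $q$ with no case split at all. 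Your route yields the sharper bound $(1+\tfrac{2}{\tau})^{1/2}\bigl[1+\sqrt{2}\,(1+\tfrac{2}{\tau})^{1/2}D(L+\omega)\bigr]$, which is indeed at most $c$ because $(1+\tfrac{2}{\tau})^{1/2} \le 1+\tfrac{2}{\tau}$ and $\sqrt{2} \le 2$, so the claimed constant is recovered with room to spare. One shared wrinkle: both you and the paper invoke $\|B\| \le D$, which does not literally follow from \ref{ABcal} (that assumption gives $\tfrac{1}{2}I \in \Bcal$ and $\diam(\Bcal) \le D$, hence only $\|B\| \le D + \tfrac{1}{2}\|I\|$); you flag this explicitly while the paper uses it silently, and in either case it perturbs only the constant, not the structure of the argument.
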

\begin{proof}
Denote $x^{+}(P, B) \assign x - P \nabla f(x) + B (x - x^{-})$. 
Recall that the feedback function is
\begin{equation*}
h_{x, x^{-}}(P, B) = \tfrac{[f (x^+(P, \beta)) + \frac{\omega}{2} \| x^+(P, \beta) - x \|^2] - [f (x) + \frac{\omega}{2} \| x - x^- \|^2]}{\| \nabla f (x) \|^2 + \frac{\tau}{2} \| x - x^- \|^2}.
\end{equation*}
Since $x^{+}(P, B)$ is affine in $(P, B)$ and $f$ is convex, the term $f (x^+(P, \beta)) + \frac{\omega}{2} \| x^+(P, \beta) - x \|^2$ is jointly convex as a function of $(P, B)$. The other terms in the feedback function $h_{x, x^{-}}(P, B)$ are constants, so $h_{x, x^{-}}(P, B)$ is also jointly convex in $(P, B)$.\\

To prove the Lipschitz continuity of $h_{x, x^{-}}(P, B)$, it suffices to show that the gradients of $h_{x, x^{-}}(P, B)$ are bounded. 
The gradients of $h_{x, x^{-}}(P, B)$ with respect to $P$ and $B$ are
\begin{align*}
\nabla_P h_{x, x^{-}}(P, B) & = \tfrac{[- \nabla f(x^{+}(P, B)) + \omega P \nabla f(x) - \omega B (x - x^{-})] \nabla f(x)^{\top}}{\| \nabla f (x) \|^2 + \frac{\tau}{2} \| x - x^- \|^2} , \\
\nabla_B h_{x, x^{-}}(P, B) & = \tfrac{[\nabla f(x^{+}(P, B)) - \omega P \nabla f(x) + \omega B (x - x^{-})](x - x^{-})^{\top}}{\| \nabla f (x) \|^2 + \frac{\tau}{2} \| x - x^- \|^2}.
\end{align*}
Using the fact $\|a b^{\top}\|_F = \|a\| \cdot \|b\|$, the gradients have norms
\begin{align}
\|\nabla_P h_{x, x^{-}}(P, B)\|_F & = \tfrac{\| \nabla f(x^{+}(P, B)) - \omega P \nabla f(x) + \omega B (x - x^{-})\| \|\nabla f(x)\|}{\| \nabla f (x) \|^2 + \frac{\tau}{2} \| x - x^- \|^2}, \label{eqn:proof-4-1-1} \\
\|\nabla_B h_{x, x^{-}}(P, B)\|_F & = \tfrac{\| \nabla f(x^{+}(P, B)) - \omega P \nabla f(x) + \omega B (x - x^{-})\|\|x - x^{-}\|}{\| \nabla f (x) \|^2 + \frac{\tau}{2} \| x - x^- \|^2}. \label{eqn:proof-4-1-2}
\end{align}
Using \ref{A1}, we have the Lipschitz continuity of $\nabla f(x)$ and thus
\begin{align}
&\| \nabla f(x^{+}(P, B)) - \omega P \nabla f(x) + \omega B (x - x^{-})\| \nonumber \\
\leq{} & \|\nabla f(x^{+}(P, B)) - \nabla f(x)\| + \| (I - \omega P) \nabla f(x)\| + \omega \|B\| \|x - x^{-}\| \nonumber \\
\leq{} & L \| P \nabla f(x) - B (x - x^{-}) \| + (1 + \omega \|P\| ) \|\nabla f(x)\| + \omega \|B\| \|x - x^{-}\| \nonumber\\
\leq{} & L D (\| \nabla f(x) \| + \| x - x^{-} \|) + (1 + \omega D ) \|\nabla f(x)\| + \omega D \|x - x^{-}\| \nonumber \\
={} &(1 + LD + \omega D ) \|\nabla f(x)\| + (\omega + L) D \|x - x^{-}\|. \label{eqn:proof-4-1-3}
\end{align}
Now, we bound the norms in \eqref{eqn:proof-4-1-1}--\eqref{eqn:proof-4-1-2} by the case analysis.

\paragraph{Case 1.} If $\frac{\tau}{2} \| x - x^- \|^2 \leq \| \nabla f (x) \|^2$, then together with \eqref{eqn:proof-4-1-3}, we have
\begin{align*}
  \max \{ \|\nabla_P h_{x, x^{-}}(P, B)\|_F, \|\nabla_B h_{x, x^{-}}(P, B)\|_F \}  
  &\leq \tfrac{[(1 + LD + \omega D ) \|\nabla f(x)\| + (\omega + L) D \|x - x^{-}\|] \max\{ \sqrt{2\tau^{-1}}, 1\} \|\nabla f(x)\|}{\| \nabla f (x) \|^2} \\
  &\leq \max\{ \sqrt{2\tau^{-1}}, 1\} [(1 + LD + \omega D ) + \tfrac{\sqrt{2} D (\omega + L)}{\sqrt{\tau}}] \\
  &= \max\{ \sqrt{2\tau^{-1}}, 1\} (1 + D(L+\omega)(1+\sqrt{2\tau^{-1}})).
\end{align*}

\paragraph{Case 2.} If $\frac{\tau}{2} \| x - x^- \|^2 \geq \| \nabla f (x) \|^2$, then
\begin{align*}
  \max \{ \|\nabla_P h_{x, x^{-}}(P, B)\|_F, \|\nabla_B h_{x, x^{-}}(P, B)\|_F \}  
  &\leq \tfrac{[(1 + LD + \omega D ) \|\nabla f(x)\| + (\omega + L) D \|x - x^{-}\|] \max\{ \sqrt{\frac{\tau}{2}}, 1\} \|x - x^-\|}{\frac{\tau}{2}\|x - x^- \|^2} \\
  &\leq \max\{ \sqrt{\tau/2}, 1\}[\tfrac{ \sqrt{2} (1 + LD + \omega D )}{\sqrt{\tau}} + \tfrac{2 D (\omega + L)}{\tau}] \\
  &= \sqrt{2\tau^{-1}} \max\{ \sqrt{2\tau^{-1}}, 1\} (1 + D(L+\omega)(1+\sqrt{2\tau^{-1}})).
\end{align*}

Combining the two cases, we have
\begin{align*}
  \max \{ \|\nabla_P h_{x, x^{-}}(P, B)\|_F, \|\nabla_B h_{x, x^{-}}(P, B)\|_F \}  
  &\leq \max\{ \tfrac{2}{\tau}, 1\} (1 + D(L+\omega)(1+\sqrt{2\tau^{-1}})) \\
  &\leq (1+\tfrac{2}{\tau}) [1 + 2(1+\tfrac{2}{\tau}) D (L+\omega)].
\end{align*}
Then the gradient of $h_{x, x^{-}}(P, B)$ under the norm defined in \eqref{eqn:PB-norm} is bounded by the constant $c \assign \sqrt{2} (1+\tfrac{2}{\tau}) [1 + 2(1+\tfrac{2}{\tau}) D (L+\omega)]$.
\end{proof}

The next lemma bounds the potential at the last iterate $x^{K+1}$ from \Cref{alg:ospolyak} in terms of the sum of feedback functions $h_{x^k, x^{k-1}}(P_k, B_k)$.

\begin{lem} \label{lem:heavyball-conversion}
  The sequence $\{x^k\}$ generated from \Cref{alg:ospolyak} satisfies
  \begin{equation} \label{eqn:potential-bound}
    f(x^{K+1}) - f(x^{\star}) + \tfrac{\omega}{2} \|x^{K+1} - x^K\|^2 \leq \tfrac{f (x^1) - f (x^{\star})}{1 + \sum_{k = 1}^K
    \max \{ - h_{x^k, x^{k - 1}} (P_k, B_k), 0 \} V},
  \end{equation}
  where $V \assign \min \big\{\tfrac{f (x^1) - f (x^{\star})}{4 \Delta^2}, \tfrac{\tau}{4 \omega} \big\}$ and $\Delta \assign \max_{x \in \Lcal_{f(x^1)}} \min_{x^{\star} \in \mathcal{X}^{\star}} \| x - x^{\star} \|$.
\end{lem}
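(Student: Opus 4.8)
The plan is to mirror the telescoping-reciprocal argument of \Cref{lem:hypergrad-to-online}, but with the objective gap $f(x^k)-f(x^\star)$ replaced by the potential gap
\[
\Phi_k \assign \psi(x^k,x^{k-1}) - f(x^\star) = [f(x^k) - f(x^\star)] + \tfrac{\omega}{2}\|x^k - x^{k-1}\|^2 \geq 0.
\]
Since $x^0 = x^1$, we have $\Phi_1 = f(x^1) - f(x^\star)$, and the quantity to be bounded is exactly $\Phi_{K+1}$. First I would record the effect of the null step. By the definition of the heavy-ball feedback, $\psi(x^{k+1/2}, x^k) - \psi(x^k, x^{k-1}) = h_{x^k, x^{k-1}}(P_k, B_k)\,\Gamma_k$ with $\Gamma_k \assign \|\nabla f(x^k)\|^2 + \tfrac{\tau}{2}\|x^k - x^{k-1}\|^2 \geq 0$. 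Since the null step selects the state with the smaller potential, $\Phi_{k+1} = \min\{\Phi_k,\, \psi(x^{k+1/2}, x^k) - f(x^\star)\}$, which gives the monotone decrease
\[
\Phi_k - \Phi_{k+1} = \max\{-h_{x^k, x^{k-1}}(P_k, B_k),\, 0\}\,\Gamma_k \geq 0.
\]

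In particular $\Phi_k \leq \Phi_1$, so $f(x^k) \leq f(x^1)$, i.e. $x^k \in \Lcal_{f(x^1)}$, and convexity then yields $f(x^k) - f(x^\star) \leq \|\nabla f(x^k)\|\,\dist(x^k, \Xcal^{\star}) \leq \|\nabla f(x^k)\|\,\Delta$. Next I would telescope the reciprocals: writing $\tfrac{1}{\Phi_{K+1}} = \tfrac{1}{\Phi_1} + \sum_{k=1}^K(\tfrac{1}{\Phi_{k+1}} - \tfrac{1}{\Phi_k})$ and using $\Phi_{k+1} \leq \Phi_k$,
\[
\frac{1}{\Phi_{k+1}} - \frac{1}{\Phi_k} = \frac{\Phi_k - \Phi_{k+1}}{\Phi_k \Phi_{k+1}} \geq \frac{\max\{-h_{x^k, x^{k-1}}(P_k, B_k),\, 0\}\,\Gamma_k}{\Phi_k^2}.
\]

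The crux is the scale-free ratio bound $\Gamma_k/\Phi_k^2 \geq V/\Phi_1$. Abbreviating $a_k \assign f(x^k) - f(x^\star)$ and $b_k \assign \tfrac{\omega}{2}\|x^k - x^{k-1}\|^2$, I have $\Phi_k = a_k + b_k$ and $\Gamma_k = \|\nabla f(x^k)\|^2 + \tfrac{\tau}{\omega}b_k$. I would combine $(a_k + b_k)^2 \leq 2(a_k^2 + b_k^2)$ with the two one-sided estimates $\|\nabla f(x^k)\|^2 \geq a_k^2/\Delta^2$ (from the previous paragraph) and $b_k \leq \Phi_k \leq \Phi_1$, the latter giving $\tfrac{\tau}{\omega}b_k \geq \tfrac{\tau}{\omega \Phi_1}b_k^2$. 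This yields
\[
\frac{\Gamma_k}{\Phi_k^2} \geq \frac{a_k^2/\Delta^2 + (\tau/(\omega\Phi_1))\,b_k^2}{2(a_k^2 + b_k^2)} \geq \frac{1}{2}\min\Big\{\tfrac{1}{\Delta^2},\, \tfrac{\tau}{\omega\Phi_1}\Big\} \geq \frac{V}{\Phi_1},
\]
where the last step uses $V = \min\{\tfrac{\Phi_1}{4\Delta^2}, \tfrac{\tau}{4\omega}\}$, i.e. $V/\Phi_1 = \tfrac{1}{4}\min\{\tfrac{1}{\Delta^2}, \tfrac{\tau}{\omega\Phi_1}\}$.

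Summing over $k$ then gives $\tfrac{1}{\Phi_{K+1}} \geq \tfrac{1}{\Phi_1}\big(1 + V\sum_{k=1}^K \max\{-h_{x^k, x^{k-1}}(P_k, B_k),\, 0\}\big)$, which rearranges to the claimed inequality after recalling $\Phi_{K+1} = f(x^{K+1}) - f(x^\star) + \tfrac{\omega}{2}\|x^{K+1} - x^K\|^2$ and $\Phi_1 = f(x^1) - f(x^\star)$. I expect the main obstacle to be twofold: the careful bookkeeping of the null step (ensuring the selected state always carries potential $\min\{\Phi_k,\, \psi(x^{k+1/2}, x^k) - f(x^\star)\}$, and that the momentum indices remain consistent when a null step is taken), and the verification of the scale-free ratio bound, where splitting $(a_k + b_k)^2$ and matching its two pieces to the two arguments of the minimum defining $V$ is the delicate calculation; the degenerate case $\Phi_k = 0$ (already optimal) should be treated separately, where the bound holds trivially.
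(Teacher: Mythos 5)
Your proof is correct and follows essentially the same route as the paper's: telescoping reciprocals of the potential gap $\Phi_k = \psi(x^k,x^{k-1})-f(x^\star)$, using the null-step monotonicity, and lower-bounding $\Gamma_k/\Phi_k^2$ by $V/\Phi_1$ via the same two ingredients (the convexity bound $f(x^k)-f(x^\star)\le \|\nabla f(x^k)\|\,\Delta$ on the sublevel set, and $\tfrac{\omega}{2}\|x^k-x^{k-1}\|^2 \le \Phi_1$). The only cosmetic difference is that you replace the paper's two-case analysis with the single inequality $(a_k+b_k)^2\le 2(a_k^2+b_k^2)$ followed by a weighted-minimum bound, which even gains a harmless factor of two over the paper's constant.
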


\begin{proof}
  The null step guarantees
\[ \tfrac{\psi (x^{k + 1}, x^k) - \psi (x^k, x^{k - 1})}{\| \nabla f
   (x^k) \|^2 + \frac{\tau}{2} \| x^k - x^{k - 1} \|^2} = \min \{ h_{x^k, x^{k
   - 1}} (P_k, B_k), 0 \}. \]
Using the initial condition $x^1 = x^0$, we have
\begin{align}
\psi (x^{K + 1}, x^K) - f (x^{\star})   ={} & \frac{1}{\frac{1}{\psi (x^{K + 1}, x^K) - f (x^{\star})}} \nonumber\\
  ={} & \frac{1}{\sum_{k = 1}^K \frac{1}{\psi (x^{k + 1}, x^k) - f
  (x^{\star})} - \frac{1}{\psi (x^k, x^{k - 1}) - f (x^{\star})} +
  \frac{1}{\psi (x^1, x^0) - f (x^{\star})}} \nonumber\\
  ={} & \frac{1}{\sum_{k = 1}^K \frac{\psi (x^k, x^{k - 1}) - \psi (x^{k +
  1}, x^k)}{[\psi (x^{k + 1}, x^k) - f (x^{\star})] [\psi (x^k, x^{k -
  1}) - f (x^{\star})]} + \frac{1}{\psi (x^1, x^0) - f (x^{\star})}}
  \nonumber\\
  ={} & \frac{1}{\sum_{k = 1}^K \frac{\max \{ - h_{x^k, x^{k - 1}} (P_k,
  B_k), 0 \} [ \| \nabla f (x^k) \|^2 + \frac{\tau}{2} \| x^k - x^{k
  - 1} \|^2 ]}{[\psi (x^{k + 1}, x^k) - f (x^{\star})] [\psi (x^k,
  x^{k - 1}) - f (x^{\star})]} + \frac{1}{f (x^1) - f (x^{\star})}} \label{eqn:proof-4-2-1}.
\end{align}

Then, by monotonicity, $\tfrac{\| \nabla f (x^k) \|^2 + \frac{\tau}{2} \| x^k
- x^{k - 1} \|^2}{[\psi (x^{k + 1}, x^k) - f (x^{\star})] [\psi (x^k,
x^{k - 1}) - f (x^{\star})]} \geq \tfrac{\| \nabla f (x^k) \|^2 +
\frac{\tau}{2} \| x^k - x^{k - 1} \|^2}{[\psi (x^k, x^{k - 1}) - f
(x^{\star})]^2}$.\\

Now we do case analysis to bound
\[ \tfrac{\| \nabla f (x^k) \|^2 + \frac{\tau}{2} \| x^k - x^{k - 1}
   \|^2}{[\psi (x^k, x^{k - 1}) - f (x^{\star})]^2} = \tfrac{\| \nabla f
   (x^k) \|^2 + \frac{\tau}{2} \| x^k - x^{k - 1} \|^2}{[ f (x^k) +
   \frac{\omega}{2} \| x^k - x^{k - 1} \|^2 - f (x^{\star}) ]^2} \]
\paragraph{Case 1.} If $\frac{\omega}{2} \| x^k - x^{k - 1} \|^2 \leq f (x^k) -
f (x^{\star})$, then
\begin{equation*}
\tfrac{\| \nabla f (x^k) \|^2 + \frac{\tau}{2} \| x^k - x^{k - 1} \|^2}{[ f (x^k) + \frac{\omega}{2} \| x^k - x^{k - 1} \|^2 - f(x^{\star}) ]^2} \geq \tfrac{\| \nabla f (x^k) \|^2}{4 [f (x^k) - f (x^{\star})]^2} \geq \tfrac{1}{4 \Delta^2},
\end{equation*}
where $\Delta \assign \max_{x \in \Lcal_{f(x^1)}} \min_{x^{\star} \in \mathcal{X}^{\star}} \| x - x^{\star} \|$.

\paragraph{Case 2.} If $\frac{\omega}{2} \| x^k - x^{k - 1} \|^2 \geq f (x^k) -
f (x^{\star})$, then $\frac{\tau}{2} \| x^k - x^{k - 1} \|^2 \geq
\frac{\tau}{\omega} [f (x^k) - f (x^{\star})]$ and
\[ \tfrac{\| \nabla f (x^k) \|^2 + \frac{\tau}{2} \| x^k - x^{k - 1}
   \|^2}{[ f (x^k) + \frac{\omega}{2} \| x^k - x^{k - 1} \|^2 - f
   (x^{\star}) ]^2} \geq \tfrac{\frac{\tau}{2} \| x^k - x^{k - 1}
   \|^2}{\omega^2 \| x^k - x^{k - 1} \|^4} = \tfrac{\tau}{2 \omega^2}
   \tfrac{1}{\| x^k - x^{k - 1} \|^2} \geq \tfrac{\tau}{4 \omega} \tfrac{1}{f
   (x^1) - f (x^{\star})} . \]
since $\frac{\omega}{2} \| x^k - x^{k - 1} \|^2 \leq \psi (x^k, x^{k - 1})
- f (x^{\star}) \leq \psi (x^1, x^0) - f (x^{\star}) = f (x^1) - f
(x^{\star})$.\\

In both cases, we have $\tfrac{\| \nabla f (x^k) \|^2 + \frac{\tau}{2} \| x^k
- x^{k - 1} \|^2}{[\psi (x^k, x^{k - 1}) - f (x^{\star})]^2} \geq \min
\{ \tfrac{1}{4 \Delta^2}, \frac{\tau}{4 \omega} \tfrac{1}{f (x^1) - f (x^{\star})} \} = \tfrac{V}{f(x^1) - f(x^\star)}$, where the constant $V$ is defined in the lemma.
Finally, plugging in the definition of $\psi$, \eqref{eqn:proof-4-2-1} gives
\[f(x^{K+1}) - f(x^{\star}) + \tfrac{\omega}{2} \|x^{K+1} - x^K\|^2 \leq \tfrac{f (x^1) - f (x^{\star})}{1 + \sum_{k = 1}^K
\max \{ - h_{x^k, x^{k - 1}} (P_k, B_k), 0 \} V}.\]
\end{proof}
The next lemma shows that there exist hindsight $\bar{P}, \bar{B}$ such that $h_{x, x^-} (\bar{P}, \bar{B}) \leq - \theta < 0$ for some $\theta$.

\begin{lem} \label{lem:heavyball-hindsight}
Let $\omega = 3 L$ and $\tau = 16 L^2$. Then for any $x, x^- \nin  \mathcal{X}^{\star}$, we have $h_{x, x^-} ( \tfrac{1}{4 L} I, \tfrac{1}{2} I ) \leq - \tfrac{1}{8 L}$. 
In particular, if $\tfrac{1}{4L}I \in \Pcal$, $\tfrac{1}{2}I \in \Bcal$, and $\{x^k\}_{k=1}^K \cap \mathcal{X}^{\star} = \varnothing$, then 
\begin{equation*}
  \gamma_K^{\star} \assign - \min_{(P, B) \in \mathcal{P} \times \mathcal{B}} \tfrac{1}{K} \textstyle \sum_{k=1}^K h_{x^k, x^{k-1}}(P, B) \geq \tfrac{1}{8 L}.
\end{equation*}
\end{lem}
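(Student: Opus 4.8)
The plan is to prove the pointwise bound $h_{x,x^-}(\tfrac{1}{4L}I, \tfrac12 I) \le -\tfrac{1}{8L}$ by estimating the numerator of the feedback directly via the descent lemma and checking that it is exactly $-\tfrac{1}{8L}$ times the denominator. Write $g \assign \nabla f(x)$ and $d \assign x - x^-$ for brevity, so that with $P = \tfrac{1}{4L}I$, $B = \tfrac12 I$ the updated point satisfies $x^+ - x = -\tfrac{1}{4L} g + \tfrac12 d$. First I would apply $L$-smoothness in the form $f(x^+) - f(x) \le \langle g, x^+ - x\rangle + \tfrac{L}{2}\|x^+ - x\|^2$ and expand the right-hand side into the three scalar quantities $\|g\|^2$, $\langle g, d\rangle$, and $\|d\|^2$.

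Next I would add the momentum part of the potential, namely $\tfrac{\omega}{2}\|x^+ - x\|^2 - \tfrac{\omega}{2}\|d\|^2$ with $\omega = 3L$, expanding it in the same three quantities. The decisive observation --- and the only thing that really needs care --- is that the coefficient of the cross term $\langle g, d\rangle$ coming from the descent-lemma estimate is $+\tfrac38$, while the coefficient coming from the momentum potential is $-\tfrac38$, so the two cancel exactly. Collecting the surviving terms yields the clean numerator bound
\[ \psi(x^+, x) - \psi(x, x^-) \;\le\; -\tfrac{1}{8L}\|g\|^2 - L\|d\|^2. \]

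To finish the pointwise claim, I would use $\tau = 16L^2$, so that $\tfrac{\tau}{2} = 8L^2$ and hence $L\|d\|^2 = \tfrac{1}{8L}\cdot\tfrac{\tau}{2}\|d\|^2$. The numerator bound is then precisely $-\tfrac{1}{8L}$ times the denominator $\|g\|^2 + \tfrac{\tau}{2}\|d\|^2$, which is strictly positive because $x \nin \mathcal{X}^{\star}$ forces $g = \nabla f(x) \neq 0$. Dividing gives $h_{x,x^-}(\tfrac{1}{4L}I, \tfrac12 I) \le -\tfrac{1}{8L}$, as claimed.

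For the ``in particular'' statement, since $\tfrac{1}{4L}I \in \mathcal{P}$ and $\tfrac12 I \in \mathcal{B}$, the pair $(\tfrac{1}{4L}I, \tfrac12 I)$ is feasible for the minimization, so the minimum over $(P,B) \in \mathcal{P}\times\mathcal{B}$ is no larger than the value there. Because every iterate satisfies $x^k \nin \mathcal{X}^{\star}$, the pointwise bound applies at each $k$, giving $\tfrac1K \sum_{k=1}^K h_{x^k, x^{k-1}}(\tfrac{1}{4L}I, \tfrac12 I) \le -\tfrac{1}{8L}$ and therefore $\gamma_K^{\star} \ge \tfrac{1}{8L}$. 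I expect no genuine analytic obstacle here: the entire content is the bookkeeping that verifies the constants $\omega = 3L$, $\tau = 16L^2$, $P = \tfrac{1}{4L}I$, $B = \tfrac12 I$ are tuned so the cross terms vanish and the residual coefficients line up with the denominator; the cancellation of the $\langle g,d\rangle$ term is the step worth double-checking.
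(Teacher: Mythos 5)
Your proof is correct and essentially identical to the paper's: both reduce the claim to the bound $\psi(x^+,x)-\psi(x,x^-)\le -\tfrac{1}{8L}\bigl(\|\nabla f(x)\|^2 + 8L^2\|x-x^-\|^2\bigr)$ and then divide by the (positive) denominator. The only difference is that the paper invokes the classical heavy-ball potential inequality $f(x^+)+\tfrac{1-\alpha L}{2\alpha}\|x^+-x\|^2\le f(x)+\tfrac{\beta^2}{2\alpha}\|x-x^-\|^2-\tfrac{\alpha}{2}\|\nabla f(x)\|^2$ from the literature as a black box and specializes $\alpha=\tfrac{1}{4L}$, $\beta=\tfrac12$, whereas you re-derive that inequality inline from the descent lemma --- your cross-term cancellation (coefficients $+\tfrac38$ and $-\tfrac38$) and resulting numerator bound $-\tfrac{1}{8L}\|g\|^2 - L\|d\|^2$ are exactly right and are precisely the computation behind the cited result.
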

\begin{proof}
When $P = \alpha I$ and $B = \beta I$ for some $\alpha, \beta > 0$, the classical analysis for the heavy-ball momentum \cite{danilova2020non} gives
\begin{equation*}
f (x^+) + \tfrac{1 - \alpha L}{2 \alpha} \| x^+ - x \|^2 \leq f (x) + \tfrac{\beta^2}{2 \alpha} \| x - x^- \|^2 - \tfrac{\alpha}{2} \| \nabla f (x) \|^2 .
\end{equation*}
Let $\alpha = \tfrac{1}{4 L}$ and $\beta = \tfrac{1}{2}$, we have
\begin{align}
  f (x^+) + \tfrac{3 L}{2} \| x^+ - x \|^2 
  \leq{}& f (x) + \tfrac{L}{2} \| x -
  x^- \|^2 - \tfrac{1}{8 L} \| \nabla f (x) \|^2 \nonumber\\
  ={}& f (x) + \tfrac{3 L}{2} \| x - x^- \|^2 - \tfrac{1}{8 L} \| \nabla f (x)
  \|^2 - L \| x - x^- \|^2 \nonumber\\
  ={}& f (x) + \tfrac{3 L}{2} \| x - x^- \|^2 - \tfrac{1}{8 L} [\| \nabla f (x)
  \|^2 + 8 L^2 \| x - x^- \|^2] \nonumber
\end{align}

and re-arranging the terms, we get
\[ \tfrac{f (x^+) + \frac{3 L}{2} \| x^+ - x \|^2 - [ f (x) + \frac{3
   L}{2} \| x - x^- \|^2 ]}{\| \nabla f (x) \|^2 + 8 L^2 \| x - x^-
   \|^2} \leq - \tfrac{1}{8 L} \]
and this completes the proof.
\end{proof}

\subsubsection{Proof of Theorem \ref{thm:heavyball}}

By \Cref{lem:heavyball-feedback-property}, the heavy-ball feedback is convex and Lipschitz, and thus the same proof of \Cref{lem:regret-sublinear} guarantees that online gradient descent 
\begin{equation*}
  (P_{k + 1}, B_{k+1}) = \Pi_{\mathcal{P} \times \mathcal{B}} [(P_k, B_k) - \eta \nabla h_{x^k, x^{k-1}} (P_k, B_k)]
\end{equation*}
(with $\eta_p = \eta_b = \eta)$ gives the regret bound
\begin{equation*}
  \tfrac{1}{K} \textstyle \sum_{k=1}^K - h_{x^k, x^{k-1}}(P_k, B_k) \geq 
  \gamma_K^{\star} - \tfrac{\rho_K}{K}
\end{equation*}
for some sublinear regret $\rho_K = \mathcal{O}(\sqrt{K})$ and the constant $\gamma_K^{\star}$ as defined in \Cref{lem:heavyball-hindsight}. 
Using the inequality
\begin{equation*}
  \tfrac{1}{K} \textstyle \sum_{k=1}^K \max \{ - h_{x^k, x^{k-1}}(P_k, B_k), 0 \}
  \geq \max \big \{\tfrac{1}{K} \textstyle \sum_{k=1}^K - h_{x^k, x^{k-1}}(P_k, B_k), 0 \big\} \geq \max \{ \gamma_K^{\star} - \tfrac{\rho_K}{K}, 0 \},
\end{equation*}
the desired result follows directly from \eqref{eqn:potential-bound} in \Cref{lem:heavyball-conversion}.

\subsection{{\hdm} + Nesterov Momentum ({\hdmagd})}
\label{app:nesterov}

\subsubsection{Auxiliary Results}
\begin{lem}\label{lem:osnes-auxi}
  Suppose a nonnegative sequence $\{ A_k \}$ satisfies $A_{k + 1} = (A_{k + 1}
  - A_k)^2$ and $A_0 = 0$, then $A_{k + 1} - A_k \leq k + 1$ for all $k \geq 1$.
\end{lem}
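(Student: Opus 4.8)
The plan is to dispose of the implicit recursion by passing to the increments. Define $a_k \assign A_{k+1} - A_k \ge 0$. The defining relation $A_{k+1} = (A_{k+1}-A_k)^2$ reads simply $A_{k+1} = a_k^2$, and therefore also $A_k = a_{k-1}^2$ for every $k \ge 1$. Substituting $A_{k+1} = A_k + a_k$ into $A_{k+1} = a_k^2$ yields the single scalar identity
\[
a_k^2 - a_k = a_{k-1}^2, \qquad k \ge 1,
\]
which is the engine of the whole argument. Since $A_0 = 0$, the $k=0$ instance forces $a_0 = a_0^2$, so $a_0 = 1$ for the nontrivial sequence generated by \agd{} (\Cref{alg:osnes}).

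Before inducting I would record two elementary facts. Because $a_k^2 = A_{k+1} > A_k = a_{k-1}^2$ with $a_k, a_{k-1} \ge 0$, the increments are nondecreasing, so $a_k \ge a_0 = 1$ for all $k$; in particular each $a_k \ge \tfrac12$, placing every $a_k$ in the region where $\phi(t) \assign t^2 - t$ is strictly increasing. The identity $\phi(a_k) = a_{k-1}^2$ together with this lower bound is precisely what a clean induction needs.

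The claim $A_{k+1} - A_k = a_k \le k+1$ then follows by induction on $k$. The base case is $a_0 = 1 \le 1$. For the inductive step, assume $a_{k-1} \le k$ and suppose toward a contradiction that $a_k > k+1$. Monotonicity of $\phi$ on $[\tfrac12,\infty)$ gives $\phi(a_k) > \phi(k+1) = (k+1)k = k^2 + k$, while the identity and the inductive hypothesis give $\phi(a_k) = a_{k-1}^2 \le k^2$. Combining the two forces $k^2 + k < k^2$, i.e. $k < 0$, a contradiction; hence $a_k \le k+1$, which in particular covers all $k \ge 1$ as required.

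The only genuine subtlety — the step I would be most careful with — is the well-definedness of the recursion: $A_{k+1} = (A_{k+1}-A_k)^2$ is a quadratic in $A_{k+1}$ with two roots, and one must confirm that the increasing branch used by the algorithm is the one with nonnegative increment $a_k$, which is exactly what legitimizes writing $A_{k+1} = a_k^2$ and hence $A_k = a_{k-1}^2$. Everything after that is a one-line induction driven by the monotonicity of $\phi$; no further estimates are needed, and since the true growth is only $a_k \sim k/2$ there is ample slack in the final contradiction.
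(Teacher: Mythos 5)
Your proof is correct, and while it runs the same induction on the increments $a_k = A_{k+1}-A_k$ with the same bound, the engine is genuinely different from the paper's. The paper solves the quadratic explicitly for the positive root, $A_{k+2}-A_{k+1} = \tfrac{1}{2}\big(1+\sqrt{4A_{k+1}+1}\big)$, and applies $\sqrt{a+b}\le\sqrt{a}+\sqrt{b}$ to obtain the sharper per-step fact $a_{k+1} \le a_k + 1$, which telescopes from the base case $a_1 = \tfrac{1+\sqrt{5}}{2} < 2$. You never touch a radical: you extract the polynomial identity $a_k^2 - a_k = a_{k-1}^2$ and close the induction by contradiction, since $a_k > k+1$ would give $a_{k-1}^2 = \phi(a_k) > (k+1)k > k^2 \ge a_{k-1}^2$ with $\phi(t)=t^2-t$. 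Your route is more elementary; the paper's buys the stronger structural statement that increments grow by at most one per step. Two small corrections. First, both proofs (the paper's included) analyze only the nondecreasing branch chosen by \Cref{alg:osnes}, so flagging the branch ambiguity is fair, but your attribution of where nonnegativity enters is slightly off: the identities $A_{k+1}=a_k^2$ and $A_k = a_{k-1}^2$ are just the defining relation and hold on either branch; what $a_{k-1}\ge 0$ actually legitimizes is passing from the inductive hypothesis $a_{k-1}\le k$ to $a_{k-1}^2 \le k^2$, which would fail if $a_{k-1}$ were allowed to be very negative. Second, the preliminary observation $a_k \ge a_0 = 1$ is dispensable: in the contradiction step you already have $a_k > k+1 \ge 2$, which places $a_k$ in the region where $\phi$ is increasing, so the proof can be shortened by dropping that paragraph.
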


\begin{proof}
	We prove by induction. The induction hypothesis is $A_{k + 1} - A_k \leq k$.
	
\textbf{Base case}. For $k = 1$, $A_2 - A_1 = \frac{\sqrt{5} + 1}{2} < 2$
and the relation holds.

\textbf{Inductive step}. Suppose $A_{k + 1} - A_k \leq k$. Using $A_{k + 2} = A_{k + 1} + \frac{1}{2} \left( 1 + \sqrt{4 A_{k + 1} + 1}
\right)$, we deduce that
\begin{align}
  A_{k + 2} - A_{k + 1} ={} & \tfrac{1}{2} ( 1 + \sqrt{4 A_{k + 1} + 1}
  ) \nonumber\\
  ={} & \tfrac{1}{2} ( 1 + \sqrt{4 (A_{k + 1} - A_k)^2 + 1} )
  \nonumber\\
  \leq{} & \tfrac{1}{2} (1 + 2 (A_{k + 1} - A_k) + 1) \label{eqn:c-2-1-auxi-1} \\
  \leq{} & 1 + A_{k + 1} - A_k \nonumber\\
  \leq{} & k + 2, \label{eqn:c-2-1-auxi-2}
\end{align}
where \eqref{eqn:c-2-1-auxi-1} uses $\sqrt{a + b} \leq \sqrt{a}+ \sqrt{b}$ and \eqref{eqn:c-2-1-auxi-2} uses the induction hypothesis $A_{k + 1} - A_k \leq k + 1$. By the principle of mathematical induction, this completes the proof.
\end{proof}

\begin{lem}[{\cite{d2021acceleration}}]\label{lem:osnes-auxi-2}
Under the same conditions as \Cref{lem:osnes-auxi}, $A_k \geq \frac{k^2}{4}$ for all $k \geq 1$.
\end{lem}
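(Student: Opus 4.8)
The plan is to prove the equivalent statement $\sqrt{A_k} \geq k/2$ for all $k \geq 1$ and conclude by squaring. First I would record the one structural fact I need about the recursion: since $A_{k+1} = (A_{k+1}-A_k)^2$, the increment $A_{k+1}-A_k$ equals $\pm\sqrt{A_{k+1}}$, and the branch actually selected in \Cref{lem:osnes-auxi} (where $A_{k+1}-A_k = \tfrac{1}{2}(1+\sqrt{1+4A_k}) > 0$) is the nonnegative one. Hence $\{A_k\}$ is nondecreasing and $A_{k+1}-A_k = \sqrt{A_{k+1}}$ for every $k \geq 0$. In particular, the explicit update at $k=0$ gives $A_1 = \tfrac{1}{2}(1+\sqrt{1}) = 1 > 0$, which supplies both the base value and the positivity needed below.

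The core estimate I would derive is a uniform lower bound on the increments of $\sqrt{A_k}$. Writing $A_{k+1}-A_k = (\sqrt{A_{k+1}} - \sqrt{A_k})(\sqrt{A_{k+1}} + \sqrt{A_k})$ and substituting the identity $A_{k+1}-A_k = \sqrt{A_{k+1}}$, I divide through by $\sqrt{A_{k+1}} > 0$ (valid for $k \geq 1$ since $A_1 = 1$ and the sequence is nondecreasing) to get $1 = (\sqrt{A_{k+1}} - \sqrt{A_k})\,\tfrac{\sqrt{A_{k+1}} + \sqrt{A_k}}{\sqrt{A_{k+1}}}$. Monotonicity gives $\sqrt{A_k} \leq \sqrt{A_{k+1}}$, so the second factor is at most $2$, and therefore $\sqrt{A_{k+1}} - \sqrt{A_k} \geq \tfrac{1}{2}$.

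Finally I would telescope this bound from $A_1 = 1$: for $k \geq 1$, $\sqrt{A_k} = \sqrt{A_1} + \sum_{j=1}^{k-1}(\sqrt{A_{j+1}} - \sqrt{A_j}) \geq 1 + \tfrac{k-1}{2} = \tfrac{k+1}{2} \geq \tfrac{k}{2}$, and squaring yields $A_k \geq k^2/4$ (in fact the slightly stronger $A_k \geq (k+1)^2/4$).

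The argument is essentially routine; the only point that needs care—and the sole potential obstacle—is justifying the branch choice $A_{k+1}-A_k = +\sqrt{A_{k+1}}$ rather than $-\sqrt{A_{k+1}}$, since this is exactly what makes the telescoping sum monotone and positive. This is immediate from the explicit update $A_{k+1}-A_k = \tfrac{1}{2}(1+\sqrt{1+4A_k})$ already established in the proof of \Cref{lem:osnes-auxi}, so no new work is required. An entirely equivalent route, matching the inductive style of \Cref{lem:osnes-auxi}, is a direct induction on the hypothesis $A_k \geq k^2/4$: assuming it, $\sqrt{1+4A_k} \geq 2\sqrt{A_k} \geq k$ gives $A_{k+1} = A_k + \tfrac{1}{2}(1+\sqrt{1+4A_k}) \geq \tfrac{k^2}{4} + \tfrac{k+1}{2} \geq \tfrac{(k+1)^2}{4}$, closing the induction.
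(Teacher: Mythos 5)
Your proof is correct. Note that the paper itself gives no proof of \Cref{lem:osnes-auxi-2}: it simply cites \cite{d2021acceleration}, so your argument fills in a step the authors deferred to the literature. Both of your routes are sound. The key identity $A_{k+1}-A_k=\sqrt{A_{k+1}}$ follows from the defining relation together with the positive-branch choice $A_{k+1}-A_k=\tfrac{1}{2}\bigl(1+\sqrt{1+4A_k}\bigr)$ used in the paper's proof of \Cref{lem:osnes-auxi}, and your factorization
$\sqrt{A_{k+1}}=(\sqrt{A_{k+1}}-\sqrt{A_k})(\sqrt{A_{k+1}}+\sqrt{A_k})$
combined with monotonicity does give $\sqrt{A_{k+1}}-\sqrt{A_k}\geq\tfrac12$, whence telescoping from $A_1=1$ yields the (slightly stronger) bound $A_k\geq\tfrac{(k+1)^2}{4}$. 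This is in fact the standard argument for estimate-sequence recursions of this type, so it is likely close to what the cited reference does. Your second route --- direct induction on $A_k\geq k^2/4$ via $\sqrt{1+4A_k}\geq 2\sqrt{A_k}\geq k$, giving $A_{k+1}\geq\tfrac{k^2}{4}+\tfrac{k+1}{2}\geq\tfrac{(k+1)^2}{4}$ --- is equally valid and has the advantage of mirroring the inductive structure of the paper's proof of \Cref{lem:osnes-auxi}, so it would splice into the appendix most naturally. Either version is complete; the only point requiring care, the sign of the branch, is one you identified and resolved correctly.
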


\subsubsection{Proof of \Cref{thm:osnes}} \label{app:proof-osnes}
Using the definition $h_y (P) = \tfrac{f (y - P \nabla f (y)) - f (y)}{\| \nabla f (y) \|^2}$ and the $x$-update in \Cref{alg:osnes}:
\begin{equation*}
  x^{k+1} = \underset{x \in \{ y^k - \frac{1}{L} \nabla f (y^k), y^k
  - P_k \nabla f (y^k), x^k \}}{\argmin} f (x),
\end{equation*}
we have the following two inequalities:
\begin{align}
  f (x^{k + 1}) - f (y^k) \leq{} & \min \{ h_{y^k} (P_k), - \tfrac{1}{2 L}
  \} \| \nabla f (y^k) \|^2 \label{eqn:proof-3-1-1}\\
  f (x^{k + 1}) \leq{} & f (x^k). \label{eqn:proof-3-1-2}
\end{align}

In other words, with $v_k = \max \{ - \tfrac{1}{2 \min \{ h_{y^k} (P_k), - 1 / (2 L) \}},
   \tfrac{L}{2 \theta} \} $, we have
\begin{equation}
f (y^k) - \tfrac{1}{2 v_k} \| \nabla f (y^k) \|^2 \geq f (x^{k + 1})
  \label{eqn:proof-3-1-3}
\end{equation}
and using $z^{k+1} = z^k + \tfrac{(A_{k + 1} - A_k)}{v_k}
  \nabla f (y^k)$,  by algebraic rearrangement
\begin{align}
   & \tfrac{v_k}{2}\|z^{k+1} - x^\star\|^2 \\
  = {} & \tfrac{v_k}{2} \| z^k - x^{\star} + \tfrac{(A_{k + 1} - A_k)}{v_k}
  \nabla f (y^k) \|^2 \nonumber\\
  ={} & \tfrac{v_k}{2} \| z^k - x^{\star} \|^2 - (A_{k + 1} - A_k) \langle
  \nabla f (y^k), z^k - x^{\star} \rangle + \tfrac{1}{2 v_k} (A_{k + 1} -
  A_k)^2 \| \nabla f (y^k) \|^2 . \label{eqn:proof-3-1-aux-1}
\end{align}

Next, we apply convexity and have
\begin{align}
  f (x^{\star}) \geq{} & f (y^k) + \langle \nabla f (y^k), x^{\star} - y^k
  \rangle \label{eqn:proof-3-1-4}\\
  f (x^k) \geq{} & f (y^k) + \langle \nabla f (y^k), x^k - y^k \rangle
  \label{eqn:proof-3-1-5}
\end{align}
Taking a weighted summation between \eqref{eqn:proof-3-1-3}, \eqref{eqn:proof-3-1-4} and \eqref{eqn:proof-3-1-5}, we deduce that
\begin{align}
  0 \geq{} & (A_{k + 1} - A_k) [f (y^k) + \langle \nabla f (y^k), x^{\star} -
  y^k \rangle - f (x^{\star})]  + A_k [f (y^k) + \langle \nabla f (y^k), x^k - y^k \rangle - f (x^k)]
  \nonumber\\
  & + A_{k + 1} [ f (x^{k + 1}) - f (y^k) + \tfrac{1}{2 v_k} \| \nabla f
  (y^k) \|^2 ] \nonumber\\
  ={} & (A_{k + 1} - A_k) \langle \nabla f (y^k), x^{\star} - y^k \rangle -
  (A_{k + 1} - A_k) f (x^{\star}) \nonumber\\
  & + A_k \langle \nabla f (y^k), x^k - y^k \rangle - A_k f (x^k) + A_{k + 1}
  f (x^{k + 1}) + \tfrac{A_{k + 1}}{2 v_k} \| \nabla f (y^k) \|^2 \label{eqn:proof-3-1-6} \\
  ={} & A_{k + 1} [f (x^{k + 1}) - f (x^{\star})] - A_k [f (x^k) - f
  (x^{\star})] \nonumber\\
  & + (A_{k + 1} - A_k) \langle \nabla f (y^k), x^{\star} - y^k \rangle + A_k
  \langle \nabla f (y^k), x^k - y^k \rangle + \tfrac{A_{k + 1}}{2 v_k} \|
  \nabla f (y^k) \|^2 \label{eqn:proof-3-1-7}\\
  ={} & A_{k + 1} [f (x^{k + 1}) - f (x^{\star})] - A_k [f (x^k) - f
  (x^{\star})] \nonumber\\
  & A_{k + 1} \langle \nabla f (y^k), x^{\star} - y^k \rangle + A_k \langle
  \nabla f (y^k), x^k - x^{\star} \rangle + \tfrac{A_{k + 1}}{2 v_k} \| \nabla
  f (y^k) \|^2, \nonumber\\
  ={} & A_{k + 1} [f (x^{k + 1}) - f (x^{\star})] - A_k [f (x^k) - f
  (x^{\star})] \nonumber\\ 
  &- (A_{k + 1} - A_k) \langle \nabla f (y^k), z^k - x^{\star} \rangle + \tfrac{A_{k + 1}}{2 v_k} \| \nabla
  f (y^k) \|^2, \label{eqn:proof-3-1-8}
\end{align}

where \eqref{eqn:proof-3-1-6} to \eqref{eqn:proof-3-1-7} simply re-arrange the terms and \eqref{eqn:proof-3-1-8} uses the identity $y^k = x^k + ( 1 - \tfrac{A_k}{A_{k + 1}} ) (z^k - x^k)$:
\begin{align}
  & A_{k + 1} \langle \nabla f (y^k), x^{\star} - y^k \rangle + A_k \langle
  \nabla f (y^k), x^k - x^{\star} \rangle 
  =  - (A_{k + 1} - A_k) \langle \nabla f (y^k), z^k - x^{\star} \rangle .
  \nonumber
\end{align}

Putting the relations together, we arrive at
\begin{align}
  & A_{k + 1} [f (x^{k + 1}) - f (x^{\star})]
  \leq  A_k [f (x^k) - f (x^{\star})] + (A_{k + 1} - A_k) \langle \nabla f
  (y^k), z^k - x^{\star} \rangle - \tfrac{A_{k + 1}}{2 v_k} \| \nabla f (y^k)
  \|^2 . \nonumber
\end{align}
and adding \eqref{eqn:proof-3-1-aux-1} gives
\begin{align}
  & A_{k + 1} [f (x^{k + 1}) - f (x^{\star})] + \tfrac{v_k}{2} \| z^{k + 1}
  - x^{\star} \|^2 \nonumber\\
  \leq{} & A_k [f (x^k) - f (x^{\star})] + \tfrac{v_k}{2} \| z^k - x^{\star}
  \|^2 + \tfrac{A_{k + 1} - (A_{k + 1} - A_k)^2}{2 v_k} \| \nabla f (y^k)
  \|^2 \nonumber\\
  ={} & A_k [f (x^k) - f (x^{\star})] + \tfrac{v_k}{2} \| z^k - x^{\star} \|^2, \label{eqn:proof-3-1-10}
\end{align}

where \eqref{eqn:proof-3-1-10} uses the relation $A_{k + 1} - (A_{k + 1} - A_k)^2 = 0 $.\\

We are now ready to analyze the acceleration effect of online hypergradient. Recall that we can guarantee
\begin{align}
  \tfrac{1}{K}\textstyle \sum_{k = 1}^K h_{y^k} (P_k) \leq & - \gamma^{\star}_K +
  \tfrac{\rho_K}{K}, \label{eqn:proof-3-1-11}
\end{align}
where $\gamma^{\star}_K := -\min_{P \in \Pcal} \sum_{k = 1}^K h_{y^k} (P)$ is expected to be larger than $1 / (2 L)$ to improve
performance. Recall that $\gamma^{\star}_K \assign
\tfrac{\omega^\star_K}{L}, \omega^\star_K \geq 0$ and note that $\omega^\star_K$ depends on the
iteration trajectory. Moreover, we have, by convexity of $f (x)$,
\[ \tfrac{f (x - P \nabla f (x)) - f (x)}{\| \nabla f (x) \|^2} \geq \tfrac{f
   (x) - \langle \nabla f (x), P \nabla f (x) \rangle - f (x)}{\| \nabla f (x)
   \|^2} = - \tfrac{\langle \nabla f (x), P \nabla f (x) \rangle}{\| \nabla f
   (x) \|^2} \geq - D \]
and $\gamma^{\star}_K \leq D$ implies $\omega^\star_K \leq L D$. Define $\mathcal{I}
\assign \{ k : h_{y^k} (P_k) \leq - \tfrac{\theta}{L} \}$ for
$\theta \in [\tfrac{1}{2}, L D)$. Then, according to \eqref{eqn:proof-3-1-11}, 
\[ \textstyle - \tfrac{\omega^\star_K}{L} + \tfrac{\rho_K}{K} \geq \tfrac{1}{K} \sum_{k = 1}^K
   h_{y^k} (P_k) = \tfrac{1}{K} [ \sum_{k \in \mathcal{I}} h_{y^k} (P_k)
   + \sum_{k \in \bar{\mathcal{I}}} h_{y^k} (P_k) ] \geq \tfrac{1}{K}
   \sum_{k \in \mathcal{I}} h_{y^k} (P_k) - \tfrac{\theta}{L} \tfrac{K - |
   \mathcal{I} |}{K} . \]
Using $h_{y^k} (P_k) \geq - D$, we get
\[ \textstyle- \tfrac{D}{K} | \mathcal{I} | \leq \tfrac{1}{K} \sum_{k \in \mathcal{I}}
   h_{y^k} (P_k) \leq - \tfrac{\omega^\star_K}{L} + \tfrac{\theta}{L} \tfrac{K - |
   \mathcal{I} |}{K} + \tfrac{\rho_K}{K} . \]
Re-arranging the terms,
\[ ( - \tfrac{D}{K} + \tfrac{\theta}{K L} ) | \mathcal{I} | \leq
   \tfrac{\theta - \omega^\star_K}{L} + \tfrac{\rho_K}{K} . \]
Using $D > \tfrac{\theta}{L}$, we get
\[ | \mathcal{I} | \geq \tfrac{\tfrac{\theta - \omega^\star_K}{L} +
   \tfrac{\rho_K}{K}}{- \tfrac{D}{K} + \tfrac{\theta}{K L}} = \tfrac{(\theta -
   \omega^\star_K) K + L \rho_K}{\theta - L D} = \tfrac{(\omega^\star_K - \theta) K}{L D -
   \theta} - \tfrac{L}{L D - \theta} \rho_K . \]
We have, if $k \in \mathcal{I}$, that using the fact that \eqref{eqn:proof-3-1-10} holds for $v_k = \max \{ - \tfrac{1}{2 \min \{ h_{y^k} (P_k), - 1 / (2 L) \}},
   \tfrac{L}{2 \theta} \}  = \tfrac{L}{2 \theta}$,
\begin{align}
  A_{k + 1} [f (x^{k + 1}) - f (x^{\star})] + \tfrac{L}{4 \theta} \| z^{k + 1}
  - x^{\star} \|^2 \leq{} & A_k [f (x^k) - f (x^{\star})] + \tfrac{L}{4 \theta}
  \| z^k - x^{\star} \|^2. \label{eqn:proof-3-1-12}
\end{align}
On the other hand, if $k \nin \mathcal{I}$, $v_k \leq L$ and 
\begin{align}
  A_{k + 1} [f (x^{k + 1}) - f (x^{\star})] + \tfrac{v_k}{2} \| z^{k + 1} -
  x^{\star} \|^2 \leq{} & A_k [f (x^k) - f (x^{\star})] + \tfrac{v_k}{2} \| z^k -
  x^{\star} \|^2 \label{eqn:proof-3-1-13}
\end{align}

and  $f (x^{k + 1}) \leq f (x^k)$ implies
\begin{align}
  A_{k + 1} [f (x^{k + 1}) - f (x^{\star})] \leq{} & A_{k + 1} [f (x^k) - f
  (x^{\star})] \nonumber\\
  \leq{} & A_k [f (x^k) - f (x^{\star})] + (k + 1) [f (x^k) - f (x^{\star})] \label{eqn:proof-3-1-14},
\end{align}
where \eqref{eqn:proof-3-1-14} uses the condition that $A_{k + 1} - A_k \leq k + 1$ from \Cref{lem:osnes-auxi}.\\

Taking a weighted summation of \eqref{eqn:proof-3-1-13} and \eqref{eqn:proof-3-1-14}, combining \eqref{eqn:proof-3-1-12},
\begin{align}
  & A_{k + 1} [f (x^{k + 1}) - f (x^{\star})] + \tfrac{L}{4 \theta} \| z^{k +
  1} - x^{\star} \|^2 \nonumber\\
  \leq{} & A_k [f (x^k) - f (x^{\star})] + \tfrac{L}{4 \theta} \| z^k -
  x^{\star} \|^2 + ( 1 - \tfrac{L}{2 \theta v_k} ) (k + 1) [f (x^k) - f
  (x^{\star})] \cdummy \mathbb{I} \{ k \in \bar{\mathcal{I}} \} \nonumber\\
  \leq {} & A_k [f (x^k) - f (x^{\star})] + \tfrac{L}{4 \theta} \| z^k -
  x^{\star} \|^2 + ( 1 - \tfrac{1}{2 \theta} ) (k + 1) [f (x^k) - f
  (x^{\star})] \cdummy \mathbb{I} \{ k \in \bar{\mathcal{I}} \} . \nonumber
\end{align}
Telescoping the relation from 1 to $K$,
\[ \textstyle A_{K + 1} [f (x^{K + 1}) - f (x^{\star})] \leq \tfrac{L}{4 \theta} \| z^1
   - x^{\star} \|^2 + \sum_{k \in \bar{\mathcal{I}}} ( 1 - \tfrac{1}{2
   \theta} ) (k + 1) [f (x^k) - f (x^{\star})] . \]
Using $A_{K + 1} \geq \tfrac{K^2}{4}$ from \Cref{lem:osnes-auxi-2} and that $| \bar{\mathcal{I}} | = K - |
\mathcal{I} | \leq K - \tfrac{(\omega^\star_K - \theta) K}{L D - \theta} + \tfrac{L
\rho_K}{L D - \theta}$,
\begin{align}
  f (x^{K + 1}) - f (x^{\star}) \leq{} & \tfrac{L}{4 \theta A_{K + 1}} \| z^1 -
  x^{\star} \|^2 + \tfrac{1}{A_{K + 1}} ( 1 - \tfrac{1}{2 \theta} )
  \textstyle\sum_{k \in \bar{\mathcal{I}}} (k + 1) [f (x^k) - f (x^{\star})] \nonumber\\
  \leq{} & \tfrac{L}{\theta K^2} \| z^1 - x^{\star} \|^2 + \tfrac{4}{K^2}
  ( 1 - \tfrac{1}{2 \theta} ) \textstyle\sum_{k \in \bar{\mathcal{I}}} (k + 1) [f
  (x^k) - f (x^{\star})] \nonumber\\
  \leq{} & \tfrac{L}{\theta K^2} \| z^1 - x^{\star} \|^2 + \tfrac{4}{K} (
  1 - \tfrac{1}{2 \theta} ) [f (x^1) - f (x^{\star})] \cdummy |
  \bar{\mathcal{I}} | \nonumber\\
  \leq{} & \tfrac{L}{\theta K^2} \| z^1 - x^{\star} \|^2 + 4 ( 1 -
  \tfrac{1}{2 \theta} ) [f (x^1) - f (x^{\star})] ( 1 -
  \tfrac{\omega^\star_K - \theta}{L D - \theta} + \tfrac{L}{L D - \theta}
  \tfrac{\rho_K}{K} ) . \nonumber
\end{align}

Suppose we run accelerated gradient descent from $z'$ for $K$ iterations and
obtain $x^1$. \\

Plugging in $f (x^1) - f (x^{\star}) \leq \tfrac{2 L}{K^2} \| z' - x^{\star}
\|^2$ we get, using $z^1 = z'$, that
\begin{align}
  f (x^{K + 1}) - f (x^{\star}) \leq{} & \tfrac{L}{\theta K^2} \| z' - x^{\star}
  \|^2 + \tfrac{L}{\theta K^2} \| z' - x^{\star} \|^2 8 (2 \theta - 1)
  ( 1 - \tfrac{\omega^\star_K - \theta}{L D - \theta} ) +\mathcal{O} (
  \tfrac{\rho_K}{K^3} ) \nonumber\\
  ={} & \tfrac{L}{\theta K^2} \| z' - x^{\star} \|^2 + \tfrac{L}{K^2} \| z' -
  x^{\star} \|^2  ( 16 - \tfrac{8}{\theta} ) \tfrac{L D -
  \omega^\star_K}{L D - \theta} +\mathcal{O} ( \tfrac{\rho_K}{K^3} )
  \nonumber\\
  \leq{} & \tfrac{L}{\theta K^2} \| z' - x^{\star} \|^2 + \tfrac{L}{K^2} \| z' -
  x^{\star} \|^2 (16 - \tfrac{8}{\theta} ) \tfrac{L D -
  \omega^\star_K}{L D - \theta} +\mathcal{O} ( \tfrac{\rho_K}{K^3} )
  \nonumber\\
  \leq{} & [ \tfrac{1}{2 \theta} +  ( 8 - \tfrac{4}{\theta} )
  ( \tfrac{L D - \omega^\star_K}{L D - \theta} ) ] \tfrac{2 L \| z' -
  x^{\star} \|^2}{K^2} +\mathcal{O} ( \tfrac{\rho_K}{K^3} ) .
  \nonumber
\end{align}
This completes the proof.

\section{Additional Experiments} \label{app:additional}

%\subsection{Ablation Study of {\hdmbest}}
%
%This section evaluates the effect of different components in {\hdmbest}, including null-step and {\adagrad}. In particular, we consider the following versions of {\hdmbest}
%
%\begin{itemize}[leftmargin=15pt]
%\item \texttt{HDM Raw}.\\
%{\hdmbest} without null step and online gradient descent with constant stepsize is used.
%\item \texttt{HDM+Null step}.\\
%{\hdmbest} with null step and online gradient descent with constant stepsize is used.
%\item \texttt{HDM+Null step+AdaGrad}.\\
%{\hdmbest} with all the components.
%\end{itemize}
%
%\begin{figure}[h]
%\includegraphics[scale=0.2]{}
%\includegraphics[scale=0.2]{}
%\includegraphics[scale=0.2]{}
%\includegraphics[scale=0.2]{}
%\caption{Ablation on different components of {\hdmbest} \label{fig:ablation}}
%\end{figure}
%
%As \Cref{fig:ablation} shows, both the null step and {\adagrad} bring significant speedup and justify our theoretical results.

\subsection{Additional Experiments on Support Vector Machine Problems}
See \Cref{fig:svm-add-1} and \Cref{fig:svm-add-2}.

\begin{figure}[!h]
\centering
% First Row: Function Values
\includegraphics[scale=0.2]{figs/a1a_objval_svm.pdf}
\includegraphics[scale=0.2]{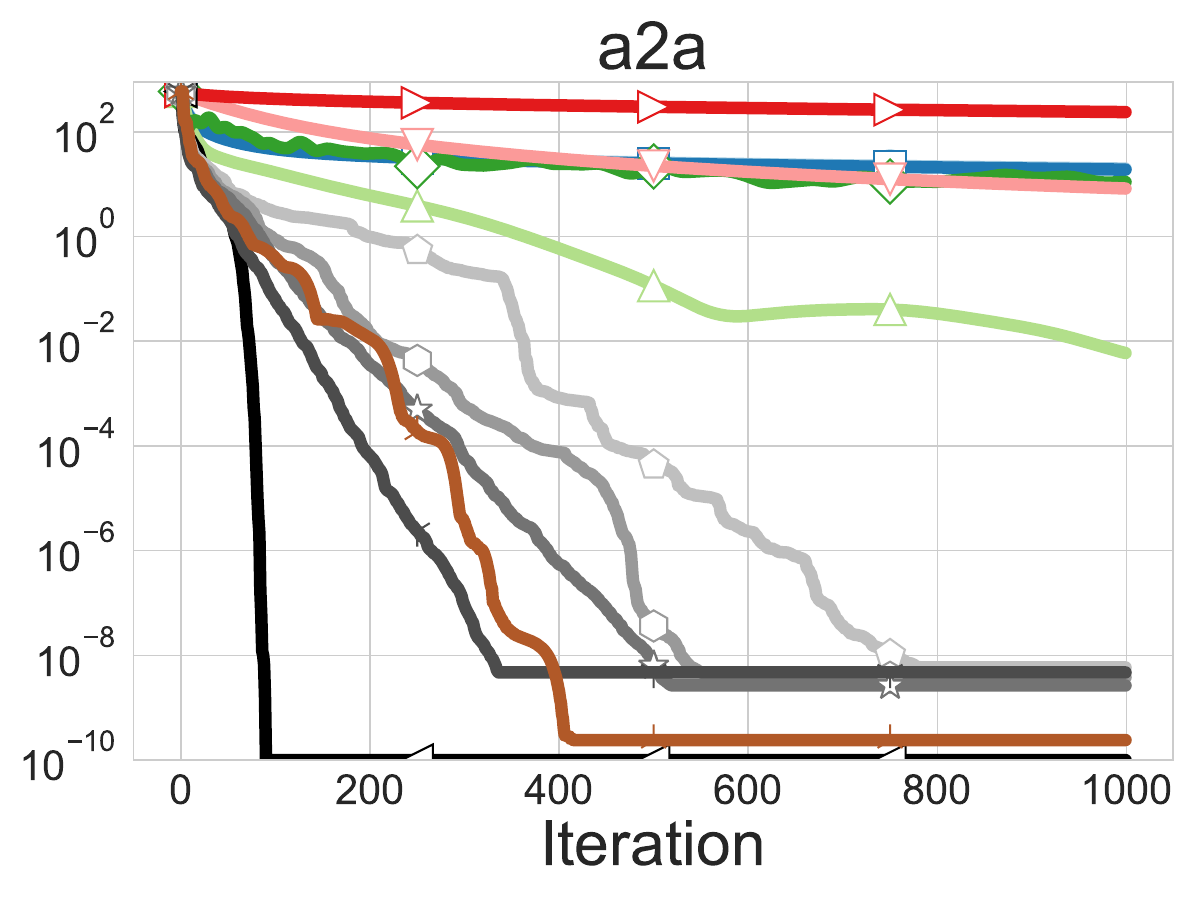}
\includegraphics[scale=0.2]{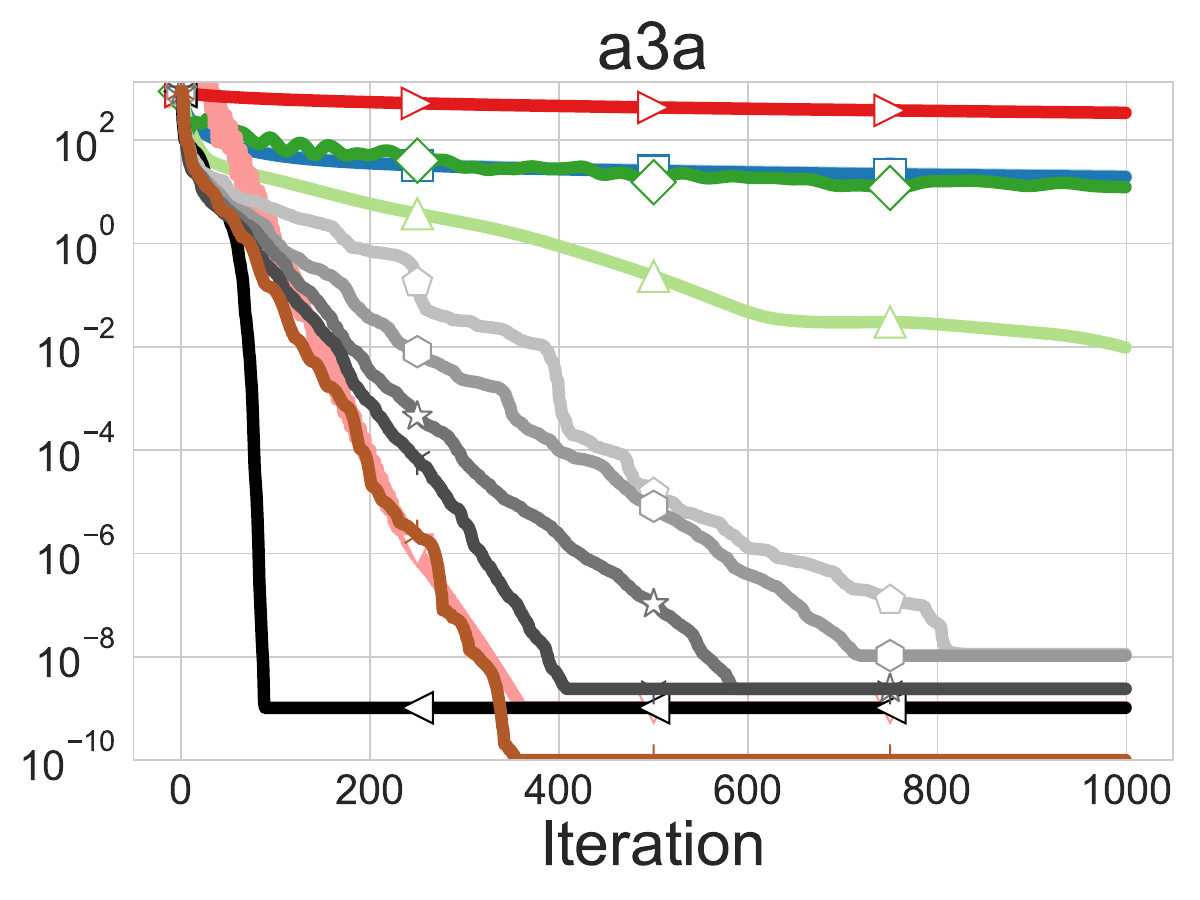}
\includegraphics[scale=0.2]{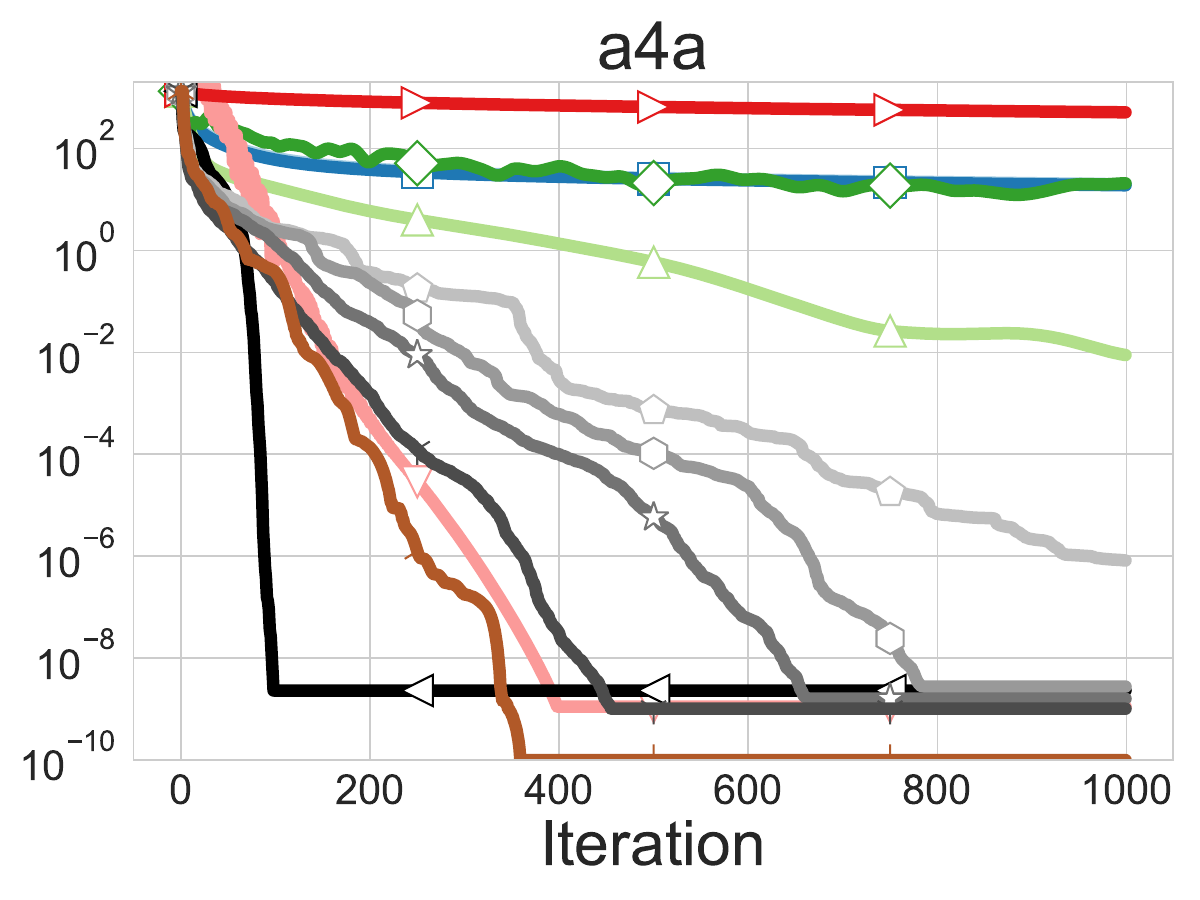}
\\
% Second Row: Gradient Norms
\includegraphics[scale=0.2]{figs/a1a_gnorm_svm.pdf}
\includegraphics[scale=0.2]{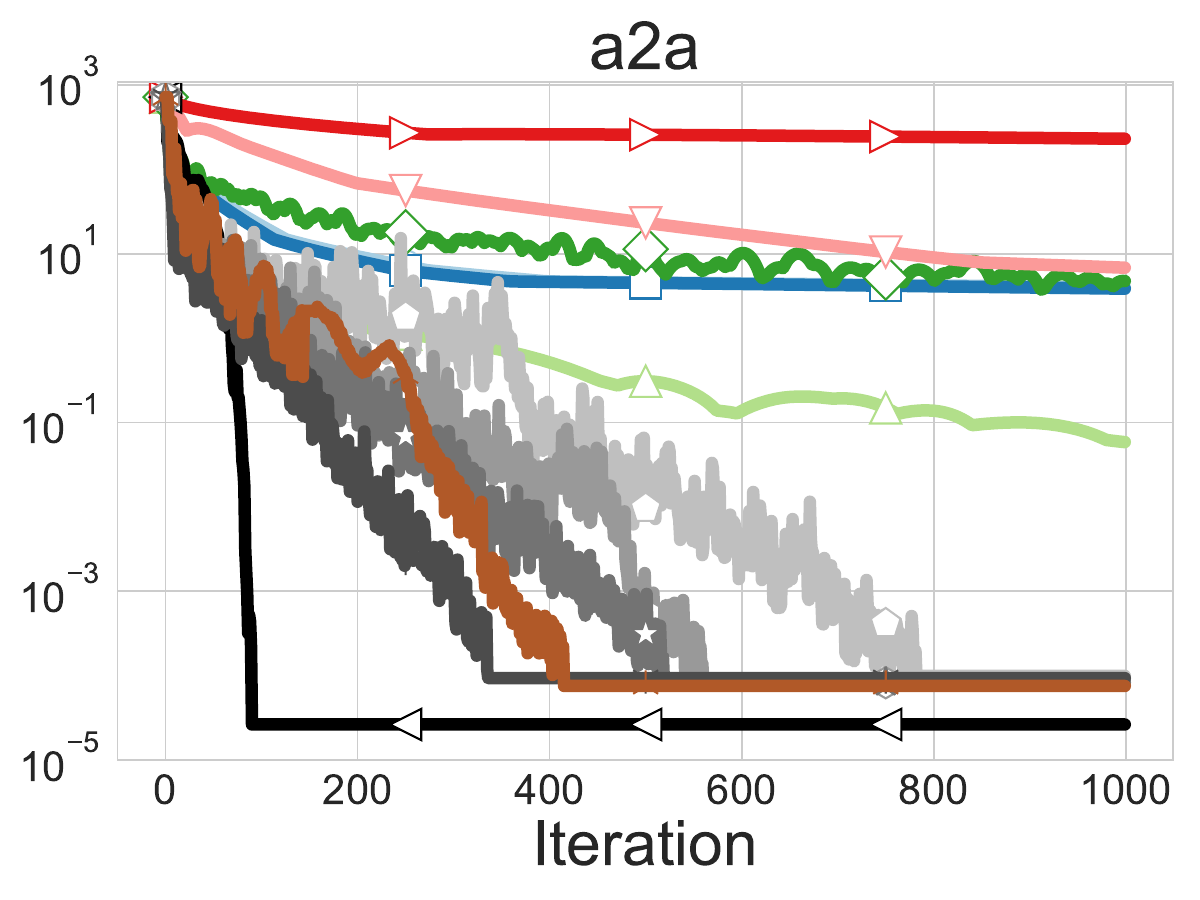}
\includegraphics[scale=0.2]{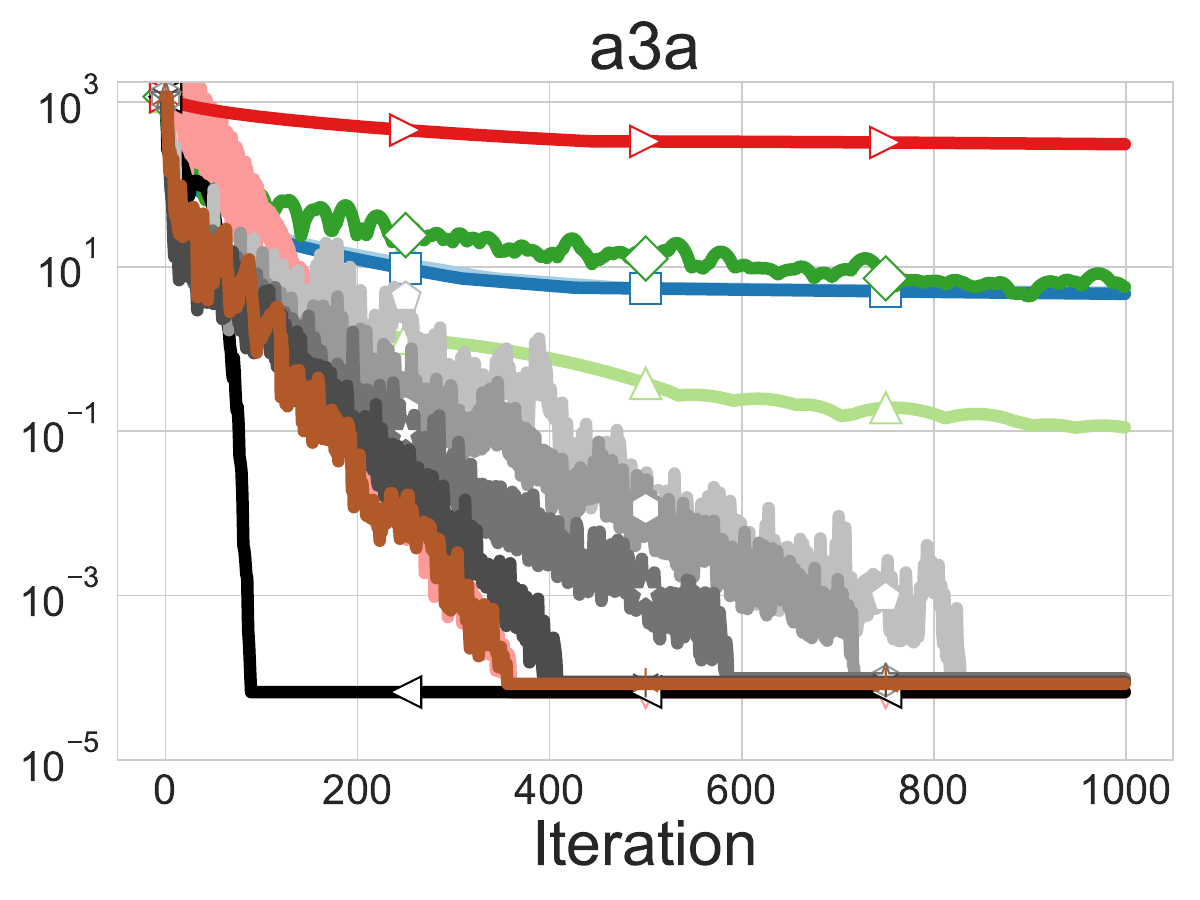}
\includegraphics[scale=0.2]{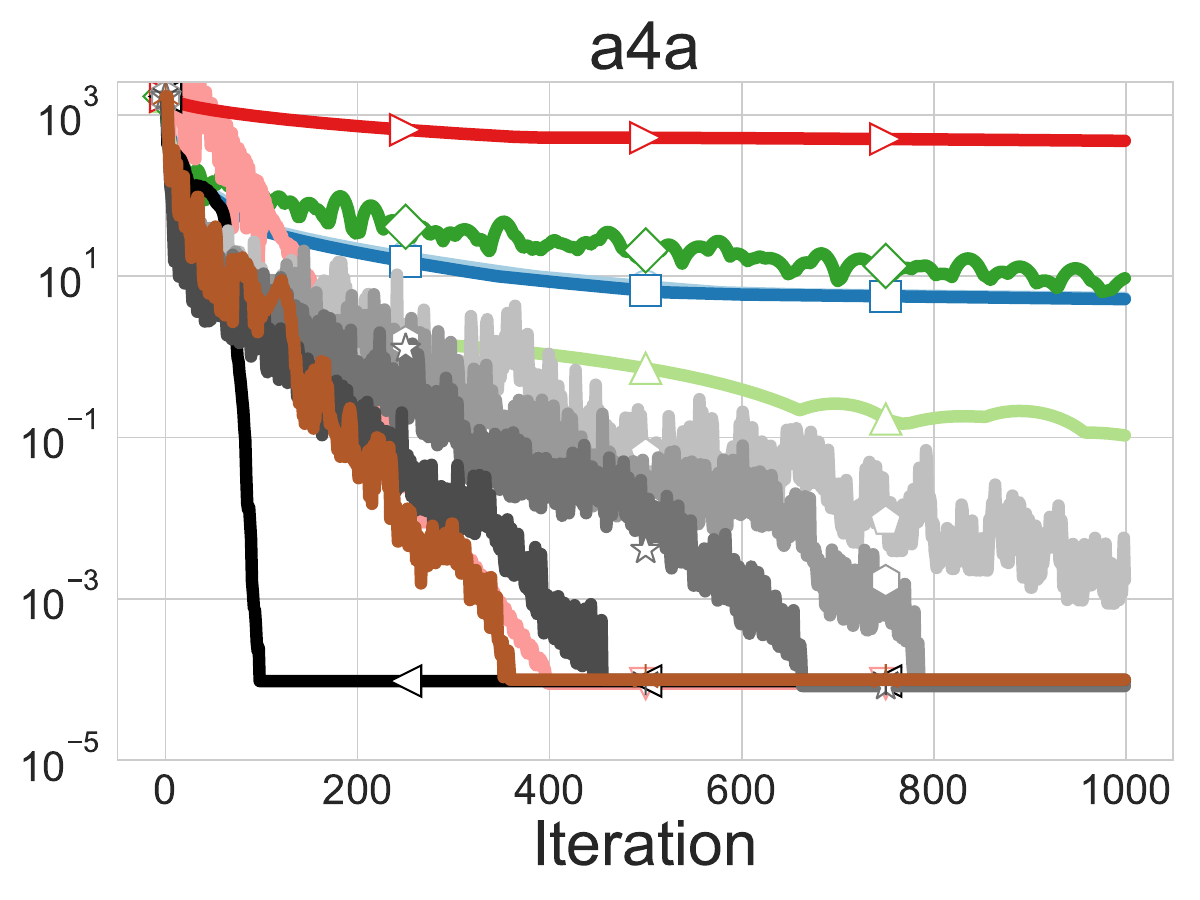}
\\
% Third Row: Function Values
\includegraphics[scale=0.2]{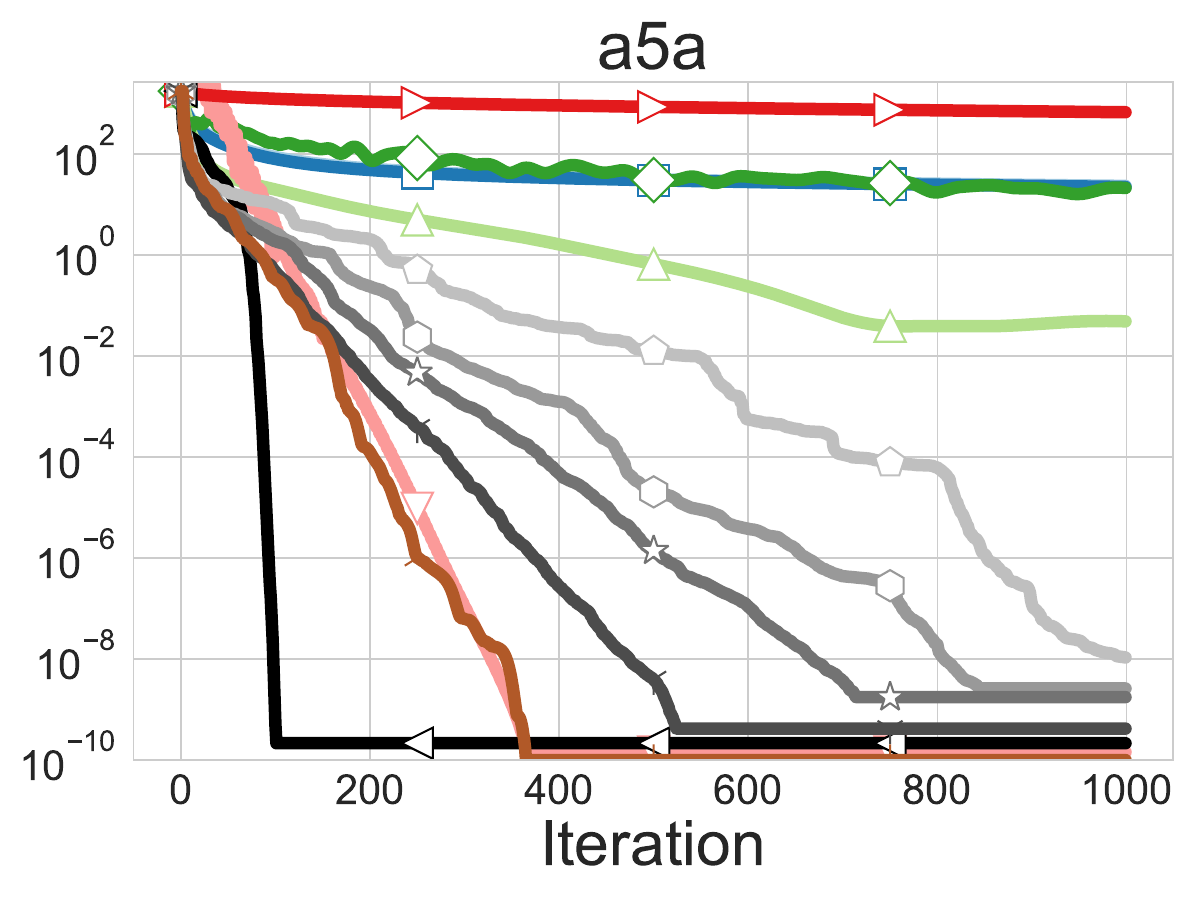}
\includegraphics[scale=0.2]{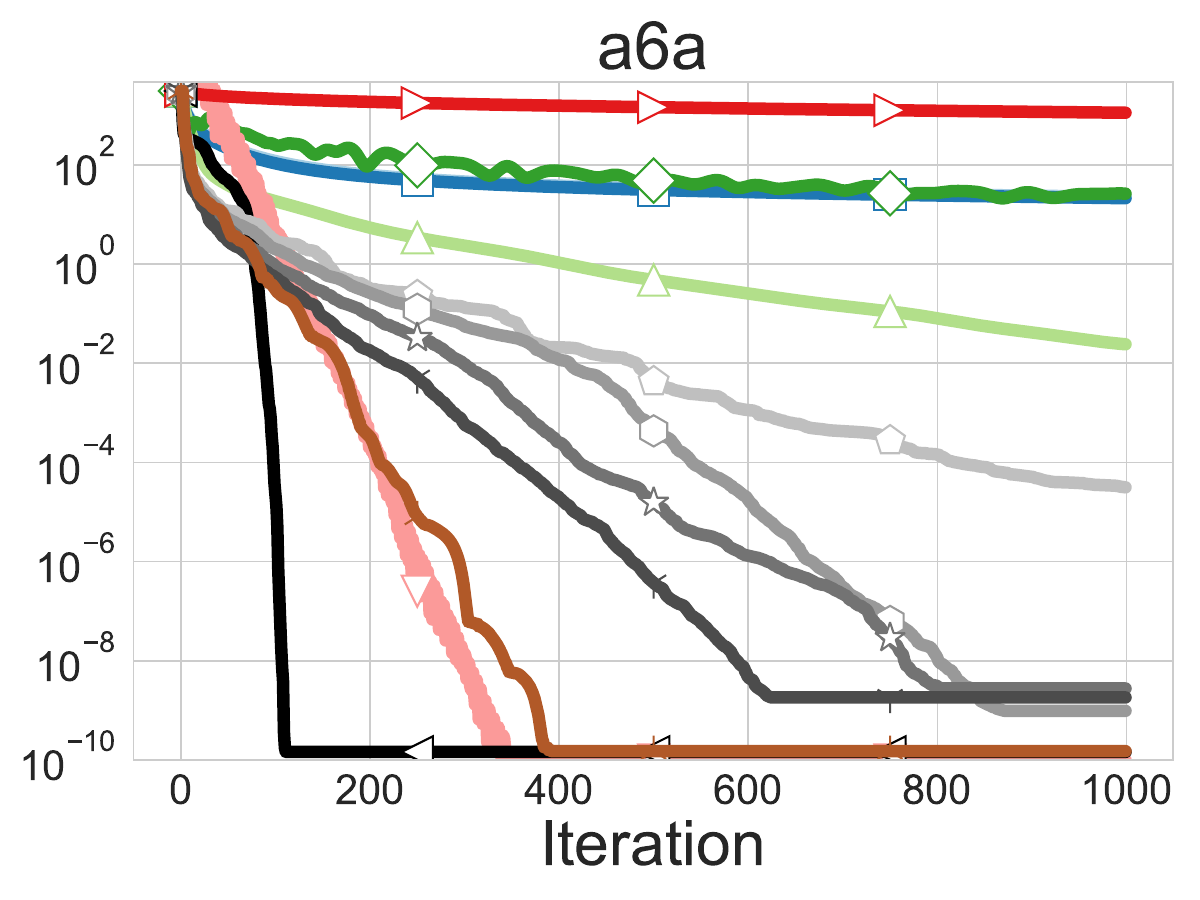}
\includegraphics[scale=0.2]{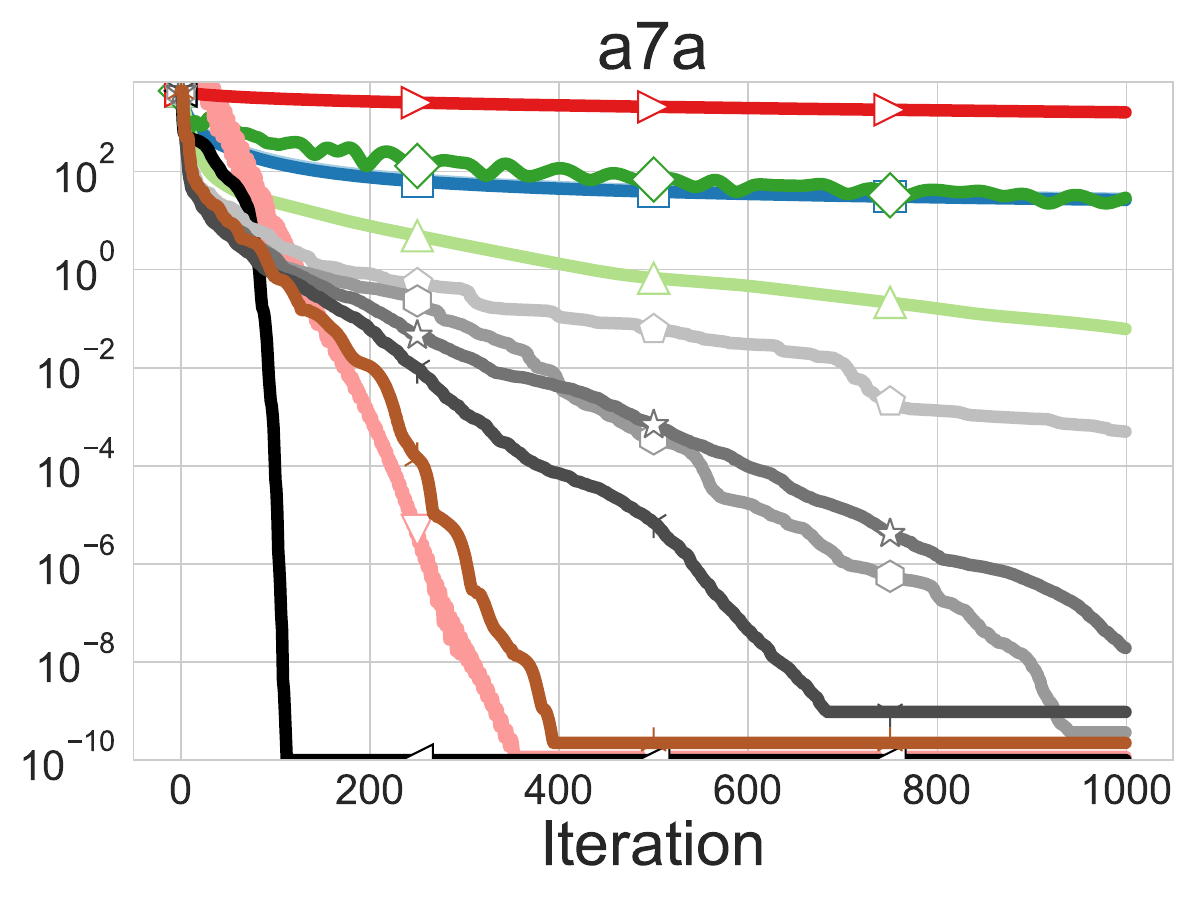}
\includegraphics[scale=0.2]{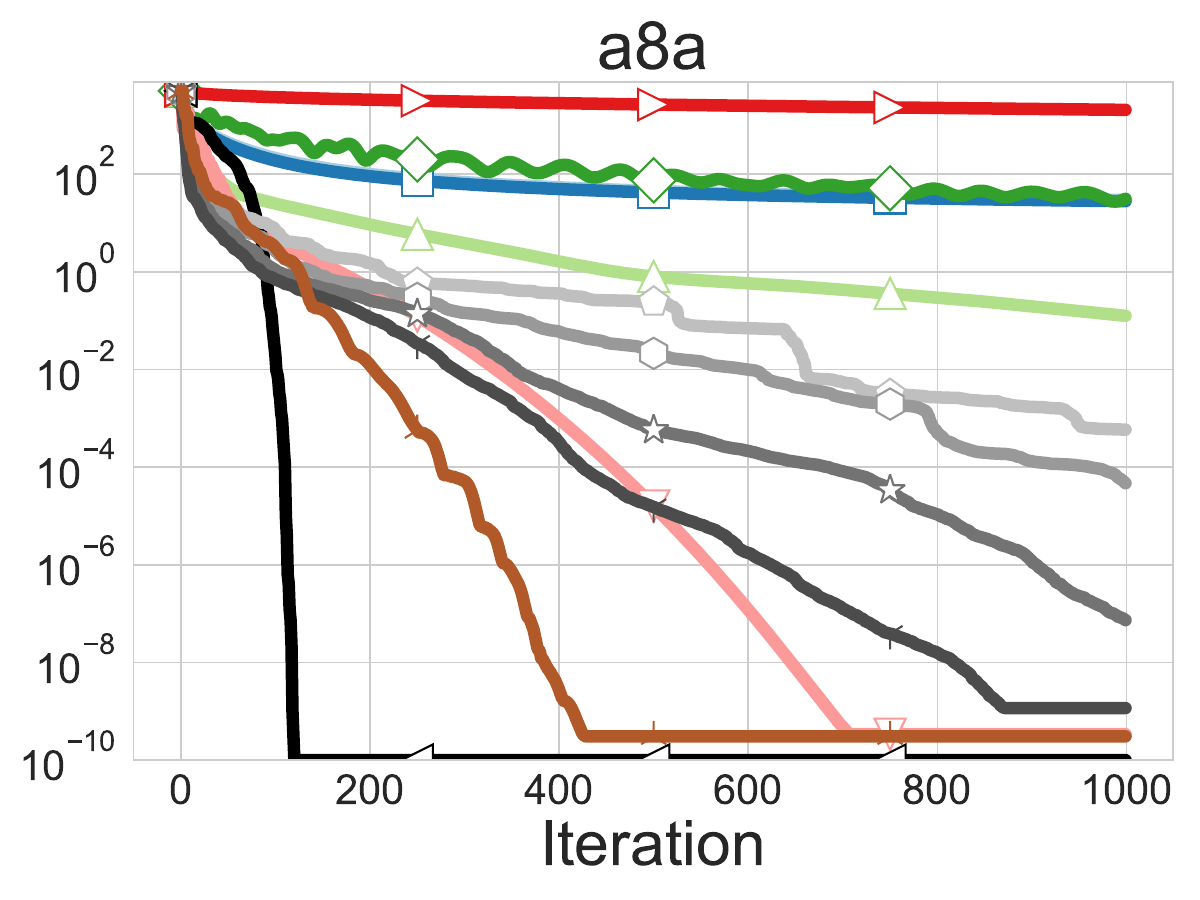}
\\
% Fourth Row: Gradient Norms
\includegraphics[scale=0.2]{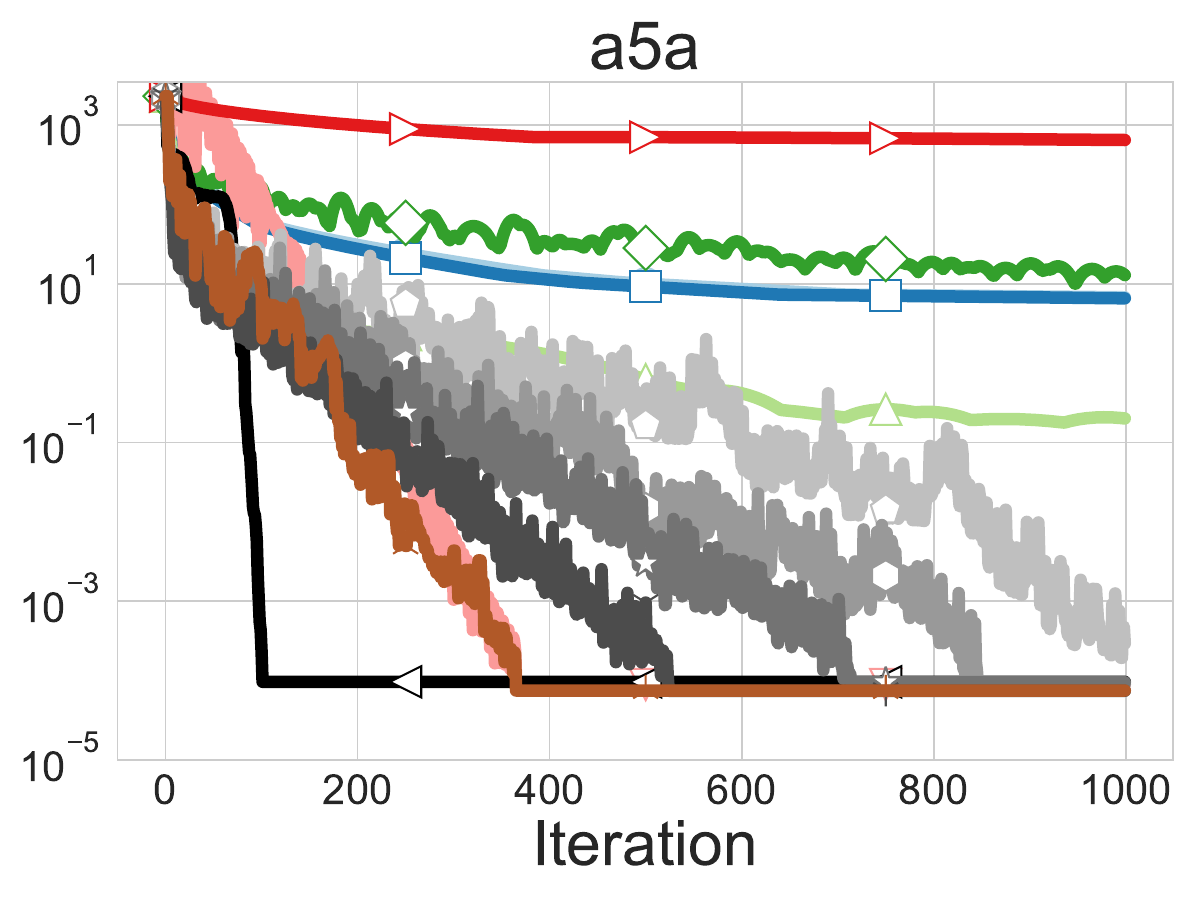}
\includegraphics[scale=0.2]{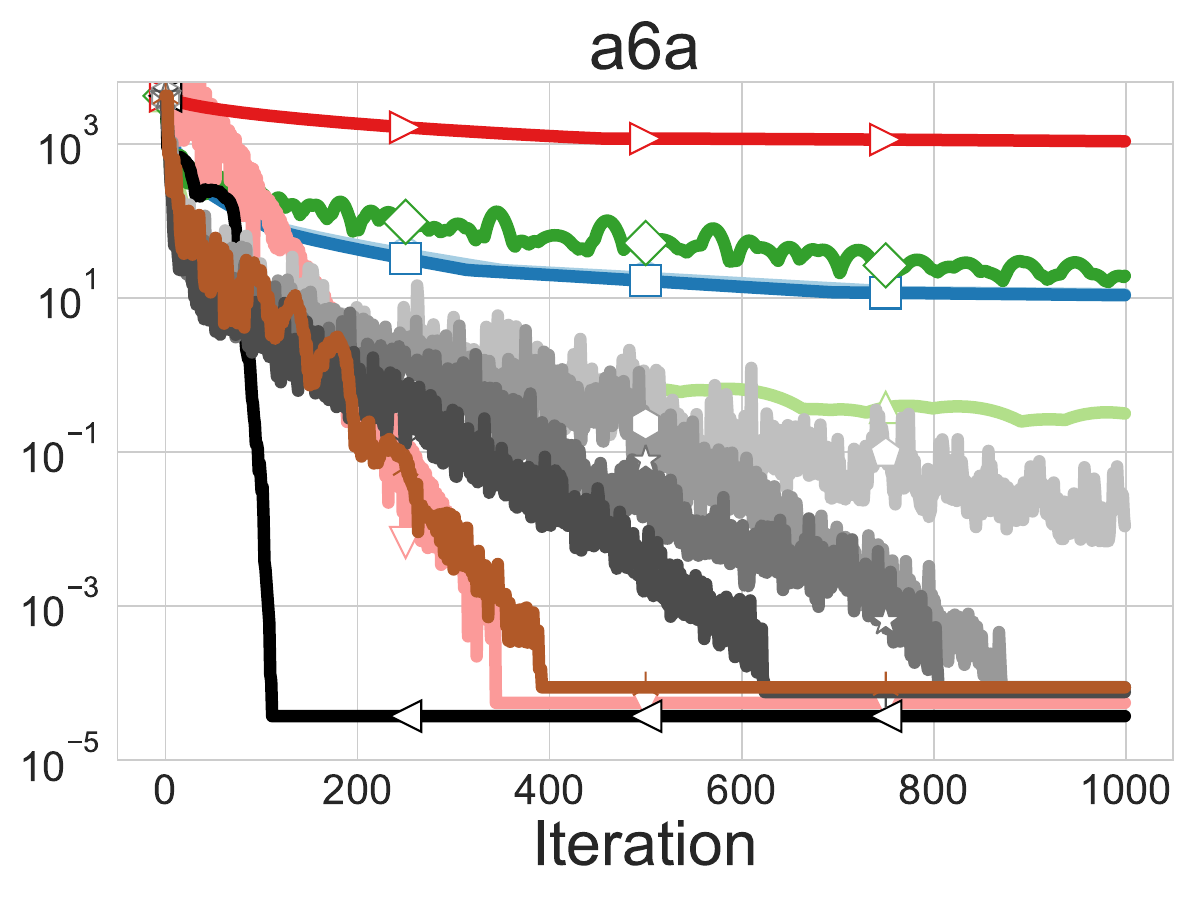}
\includegraphics[scale=0.2]{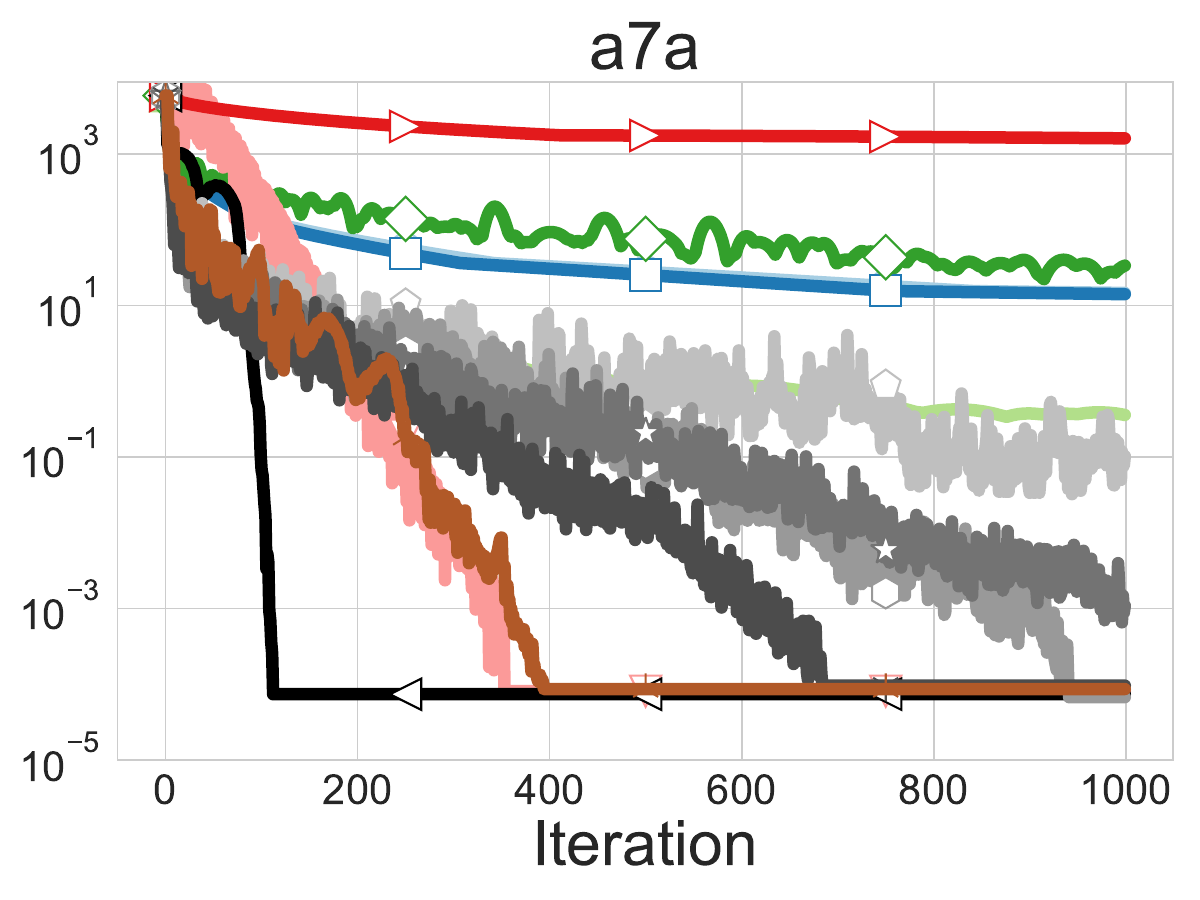}
\includegraphics[scale=0.2]{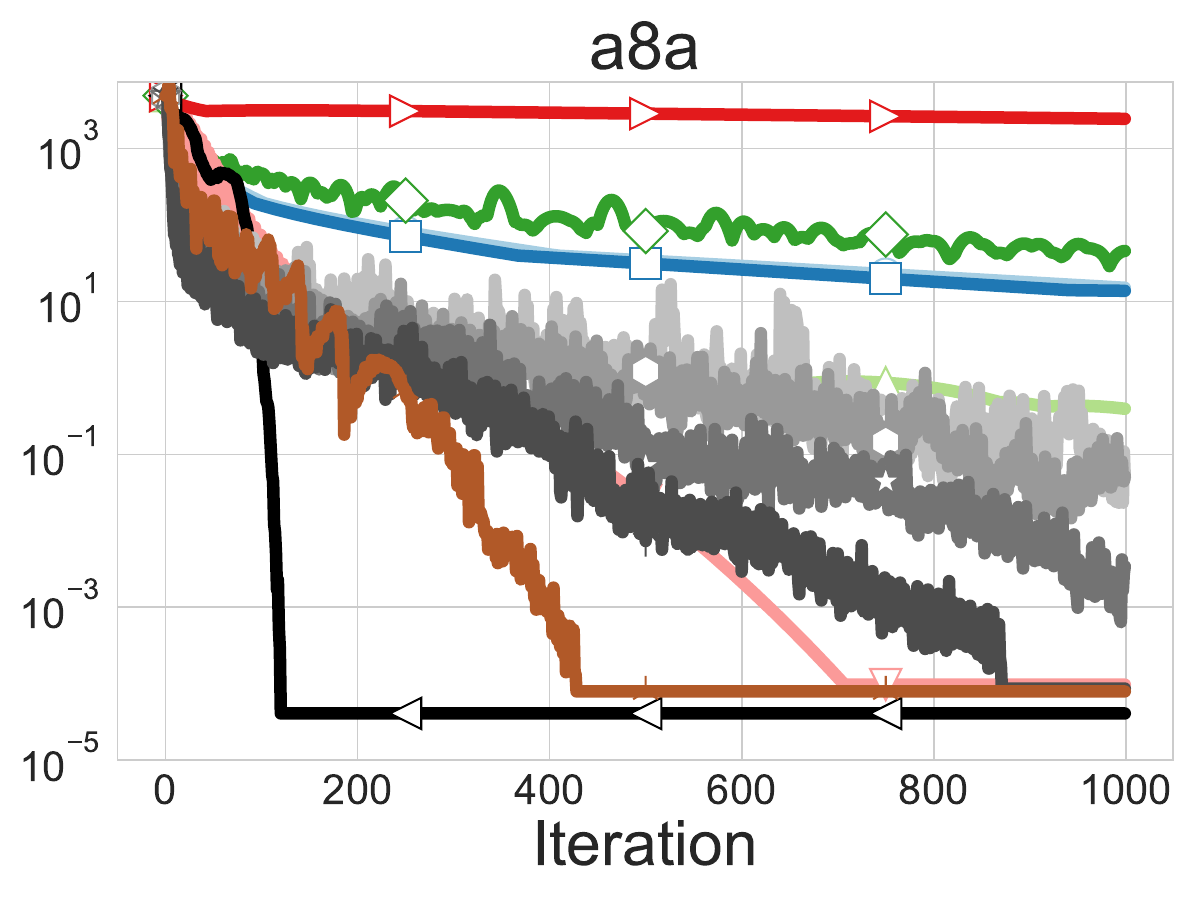}
\\
%% Fifth Row: Function Values
%\includegraphics[scale=0.2]{figs/a9a_objval_svm.pdf}
%\includegraphics[scale=0.2]{}
%\includegraphics[scale=0.2]{}
%\includegraphics[scale=0.2]{}
%\\
%% Sixth Row: Gradient Norms
%\includegraphics[scale=0.2]{figs/a9a_gnorm_svm.pdf}
%\includegraphics[scale=0.2]{}
%\includegraphics[scale=0.2]{}
%\includegraphics[scale=0.2]{}
%\\
% Add the legend
\includegraphics[scale=0.38]{figs/legend.pdf}
\caption{More experiments on support vector-machine problem}
\label{fig:svm-add-1}
\end{figure}

\begin{figure}

\centering
% Ninth Row: Function Values
\includegraphics[scale=0.2]{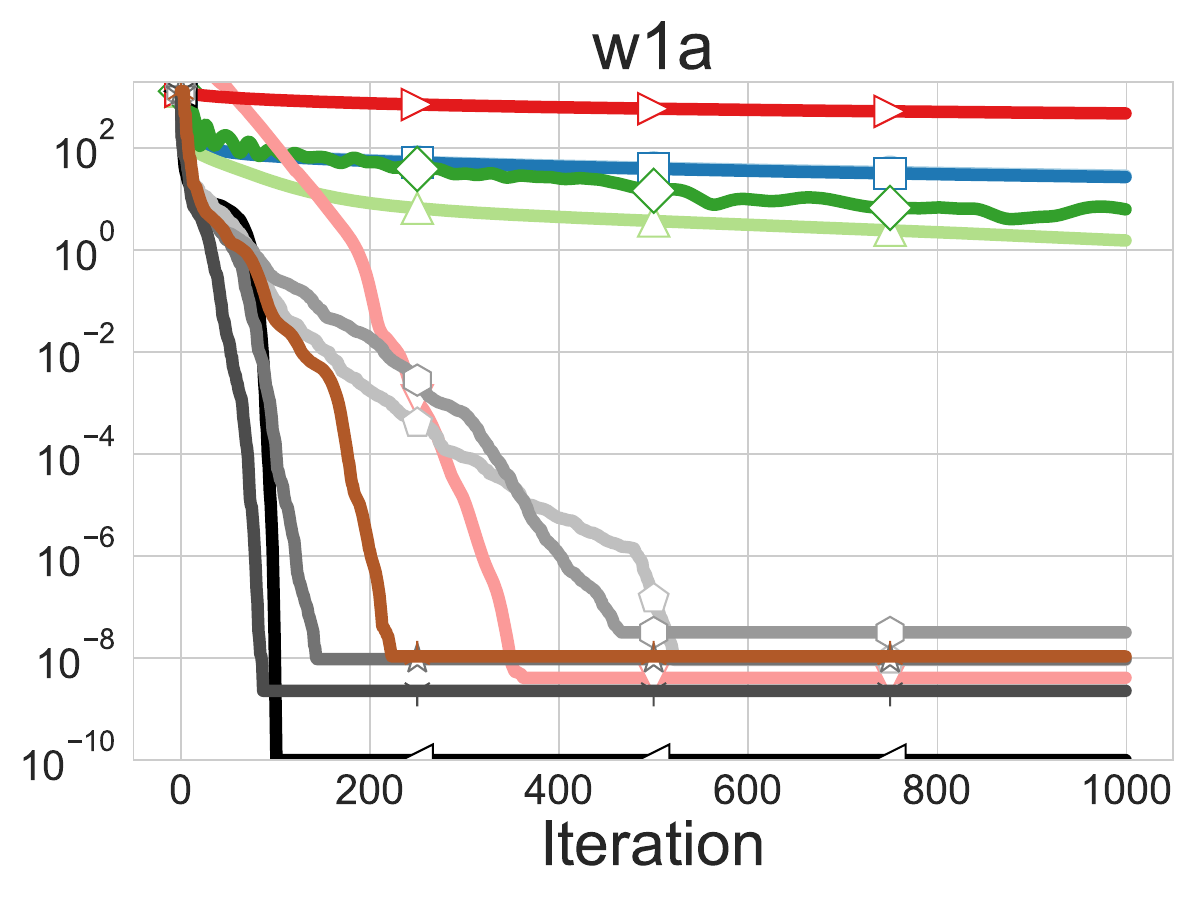}
\includegraphics[scale=0.2]{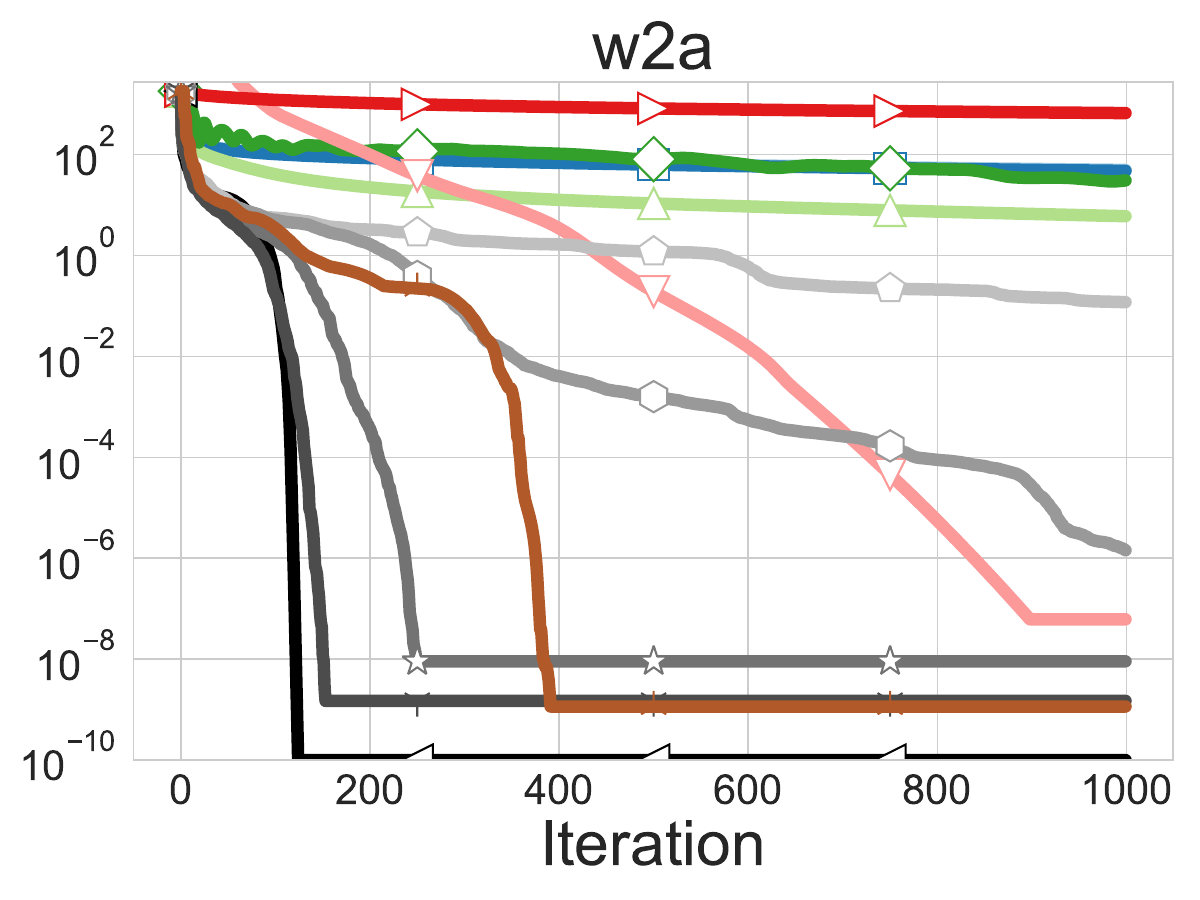}
\includegraphics[scale=0.2]{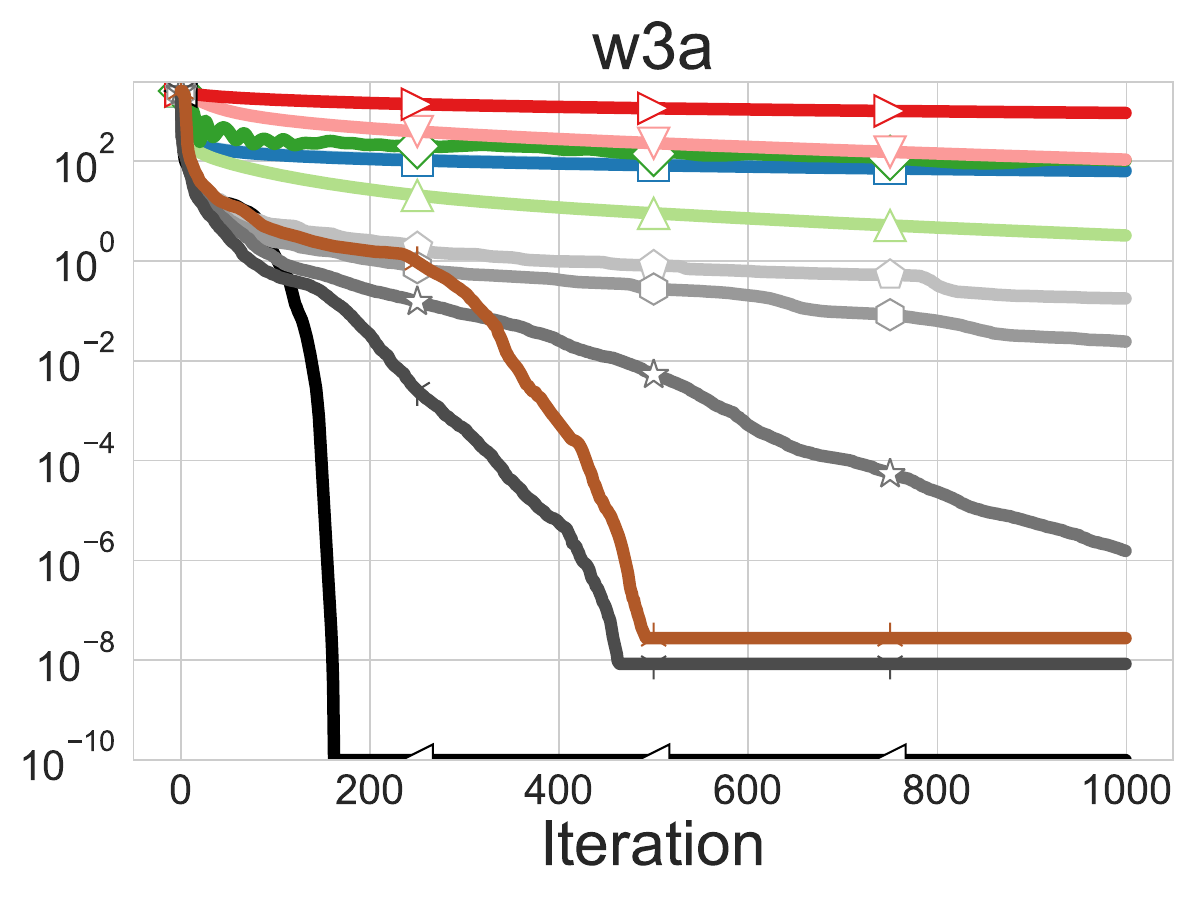}
\includegraphics[scale=0.2]{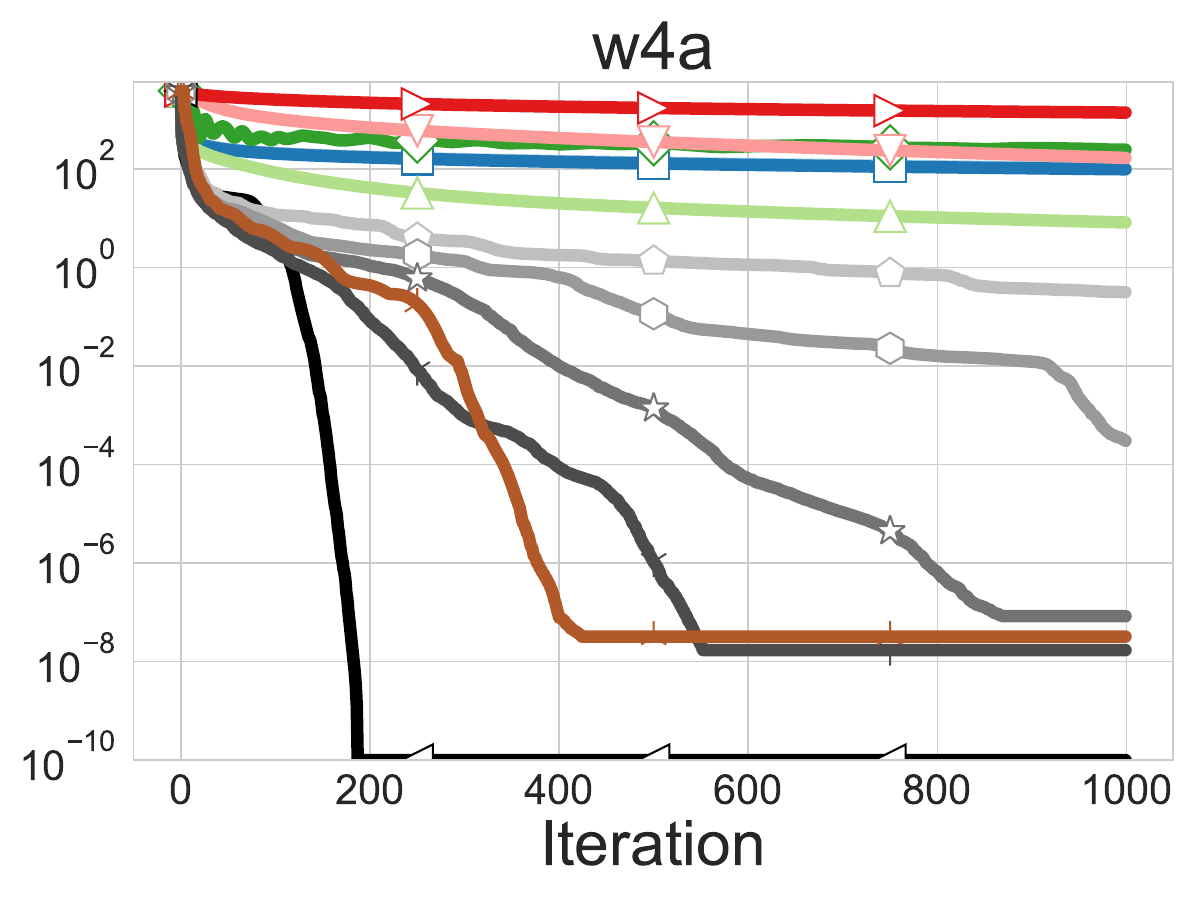}
\\
% Tenth Row: Gradient Norms
\includegraphics[scale=0.2]{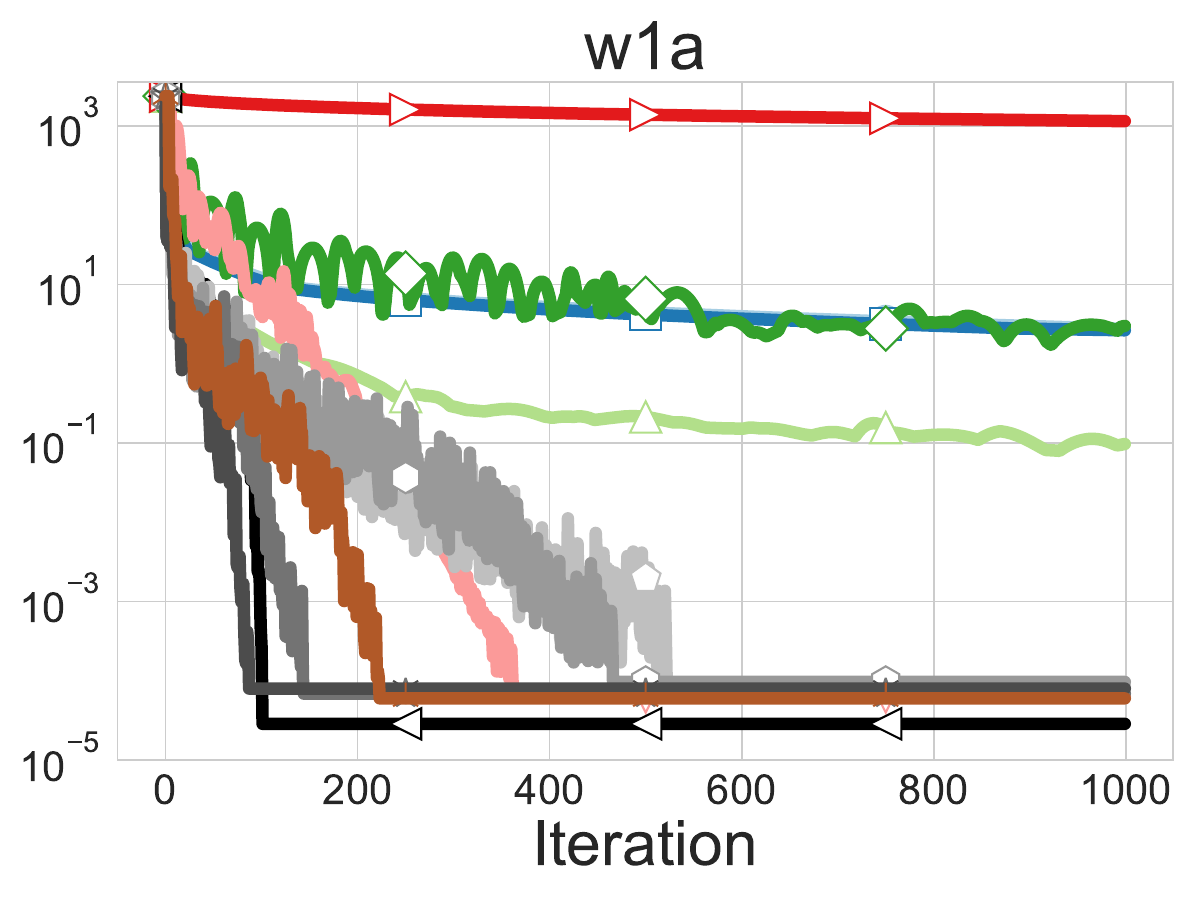}
\includegraphics[scale=0.2]{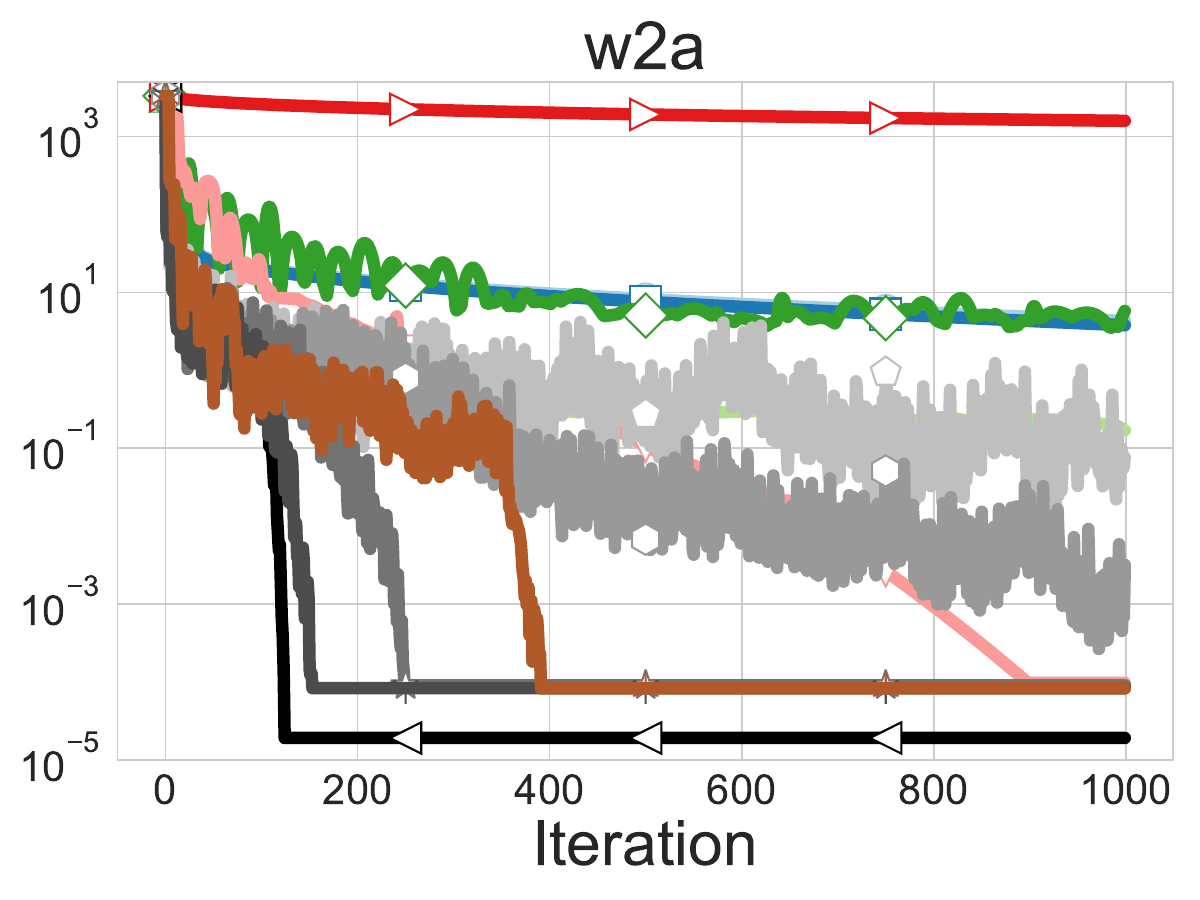}
\includegraphics[scale=0.2]{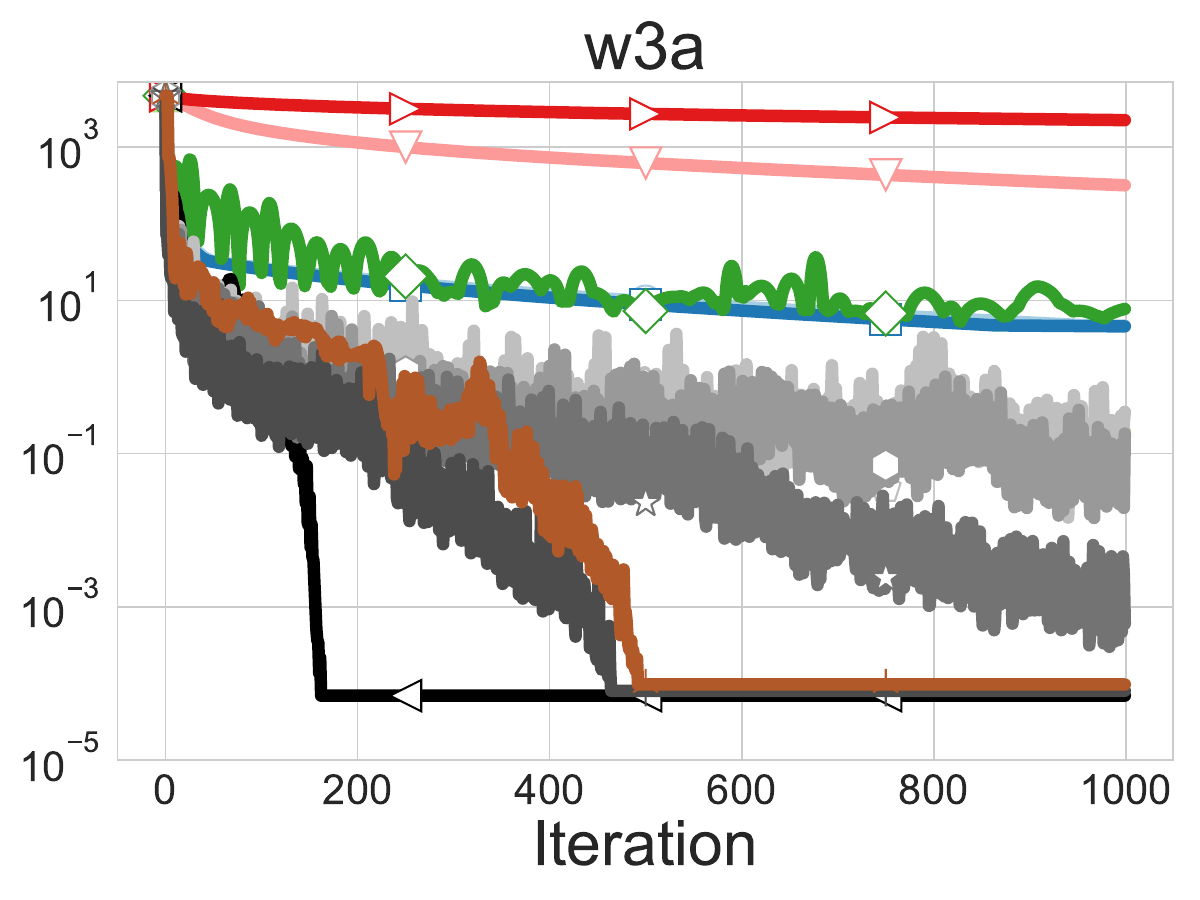}
\includegraphics[scale=0.2]{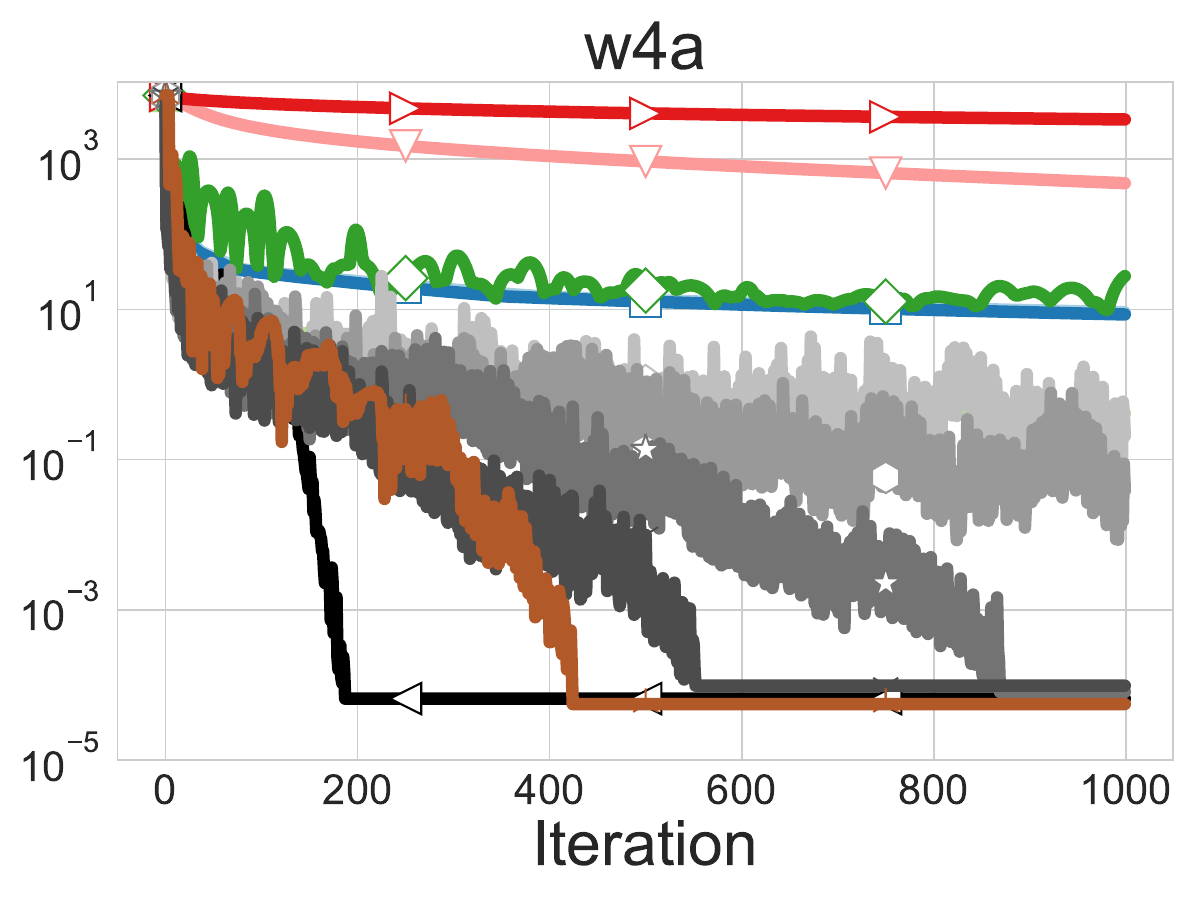}
\\
% Eleventh Row: Function Values
\includegraphics[scale=0.2]{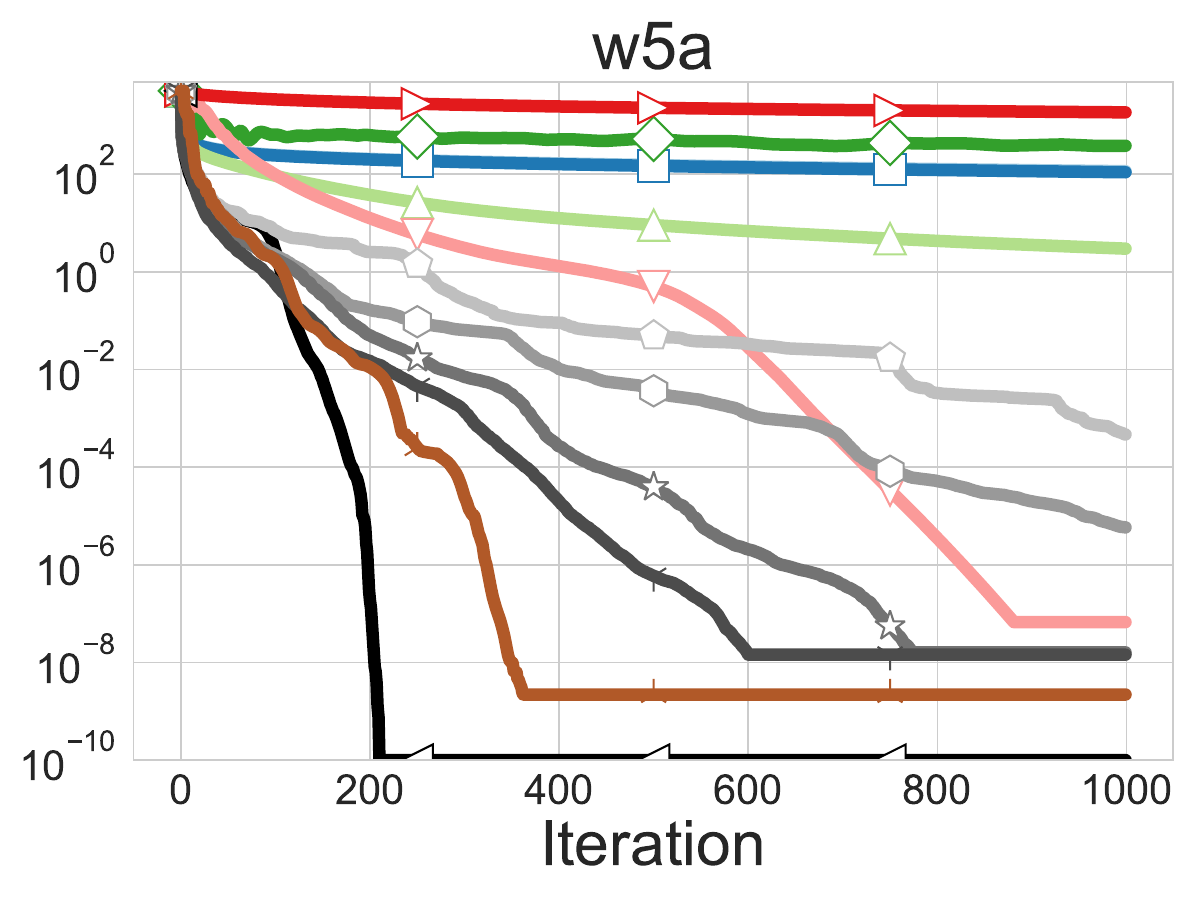}
\includegraphics[scale=0.2]{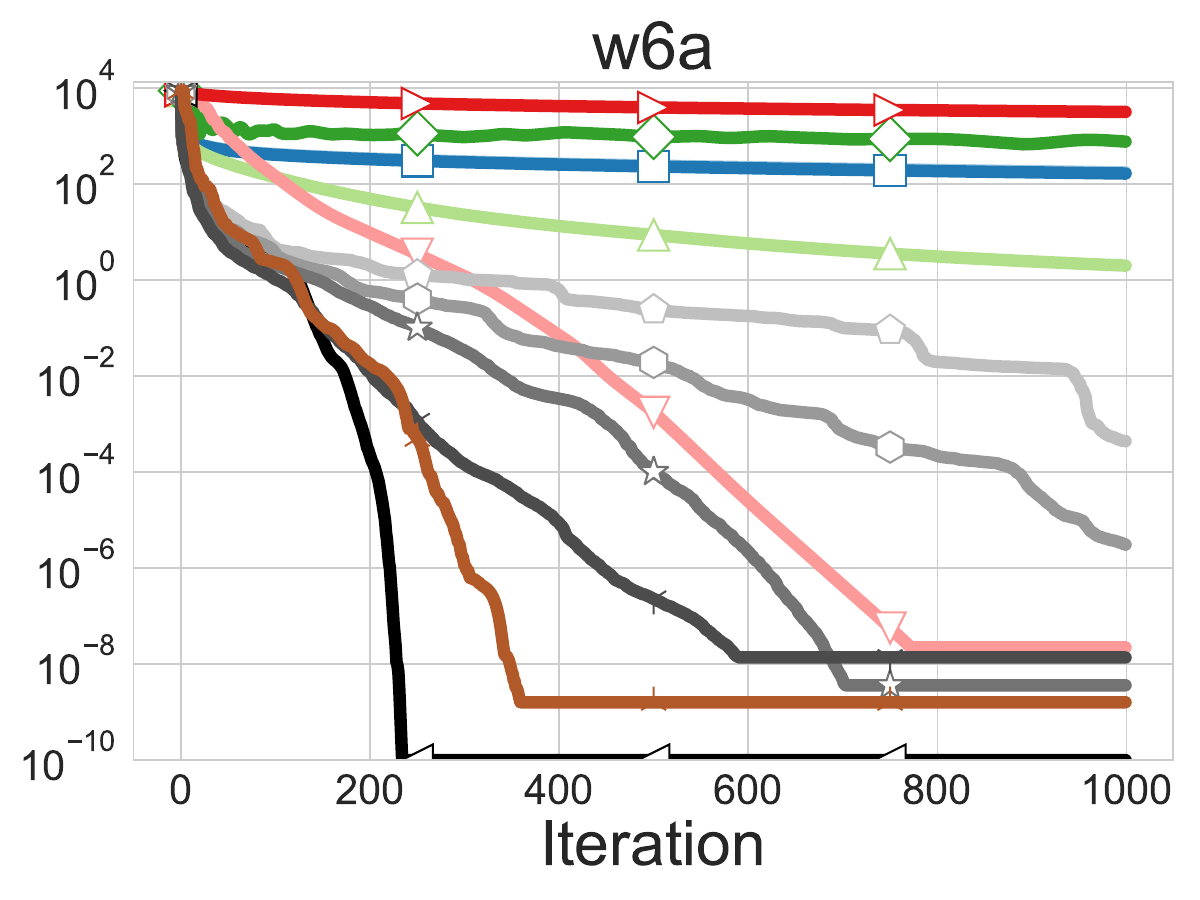}
\includegraphics[scale=0.2]{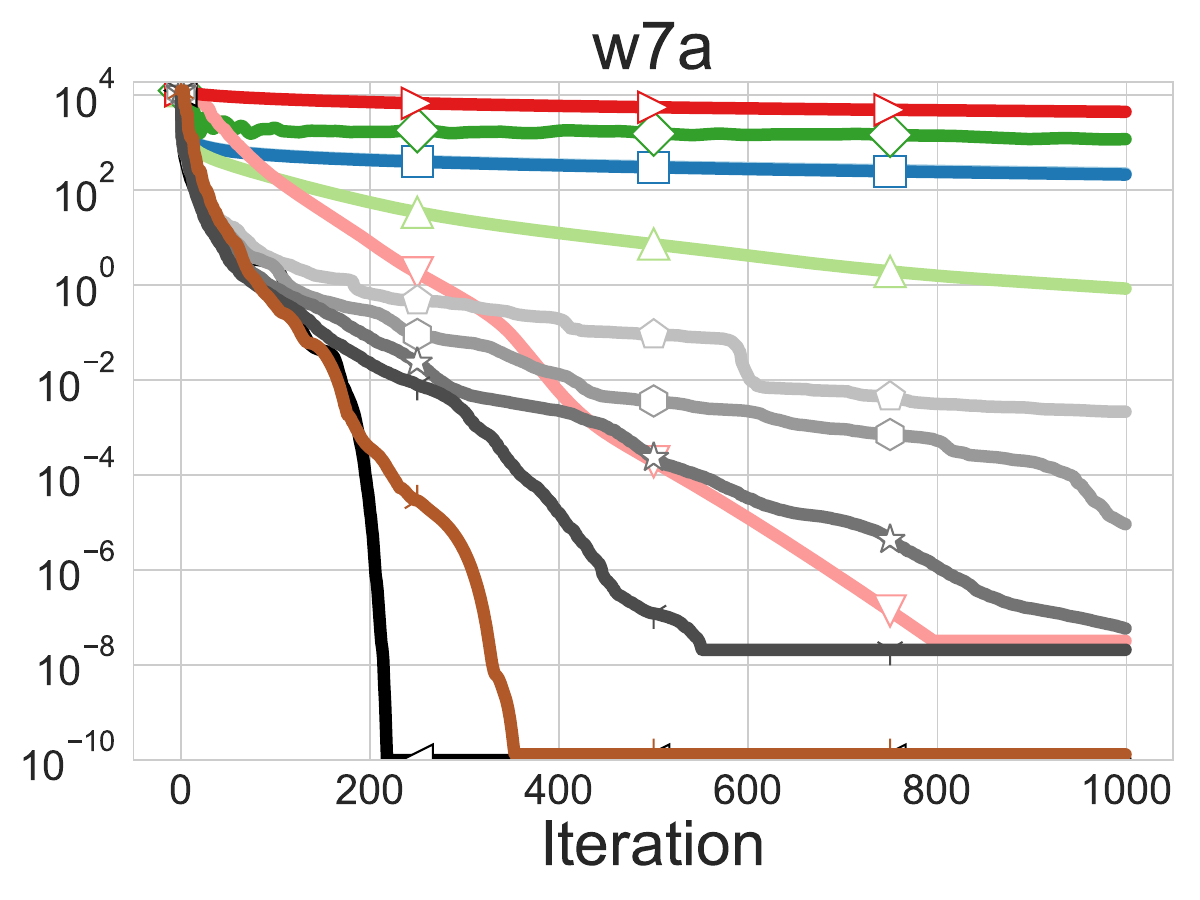}
\includegraphics[scale=0.2]{figs/w8a_objval_svm.pdf}
\\
% Twelfth Row: Gradient Norms
\includegraphics[scale=0.2]{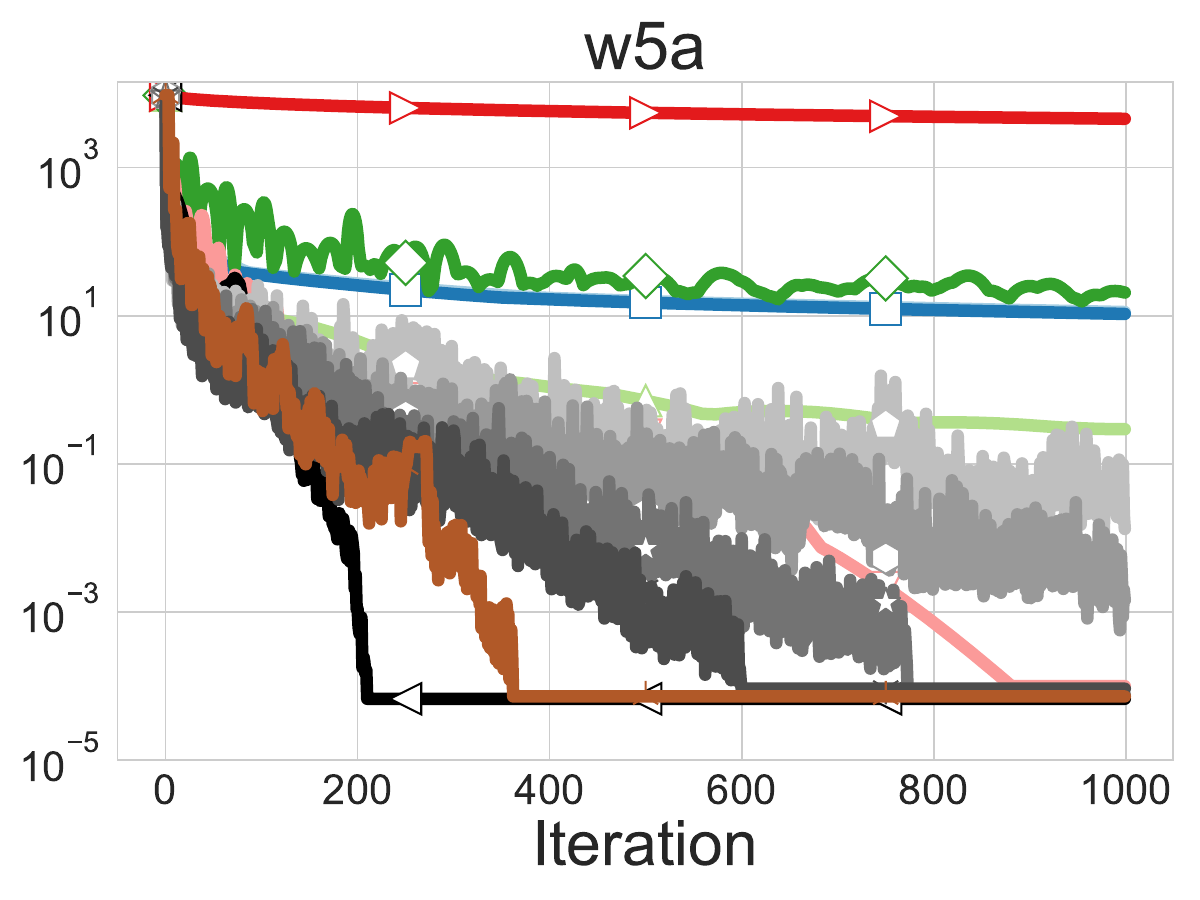}
\includegraphics[scale=0.2]{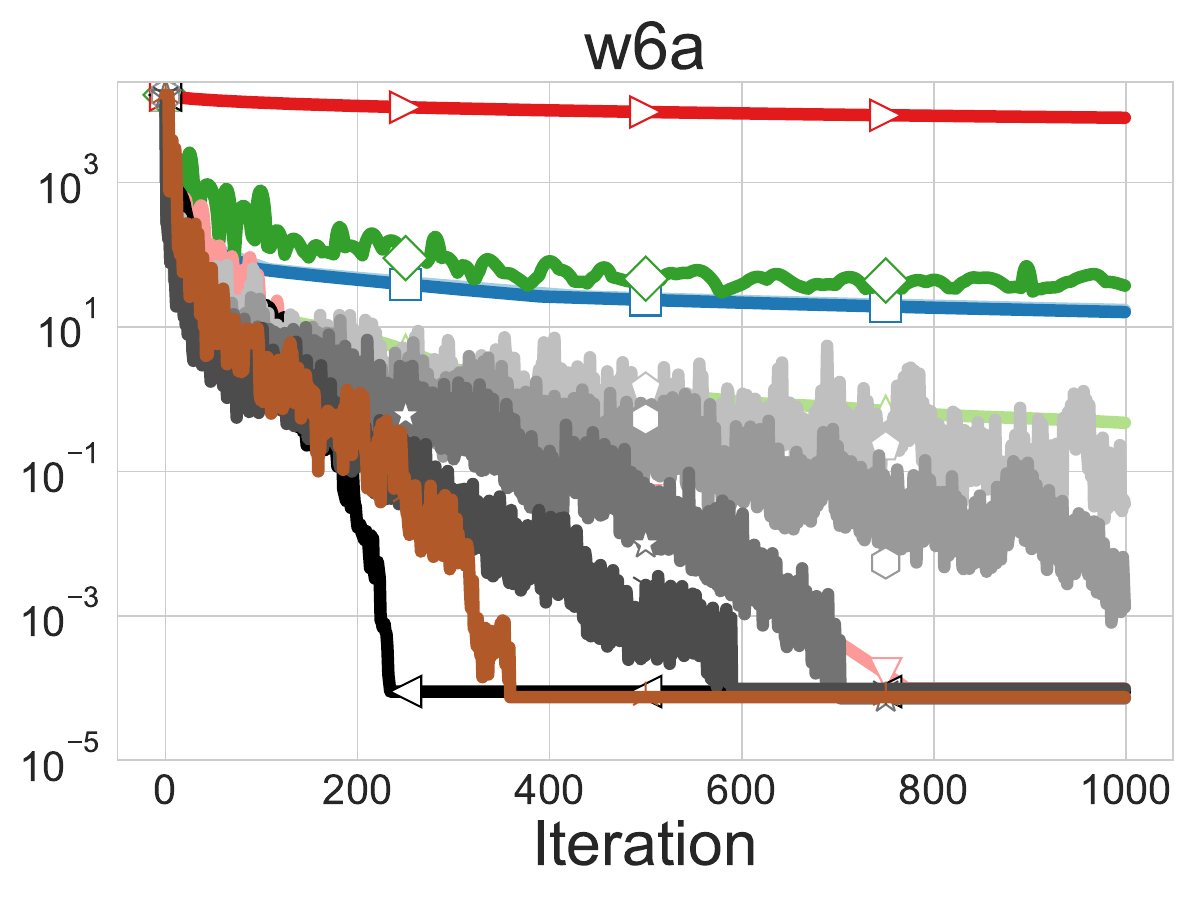}
\includegraphics[scale=0.2]{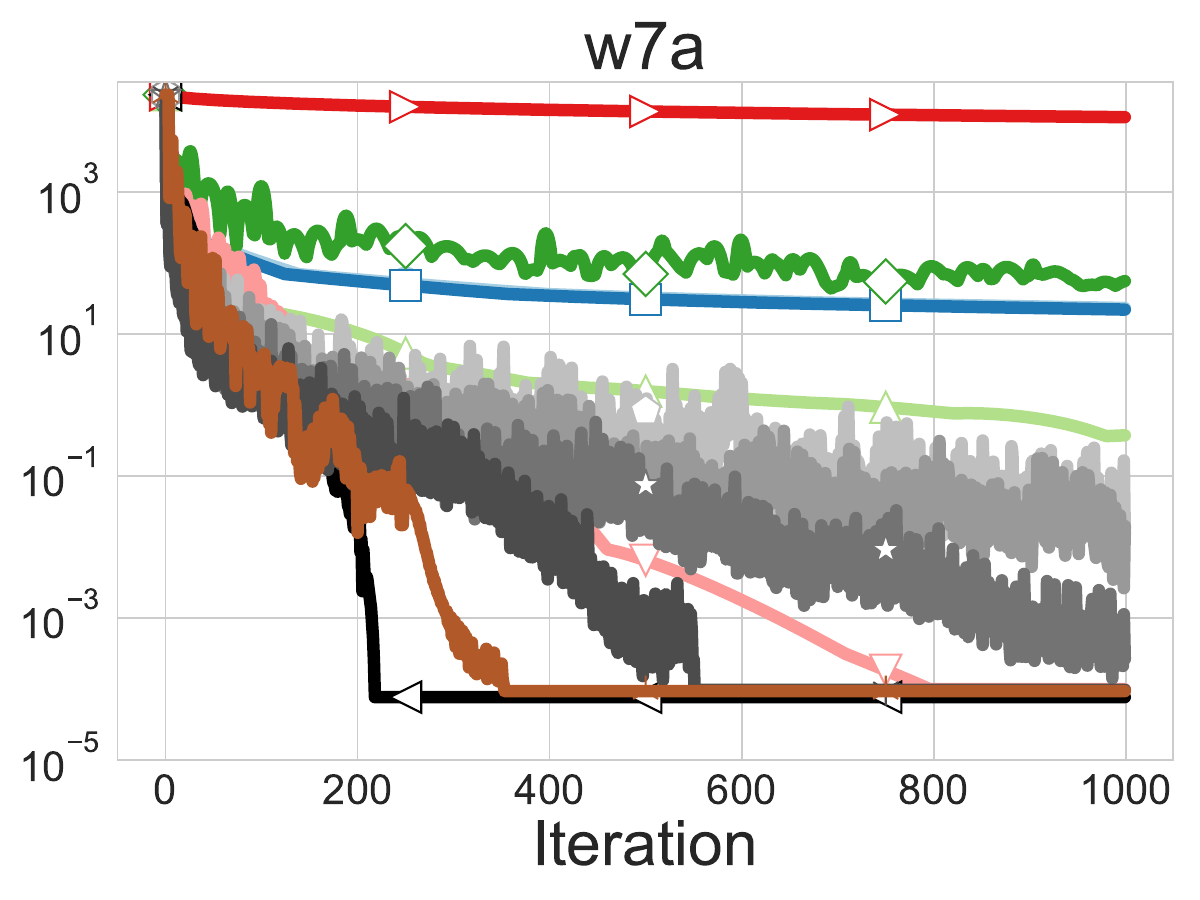}
\includegraphics[scale=0.2]{figs/w8a_gnorm_svm.pdf}
	
\includegraphics[scale=0.38]{figs/legend.pdf}
\caption{More experiments on support vector-machine problem}
\label{fig:svm-add-2}
\end{figure}

\subsection{Additional Experiments on Logistic Regression Problems}
See \Cref{fig:log-add-1} and \Cref{fig:log-add-2}.

\begin{figure}[!h]
\centering
% First Row: Function Values
\includegraphics[scale=0.2]{figs/a1a_objval_logistic.pdf}
\includegraphics[scale=0.2]{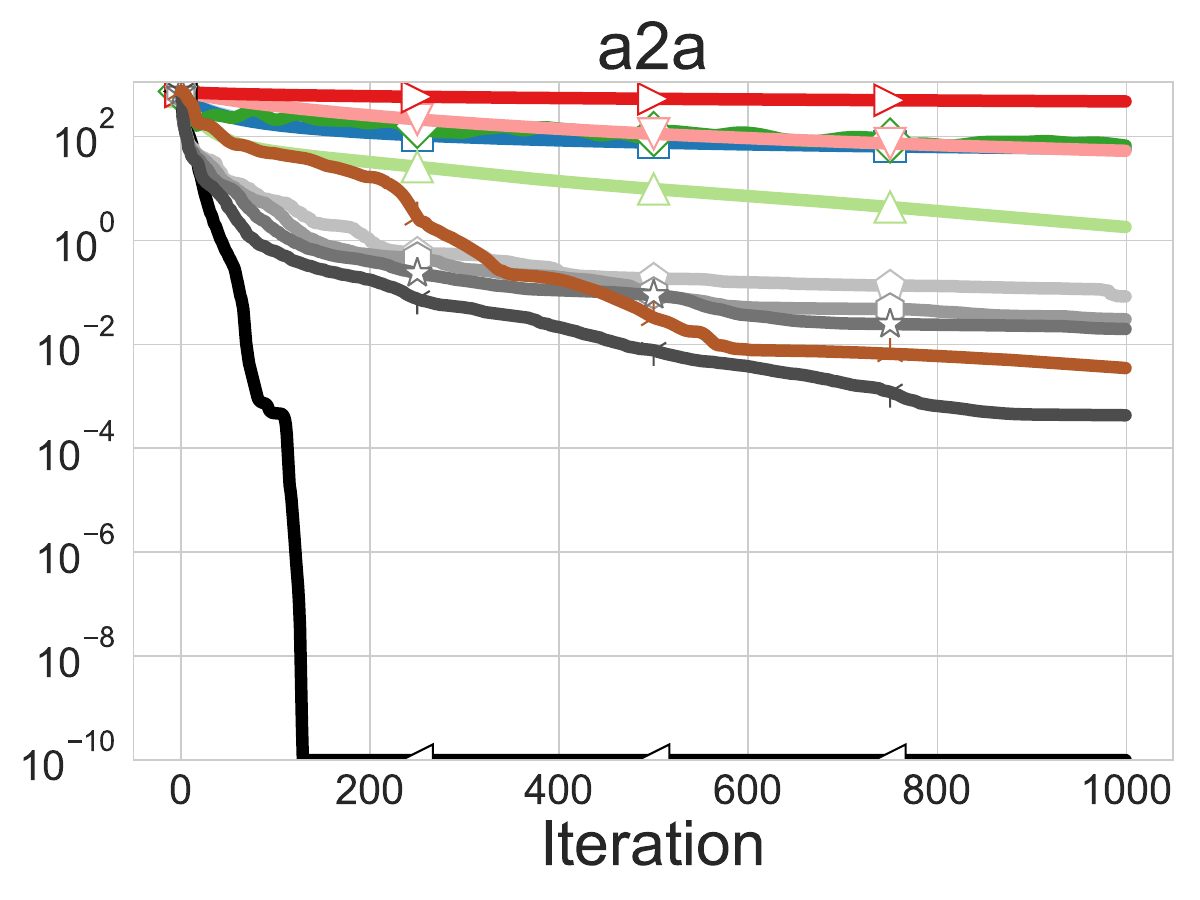}
\includegraphics[scale=0.2]{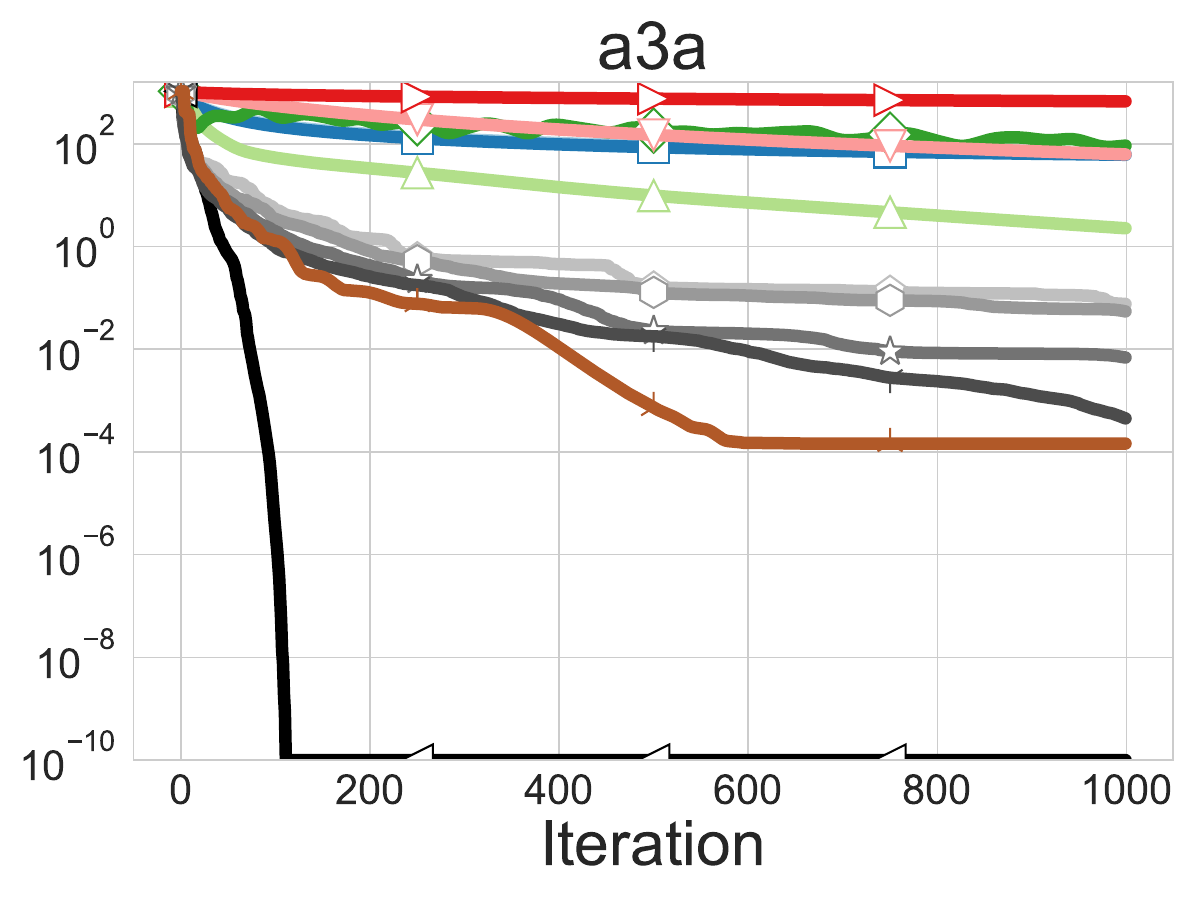}
\includegraphics[scale=0.2]{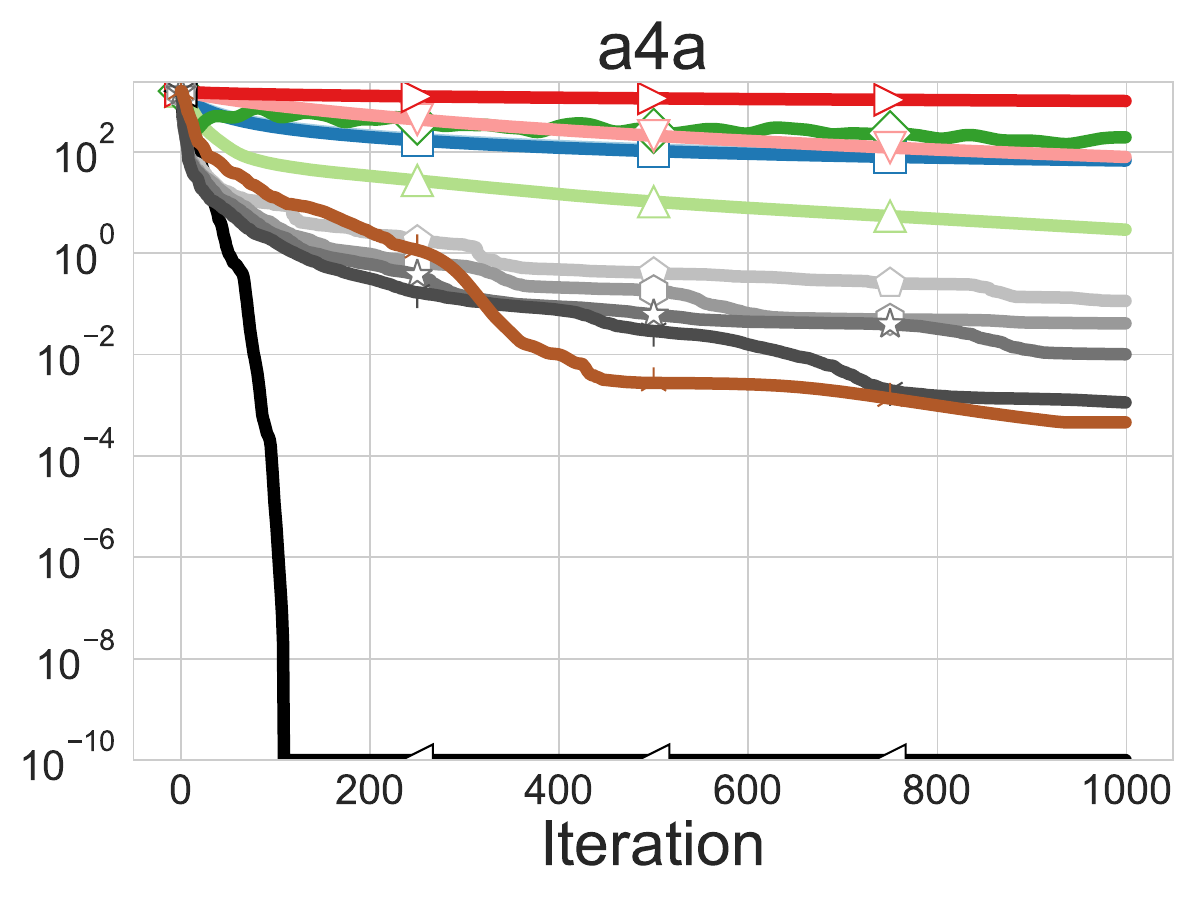}
\\
% Second Row: Gradient Norms
\includegraphics[scale=0.2]{figs/a1a_gnorm_logistic.pdf}
\includegraphics[scale=0.2]{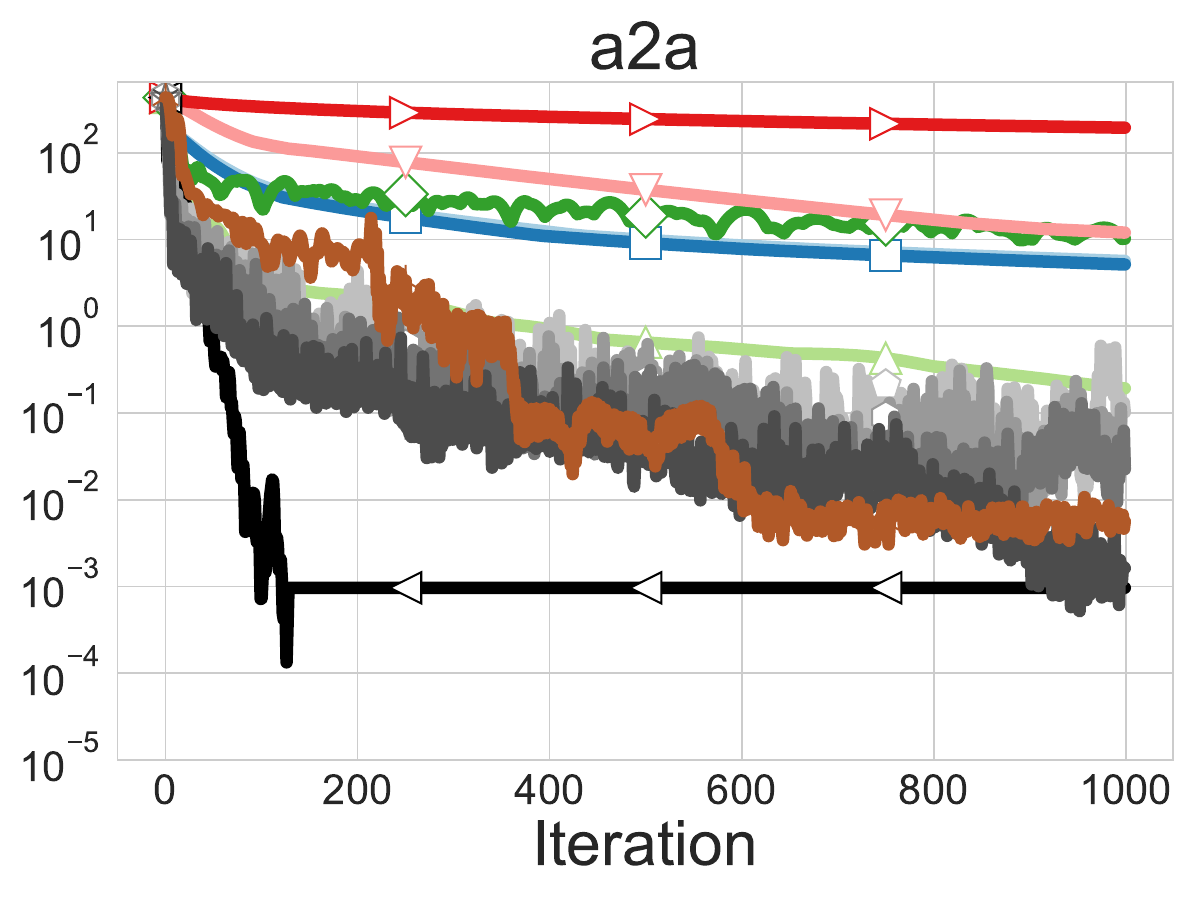}
\includegraphics[scale=0.2]{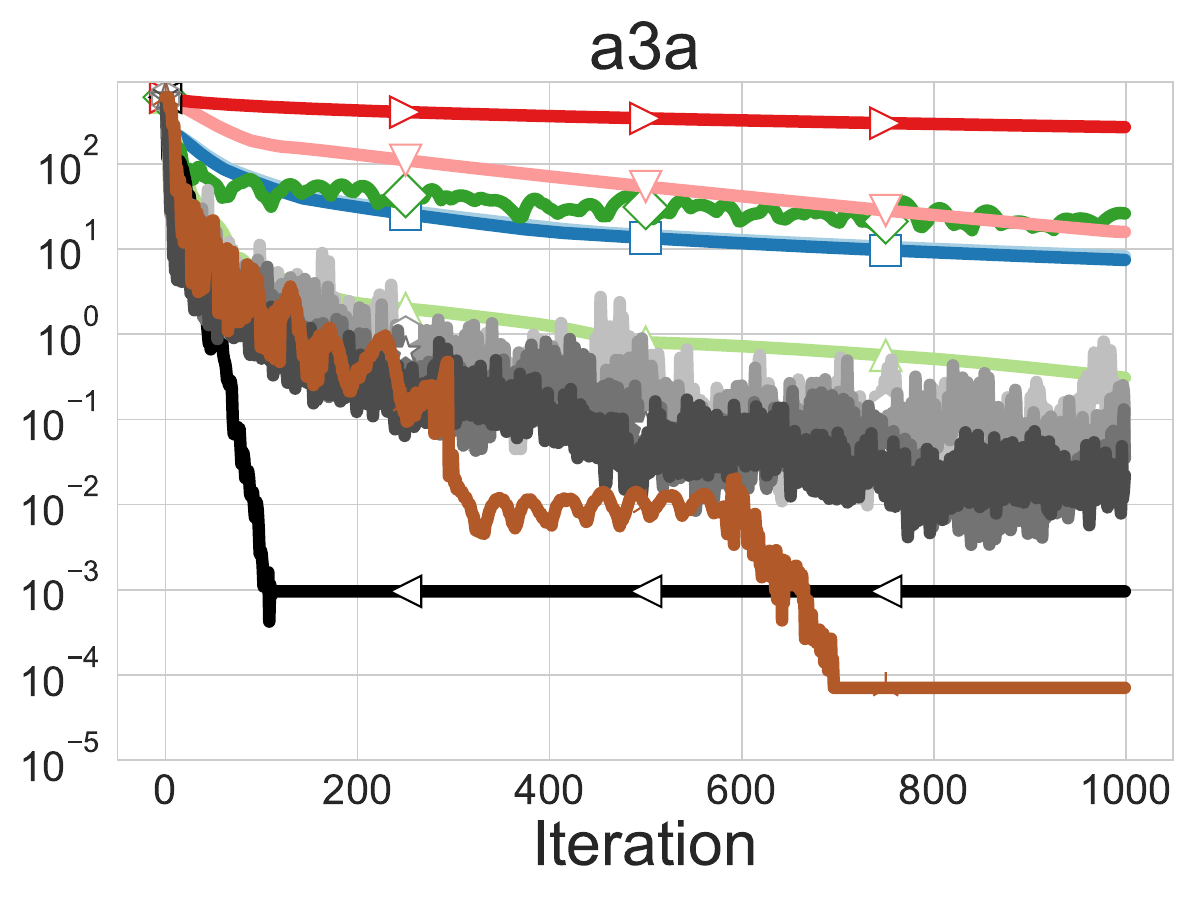}
\includegraphics[scale=0.2]{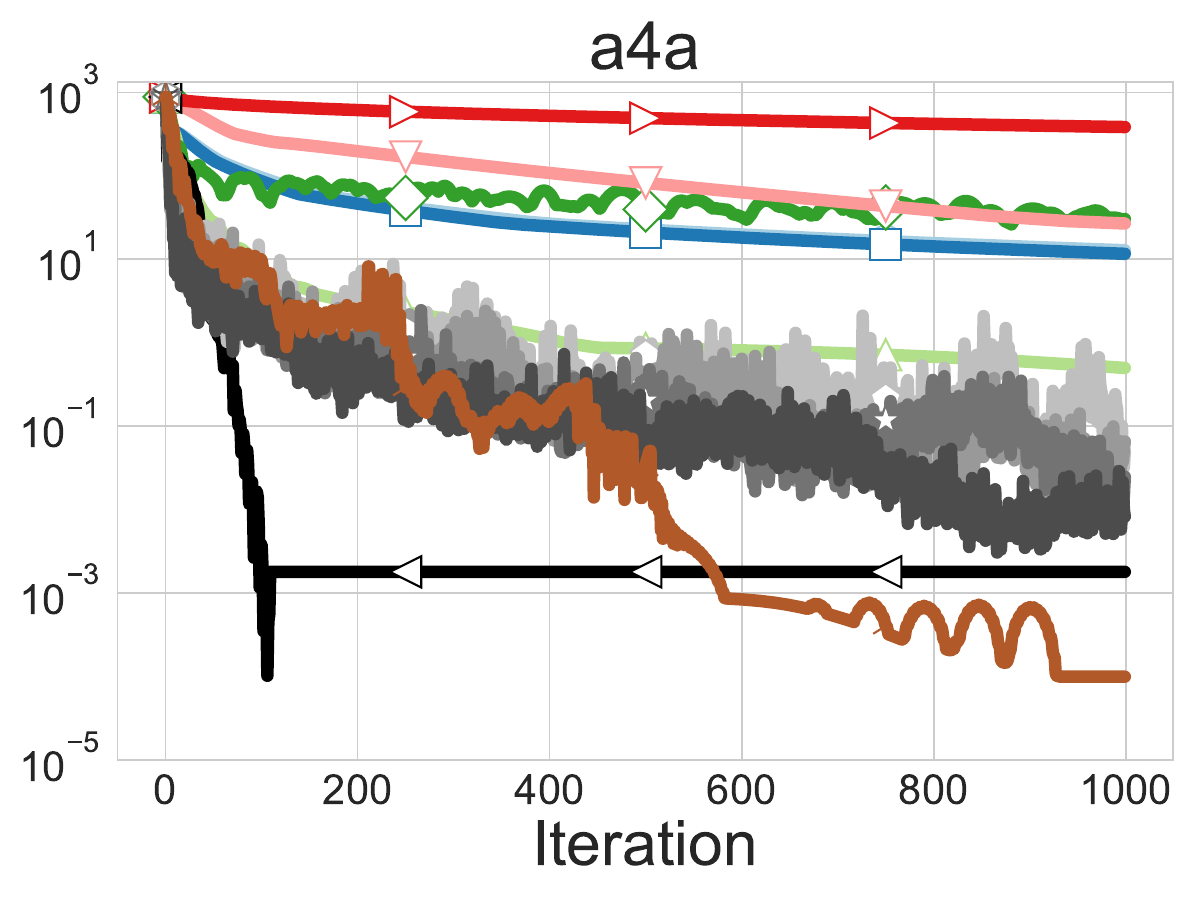}
\\
% Third Row: Function Values
\includegraphics[scale=0.2]{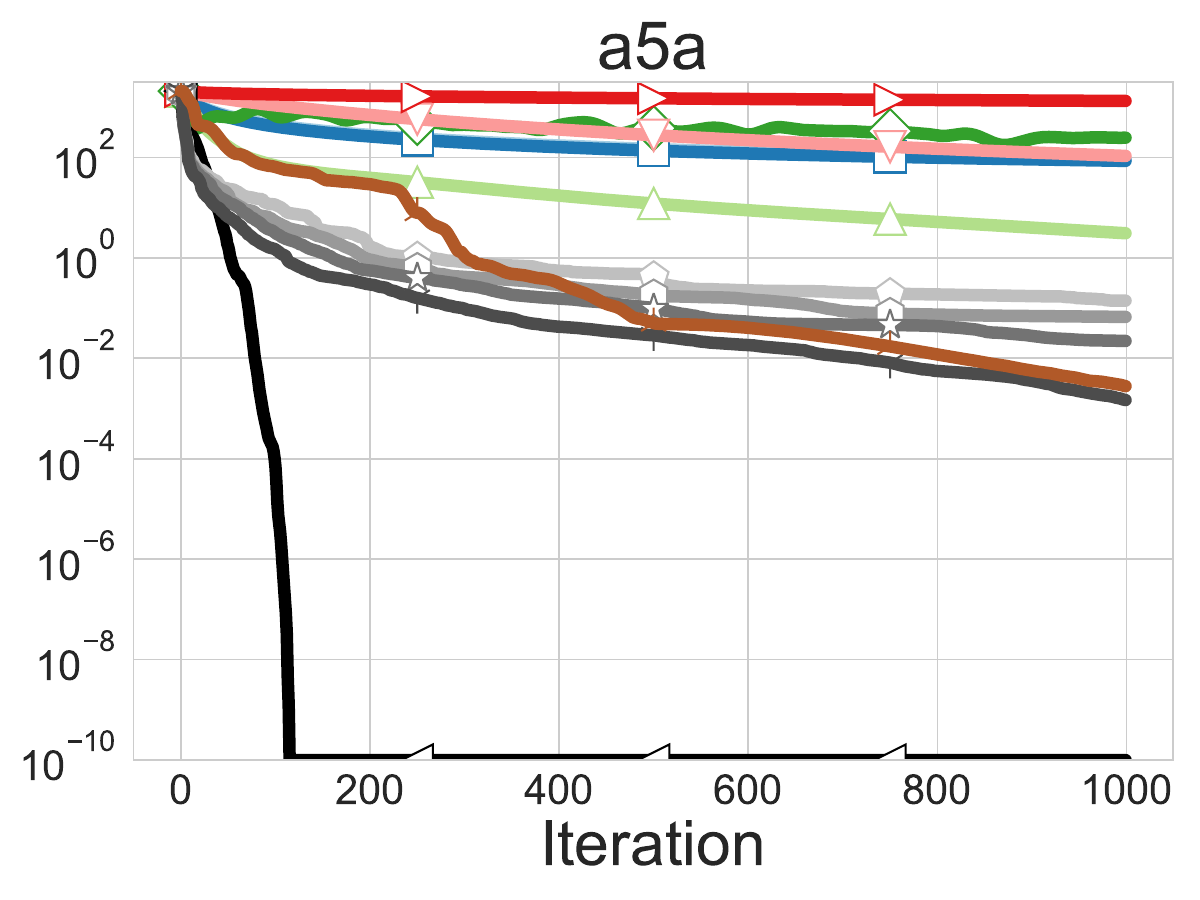}
\includegraphics[scale=0.2]{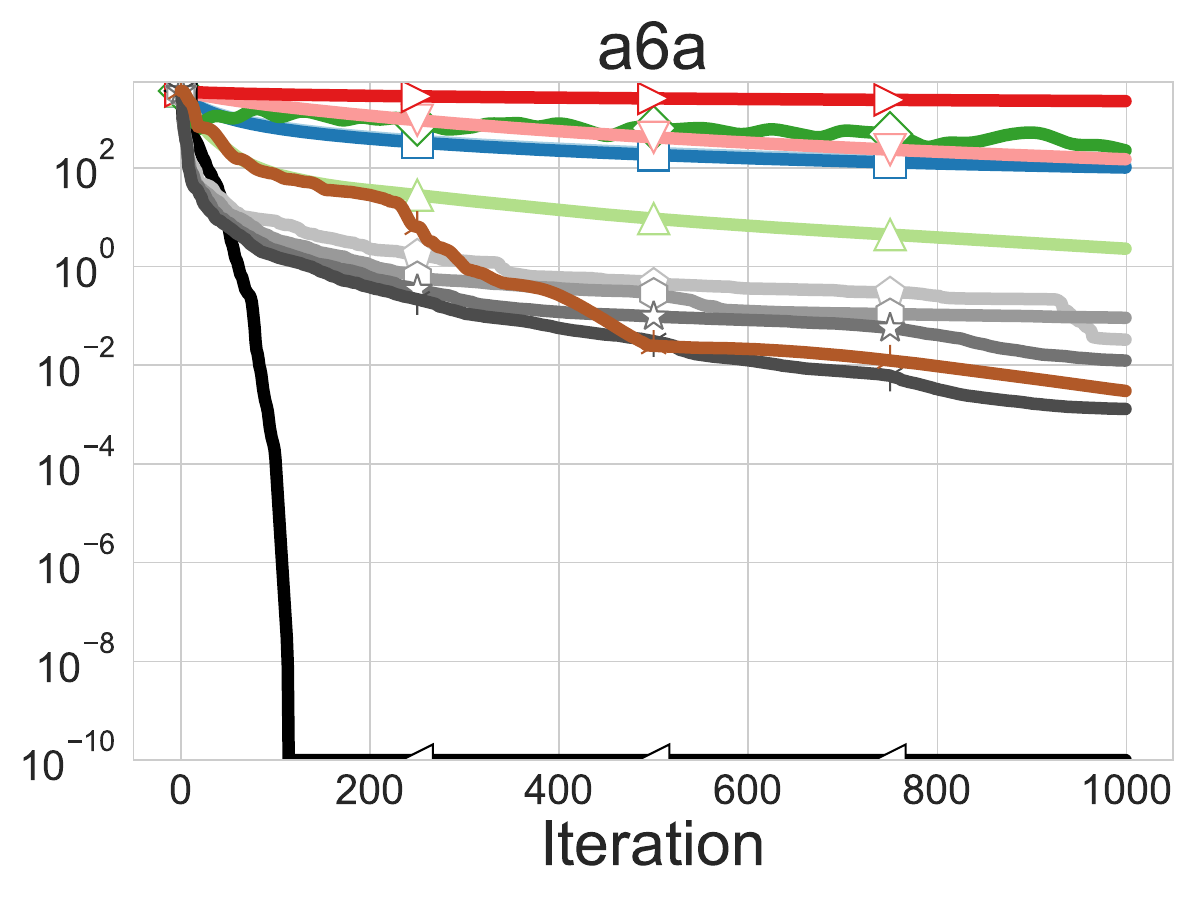}
\includegraphics[scale=0.2]{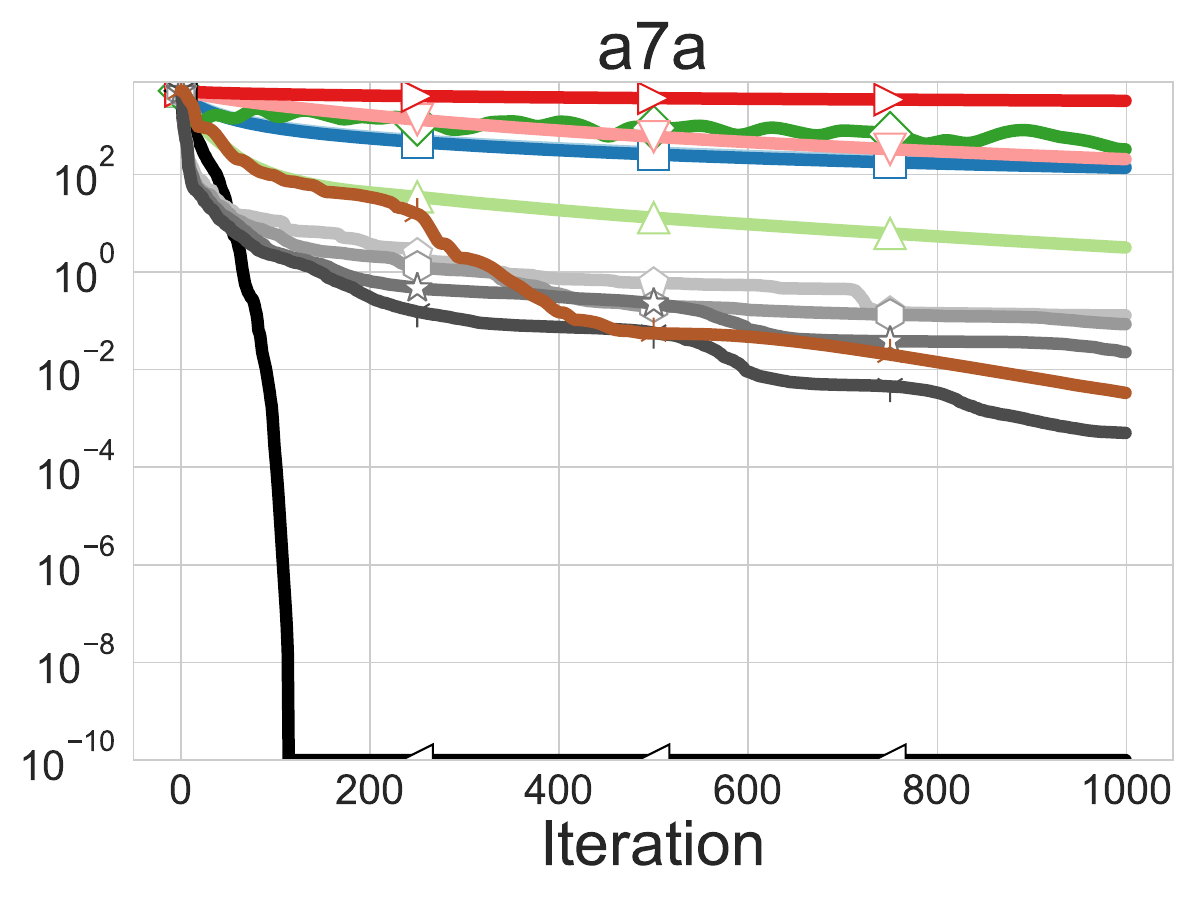}
\includegraphics[scale=0.2]{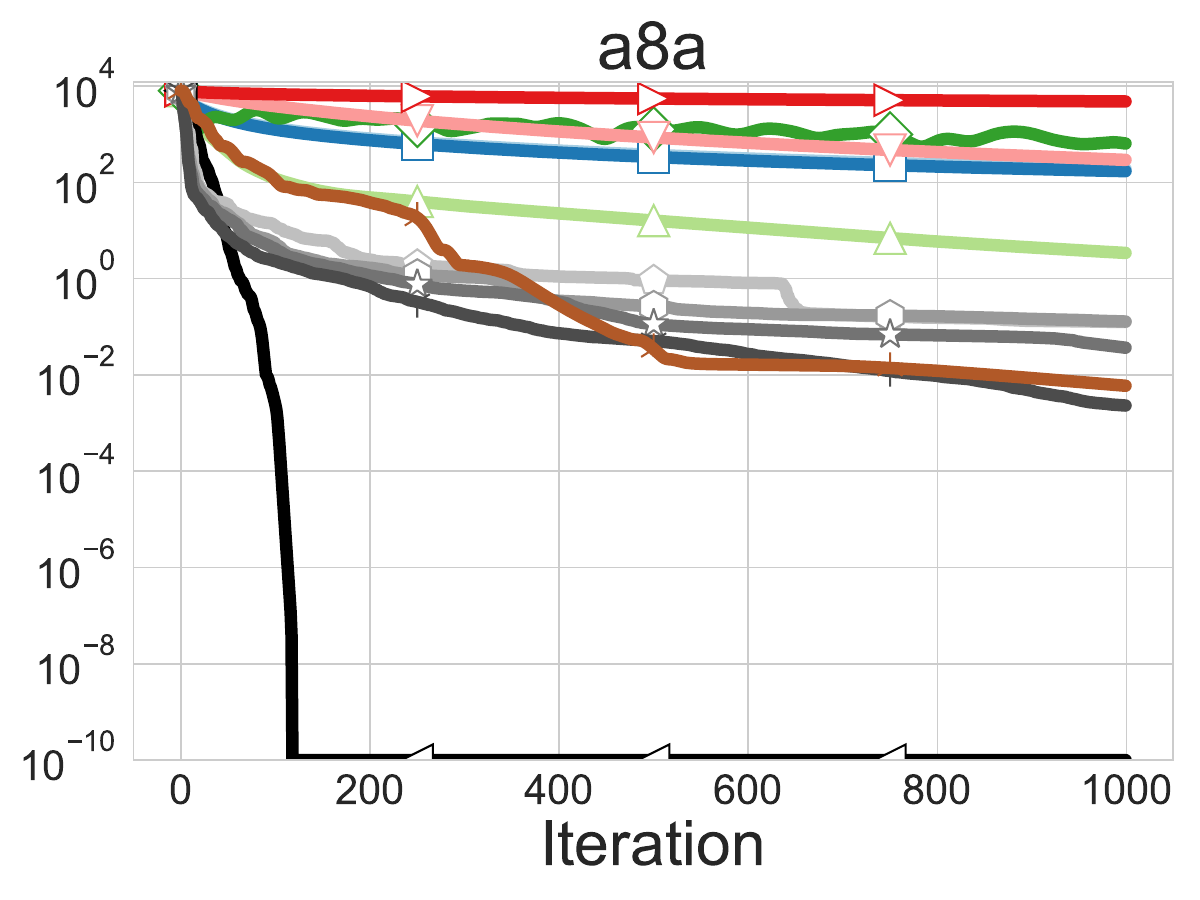}
\\
% Fourth Row: Gradient Norms
\includegraphics[scale=0.2]{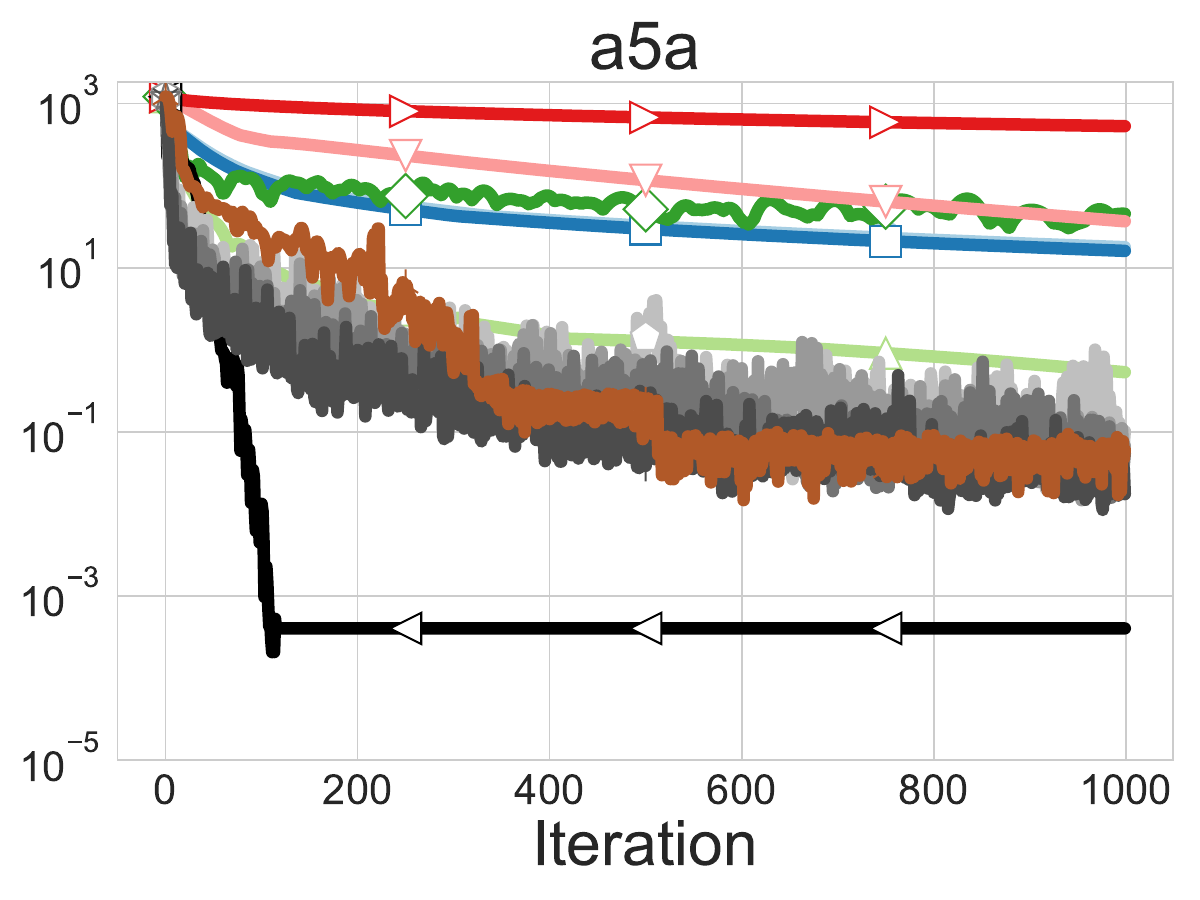}
\includegraphics[scale=0.2]{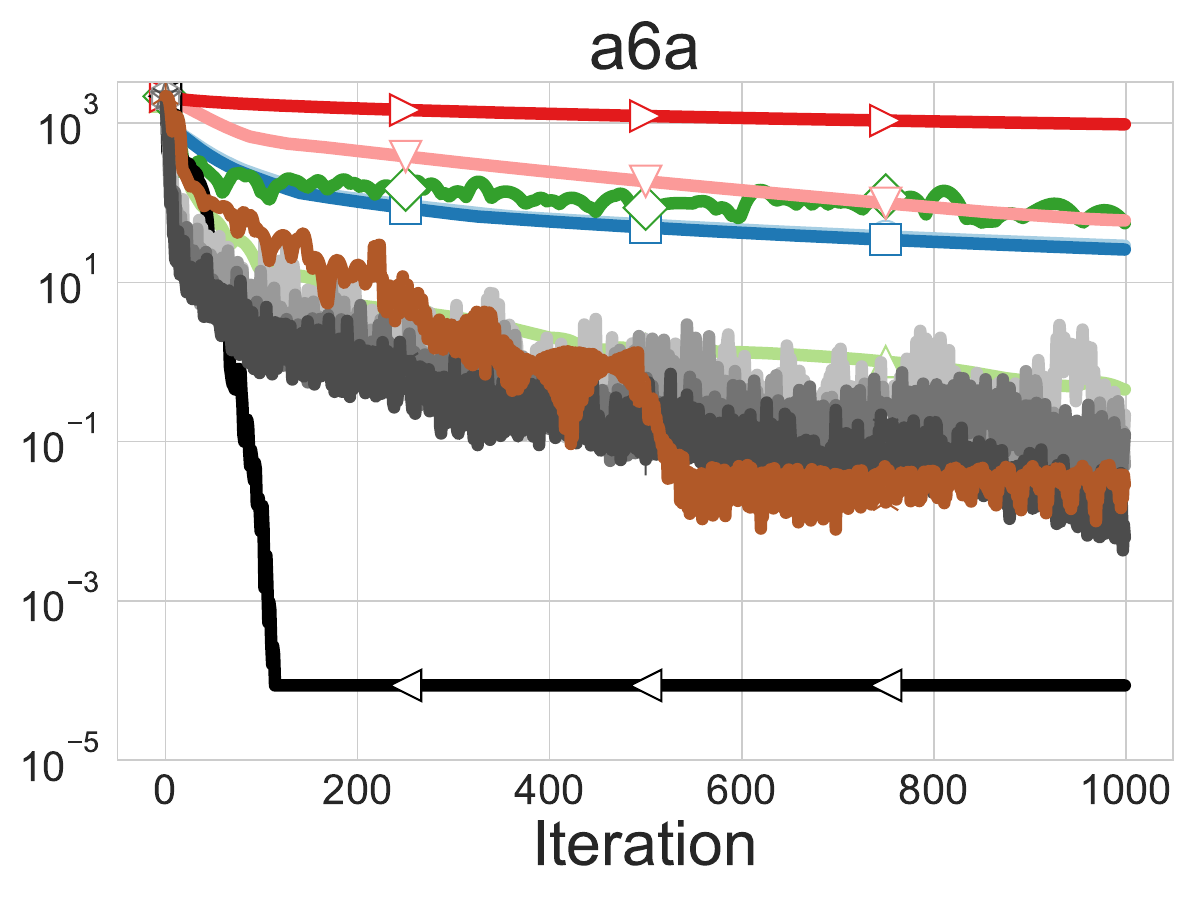}
\includegraphics[scale=0.2]{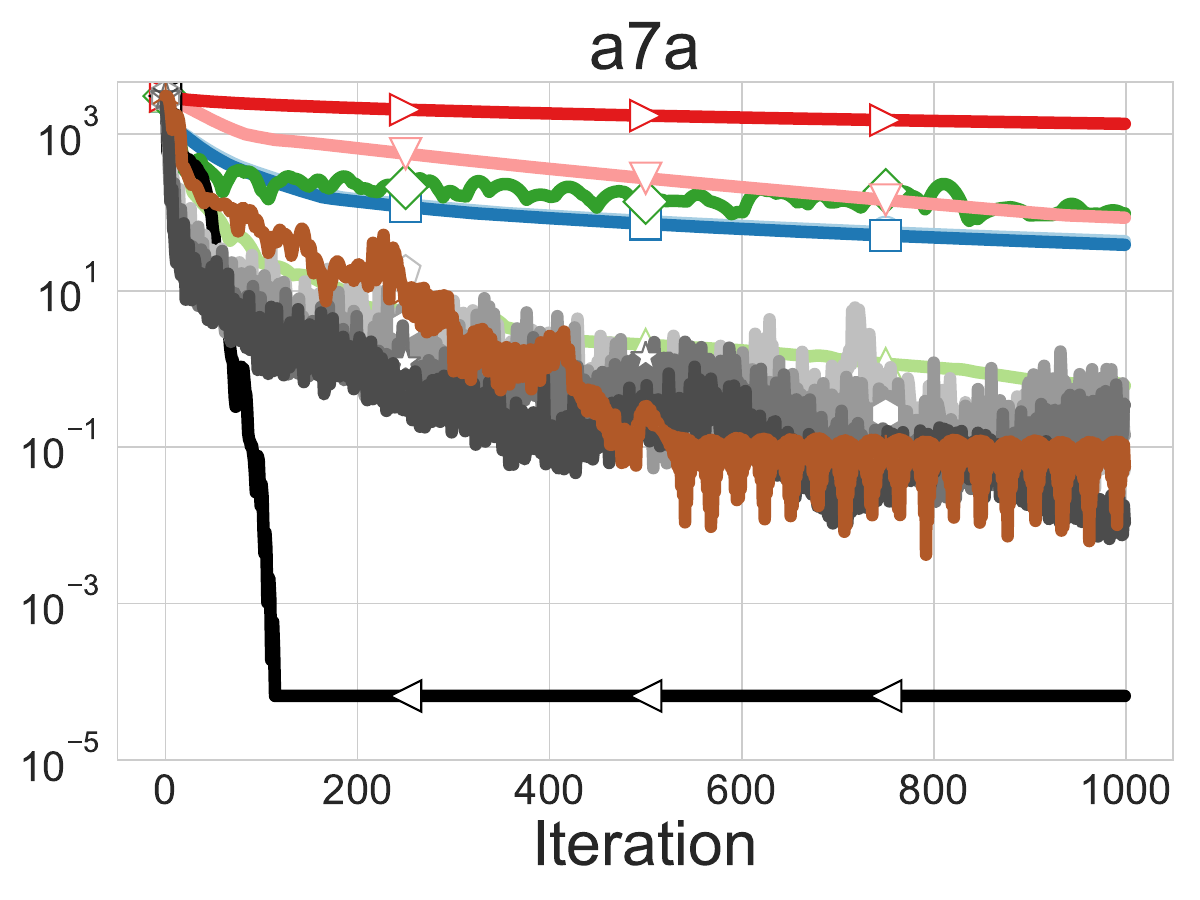}
\includegraphics[scale=0.2]{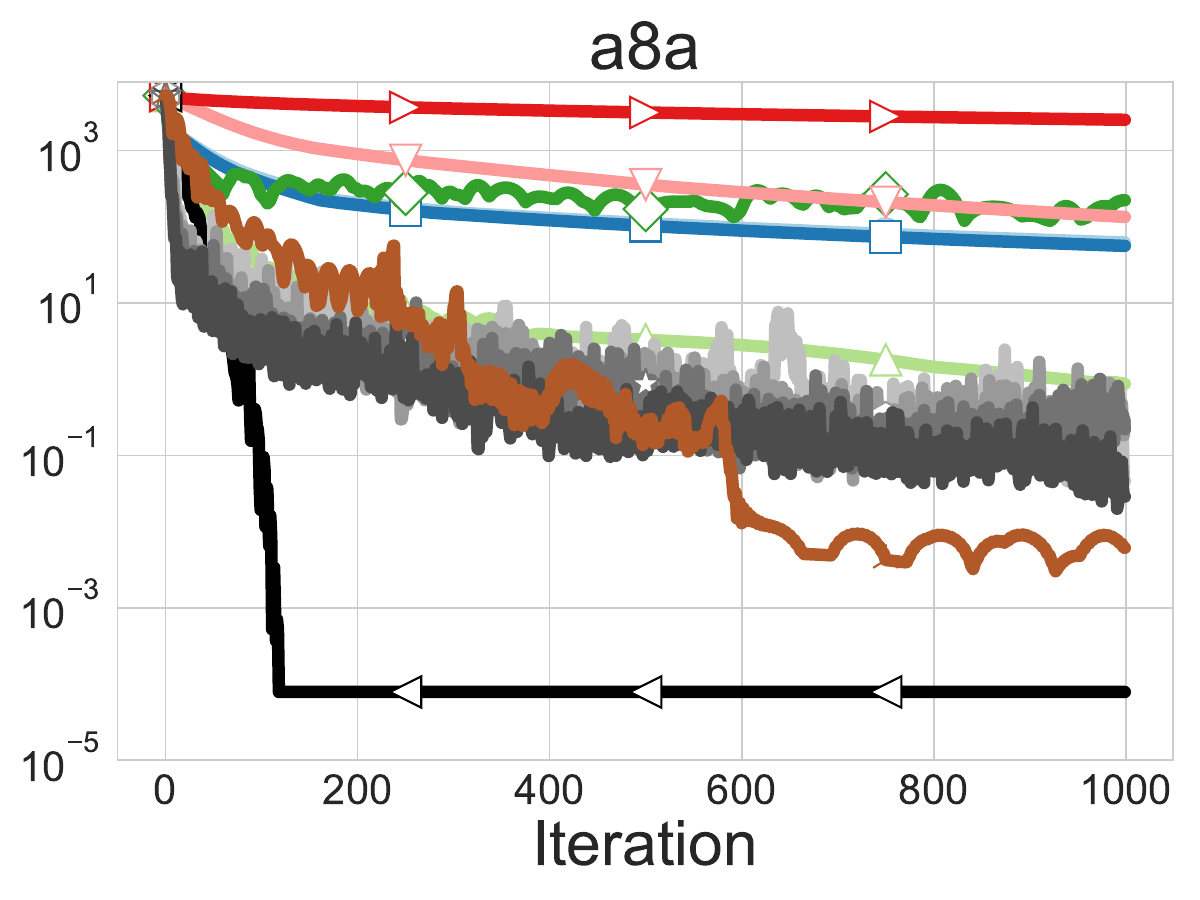}
\\
% Fifth Row: Function Values
\includegraphics[scale=0.2]{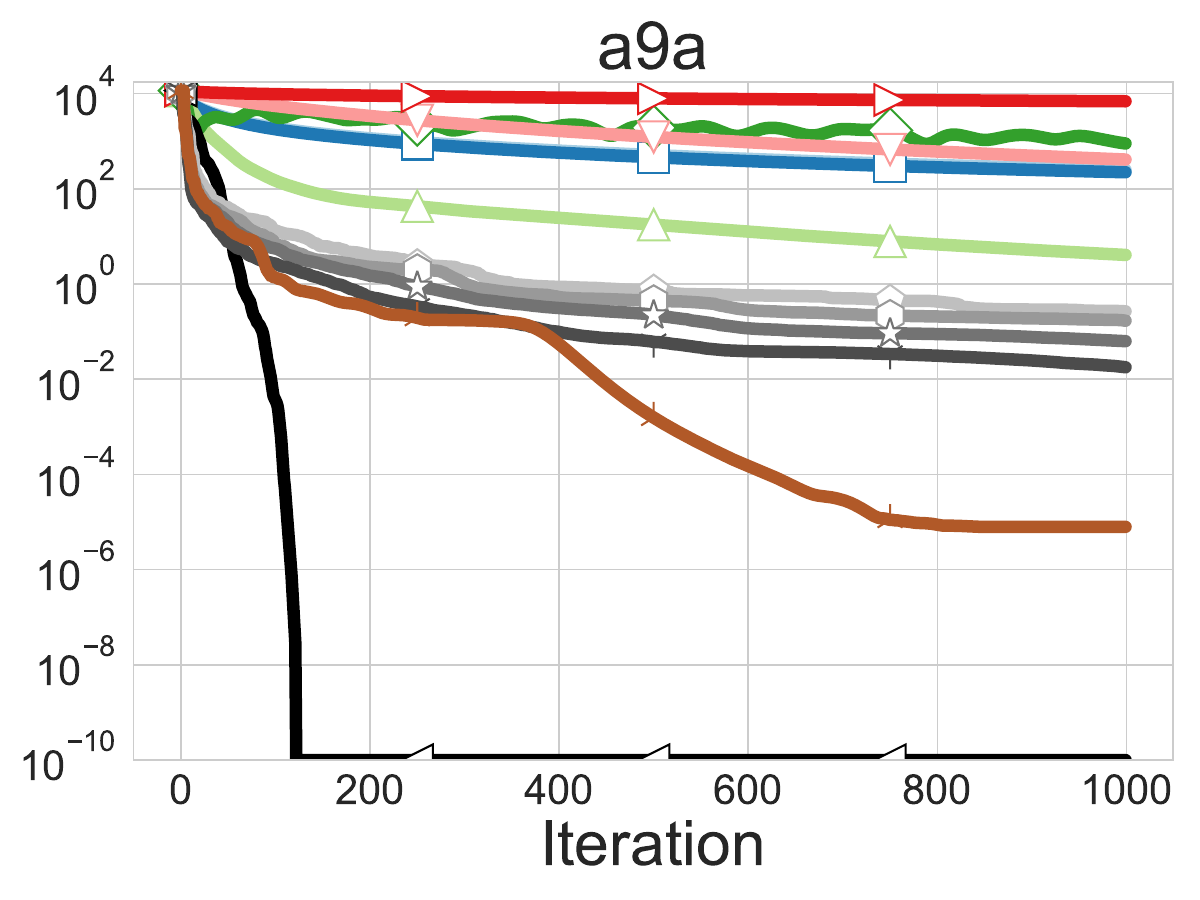}
\includegraphics[scale=0.2]{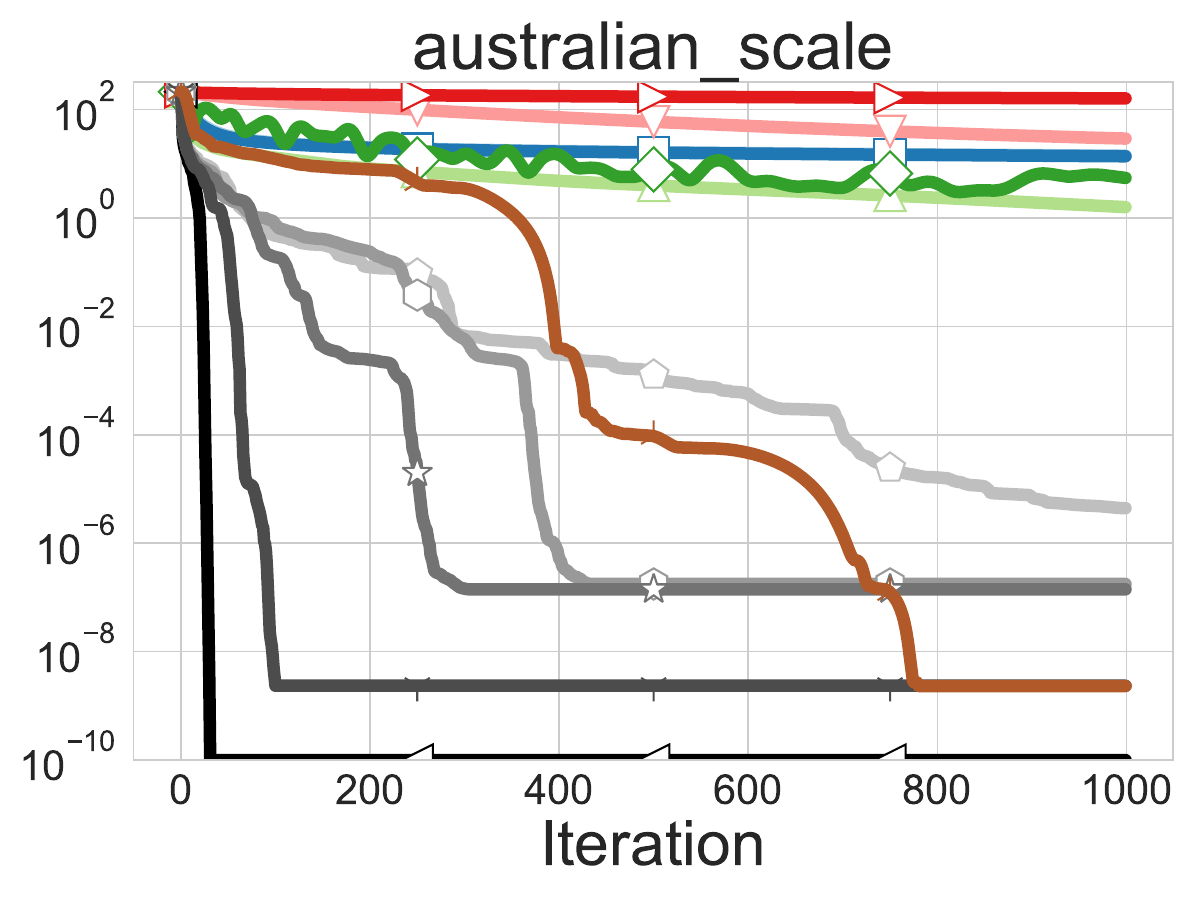}
\includegraphics[scale=0.2]{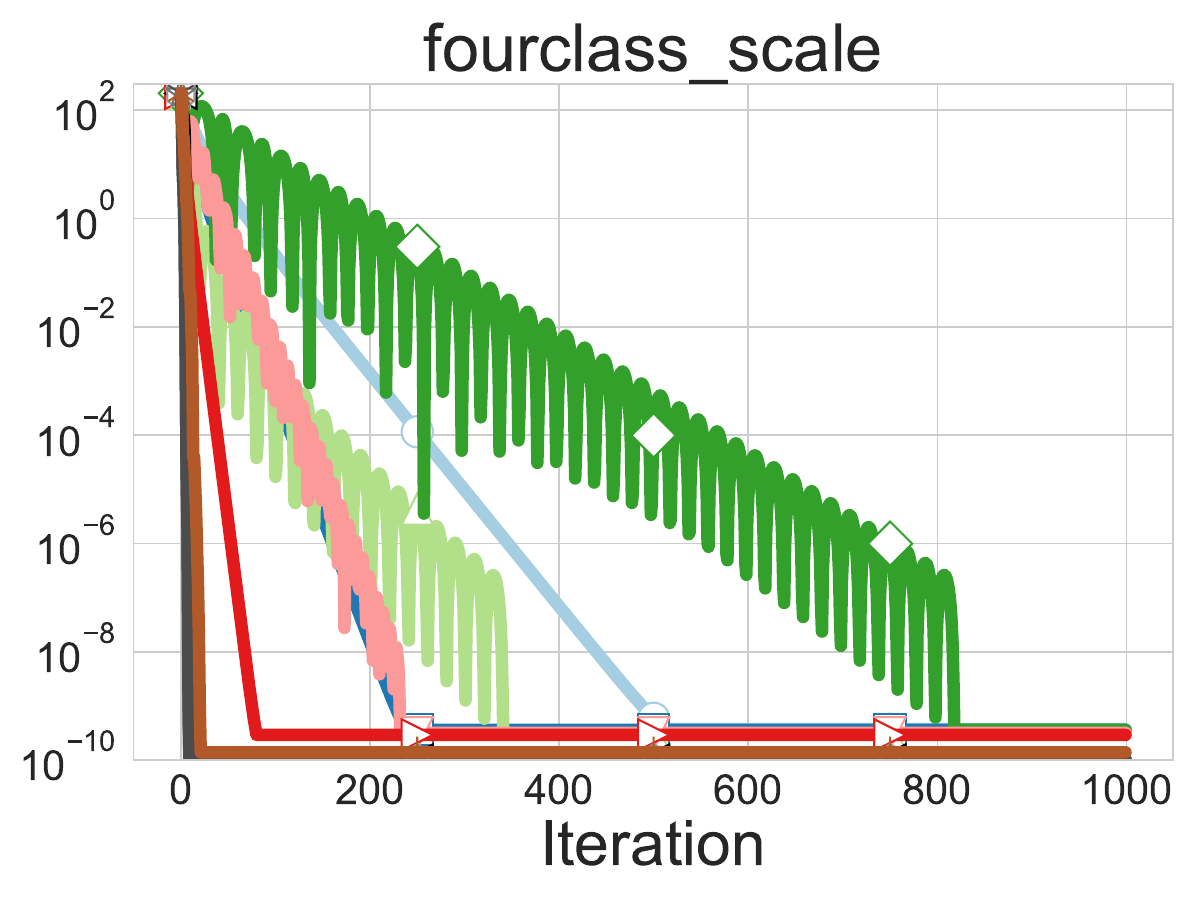}
\includegraphics[scale=0.2]{figs/ijcnn1_objval_logistic.pdf}
\\
% Sixth Row: Gradient Norms
\includegraphics[scale=0.2]{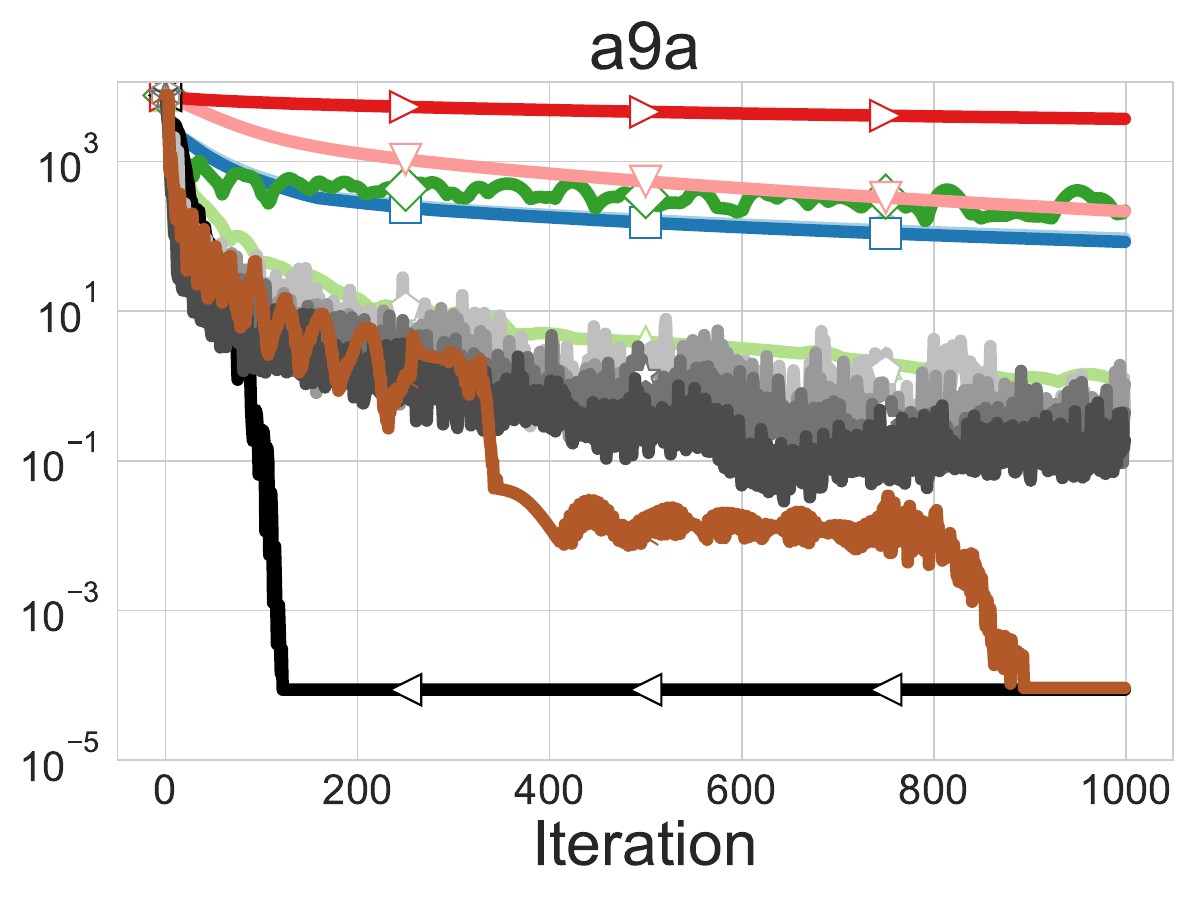}
\includegraphics[scale=0.2]{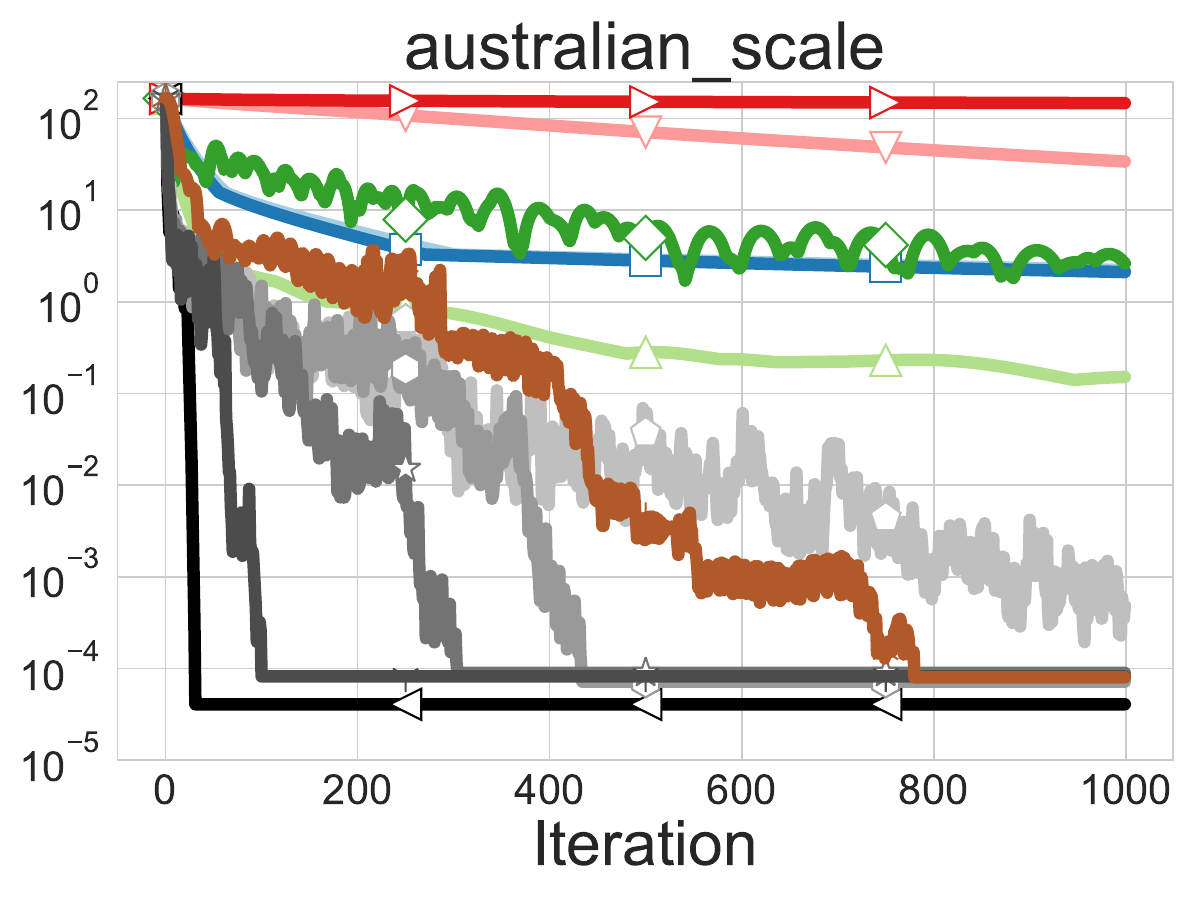}
\includegraphics[scale=0.2]{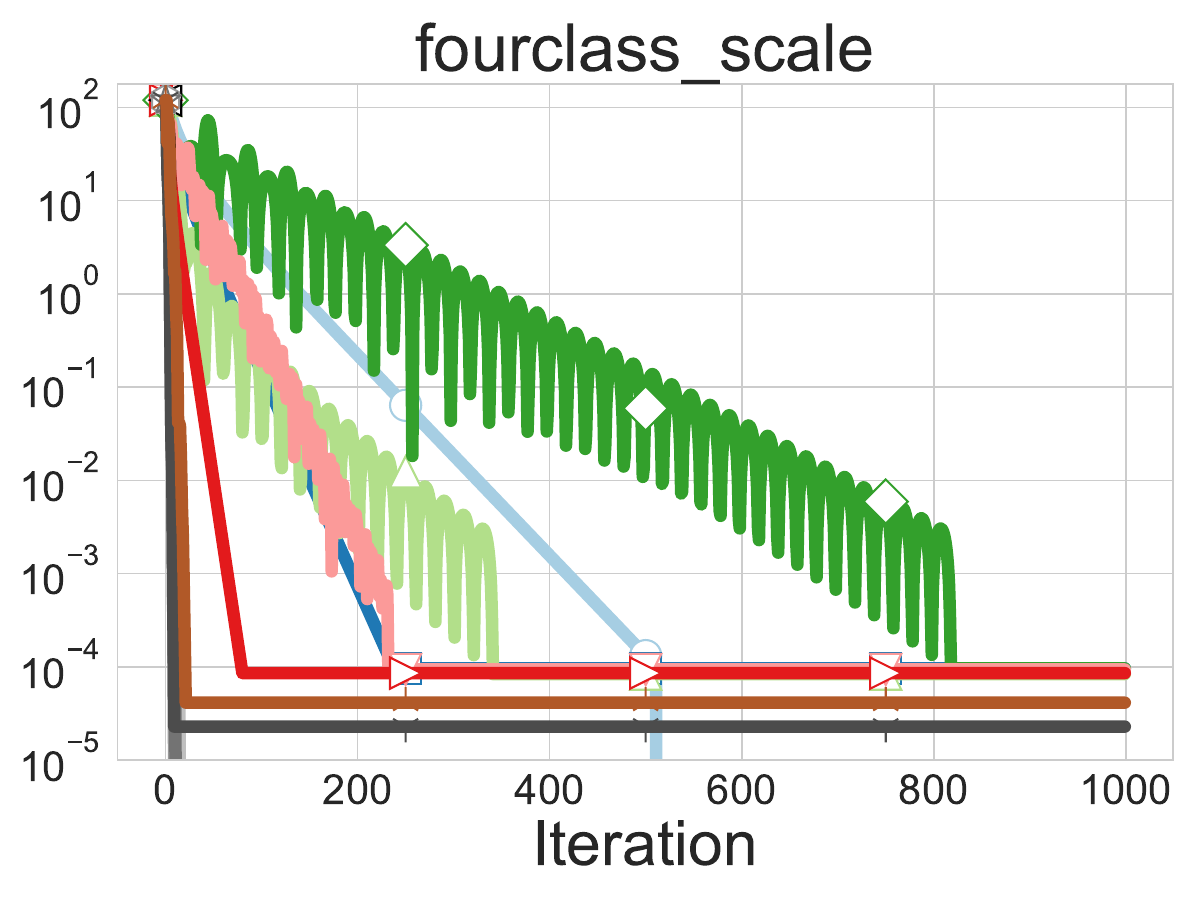}
\includegraphics[scale=0.2]{figs/ijcnn1_gnorm_logistic.pdf}
\\
% Add the legend
\includegraphics[scale=0.38]{figs/legend.pdf}
\caption{More experiments on logistic regression problem}
\label{fig:log-add-1}
\end{figure}

\begin{figure}

\centering
% Ninth Row: Function Values
\includegraphics[scale=0.2]{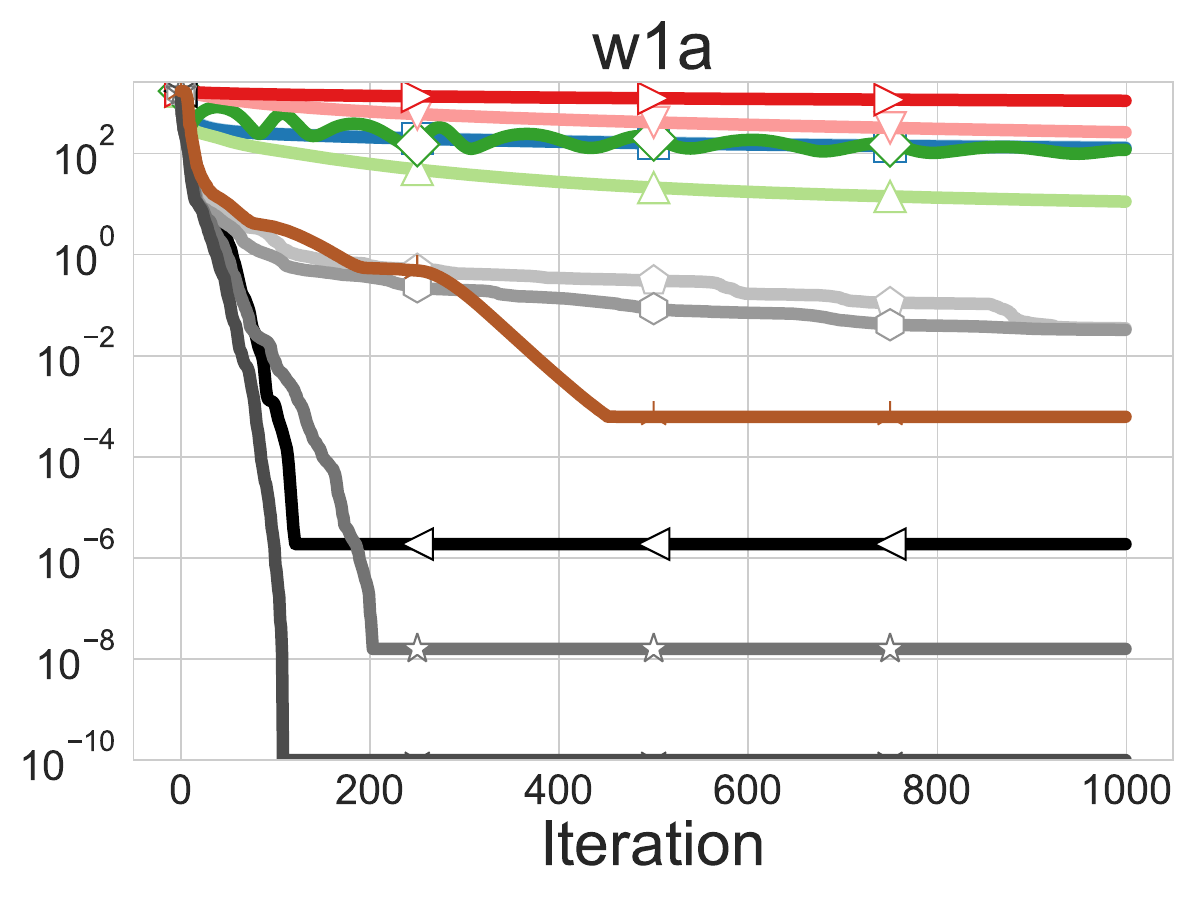}
\includegraphics[scale=0.2]{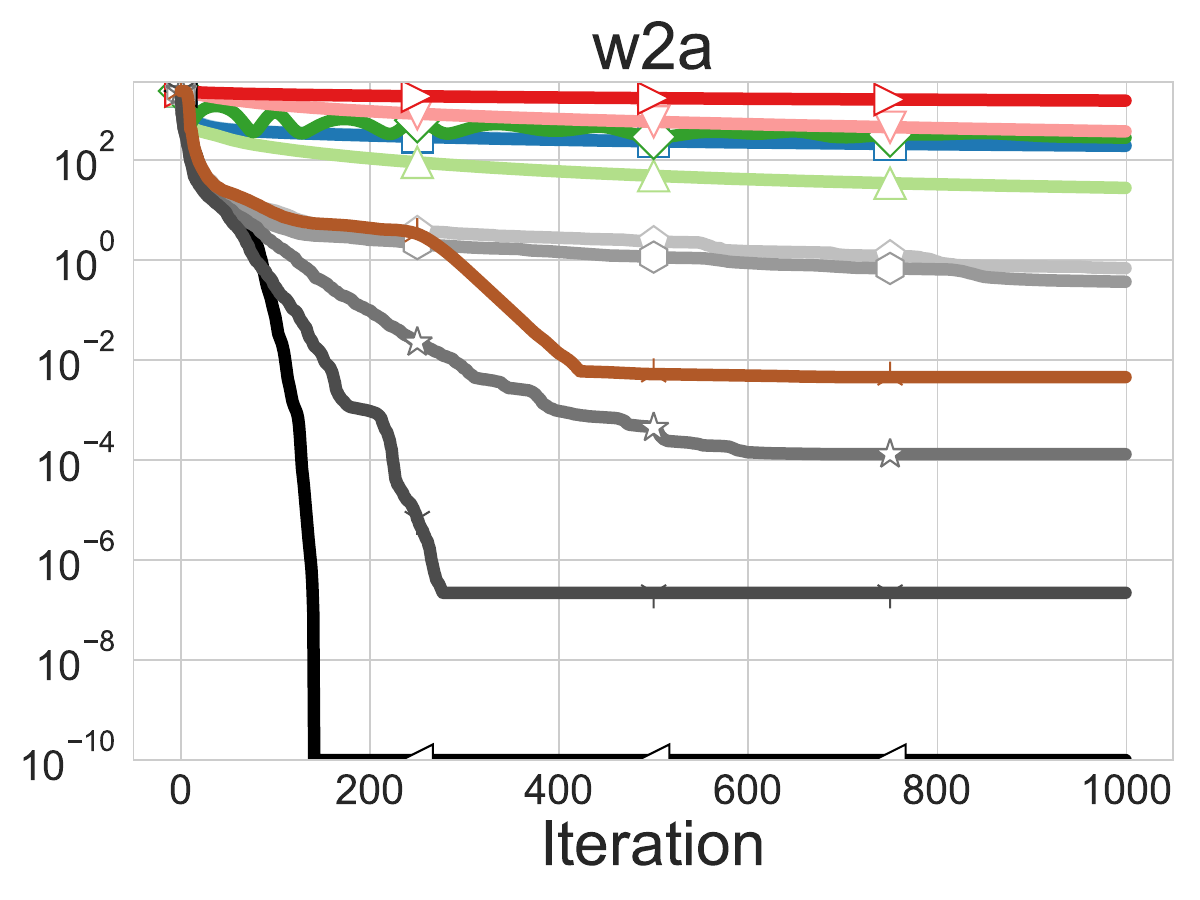}
\includegraphics[scale=0.2]{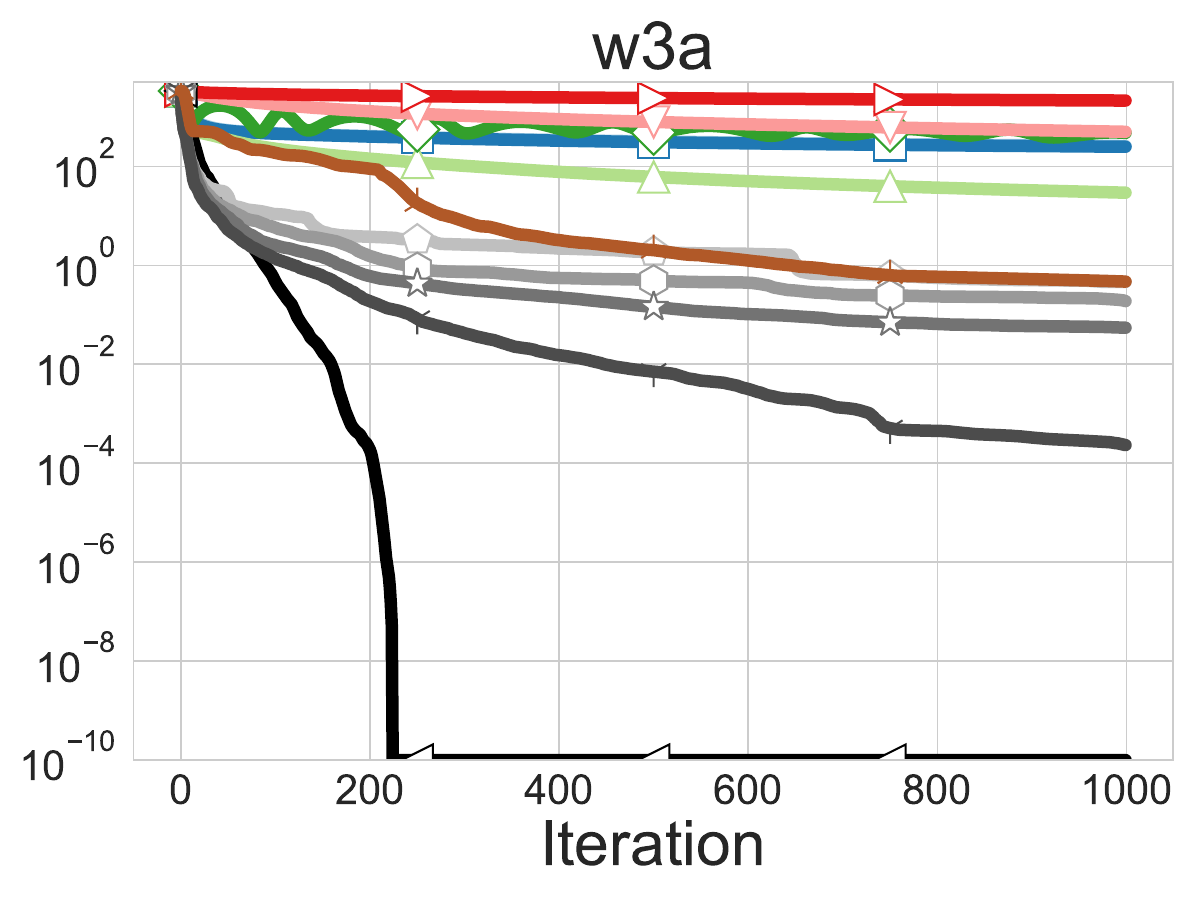}
\includegraphics[scale=0.2]{figs/w4a_objval_logistic.pdf}
\\
% Tenth Row: Gradient Norms
\includegraphics[scale=0.2]{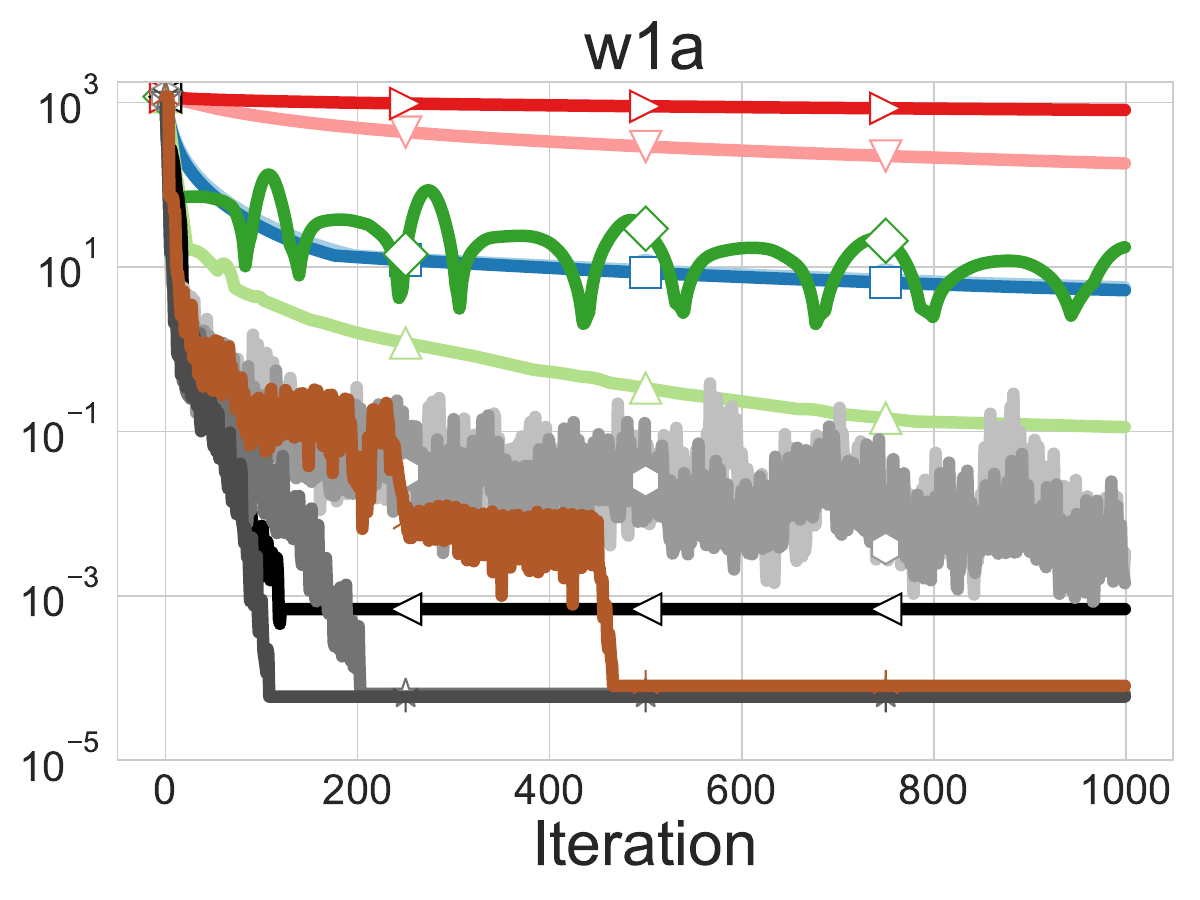}
\includegraphics[scale=0.2]{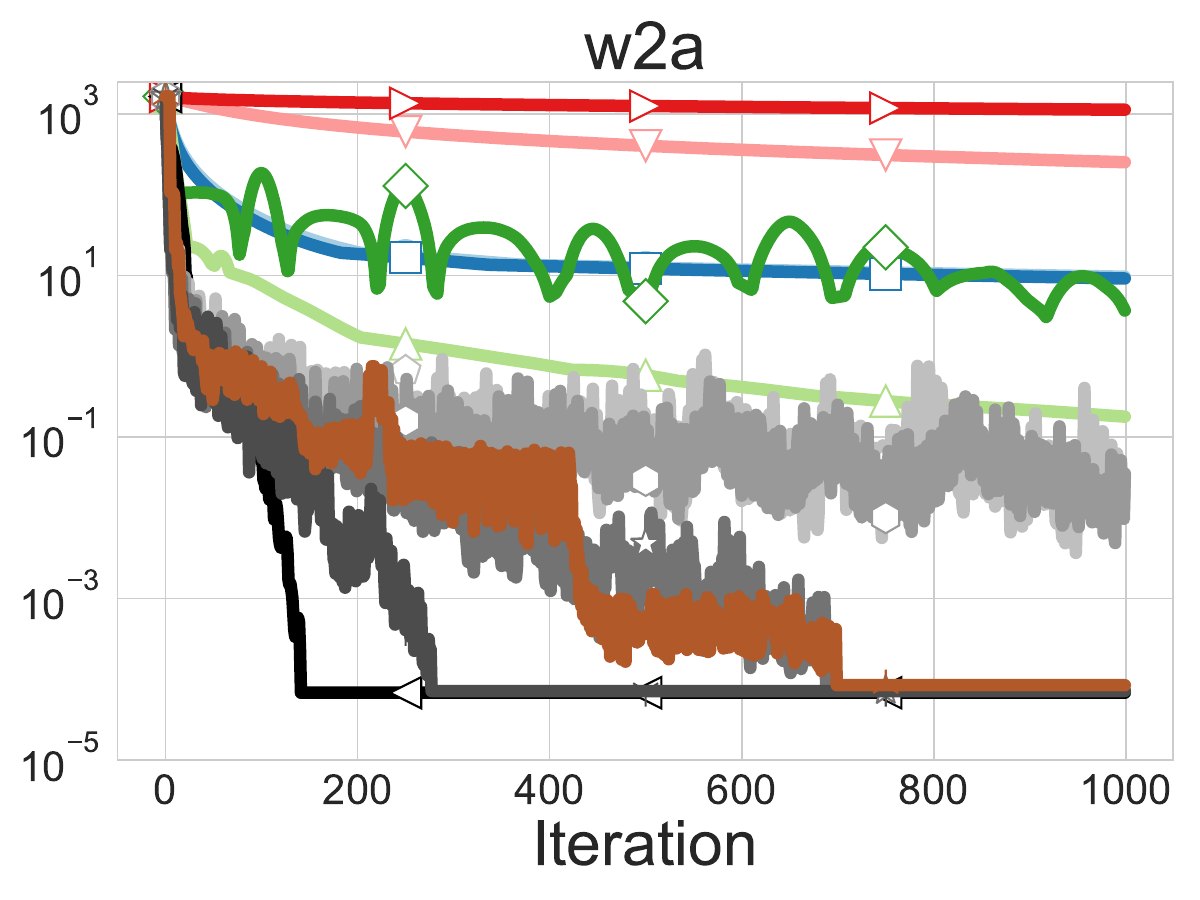}
\includegraphics[scale=0.2]{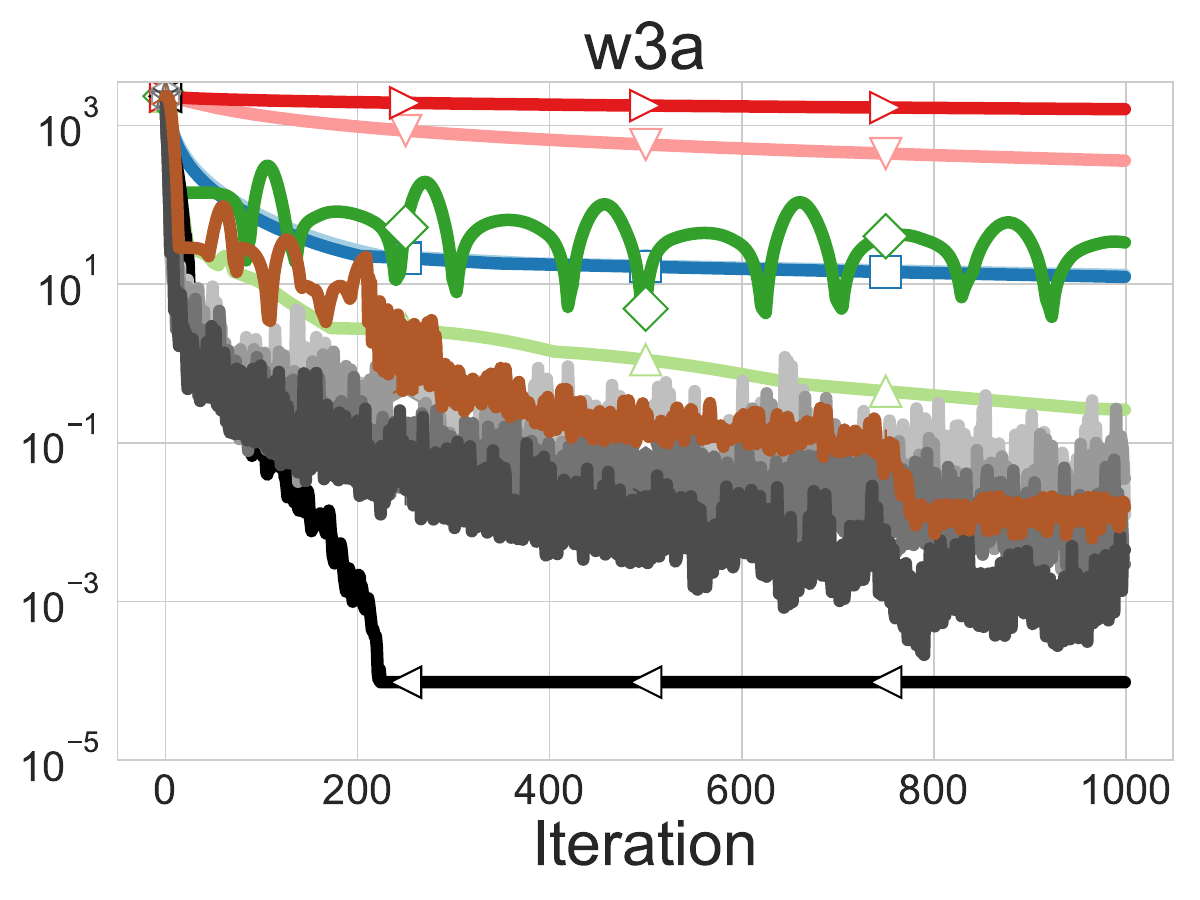}
\includegraphics[scale=0.2]{figs/w4a_gnorm_logistic.pdf}
\\
% Eleventh Row: Function Values
\includegraphics[scale=0.2]{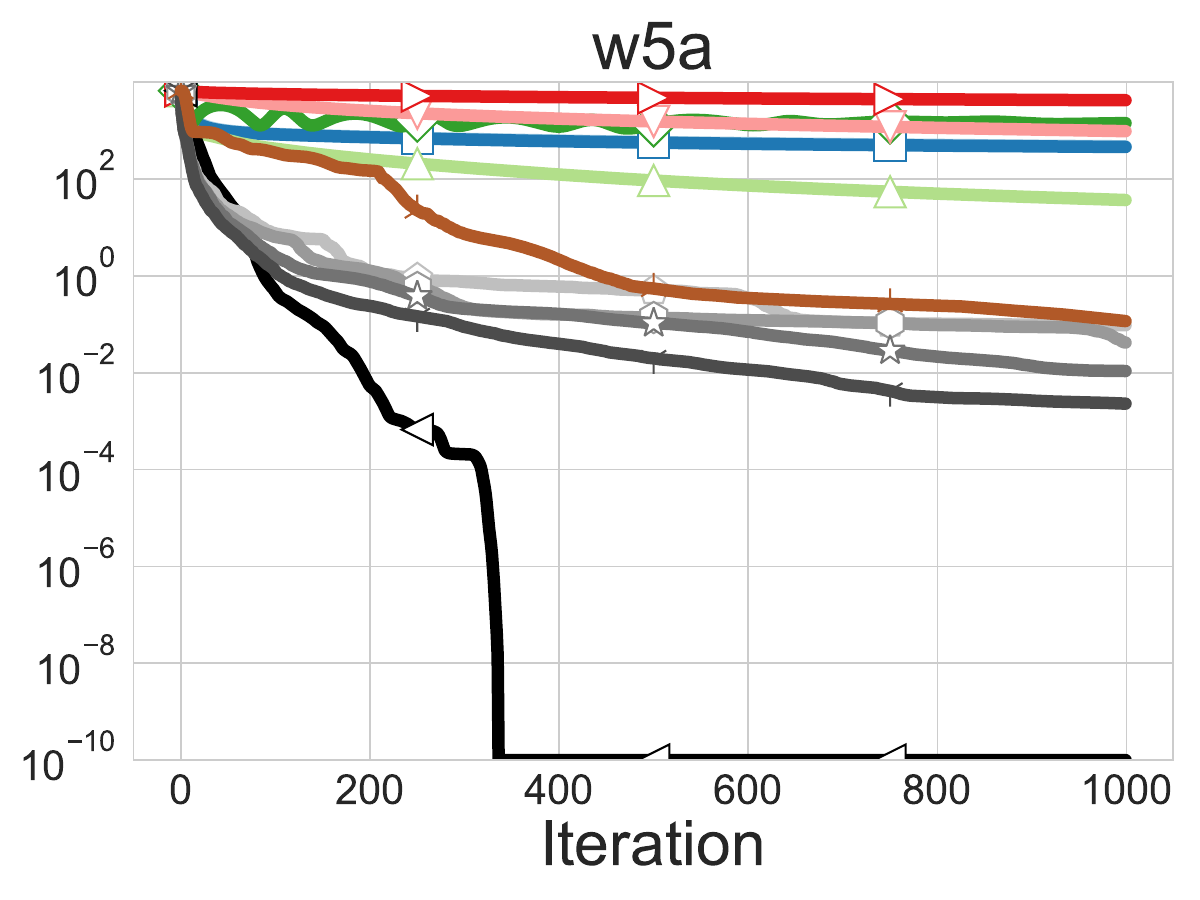}
\includegraphics[scale=0.2]{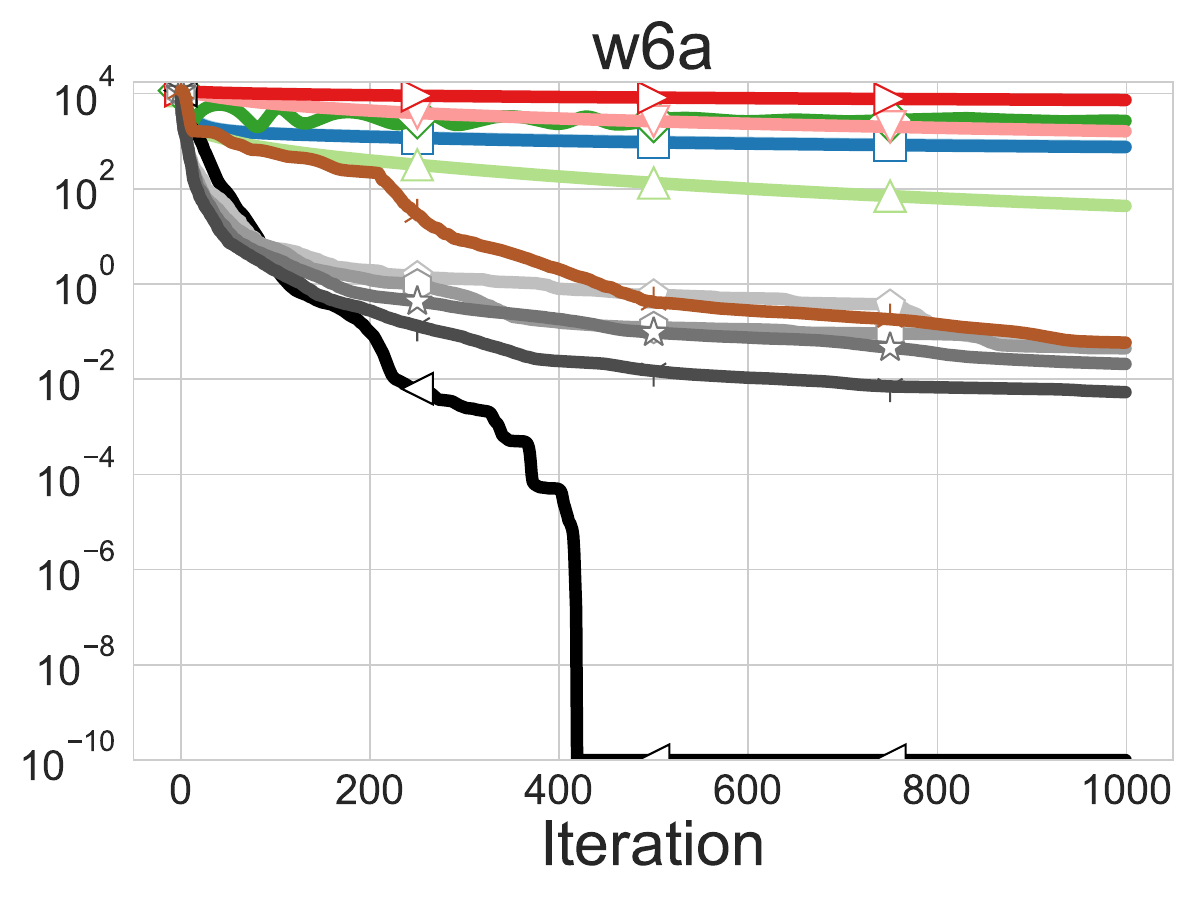}
\includegraphics[scale=0.2]{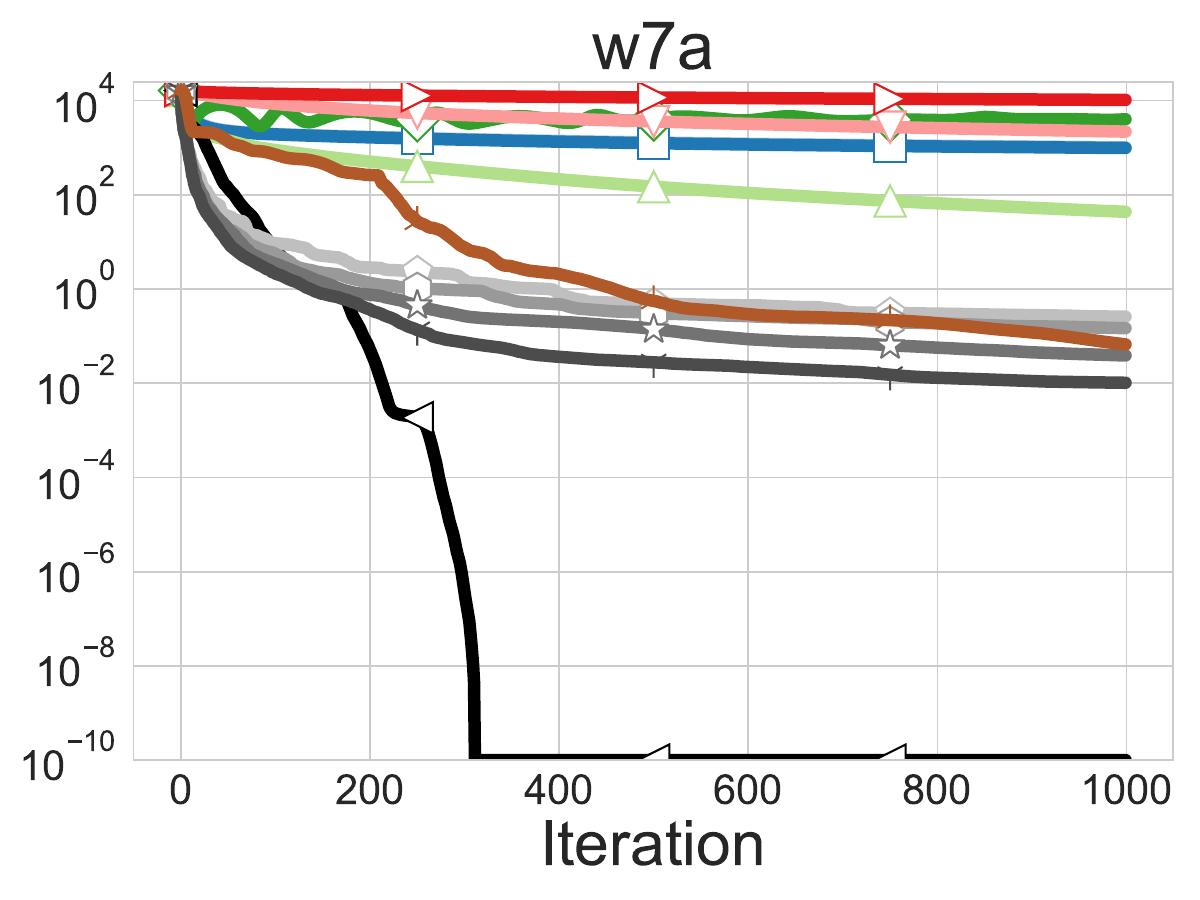}
\includegraphics[scale=0.2]{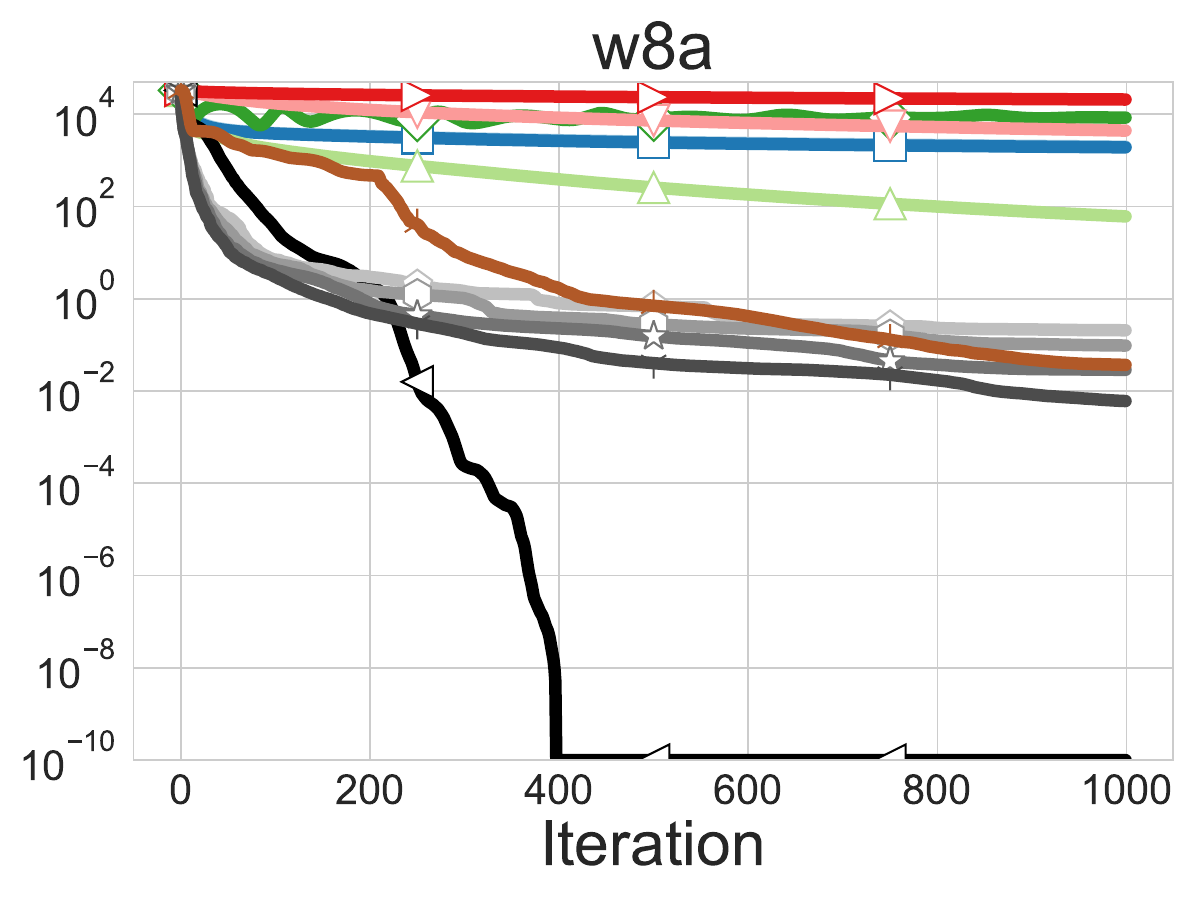}
\\
% Twelfth Row: Gradient Norms
\includegraphics[scale=0.2]{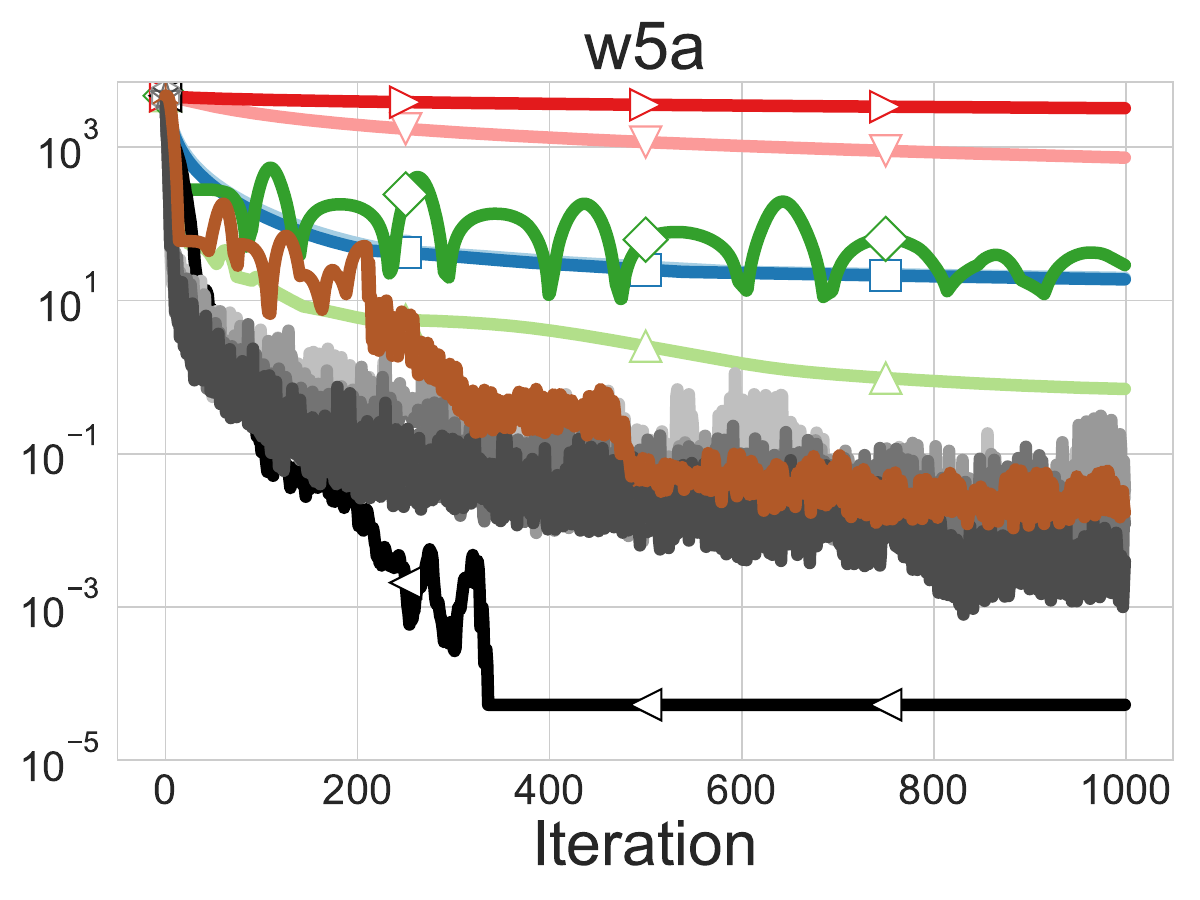}
\includegraphics[scale=0.2]{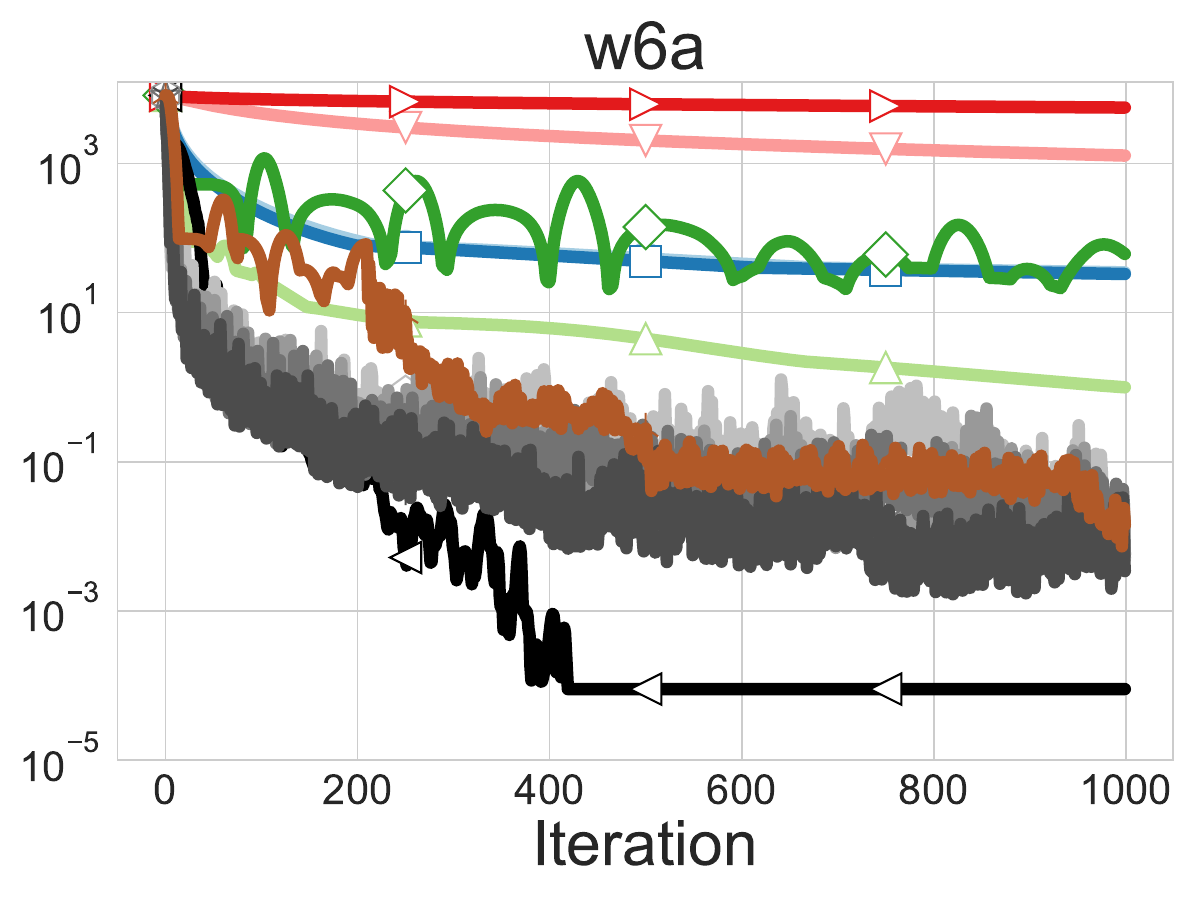}
\includegraphics[scale=0.2]{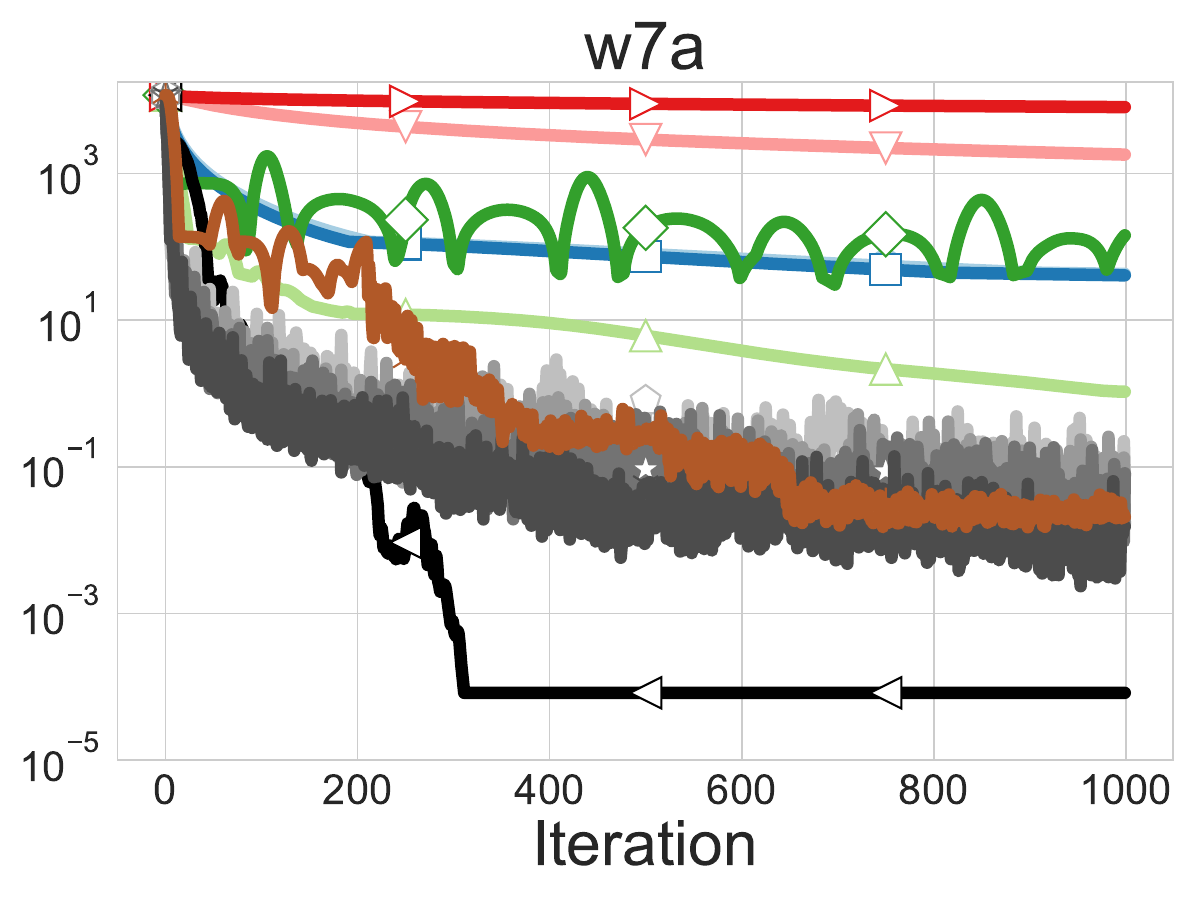}
\includegraphics[scale=0.2]{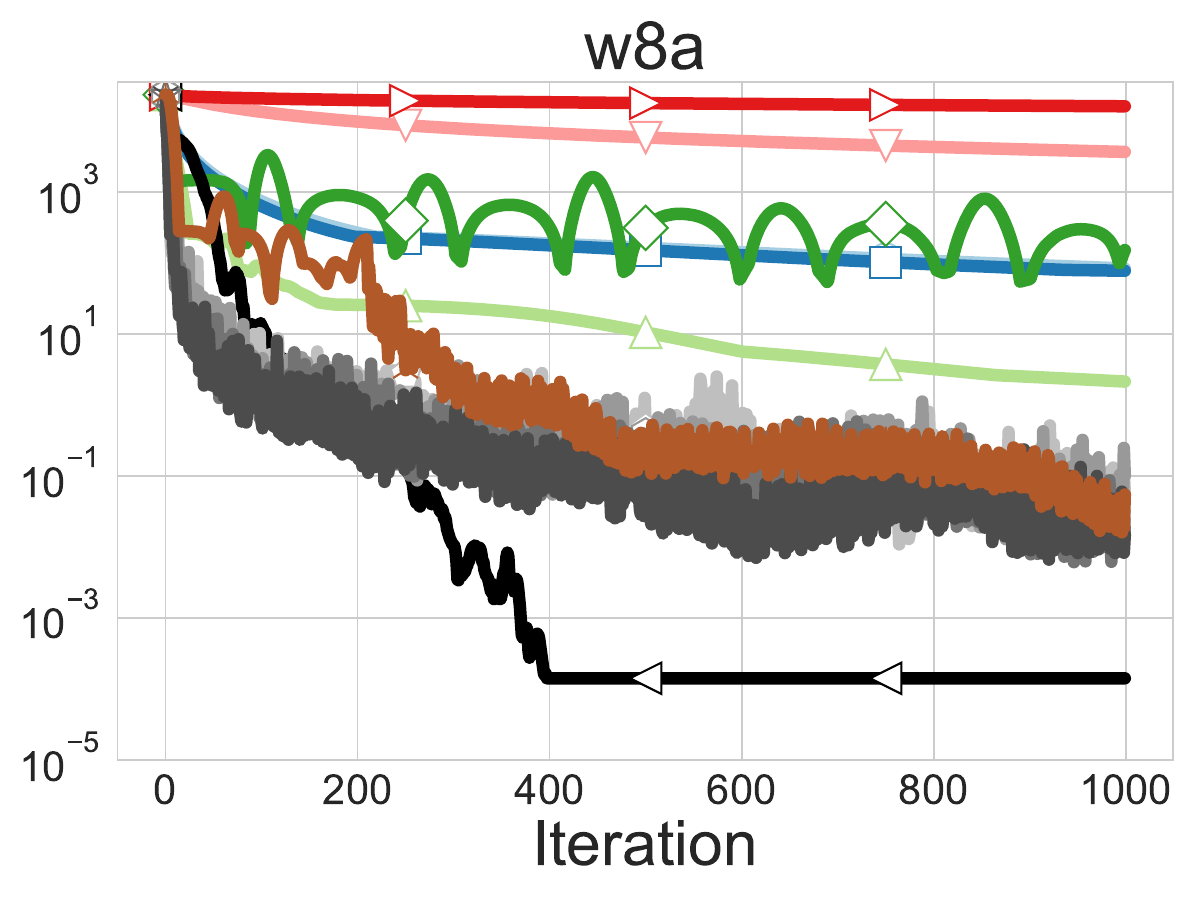}
	
\includegraphics[scale=0.38]{figs/legend.pdf}
\caption{More experiments on logistic regression problem}
\label{fig:log-add-2}
\end{figure}
\end{document}